\tikzset{>=stealth',
  head/.style = {fill = white, text=black},
  plaque/.style = {draw, rectangle, minimum size = 10mm, fill=white}, 
     pil/.style={->,thick},
  junct/.style = {draw,circle,inner sep=0.5pt,outer sep=0pt, fill=black}
  }
\definecolor{amber}{rgb}{1.0,0.5,0} 
\definecolor{amethyst}{rgb}{.6,0.2,1} 
\definecolor{clay}{rgb}{1,.75,0} 
\definecolor{dark}{rgb}{0.05, 0.5, 0.06}
\definecolor{sky}{rgb}{0.52, 0.73, 0.4}
\definecolor{lavender}{rgb}{0.96, 0.73, 1.0}
\newtheorem{theorem}{Theorem}[section]
\newtheorem{lemma}[theorem]{Lemma}
\newtheorem{proposition}[theorem]{Proposition}
\newtheorem{corollary}[theorem]{Corollary}
\newtheorem{introthm}{Theorem}
\newtheorem*{claim}{Claim}
\theoremstyle{definition}
\newtheorem{definition}[theorem]{Definition}
\newtheorem{algorithm}[theorem]{Algorithm}
\theoremstyle{remark}
\newtheorem{remark}[theorem]{Remark}
\newtheorem{example}[theorem]{Example}
\numberwithin{equation}{section}
\DeclareMathOperator{\trip}{\mathsf{trip}}
\DeclareMathOperator{\col}{\mathsf{col}}
\DeclareMathOperator{\osc}{\mathsf{osc}}
\DeclareMathOperator{\slack}{\mathsf{slack}}
\DeclareMathOperator{\bw}{\mathbf{w}}
\DeclareMathOperator{\ft}{FT}
\DeclareMathOperator{\rft}{RFT}
\DeclareMathOperator{\id}{id}
\DeclareMathOperator{\aexc}{\mathrm{Aexc}}
\DeclareMathOperator{\prom}{\mathsf{prom}}
\DeclareMathOperator{\promotion}{\mathcal{P}}
\DeclareMathOperator{\devacuation}{\mathcal{E}^*}
\DeclareMathOperator{\evacuation}{\mathcal{E}}
\DeclareMathOperator{\PM}{\mathbf{M}}
\DeclareMathOperator{\sgn}{\mathsf{sgn}}
\DeclareMathOperator{\SL}{SL}
\newcommand{\bbb}{\mathsf{b}}
\newcommand{\fsl}{\mathfrak{sl}}
\DeclareMathOperator{\ASM}{\mathcal{ASM}}
\DeclareMathOperator{\PP}{\mathcal{PP}}
\DeclareMathOperator{\Inv}{\mathsf{Inv}}
\DeclareMathOperator{\Hom}{\mathrm{Hom}}
\DeclareMathOperator{\Sym}{\mathsf{Sym}}
\DeclareMathOperator{\JDT}{\mathsf{jdt}}
\DeclareMathOperator{\sep}{\mathsf{sep}}
\DeclareMathOperator{\rot}{\mathsf{rot}}
\DeclareMathOperator{\refl}{\mathsf{refl}}
\DeclareMathOperator{\crg}{CRG}
\DeclareMathOperator{\cg}{CG}
\DeclareMathOperator{\rg}{RG}
\DeclareMathOperator{\ssv}{SSV}
\DeclareMathOperator{\ssyt}{SSYT}
\DeclareMathOperator{\wgt}{\mathsf{wgt}}
\DeclareMathOperator{\wssv}{WSSV}
\DeclareMathOperator{\tlex}{l\widetilde{ex}}
\DeclareMathOperator{\grevlex}{grevlex}
\DeclareMathOperator{\Des}{Des}
\DeclareMathOperator{\sfH}{\mathsf{H}}
\DeclareMathOperator{\sfX}{\mathsf{X}}
\newcommand{\bs}{\boldsymbol}
\newcommand{\symm}{symmetrized }
\newcommand{\too}[1]{\stackrel{#1}{\to}}
\newcommand\precdot{\mathrel{\ooalign{$\prec$\cr
  \hidewidth\raise0.001ex\hbox{$\cdot\mkern0.6mu$}\cr}}}
   \def\MR#1{}
\title{Rotation-invariant web bases from hourglass plabic graphs}
\author[Gaetz]{Christian Gaetz}
\address[Gaetz]{Department of Mathematics, University of California, Berkeley, CA, USA.}
\email{gaetz@berkeley.edu}
\author[Pechenik]{Oliver Pechenik}
\address[Pechenik]{Department of Combinatorics \& Optimization, University of Waterloo, ON, Canada.}
\email{oliver.pechenik@uwaterloo.ca}
\author[Pfannerer]{Stephan Pfannerer}  
\address[Pfannerer]{Department of Combinatorics \& Optimization, University of Waterloo, ON, Canada.}
\email{math@pfannerer-mittas.net}
\author[Striker]{Jessica Striker}
\address[Striker]{Department of Mathematics, North Dakota State University, Fargo, ND, USA.}
\email{jessica.striker@ndsu.edu}
\author[Swanson]{Joshua P. Swanson}
\address[Swanson]{Department of Mathematics, University of Southern California, Los Angeles, CA, USA.}
\email{swansonj@usc.edu}
\thanks{Gaetz was partially supported by NSF fellowship DMS-2103121 and by NSF grant DMS-2452032. Pechenik and Pfannerer were partially supported by Pechenik's Discovery Grant (RGPIN-2021-02391) and Launch Supplement (DGECR-2021-00010) from
the Natural Sciences and Engineering Research Council of Canada. Pfannerer was also partially supported by  the Austrian  Science Fund (FWF) P29275, Olya Mandelshtam's Discovery Grant (RGPIN-2021-02568), and was a recipient of a DOC Fellowship of the Austrian Academy of Sciences. Striker was partially supported by Simons Foundation gifts MP-TSM-00002802 and 527204 and NSF grant DMS-2247089. Swanson was partially supported by NSF grant DMS-2348843.}
\date{\today}
\begin{document}
\begin{abstract}
Webs give a diagrammatic calculus for spaces of tensor invariants. We introduce \emph{hourglass plabic graphs} as a new avatar of webs, and use these to give the first rotation-invariant $U_q(\fsl_4)$-web basis, a long-sought object. The characterization of our basis webs relies on the combinatorics of these new plabic graphs and associated configurations of a symmetrized six-vertex model. We give growth rules, based on a novel crystal-theoretic technique, for generating our basis webs from tableaux and we use skein relations to give an algorithm for expressing arbitrary webs in the basis. We also discuss how previously known rotation-invariant web bases can be unified in our framework of hourglass plabic graphs.
\end{abstract}

\maketitle
\setcounter{tocdepth}{1}
\tableofcontents
\section{Introduction}

Over the last four decades, the classical theory of spaces of  $\SL_r(\mathbb{C})$-invariants has been extended with the aid of diagrams called \emph{webs}, whose calculus has powerful topological applications. For $\SL_2$, the \emph{Temperley--Lieb basis} consists of tensor invariants of non-crossing matchings of points around a disk and can be used to compute the \emph{Jones polynomial}. For $\SL_3$, Kuperberg \cite{Kuperberg} introduced the remarkable \emph{non-elliptic web basis} with many beautiful properties. However, a rotation-invariant extension of Kuperberg's basis to higher ranks has proven elusive ever since its introduction in 1996. Our main result provides the first such basis for $\SL_4$, and indeed for its quantum deformation $U_q(\fsl_4)$.

\begin{introthm}[See \Cref{thm:web-basis}]\label{thm:A}
The collection $\mathscr{B}_q^{\underline{c}}$ of tensor invariants of top fully reduced hourglass plabic graphs of type $\underline{c}$ is a rotation-invariant web basis for the invariant space $\Inv_{U_q(\fsl_4)}(\bigwedge\nolimits_q^{\underline{c}} V_q)$.
\end{introthm}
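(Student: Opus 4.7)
The plan is to prove the theorem via four interlocking ingredients: (i) a dimension-matching bijection between top fully reduced hourglass plabic graphs of type $\underline{c}$ and a known combinatorial set indexing a basis of $\Inv_{U_q(\fsl_4)}(\bigwedge\nolimits_q^{\underline{c}} V_q)$; (ii) a spanning statement obtained from $U_q(\fsl_4)$-skein relations; (iii) linear independence as a consequence of (i) and (ii); and (iv) a verification that the set is preserved setwise by rotation.

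First, I would establish a bijection between top fully reduced hourglass plabic graphs of type $\underline{c}$ and a well-understood indexing set of the invariant space, most naturally semistandard Young tableaux of rectangular shape with four rows and content compatible with $\underline{c}$. The forward direction would be supplied by the crystal-theoretic growth rules promised in the abstract: a local rule reads off one column of the tableau at a time from the plabic graph. The inverse direction would be extracted from the associated symmetrized six-vertex configuration, whose constraints match the semistandardness conditions. This pins down $|\mathscr{B}_q^{\underline{c}}|$. Next, for spanning, I would invoke a suitable complete set of skein relations for $U_q(\fsl_4)$-webs and show that iterated application rewrites any invariant as a $\mathbb{Z}[q,q^{-1}]$-linear combination of fully reduced hourglass plabic graphs, identifying the latter as normal forms. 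Linear independence then follows automatically from the matching cardinality. For rotation invariance, I would show that rotation of a plabic graph corresponds under the bijection to promotion of the associated rectangular tableau; since promotion is a well-defined bijection on four-row rectangular tableaux, rotation permutes $\mathscr{B}_q^{\underline{c}}$. The crystal-theoretic formulation of the growth rules should make this step natural, as promotion is essentially a cyclic shift in the crystal-commutor framework.

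The main obstacle will be the skein-theoretic spanning argument. In contrast to the $\SL_3$ case, where Kuperberg's three local relations suffice, a complete presentation for $U_q(\fsl_4)$-webs involves many more, more intricate, relations, and establishing that the resulting rewriting system terminates at fully reduced hourglass plabic graphs will require a delicate confluence/termination analysis, very likely via a carefully chosen monovariant on some complexity measure of the graph (edge multiplicities, face sizes, etc.) combined with a Newman-style check. A parallel challenge is proving surjectivity of the growth-rule map from tableaux onto the class of top fully reduced hourglass plabic graphs, which is what makes the cardinality matching genuinely available; this typically demands that one show the image graphs are indeed fully reduced and that no additional such graphs exist outside the image, both of which rely on a fine understanding of how the six-vertex configurations constrain local structure.
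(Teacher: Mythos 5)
Your outline is sound and in fact coincides almost exactly with the paper's \emph{secondary} proof (\Cref{thm:basis-without-growth-rules}): injectivity of the graph-to-tableau map bounds the number of basis candidates by $\dim \Inv_{U_q(\fsl_4)}(\bigwedge\nolimits_q^{\underline{c}} V_q)$, the reduction algorithm of \Cref{sec:reduction-rules} (uncrossing, forbidden-$4$-cycle, loop, and benzene relations) gives spanning, and a spanning set of the right cardinality is a basis. The paper's \emph{primary} proof (\Cref{thm:web-basis}) takes a different route for linear independence: it shows via the growth rules that each fully reduced graph has a unique $\tlex$-minimal proper labeling, so the invariants have distinct leading monomials with unit coefficients (\Cref{thm:unitriangular}), and then matches cardinalities. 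The unitriangularity route is longer but buys substantially more --- integrality of the basis over $\mathbb{Z}[q,q^{-1}]$, an explicit leading-term description via the separation word, and the echelon-form transition matrix --- whereas your skein route yields only a $\mathbb{C}(q)$-basis. One simplification available to you: confluence of the rewriting system is not needed, only termination, since you are proving spanning rather than a normal-form theorem; likewise, surjectivity of the growth map is not needed for this route, as injectivity of the graph-to-tableau assignment already gives the cardinality upper bound.

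Two corrections of detail. First, the indexing set for $\Inv_{U_q(\fsl_4)}(\bigwedge\nolimits_q^{\underline{c}} V_q)$ is the set of $4$-row rectangular \emph{fluctuating} tableaux of type $\underline{c}$ (standard tableaux in the type $(1,\dots,1)$ case); semistandard tableaux index invariants of tensor products of \emph{symmetric} powers, which is the separate setting of \Cref{thm:semistandard}. Second, the bijection to tableaux is naturally defined on \emph{move-equivalence classes} of fully reduced graphs, not on individual top graphs, so you must additionally verify that each class yields a well-defined invariant: square moves leave $[W]_q$ unchanged, benzene moves do not, and the ``top'' condition selects a benzene-maximal representative which exists and is essentially unique by a Newman's-lemma argument (\Cref{prop:benzene-max-exists}). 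Without this step the set $\mathscr{B}_q^{\underline{c}}$ is not well defined and the cardinality count does not go through.
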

Such a rotation-invariant basis has long been desired by those developing the theory of webs; see, e.g., \cite{Westbury, Fontaine, Fraser-Lam-Le,Fraser:braid, Fraser-2-column} for such remarks. The lack heretofore of suitable generalizations of the $\SL_3$-web basis to higher rank has also been specifically lamented in applications of webs to cluster algebras \cite{Fomin-Pylyavskyy-advances}, enumerative combinatorics \cite{Petersen-Pylyavskyy-Rhoades}, representation theory \cite{Tymoczko-Russell-unitriangular}, quantum topology \cite[p.~10]{Le.Sikora}, and dimer models \cite{Douglas-Kenyon-Shi}.

The key insight behind \Cref{thm:A} is our introduction of a combinatorial framework we call \textit{hourglass plabic graphs}, so named because they allow certain half-twist multi-edges called \textit{hourglasses}. These graphs are an extension of Postnikov's plabic graphs; while Postnikov's graphs are governed by a single \emph{trip permutation}, ours crucially involve a tuple $\trip_{\bullet}$ of such permutations. To establish \Cref{thm:A}, we describe a map $\mathcal{T}$ from move-equivalence classes of certain hourglass plabic graphs to $4$-row \emph{fluctuating tableaux} (a class including standard tableaux) and a map $\mathcal{G}$, based on \emph{growth rules}, in the other direction (see \Cref{fig:intro-ex} for an example).  In \cite{fluctuating-paper}, we showed that orbits of \emph{promotion} on fluctuating tableaux are tracked by a tuple $\prom_{\bullet}$ of \emph{promotion permutations}.

\begin{introthm}[See \Cref{thm:main-bijection}]\label{thm:B}
The maps $\mathcal{T}$ and $\mathcal{G}$ are mutually inverse bijections between move-equivalence classes of fully reduced hourglass plabic graphs and $4$-row rectangular fluctuating tableaux. Furthermore, this bijection satisfies $\trip_{\bullet}(G)=\prom_{\bullet}(\mathcal{T}(G))$ and consequently intertwines promotion of tableaux with rotation of hourglass plabic graphs.
\end{introthm}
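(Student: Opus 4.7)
The plan is to prove the theorem in three stages: well-definedness of the maps on the stated domains, mutual inverseness, and finally the trip--promotion identity (from which the intertwining of promotion with rotation is immediate, since rotation on graphs is generated by a single boundary shift and promotion is generated by a single $\promotion$-step).

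First, I would verify that $\mathcal{T}$ descends to move-equivalence classes by checking invariance under each generating local move on hourglass plabic graphs (contractions/uncontractions, square moves, and the hourglass-specific moves). Since $\mathcal{T}$ is extracted from combinatorial data of the graph (boundary data together with the trip tuple $\trip_\bullet$), this reduces to routine local case analysis at each move. For $\mathcal{G}$, I would induct on the size of the tableau to show that the growth rules produce a fully reduced hourglass plabic graph with the specified boundary type $\underline{c}$: each growth step appends a local configuration determined by the next tableau entry, and one must check that the resulting graph admits no reducing moves.

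Mutual inverseness splits into two parts. The identity $\mathcal{T} \circ \mathcal{G} = \id$ should follow essentially from the design of the growth rules: they are set up precisely so that the local piece added at each step is what $\mathcal{T}$ reads off at that step. The reverse direction $\mathcal{G} \circ \mathcal{T} \sim \id$ (on move-equivalence classes) is the main technical difficulty and the expected main obstacle. The plan is to induct on the number of boundary vertices, peel off the last column of $\mathcal{T}(G)$, identify via moves a corresponding local piece of $G$, contract it away to obtain a smaller fully reduced graph $G'$, and invoke the inductive hypothesis on $G'$. This requires a confluence-style argument to ensure any fully reduced $G$ can be brought into the canonical growth-rule form produced by $\mathcal{G}(\mathcal{T}(G))$.

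For the identity $\trip_\bullet(G) = \prom_\bullet(\mathcal{T}(G))$, I would leverage the novel crystal-theoretic technique underlying the growth rules, together with the result from \cite{fluctuating-paper} that $\prom_\bullet$ tracks promotion orbits on fluctuating tableaux. The key step is to show that a single rotation of $G$ corresponds under $\mathcal{T}$ to a single application of $\promotion$. This can be analyzed locally at the boundary: one tracks how the tuple of trip routes reindexes after a cyclic shift of the boundary labels, and matches this with the effect of $\promotion$ on the tableau entries (e.g.\ via the jeu de taquin realization of promotion, lifted to fluctuating tableaux). Once this single-step compatibility is in hand, iteration gives the full identity $\trip_\bullet(G) = \prom_\bullet(\mathcal{T}(G))$, and the intertwining of promotion with rotation follows.

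The main obstacles are twofold: (i) the confluence argument for $\mathcal{G} \circ \mathcal{T} \sim \id$, which requires enough structural control over fully reduced hourglass plabic graphs to recognize the canonical growth-rule representative up to moves; and (ii) the simultaneous matching of the \emph{full tuple} $\trip_\bullet$ with $\prom_\bullet$, rather than a single permutation. The latter likely demands a multi-level inductive or crystal-theoretic argument keeping track of how trip permutations at each color level interact with the corresponding promotion permutation, since the different levels of $\trip_\bullet$ are coupled by the hourglass structure in a nontrivial way.
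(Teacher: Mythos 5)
Your plan has the right ingredients in places, but it misses the structural result that actually carries the proof, and the step you flag as ``the main technical difficulty'' is left as an unfilled gap. The paper does not prove $\mathcal{G}\circ\mathcal{T}\sim\id$ by peeling off columns and running a confluence argument. Instead it first proves a rigidity theorem (\Cref{thm:hourglass-trips-determine-move-equivalence}): two fully reduced hourglass plabic graphs of the same type are move-equivalent if and only if $\trip_\bullet$ agrees. This is the analogue of Postnikov's characterization and is itself a substantial result (proved via reducedness of the underlying plabic graph, connectivity of matching diagrams under Yang--Baxter moves, and a reconstruction lemma). Granting it, $\mathcal{G}\circ\mathcal{T}\sim\id$ is immediate: one shows $\trip_\bullet(G)=\prom_\bullet(\mathcal{T}(G))$ and $\trip_\bullet(\mathcal{G}(T))=\prom_\bullet(T)$ (\Cref{thm:growth_algorithm}), so $G$ and $\mathcal{G}(\mathcal{T}(G))$ share the same trip tuple and hence the same move class. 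Without identifying this rigidity statement, your confluence plan is a restatement of the problem rather than a route to its solution. Note also that the logical order is inverted in your proposal: bijectivity is \emph{derived from} the identity $\trip_\bullet=\prom_\bullet$, not established first and supplemented by it.

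Two further points. First, $\mathcal{T}\circ\mathcal{G}=\id$ does not ``follow essentially from the design of the growth rules'' by a local comparison: the separation labeling read by $\mathcal{T}$ is defined by global counts of separating trip strands relative to the base face, and the paper never matches it edge-by-edge against the growth labeling. The actual argument is again global: both the tableau and the boundary separation word are determined by the antiexcedance sets of $\prom_\bullet$, resp.\ $\trip_\bullet$ (\Cref{prop:sep-boundary-label-from-trips} and \cite[Thm.~6.12]{fluctuating-paper}), so $\trip_\bullet(\mathcal{G}(T))=\prom_\bullet(T)$ forces $\mathcal{T}(\mathcal{G}(T))=T$. Second, for $\trip_\bullet(G)=\prom_\bullet(\mathcal{T}(G))$ your ``iterate the single-step compatibility'' idea is close to the paper's (one needs $\mathcal{T}(\rot^i(G))=\promotion^i(\mathcal{T}(G))$ for all $i$ together with the fact that a permutation is recovered from the antiexcedances of all its cyclic conjugates), but the single-step statement \Cref{thm:sep-label-promotion-equivariant} is not a boundary-local check: it requires the first-balance-point characterization of promotion and a careful contour decomposition of $G$ along the trip strands out of $b_1$, using monotonicity of fully reduced graphs throughout.
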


\begin{figure}[hbtp]
    \centering
    \begin{tikzpicture}
        \node[inner sep=0.5cm] (G) at (0,0)
          {\includegraphics[scale=1.1]{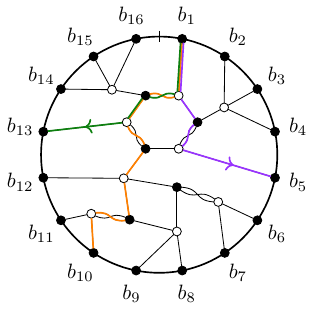}};
        \node[inner sep=0.5cm] (T) at (7,0)
          {\begin{ytableau}
          1 & 2 & 6 & 8 \\
          3 & 5 & 7 & 14 \\
          4 & 9 & 10 & 15 \\
          11 & 12 & 13 & 16
          \end{ytableau}};
        \begin{scope}[transform canvas={yshift=0.7em}]
          \draw[|->] (G.east) to[out=30, in=150] node[midway, above] {$\mathcal{T}$} (T.west);
        \end{scope}
        \begin{scope}[transform canvas={yshift=-0.7em}]
          \draw[|->] (T.west) to[out=210, in=330] node[midway, below] {$\mathcal{G}$} (G.east);
        \end{scope}
    \end{tikzpicture}

    \caption{A top fully reduced hourglass plabic graph $G$ and its corresponding $4$-row rectangular standard tableau $\mathcal{T}(G)$. The purple ($\textcolor{amethyst}{\blacksquare}$) $\trip_1$-, orange ($\textcolor{amber}{\blacksquare}$) $\trip_2$-, and green ($\textcolor{dark}{\blacksquare}$) $\trip_3$-strands are drawn, showing that $\trip_i(G)(1)=5, 10,$ and $13$ for $i=1,2,$ and $3$, respectively.}
    \label{fig:intro-ex}
\end{figure}

\noindent Associated to each tableau $T$ is a \emph{lattice word}, which can be thought of as a \emph{highest weight} element in a Kashiwara crystal of words \cite{Bump-Schilling, Kashiwara-Crystals}. In this setting, promotion is identified with the action of a \emph{crystal commutor} \cite{henriques-kamnitzer, lenart, Pfannerer-Rubey-Westbury}. Our growth rules are local substitutions in lattice words induced by crystal isomorphisms, together with diagrammatic building blocks of webs that we call \emph{plumbings}. The growth rules therefore interact well with crystal commutors. The trip permutations of the plumbings precisely account for the local change in promotion permutations caused by a growth rule, guaranteeing that $\prom_{\bullet}(T)=\trip_{\bullet}(\mathcal{G}(T))$. We apply the growth rules to deduce that the change of basis between $\mathscr{B}_q^{\underline{c}}$ from \Cref{thm:A} and the monomial basis is unitriangular over $\mathbb{Z}[q, q^{-1}]$ (see \Cref{thm:unitriangular}).

Our framework unifies the $\SL_r$-web bases for $r \leq 4$, as well as the ``$2$-column'' case in arbitrary rank (see \Cref{sec:two_column} and \cite{two-column}). It is also remarkably rich from a purely combinatorial perspective. For example, two extreme move-equivalence classes for $r=4$ are naturally identified with the lattices of \emph{alternating sign matrices} and \emph{plane partitions} in a box, respectively (see \Cref{sec:cc-asm-pp}). 

\subsection{Hourglass plabic graphs}\label{sec:thmB}

Previous efforts to generalize Kuperberg's work have largely focused on the representation theory of quantum groups (see e.g.~\cite{Kim,Morrison,Hagemeyer}). We instead take a combinatorial approach, which we develop throughout \Cref{sec:hourglass-and-six-vertex,sec:forwards-map,sec:backwards-map}; in \Cref{sec:bijection-and-basis} this combinatorial theory straightforwardly yields our rotation-invariant web basis of $\Inv_{U_q(\fsl_4)}(\bigwedge\nolimits_q^{\underline{c}} V_q)$ from \Cref{thm:A}. We now describe the combinatorial ingredients of \Cref{thm:B} before returning to discussion of web invariants.

\emph{Plabic graphs} were introduced by Postnikov to study the totally nonnegative Grassmannian \cite{Postnikov-arxiv, Postnikov-ICM}. These are planar graphs embedded in a disk with internal vertices colored black or white. The \emph{trip permutation} of a plabic graph is the bijection obtained by starting at a boundary vertex and traveling through the graph, taking a left at white vertices and a right at black vertices, until another boundary vertex is reached. Postnikov showed that \emph{reduced} plabic graphs are classified up to \emph{move-equivalence} by their trip permutation.

Building on connections between webs and plabic graphs due to Fraser--Lam--Le \cite{Fraser-Lam-Le}, Hopkins--Rubey \cite{Hopkins-Rubey,Hopkins:talk} recently observed that Kuperberg's non-elliptic $\SL_3$ basis webs can be interpreted as reduced bipartite plabic graphs.
Khovanov--Kuperberg \cite{Khovanov-Kuperberg} introduced a bijection between $\SL_3$ basis webs and \emph{sign and state strings}, which are well-known to correspond to $3$-row rectangular tableaux (see, e.g.,~\cite{Petersen-Pylyavskyy-Rhoades,Patrias}). Petersen--Pylyavskyy--Rhoades \cite{Petersen-Pylyavskyy-Rhoades} showed that this bijection intertwines rotation with tableau promotion in the standard case; Patrias \cite{Patrias} extended this to the \emph{(generalized) oscillating} case. The recent work of Hopkins--Rubey \cite{Hopkins-Rubey,Hopkins:talk} 
encodes the structure of tableau promotion in a \emph{promotion permutation} and shows that the Khovanov--Kuperberg bijection carries this permutation to the trip permutation of an $\SL_3$ basis web. 

Motivated by results such as \cite{Rhoades}, combinatorialists have long desired such a planar model encoding promotion of general rectangular tableaux as rotation. 
We observe that a consequence of the Hopkins--Rubey result is that the $\SL_3$ web basis (and moreover its bijection to $3$-row rectangular standard tableaux) is uniquely characterized among reduced plabic graphs by this ``$\trip=\prom$'' property. Our key guiding principle is that this property should extend to higher rank web bases and other classes of tableaux.

In \cite{fluctuating-paper}, we
introduce fluctuating tableaux as a simultaneous generalization of standard, dual-semistandard, and the (generalized) oscillating tableaux of Patrias~\cite{Patrias}. These are the natural combinatorial objects whose number is the dimension of the invariant space of interest. For rectangular tableaux with $r>3$ rows, one sees that the promotion permutation determines only the first and last row of the tableau; hence, it is natural to consider analogues of the promotion permutation encoding other rows of the tableau. We do this by associating to an $r$-row rectangular fluctuating tableaux an entire sequence of \emph{promotion permutations} $\prom_\bullet(T) = (\prom_1(T), \ldots, \prom_{r-1}(T))$. 

In light of this extension, we desire a graphical model for $\SL_r$ webs that admits a sequence of \emph{trip permutations} $\trip_\bullet(G) = (\trip_1(G), \ldots, \trip_{r-1}(G))$. For this reason we introduce \emph{hourglass plabic graphs}, as illustrated in \Cref{fig:intro-ex}. These are $r$-valent properly bicolored graphs embedded in a disk. A special feature are the \emph{hourglass} edges, which are multiple edges twisted so that the clockwise orders of their strands around the two incident vertices are the same. The $i$-th trip permutation $\trip_i$ is obtained by taking the $i$-th left at white vertices and the $i$-th right at black vertices. Here, the twists of the hourglass edges are essential to yield the desired behavior of $\trip_\bullet(G)$. Indeed, one notices their necessity already in the smallest examples if one wants that, for each $4$-row rectangular tableau $T$, there is a $4$-valent hourglass plabic graph $G$ such that $\prom_\bullet(T) = \trip_\bullet(G)$. Remarkably, the same statement holds for all other known rotation-invariant $\SL_r$ web bases:  the $r < 4$ cases and Fraser's \cite{Fraser-2-column} $2$-column web basis for arbitrary $\SL_r$. (See \Cref{sec:two_column} for further discussion.) In future work, we hope to extend our approach to all $r$.

It remains for us to determine which subset of hourglass plabic graphs is naturally in bijection with tableaux via $\trip_{\bullet}=\prom_{\bullet}$. Hourglass plabic graphs $G$ for $r = 4$ admit a notion of \emph{full reducedness}, defined by forbidding certain faces in graphs equivalent to $G$ under \emph{moves}, most notably \textit{square moves} and \textit{benzene moves} (see \Cref{fig:main-hourglass-moves}). Full reducedness is in many ways analogous to the well-studied notion of reducedness on plabic graphs, while moves on hourglass plabic graphs lift moves on plabic graphs. In particular, \Cref{thm:B} entails that two fully reduced hourglass plabic graphs are move-equivalent if and only if their trip permutations coincide, mirroring Postnikov's celebrated characterization of move-equivalence for reduced plabic graphs \cite[Thm.~13.4]{Postnikov-arxiv}. We moreover establish that full reducedness can be characterized by forbidding certain crossings of trip strands, just as Postnikov demonstrated for reduced plabic graphs \cite[Thm.~13.2]{Postnikov-arxiv}. This characterization is critical to many of our arguments and gives another explanation for the appearance of hourglass edges, since a bipartite plabic graph with parallel edges fails to be reduced.  

The definition of full reducedness naturally extends to the $r <4$ case, forbidding certain uniformly defined face configurations. This definition directly recovers the Temperley--Lieb noncrossing condition when $r=2$ and Kuperberg's non-elliptic condition when $r=3$, giving a uniform description of basis webs in all three settings. 

\begin{figure}[hbtp]
    \centering
    \includegraphics[scale=1.1]{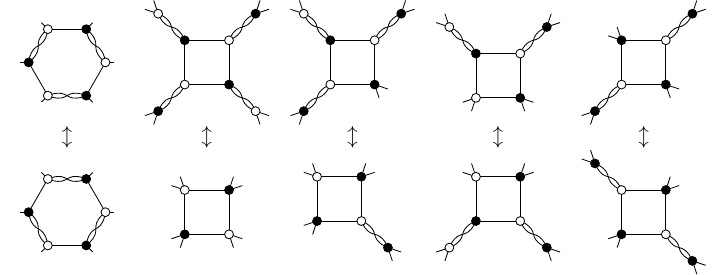}
    \caption{A benzene move (leftmost) and the square moves for hourglass plabic graphs. The color reversals of these moves are also allowed.}
    \label{fig:main-hourglass-moves}
\end{figure}

When $r=4$, we may transform hourglass plabic graphs into directed graphs that we call \emph{symmetrized six-vertex configurations} (see \Cref{sec:six-vertex-correspondence}) and we move back and forth between these realizations as convenient. These configurations appeared independently in Hagemeyer's thesis \cite{Hagemeyer}. In contrast with the hourglass plabic graph setting, where $\trip_1$ (and $\trip_3$) is easiest to understand, the symmetrized six-vertex configurations make $\trip_2$ particularly transparent. Symmetrized six-vertex configurations are closely related to the usual six-vertex (\emph{square ice}) model of statistical physics (see e.g.\ \cite{BaxterBook}) and we thereby obtain intriguing connections to the combinatorics of \emph{plane partitions} and \emph{alternating sign matrices}; see \Cref{sec:asm,sec:plane-partitions}.

Our proof of \Cref{thm:B} involves constructing (\Cref{sec:backwards-map}) the map $\mathcal{T}$ from fully reduced hourglass plabic graphs $G$ to fluctuating tableaux. This map involves giving an edge-labeling of $G$ that is obtained from information about the crossings of $\trip_\bullet$-strands. Similar labelings for $\SL_3$ appear in \cite{Khovanov-Kuperberg,LACIM}. In the reverse direction, we give in \Cref{sec:forwards-map} a \emph{growth algorithm} $\mathcal{G}$, analogous to that of \cite{Khovanov-Kuperberg} for $\SL_3$, that produces an $\SL_4$-basis web given the labeling of the boundary edges. The growth algorithm is the most combinatorially intricate part of our work. The proof of correctness relies critically on a novel Kashiwara crystal-theoretic technique that we introduce. 

\subsection{Bases of tensor invariants}\label{sec:intro_bases}
Consider $\mathsf{G} = \SL_r(\mathbb{C})$  with its defining representation $V = \mathbb{C}^r$, its fundamental representations $V^{\omega_k} = \bigwedge^k V$, and their duals, which we write as $\bigwedge^{-k} V \coloneqq (\bigwedge^k V)^*$.
Given any tensor product 
\begin{equation}\label{eq:tensor}
    \bigwedge\nolimits^{\underline{c}} V \coloneqq \bigwedge\nolimits^{c_1} V \otimes \cdots \otimes \bigwedge\nolimits^{c_n} V
\end{equation}
of such representations, the space of linear functionals on $\bigwedge\nolimits^{\underline{c}} V$ also carries a $\mathsf{G}$-action. It is a classical problem of invariant theory to describe the subspace $\Inv(\bigwedge\nolimits^{\underline{c}} V) \coloneqq \Hom_\mathsf{G}(\bigwedge\nolimits^{\underline{c}} V, \mathbb{C})$ of invariant multilinear forms. It is typical to consider only tensor products of fundamental representations as in \eqref{eq:tensor}, since the category of all finite-dimensional representations may be recovered from these through the \emph{Karoubi envelope} construction (see e.g.~\cite[\S3.3.1]{Morrison}, \cite[\S3.1]{Cautis-Kamnitzer-Morrison}).  

Replacing $\mathsf{G}$ with the corresponding quantum group $U_q(\fsl_r)$, we obtain the space $\Inv_{U_q(\fsl_r)}(\bigwedge\nolimits_q^{\underline{c}} V_q)$ of quantum invariants, deforming $\Inv(\bigwedge\nolimits^{\underline{c}} V)$. Such quantum invariants and the associated representation theory of quantum groups give rise to many of the most powerful invariants of knots, links, and tangles (see, for example, \cite{Jones,Kauffman,Turaev,Khovanov:Jones,Khovanov:sl3,Morrison.Nieh}).

For many purposes, one needs to have explicit bases for $\Inv(\bigwedge\nolimits^{\underline{c}} V)$ or its quantum deformation. Likely the first such basis to be discovered was the \emph{standard monomial basis} constructed by Hodge \cite{Hodge} and greatly developed by Seshadri and collaborators (e.g., \cite{Seshadri:1,Seshadri:2,Seshadri:3,Seshadri:4}). While relatively elementary and computable, the standard monomial basis does not enjoy many of the other properties one desires in a basis. As a result, many further authors have developed a panoply of less tractable bases with various assortments of desirable properties. We have, for example, the \emph{dual canonical basis}, developed independently by Lusztig \cite{Lusztig:canonical} and Kashiwara \cite{Kashiwara:duke_crystals}, as well as the \emph{dual semicanonical basis} of Lusztig \cite{Lusztig:semicanonical_1, Lusztig:semicanonical_2} and the \emph{geometric Satake basis} (see e.g.\ \cite{Zhu}). More recently, the theory of cluster algebras has engendered further bases, such as the \emph{theta basis} of Gross, Hacking, Keel, and Kontsevich \cite{Gross.Hacking.Keel.Kontsevich}, in the cases (see \cite{Fomin-Pylyavskyy-advances}) where $\Inv(\bigwedge\nolimits^{\underline{c}} V)$ is known to carry a cluster structure. For a recent survey of cluster algebra bases and their relations, see \cite{Qin:survey}.

There is a natural cyclic shift isomorphism 
\[
 \shuffle : \Inv\left(\bigwedge\nolimits^{(c_1, \dots, c_n)} V\right) \xrightarrow{\sim} \Inv\left(\bigwedge\nolimits^{(c_2, \ldots, c_n, c_1)} V\right)
\] 
induced by the natural isomorphism with $\Hom_\mathsf{G}(\bigwedge\nolimits^{(c_2, \dots, c_n)} V, (\bigwedge\nolimits^{c_1} V)^*)$.
Given a choice of basis as above, one might hope that this isomorphism would preserve the basis, mapping basis elements to scalar multiples of basis elements. In such a case, we say the basis is \emph{$\shuffle$-invariant}. Indeed, instances of $\shuffle$-invariance for the dual canonical, geometric Satake, and theta bases have been used to great effect in \cite{Rhoades}, \cite{Fontaine.Kamnitzer}, and \cite{Shen.Weng}, respectively. Unfortunately, these three bases are notoriously difficult for computation. It is much easier to compute with the standard monomial basis; however, the standard monomial basis is far from $\shuffle$-invariant even in very simple cases and is not well-suited to analysis of quantum link invariants.

\subsection{Web bases}
\emph{Web bases} are diagrammatic bases for $\Inv(\bigwedge\nolimits^{\underline{c}} V)$ or its quantum deformation that seek to surmount these challenges. Each basis element $[W]$ is encoded by a planar bipartite graph $W$ embedded in a disk, so that important algebraic operations are realized as simple graph-theoretic manipulations.

When $\mathsf{G} = \SL_2$, the Temperley--Lieb web basis of non-crossing matchings has long been known (see \cite{Temperley.Lieb,Kung-Rota,Kauffman.Lins}). When $\mathsf{G} = \SL_3$, Kuperberg \cite{Kuperberg} introduced a remarkable web basis involving a non-elliptic condition on trivalent graphs and gave similar constructions for the other simple Lie groups of rank $2$. The non-crossing and non-elliptic conditions are clearly rotation-invariant, so the web bases for $U_q(\fsl_2)$ and $U_q(\fsl_3)$ are $\shuffle$-invariant. While Kuperberg's $\SL_3$ web basis was primarily introduced as a tool for efficient computation of quantum link invariants, it has since found application in areas as diverse as the geometric Satake correspondence \cite[Thm.~1.4]{Fontaine-Kamnitzer-Kuperberg}, cluster algebras \cite{Fomin-Pylyavskyy-advances}, and cyclic sieving \cite{Petersen-Pylyavskyy-Rhoades}. The transition matrices to the bases discussed in \Cref{sec:intro_bases} have also been of great recent interest (see \cite{Frenkel.Khovanov,Khovanov-Kuperberg,Russell.Tymoczko:2row,Rhoades:polytabloid,Im.Zhu,Tymoczko-Russell-unitriangular,Hwang.Jang.Oh}).

Since Kuperberg's introduction of rank $2$ web bases, a primary focus has been his statement:
\begin{quote}
    \small
    ``The main open problem related to the combinatorial rank 2 spiders is how to generalize them to higher rank.'' --\cite[p.~146]{Kuperberg}
\end{quote}
Kim \cite{Kim} conjectured web generators and relations for the case of $U_q(\fsl_4)$ and Morrison \cite{Morrison} extended Kim's conjecture to $U_q(\fsl_r)$. Cautis--Kamnitzer--Morrison \cite{Cautis-Kamnitzer-Morrison} later proved these conjectures (with a different proof subsequently given in \cite{Fraser-Lam-Le}). While the diagrammatic relations have thus been determined, generalizing Kuperberg's web basis in a rotation-invariant way has proved difficult. 

Several web bases for simple Lie groups of higher rank have appeared in the literature, though none is rotation-invariant. In \cite{Westbury}, for $\mathsf{G} = \SL_r$ with representations restricted to $c_i \in \{1, r-1\}$, Westbury gave a recipe to construct a generator with each possible leading term, thereby obtaining a basis of web diagrams. Unfortunately, this basis is not rotation-invariant.\footnote{Some brief remarks in \cite[p.~94]{Westbury} suggest that Westbury's basis is rotation-invariant. This is not correct. The smallest example of non-invariance occurs for $r=4$ and $\underline{c} = (1,1)$. We thank Bruce Westbury for very helpful correspondence on this point. See also \cite[p.~6121]{Fraser-Lam-Le} for related discussion.} Westbury's basis was extended to products of arbitrary fundamental representations by Fontaine \cite{Fontaine}. Fontaine's basis also is not rotation-invariant; moreover, in general, it involves making arbitrary choices, so is not well-defined in all cases. Elias \cite{Elias} independently obtained what appear to be the quantum group deformations of these bases; again, his bases involve arbitrary choices and are not rotation-invariant. A further obstacle to the use of these non-rotation-invariant bases is that there is no known topological or combinatorial property that enables determining whether a given web is a basis element, let alone an efficient method for expressing invariants in the basis. More recently, in his thesis, Hagemeyer gave another family of web bases for $U_q(\fsl_4)$ \cite[Thm.~1.8]{Hagemeyer}, which, like our basis, are related to the symmetrized six-vertex model. However, he required arbitrary choices and thus did not obtain a rotation-invariant basis.

Previous work has typically focused on trivalent graph models of representation categories of classical or quantum groups (see e.g.~\cite{MOY,Khovanov-Kuperberg,Kim,Morrison,Cautis-Kamnitzer-Morrison}). Intuitively, the trivalent vertices encode multiplication or comultiplication maps in the exterior algebra of $V$, and webs are formed by composing such maps (along with evaluation and coevaluation morphisms). See \cite{Selinger} for an overview of the graphical calculus of such \emph{pivotal categories}. Our hourglass plabic graphs build instead on \cite{Fraser-Lam-Le} by using an $r$-valent model. (Such $r$-valent models also appear in the topology literature \cite{Sikora,Le.Sikora}; for the connection of that work with \cite{Cautis-Kamnitzer-Morrison}, see \cite{Poudel}.) This $r$-valent approach is better suited to rotation invariance. For example, under our approach the determinant map corresponds to $r$ simple edges around a single internal vertex, rather than a full binary tree, as in a trivalent model. See \Cref{sec:prelim} for details.

Our main algebraic result, \Cref{thm:A}, gives the first rotation-invariant web basis for $\SL_4$, and indeed for $U_q(\fsl_4)$. This is the first construction of a rotation-invariant web basis for any simple Lie group of rank greater than $2$ (although some apparent web bases for algebras of block-triangular matrices appear in \cite{Patrias-Pechenik-Striker}).

The family of basis webs of \Cref{thm:A} for $r=4$ is not preserved under reflection, since the \emph{top} condition (see \Cref{def:top-fully-reduced}) requires that benzene rings are oriented as in the upper-left diagram of \Cref{fig:main-hourglass-moves}. Reflection interchanges the upper-left and lower-left benzene rings, so one may dually define a \emph{bottom} condition using the lower-left diagram, together with an attendant \emph{bottom basis}. In this way, while we focus in this paper on the top basis, we actually introduce two families of rotation-invariant bases of $\Inv(\bigwedge\nolimits_q^{\underline{c}} V_q)$, one for the top condition and one for the bottom condition. At $q=1$, these families are interchanged under reflection, up to reversing the type $\underline{c}$. In the standard case when $q=1$, reflection corresponds to the action of the long permutation $w_0$. By contrast, the sets of $r=2$ and $r=3$ basis webs are both rotation- and reflection-invariant (see \cite{Patrias-Pechenik} for discussion). From the behavior of our basis under the action of $w_0$, we suspect that our basis differs from all of the bases discussed in \Cref{sec:intro_bases}. We further suspect the combinatorial top and bottom conditions can be interpreted geometrically in terms of affine $A_3$ buildings (cf.\ \cite{Fontaine-Kamnitzer-Kuperberg}).

Our results for $\Inv(\bigwedge^{\underline{c}} V)$ can also be extended (see \Cref{thm:semistandard}) to give a web basis for spaces $\Inv(\Sym^{\underline{\mu}} V)$ of invariants in tensor products of symmetric powers of $V$. These spaces connect a range of important objects in algebra and geometry. For example, consider a Grassmannian $\mathrm{Gr}_r(\mathbb{C}^n)$ with respect to its Pl\"ucker embedding into projective space. We may recover multi-homogeneous parts of the homogeneous coordinate ring of $\mathrm{Gr}_r(\mathbb{C}^n)$ as $\Inv(\Sym^{\underline{\mu}} V)$ for appropriate choices of $\underline{\mu}$. Moreover, certain multi-homogeneous parts of this coordinate ring yield concrete constructions of Schur and Specht modules, irreducible representations of $\SL_n(\mathbb{C})$ and the symmetric group $\mathfrak{S}_n$, respectively. The homogeneous coordinate ring of $\mathrm{Gr}_r(\mathbb{C}^n)$ also carries an important cluster algebra structure \cite{Fomin.Zelevinsky:cluster2}, conjecturally related to web bases, as do other rings of invariant polynomials \cite{Fomin-Pylyavskyy-advances}.
 
\subsection{Outline}
\Cref{sec:prelim} gives preliminaries on webs and their associated quantum invariants and on plabic graphs. We use a less standard presentation of webs---due essentially to \cite{Fraser-Lam-Le} in the $q=1$ case---in which trivalence is not assumed, so even experts may wish to consult this section. 

In \Cref{sec:hourglass-and-six-vertex}, we develop the theory of hourglass plabic graphs and full reducedness. This is applied in \Cref{sec:backwards-map} to define and study the map $\mathcal{T}$ of \Cref{thm:B}. The growth rules defining the inverse map $\mathcal{G}$ are developed in \Cref{sec:forwards-map}; the growth rules also allow for the leading terms of basis elements to be read off. In \Cref{sec:bijection-and-basis}, these results are combined to obtain the bijection of \Cref{thm:B} and subsequently the web basis $\mathscr{B}_q^{\underline{c}}$ of \Cref{thm:A}. 

In \Cref{sec:reduction-rules}, we show how arbitrary web invariants may be expressed in the basis $\mathscr{B}_q^{\underline{c}}$. These skein relations are of considerable interest in many applications of webs, particularly in relation to quantum link invariants.

\Cref{sec:cc-asm-pp} contains several combinatorial applications of our work to cyclic sieving, alternating sign matrices, and plane partitions.

Finally, in \Cref{sec:two_column}, we discuss how the previously-known rotation-invariant web bases fit into our unified framework of fully reduced hourglass plabic graphs with $\trip_{\bullet}=\prom_{\bullet}$ and how our results allow for the extension of these bases to the semistandard setting.

An extended abstract describing part of this work appears in the proceedings of FPSAC 2023 \cite{fpsac-abstract}.

\section{Preliminaries}
\label{sec:prelim}

Let $[r] \coloneqq \{1, 2, \dots, r\}$, $\overline{[r]} \coloneqq \{-1, -2, \dots, -r\}$, and $\pm[r]\coloneqq \{\pm1,\pm2,\ldots,\pm r\}$. We define $\mathcal{A}_r$ as the collection of subsets of~$\pm [r]$ whose elements are all of the same sign. We often write $\overline{i}$ for $-i$.  

\subsection{Webs and tensor invariants}\label{sec:webs_and_tensors}
The quantum group $U_q(\fsl_r)$ is a $\mathbb{C}(q)$-algebra that arose from the theory of quantum integrable systems; see \cite{Lusztig-Quantum-Groups} for a definition by generators and relations.  The quantum group $U_q(\fsl_r)$ deforms the universal enveloping algebra $U(\fsl_r)$, so the classical theory of $\fsl_r$ is recovered at $q=1$. For all of the objects introduced below which are subscripted by $q$, we make the convention that the subscript is omitted when specializing $q=1$.

Explicit diagrammatic generators and relations for the category of finite-dimensional $U_q(\fsl_r)$-modules were obtained in \cite[\S2, \S3]{Cautis-Kamnitzer-Morrison} (see also \cite{Kim, Morrison}), where the generators are drawn as certain trivalent graphs. For our purposes, it is important to use instead an essentially equivalent formulation in terms of \emph{tagged} $r$-valent graphs, due essentially to \cite{Fraser-Lam-Le}.

We are primarily interested in the following $U_q(\fsl_r)$-modules and their tensor products. 
\begin{itemize}
    \item The standard $U_q(\fsl_r)$-module $V_q$ (deforming the defining representation of $\fsl_r$) has standard $\mathbb{C}(q)$-basis $v_1, \ldots, v_r$.
    \item The \emph{quantum exterior algebra} $\bigwedge_q^\bullet V_q$ (see \cite{Berenstein-Zwicknagl,Cautis-Kamnitzer-Morrison})  is a $U_q(\fsl_r)$-module (deforming the classical exterior algebra). On the generators $v_i$, the quantum exterior product satisfies the $q$-commutation relations 
    \[v_i \wedge_q v_j =
      \begin{cases}
          (-q)\  v_j \wedge_q v_i  &\text{if }i < j, \\
          0 &\text{if }i = j.
      \end{cases}
      \]
   The quantum exterior power $\bigwedge\nolimits_q^c V_q$ has $\mathbb{C}(q)$-basis $\{v_I : I \subseteq [r], |I| = c\}$, where we write $v_I \coloneqq v_{i_1} \wedge_q \cdots \wedge_q v_{i_c}$ for $I = \{i_1 > \cdots > i_c\}$, following~\cite[p.363]{Cautis-Kamnitzer-Morrison}.
    \item The linear dual $(\bigwedge\nolimits_q^c V_q)^*$ is a $U_q(\fsl_r)$-module with dual basis $\{ v_I^* \}$.
\end{itemize}

As a shorthand, we write $\bigwedge\nolimits^{-c}_q V_q \coloneqq (\bigwedge\nolimits_q^c V_q)^*$.
 For $I \subseteq [r]$, we write $\overline{I} \in \mathcal{A}_r$ for the corresponding subset of negative numbers and set $v_{\overline{I}} \coloneqq v_I^*$. 

\begin{definition}\label{def:wedge_c}
Given a \emph{type} $\underline{c} = (c_1, \ldots, c_n)$ where each $c_j \in \pm [r]$, let
  \[ \bigwedge\nolimits_q^{\underline{c}} V_q \coloneqq \bigwedge\nolimits_q^{c_1} V_q \otimes \cdots \otimes \bigwedge\nolimits_q^{c_n} V_q. \]
\end{definition}

In this paper, we limit attention to the  $U_q(\fsl_r)$-modules described in \Cref{def:wedge_c}. As the \emph{Karoubi envelope} of the category of such modules recovers the entire category of finite-dimensional representations, this is not a major restriction.

The natural product map $\bigwedge\nolimits_q^{c_1} V_q \otimes \bigwedge\nolimits_q^{c_2} V_q \to \bigwedge\nolimits_q^{c_1+c_2} V_q$ given by $u \otimes v \mapsto u \wedge_q v$ is $U_q(\fsl_r)$-equivariant \cite[\S3.1]{Cautis-Kamnitzer-Morrison}. It may be described explicitly by $v_I \wedge_q v_J \mapsto (-q)^{\ell(I, J)} v_{I \cup J}$ when $I \cap J = \varnothing$ and $\ell(I, J) = |\{(i, j) \in I \times J : i < j\}|$. The more general $n$-ary product map is associative.

Elements of $\mathcal{H}=\Hom_{U_q(\fsl_r)}\left(\bigwedge\nolimits_q^{\underline{c}} V_q, \bigwedge\nolimits_q^{\underline{d}} V_q\right)$ may be obtained by composition and tensoring from the natural product map as well as natural $U_q(\fsl_r)$-module maps arising from coproducts, duals, evaluation, coevaluation, and the identity (plus the \emph{pivotal isomorphism} between a representation and its double dual). Each such morphism has explicit transition coefficients\label{pg:transition_coeff} which are all integral powers of~$q$. Webs as in \cite{Morrison,Cautis-Kamnitzer-Morrison} are planar graphs between two horizontal lines which are locally given by the building blocks in \Cref{fig:CKM-building-blocks} and which represent elements of $\mathcal{H}$; we will call these \emph{CKM-style webs}. The  morphism corresponding to a CKM-style web is unchanged under planar isotopies fixing the boundary lines. The domain and codomain types $\underline{c}$ and $\underline{d}$ may be read off from edges incident to the upper and lower horizontal lines, respectively, by reading edge multiplicities from left to right, where upward oriented edges pick up a negative and correspond to dual exterior powers.

\begin{figure}[H]
    \centering
    \includegraphics[width=0.8\linewidth]{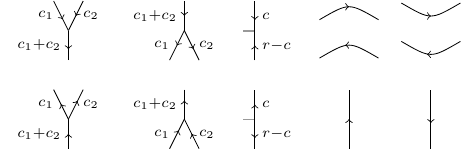}
    \caption{Building blocks of CKM-style webs for $U_q(\fsl_r)$ corresponding to products, coproducts, duals, etc. Multiplicities are in $[r]$, and multiplicities on the rightmost diagrams are omitted. Upward arrows indicate duals.}
    \label{fig:CKM-building-blocks}
\end{figure}

  The middle two diagrams in \Cref{fig:CKM-building-blocks} include a short, undirected edge called a \emph{tag}, introduced in \cite{Morrison}. These middle diagrams may be thought of as degenerate cases of the trivalent vertex diagrams where $c_1+c_2=r$. When $c=r$, the middle diagrams correspond to the $U_q(\fsl_r)$-isomorphism $\bigwedge\nolimits_q^r V_q \cong \mathbb{C}(q)$ and its dual \cite[\S3.2]{Cautis-Kamnitzer-Morrison}. Unlike other edges, tags end as leaves in the middle of the diagram.
  
  As we will see, tags may be moved across an edge of multiplicity $c$ at the cost of multiplying the invariant by $(-1)^{c(r-c)}$. When $r$ is odd, this factor is always $1$, and tags may be omitted completely. Tags are hence a comparatively unimportant bookkeeping device and are frequently ignored or de-emphasized. Our web bases for $r=4$ will come with essentially canonical choices of tags, allowing us to ultimately omit them from our main result.

\begin{remark}
  The conventions in \cite{Morrison} and \cite{Cautis-Kamnitzer-Morrison} differ in their handling of certain negative signs, and other sources such as \cite{MOY} differ in their handling of powers of $q$ or other details. We follow the algebraic conventions of \cite{Cautis-Kamnitzer-Morrison} throughout. However, for consistency with standard numbering conventions on plabic graphs, we draw CKM-style webs with the domain on top rather than on bottom. That is, our CKM-style webs are obtained by vertically flipping the diagrams from \cite{Cautis-Kamnitzer-Morrison}.
\end{remark}

We now turn to webs as in \cite[\S6]{Fraser-Lam-Le} (where they are referred to as \emph{$r$-weblike subgraphs}). While \cite{Fraser-Lam-Le} explicitly consider only the case $q=1$, they note that their diagrammatic construction nonetheless applies over $U_q(\fsl_r)$ using the framework of \cite{Cautis-Kamnitzer-Morrison}. For our purposes, it suffices to consider only the \emph{(tensor) invariants}
  \[ [W]_q \in \Inv_{U_q(\fsl_r)}\left(\bigwedge\nolimits_q^{\underline{c}} V_q \right) \coloneqq \Hom_{U_q(\fsl_r)}\left(\bigwedge\nolimits_q^{\underline{c}} V_q, \mathbb{C}(q)\right), \]
for $W$ a CKM-style web without vertices on the lower horizontal boundary; we draw such webs by closing the upper horizontal boundary to a circle. See \Cref{ex:CKM-FLL-example}.

\begin{definition}
\label{def:U-q-web}
A \emph{$U_q(\fsl_r)$-web} (or just \emph{web}) is a planar graph $W$ embedded in the disk such that
\begin{itemize}
    \item $W$ is properly bicolored, with black and white vertices;
    \item all vertices on the boundary circle have degree $1$;
    \item boundary vertices are labeled $b_1, b_2, \ldots, b_n$ in clockwise order;
    \item all internal vertices have a special ``tag'' edge, which immediately terminates in the interior and not at a vertex;
    \item non-tag edges have multiplicities from $[r]$;
    \item all vertices in the interior of the disk have incident edge multiplicities summing to $r$.
\end{itemize}
These webs are defined up to planar isotopy fixing the boundary circle.
\end{definition}

Given a $U_q(\fsl_r)$-web $W$, the corresponding tensor invariant $[W]_q$ is obtained by converting $W$ to a CKM-style web as in \Cref{fig:CKM-FLL-translations}. A boundary vertex with edge multiplicity $c$ corresponds to a domain factor $\bigwedge\nolimits_q^c V_q$ for a black vertex and $\bigwedge\nolimits_q^{-c} V_q$ for a white vertex. The internal vertices of the $U_q(\fsl_r)$-web correspond to composites \[
\bigwedge\nolimits_q^{c_1} V_q \otimes \cdots \otimes \bigwedge\nolimits_q^{c_k} V_q \to \bigwedge\nolimits_q^r V_q \to \mathbb{C}(q).
\]

\begin{figure}[ht]
    \centering
    \includegraphics[width=0.9\linewidth]{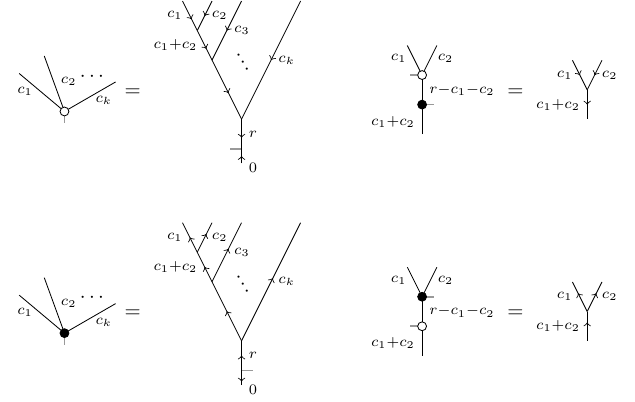}
    \caption{Translations between   $U_q(\fsl_r)$-webs and CKM-style webs. Since $0$-edges in CKM-style webs correspond to the unit object $\mathbb{C}(q)$, they are typically unwritten.}
    \label{fig:CKM-FLL-translations}
\end{figure}

\begin{example}\label{ex:CKM-FLL-example}
Consider the following CKM-style web and the corresponding $U_q(\fsl_r)$-web.
\begin{center}
    \includegraphics[scale=1.2]{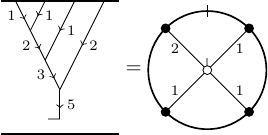}
\end{center}
They represent the same tensor invariant in $\Hom_{U_q(\fsl_5)}\left(V_q \otimes V_q \otimes V_q \otimes \bigwedge\nolimits_q^2 V_q, \mathbb{C}(q)\right)$, which is similar to a determinant. Boundary vertices in the right web are implicitly labeled $b_1, b_2, b_3, b_4$ like the face of a clock. The ``point at infinity'' separating vertex $b_1$ from vertex $b_4$ is marked on the boundary. Later in \Cref{ex:FLL-HPG-example}, we interpret this as a $U_q(\fsl_5)$-hourglass plabic graph.
\end{example}

As noted above, the positions of tags around an internal vertex only affect the sign of the corresponding tensor invariant. More precisely, we have the following.

\begin{lemma}\label{lem:tag-sign}
    The relations in \Cref{fig:tag-sign} hold, as do those obtained by reversing all arrows or switching all vertex colors.
\end{lemma}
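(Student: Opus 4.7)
The plan is to verify the displayed tag relations by unpacking each tagged building block as the explicit $U_q(\fsl_r)$-equivariant map it represents and then tracking signs coming from the quantum exterior algebra. Recall from \cite{Cautis-Kamnitzer-Morrison} that a black tagged vertex with incident edge of multiplicity $c$ is (up to the pivotal structure) the composite
\[
\bigwedge\nolimits_q^c V_q \otimes \bigwedge\nolimits_q^{r-c} V_q \xrightarrow{\;\wedge_q\;} \bigwedge\nolimits_q^r V_q \xrightarrow{\;\sim\;} \mathbb{C}(q),
\]
where the final isomorphism sends $v_{[r]} \mapsto 1$, and the side on which the tag sits records which tensor factor is the multiplicity-$c$ one. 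The white tagged vertex is the $\mathbb{C}(q)$-linear dual of this map, inserted via evaluation/coevaluation.

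First I would prove the prototypical ``tag flip'' relation, which I expect to be the hardest step: reversing the side of a tag on an edge of multiplicity $c$ scales the invariant by $(-1)^{c(r-c)}$. For this, I would apply both the original and flipped tagged vertex to a basis element $v_I \otimes v_J$ with $I \sqcup J = [r]$, $|I|=c$, $|J|=r-c$. Using the explicit formula $v_I \wedge_q v_J = (-q)^{\ell(I,J)} v_{[r]}$, the two compositions differ by the ratio $(-q)^{\ell(J,I) - \ell(I,J)}$. Since $\ell(I,J) + \ell(J,I) = c(r-c)$ and the other relation is an identity of morphisms (not dependent on the particular $I,J$), one shows that after dualizing on the appropriate side, the $q$-weighted contributions collapse: the pairing of $v_I^*$ with $v_I$ (and analogously for $J$) removes the $q$-dependent part of $(-q)^{\ell(I,J)}$, leaving exactly $(-1)^{\ell(I,J)+\ell(J,I)} = (-1)^{c(r-c)}$ as the global scalar. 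This recovers precisely the sign in the figure.

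With the tag-flip relation in hand, the remaining displayed relations reduce to compositions of tag flips together with basic identities: a tag that ``cancels'' with another on the same strand corresponds to composing the isomorphism $\bigwedge_q^r V_q \cong \mathbb{C}(q)$ with its dual, which is the identity; and a tag sliding through a trivalent vertex is computed by inserting the associativity of the product map $\bigwedge_q^{c_1} V_q \otimes \bigwedge_q^{c_2} V_q \otimes \bigwedge_q^{c_3} V_q \to \bigwedge_q^r V_q$ and reducing to the flip relation on the resulting pair of factors. In each case the identity is an equality between two $U_q(\fsl_r)$-module maps which need only be checked on standard basis vectors $v_I$.

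Finally, the ``arrow reversal'' and ``color switching'' versions of the relations follow from the structural symmetries of the diagrammatic calculus. Arrow reversal corresponds to replacing each module $\bigwedge_q^c V_q$ by its dual and taking the linear dual of every morphism; since taking duals is an involution that preserves compositions up to evaluation/coevaluation (using the pivotal isomorphism), the sign $(-1)^{c(r-c)} = (-1)^{(r-c)c}$ is unchanged. Color switching corresponds to swapping the roles of product and coproduct maps, which by the Hopf-algebra-style compatibility in \cite{Cautis-Kamnitzer-Morrison} produces the same scalar. Thus, once the tag-flip identity is established by direct computation, the full collection of relations follows by symmetry and composition, with no further case analysis required.
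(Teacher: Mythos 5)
Your route is genuinely different from the paper's: the paper disposes of this lemma by citing relations (2.3), (2.6), and (2.7) of Cautis--Kamnitzer--Morrison, whereas you attempt to re-derive the tag-flip sign directly from the quantum exterior algebra. The outline is reasonable, but the step you yourself flag as the hardest has a real gap. If you apply the two product maps determined by the two tag positions to a basis element $v_I \otimes v_J$ with $I \sqcup J = [r]$, their ratio is $(-q)^{\ell(J,I)-\ell(I,J)}$, which depends on the particular $I$ and $J$; it is not the constant $(-1)^{c(r-c)}$, and for $q \neq 1$ it is not even a sign. So the tag flip cannot be reduced to merely reordering the arguments of $\wedge_q$. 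The assertion that ``the pairing of $v_I^*$ with $v_I$ removes the $q$-dependent part'' does not repair this: that pairing is the scalar $1$ and supplies no compensating power of $q$. What actually makes the relation hold is that the evaluation/coevaluation morphisms used to assemble the tagged vertex carry explicit $I$-dependent powers of $q$ in their matrix coefficients, calibrated exactly so that the two composites differ by the constant $(-1)^{c(r-c)}$; since $\ell(J,I)-\ell(I,J) = c(r-c) - 2\ell(I,J)$, the duality data must contribute the factor $q^{2\ell(I,J)-c(r-c)}$ that you never account for. As written, the crucial cancellation is asserted rather than proved.

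The remaining reductions --- cancelling adjacent tags via the isomorphism $\bigwedge\nolimits_q^r V_q \cong \mathbb{C}(q)$ composed with its dual, sliding a tag through a vertex via associativity of the product, and the arrow-reversal and color-switch symmetries --- are plausible and essentially routine once the flip relation is secured, and they parallel what the cited relations (2.6)--(2.7) encode. To close the gap, either import the explicit coevaluation/evaluation coefficients from Cautis--Kamnitzer--Morrison and finish the computation on basis vectors, or simply cite their relation (2.3) for the flip, as the paper does.
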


\begin{proof}
    The first relation is \cite[(2.3)]{Cautis-Kamnitzer-Morrison}. The second is a consequence of \cite[(2.3), (2.6), and (2.7)]{Cautis-Kamnitzer-Morrison}.
\end{proof}

\begin{figure}[H]
    \centering
    \includegraphics[scale=1.2]{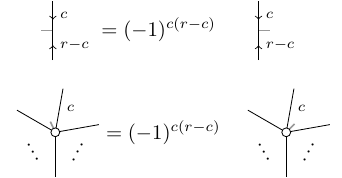}
    \caption{Tag sign relations for CKM-style webs and $U_q(\fsl_r)$-webs.}
    \label{fig:tag-sign}
\end{figure}

\subsection{Polynomial invariants and proper labelings}\label{sec:q.Laplace}

A $U_q(\fsl_r)$-web $W$ of type $\underline{c}$ may be considered as a $\mathbb{C}(q)$-multilinear map
  \[ \bigwedge\nolimits_q^{c_1} V_q \times \cdots \times \bigwedge\nolimits_q^{c_n} V_q \to \mathbb{C}(q). \]
Write elements in $\bigwedge\nolimits_q^c V_q$ as $\sum_{|S| = |c|} x_S v_S$ for scalars $x_S$ with $S\in \mathcal{A}_r$. Thus we may interpret the invariant $[W]_q$ as a polynomial
  \[ [W]_q \in \mathbb{C}(q)[x_{S, i}] \]
where for each $i$, $S$ ranges over cardinality $|c_i|$ elements of $\mathcal{A}_r$ of the same sign as $c_i$. The polynomial ring is naturally $\mathbb{Z}_{\geq 0}^n$-graded where $\deg(x_{S, i}) = \mathbf{e}_i$, and the invariants $[W]_q$ lie in the multilinear component $(1, \ldots, 1)$.

\begin{example}
  The web in \Cref{ex:CKM-FLL-example} corresponds to the polynomial
    \[ \sum_{a, b, c, \{d, e\}} (-q)^{\ell(a, b, c, \{d, e\})} x_{a, 1} x_{b, 2} x_{c, 3}, x_{\{d, e\}, 4} \]
  where the sum is over all partitions $\{a\} \sqcup \{b\} \sqcup \{c\} \sqcup \{d, e\} = \{1, \ldots, 5\}$ and the exponent $\ell$ is the number of non-inversions of the permutation $(a, b, c, d, e)$ if we choose $d > e$.
\end{example}

It is difficult to find a precise statement in the literature of a monomial expansion of general $U_q(\fsl_r)$-web invariants; see \cite{MOY}, \cite[(4.1)]{Fomin-Pylyavskyy-advances}, \cite[(5.5)]{Fraser-Lam-Le} for certain cases. We next give such a formula which is sufficient for our purposes.

\begin{definition}
\label{def:proper-labeling}
A \emph{labeling} $\phi$ of a $U_q(\fsl_r)$-web $W$ is an assignment of subsets of $[r]$ to each edge of $W$, where an edge of multiplicity $m$ is assigned a subset of size $m$. A labeling is \emph{proper} if the edges incident to each internal vertex partition the set $[r]$.

The \emph{boundary word} $\partial(\phi)$ of a labeling $\phi$ is the word $w = w_1 \cdots w_n$ in the alphabet $\mathcal{A}_r$ given by reading the labels of the edges adjacent to $b_1,b_2,\ldots,b_n$, where the sign is positive if $b_i$ is black and negative if $b_i$ is white.
\end{definition}

\begin{definition}
Given subsets $S_1, \ldots, S_m$ of $[r]$, their \emph{coinversion number} is
    \[ \ell(S_1, \ldots, S_m) = |\{(a, b) : a \in S_i, b \in S_j, a \leq b, \text{ and }i < j\}|. \]
Given a $U_q(\fsl_r)$-web $W$ with a labeling $\phi$, the coinversion number of an internal vertex $v$ is $\ell(S_1, \ldots, S_n)$ where $S_1, \ldots, S_n$ are the labels from $\phi$ around $v$ read in clockwise order starting from the tag on $v$. The coinversion number $\ell_W(\phi)$ is the sum of the coinversion numbers of all internal vertices.

Finally, suppose $w$ is a boundary word. Create a word $\hat{w}$ over $[r]$ by replacing each $\overline{S} \subseteq \overline{[r]}$ with $[r] \setminus S$ and then writing the elements of the subsets in $w$ in increasing order. The \emph{coinversion number} $\ell_W(w)$ of $w$ is $\ell_W(\hat{w})$, plus the sum of $\ell(S, [r] \setminus S)$ over all such replaced $S$.
\end{definition}

\begin{theorem}\label{thm:q.Laplace}
Let $W$ be a $U_q(\fsl_r)$-web. There are statistics $\wgt_W(\phi) \in \mathbb{Z}$ on proper labelings $\phi$ of~$W$ and $\sgn_W(w) \in \{\pm 1\}$ on boundary words $w$ such that
\begin{equation}\label{eq:q.Laplace}
\begin{split}
    [W]_q
      &= \sum_\phi (-1)^{\ell_W(\phi)} q^{\wgt_W(\phi)} x_{\partial(\phi)} \\
      &= \sum_w \left(\sum_{\phi : \partial(\phi) = w} q^{\wgt_W(\phi)}\right) \sgn_W(w) x_w,
\end{split}
\end{equation}
where $\phi$ ranges over the proper labelings of $W$, $w$ ranges over the boundary words of proper labelings, and $x_w \coloneqq x_{w_1, 1} \cdots x_{w_n, n}$. Furthermore:
\begin{itemize}
    \item If $\phi$ and $\phi'$ are proper labelings of $W$ with boundary words $w$ and $w'$, respectively, then
    \begin{equation}\label{eq:q.Laplace.2}
        (-1)^{\ell_W(\phi) - \ell_W(\phi')} = (-1)^{\ell_W(w) - \ell_W(w')}.
    \end{equation}
    In particular, $\sgn_W(w) = (-1)^{\ell_W(\phi)}$ for any proper labeling $\phi$ of $W$ with boundary word $w$.
    \item The statistic $\wgt_W(\phi)$ does not depend on the tags of $W$.
    \end{itemize}
\end{theorem}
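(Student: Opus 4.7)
The plan is to compute $[W]_q$ directly by unpacking the CKM-style composition that it encodes, then identify its monomial expansion in terms of proper labelings. Using the translations in \Cref{fig:CKM-FLL-translations}, I would rewrite $W$ as a composition of building blocks from \Cref{fig:CKM-building-blocks}: products at black vertices, coproducts at white vertices, pairs of tag and unit morphisms, and evaluation/coevaluation maps along dual edges. The invariant $[W]_q$ is then a multilinear map which I would evaluate on tensors of basis vectors $v_{S,i}$. At each black internal vertex, the product map sends a tensor $v_{S_1} \otimes \cdots \otimes v_{S_n}$ to $(-q)^{\ell(S_1,\ldots,S_n)} v_{S_1 \cup \cdots \cup S_n}$ when the $S_i$ partition $[r]$ in clockwise order from the tag, and to zero otherwise; the tag isomorphism $\bigwedge_q^r V_q \xrightarrow{\sim} \mathbb{C}(q)$ then collapses this to a scalar. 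White vertices are handled dually via the coproduct on $\bigwedge_q^\bullet V_q$, which contributes analogous signs and $q$-powers determined by the same coinversion statistic on the labels around the vertex.

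Reading off the surviving terms of the composition produces a sum over proper labelings $\phi$ of $W$, each contributing the monomial $x_{\partial(\phi)}$ with a coefficient that is a product of local sign and $q$-factors from the internal vertices and the duality pairings. I would then \emph{define} $\wgt_W(\phi) \in \mathbb{Z}$ to be the total $q$-exponent accumulated in this coefficient, and verify by direct comparison that the total sign is $(-1)^{\ell_W(\phi)}$ up to an overall factor depending only on the underlying web. This will establish the first equality in \eqref{eq:q.Laplace}; the second equality and the expression $\sgn_W(w) = (-1)^{\ell_W(\phi)}$ then follow once \eqref{eq:q.Laplace.2} is known.

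The main obstacle will be proving \eqref{eq:q.Laplace.2}: namely, that the parity $\ell_W(\phi) - \ell_W(\partial(\phi))$ is independent of the proper labeling $\phi$. I plan to establish this by a local-moves argument. First, I would show that any two proper labelings $\phi, \phi'$ with the same boundary word are connected by a finite sequence of ``cycle swaps'' that modify the labels on the edges of some internal cycle in a way that preserves properness at each vertex, and verify by a direct coinversion computation that each such swap preserves the parity of $\ell_W(\phi)$. For two proper labelings with different boundary words $w$ and $w'$, I would then check that rewriting a boundary label $\overline{S}$ as its complement $[r] \setminus S$ and reordering subset elements in increasing order introduces precisely the correction term $\ell(S, [r] \setminus S)$ appearing in the definition of $\ell_W(w)$, which is exactly the asserted parity equation. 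This step is essentially a careful bookkeeping of signs induced by the dualization conventions together with the quantum product relations at each internal vertex.

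Finally, tag independence of $\wgt_W(\phi)$ is an immediate consequence of \Cref{lem:tag-sign}: moving a tag across an edge of multiplicity $c$ multiplies the CKM-style invariant by $(-1)^{c(r-c)}$ and introduces no power of $q$. Hence the total $q$-exponent $\wgt_W(\phi)$ is unchanged by any tag repositioning, while the sign is absorbed into $(-1)^{\ell_W(\phi)}$ and, after \eqref{eq:q.Laplace.2}, into $\sgn_W(w)$.
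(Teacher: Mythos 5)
Your treatment of the first equality in \eqref{eq:q.Laplace} and of tag independence follows essentially the same route as the paper: unpack the CKM-style composite, observe that the proper labeling condition is exactly the nonvanishing condition for the (co)product maps, read off the sign as $(-1)^{\ell_W(\phi)}$ and define $\wgt_W(\phi)$ as the accumulated $q$-exponent, and then use \Cref{lem:tag-sign} for tags. For the tag step you should make explicit the small identity $(-1)^{\ell(S_n, S_1, \ldots, S_{n-1})} = (-1)^{k(r-k)}(-1)^{\ell(S_1,\ldots,S_n)}$ for $S_1 \sqcup \cdots \sqcup S_n = [r]$ with $|S_n|=k$: it is this matching of the $(-1)^{k(r-k)}$ from \Cref{lem:tag-sign} against the change in the coinversion statistic that lets you conclude $\wgt_W$ is unchanged, rather than merely that ``the sign is absorbed.''

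The genuine issue is your plan for \eqref{eq:q.Laplace.2}, which is the crux of the theorem. The paper does not prove this from scratch; it observes that \eqref{eq:q.Laplace.2} is equivalent to \cite[Lem.~5.4]{Fraser-Lam-Le} in the type $(1,\ldots,1)$ case and reduces general type to that case by attaching claws and tracking signs. You instead propose a self-contained argument resting on two unproven sub-claims: (a) any two proper labelings with the same boundary word are connected by ``cycle swaps,'' and (b) each such swap preserves the parity of $\ell_W$. Claim (a) is the real mathematical content here and is not obvious: a proper labeling is equivalent to the data, for each color $c \in [r]$, of a subgraph meeting each internal vertex exactly once, and while symmetric-difference arguments suggest that two labelings differ along alternating cycles in each pair of colors, turning this into a connected set of moves on the full subset-valued labeling (and then verifying (b) for each move) requires a careful argument you have not supplied. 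As written, the proposal identifies where the difficulty lies but does not resolve it; you would need either to carry out the connectivity and parity computations in detail or, as the paper does, to invoke the Fraser--Lam--Le lemma together with the claw-attachment reduction for general edge multiplicities.
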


\begin{proof}
    For the first equality in \eqref{eq:q.Laplace}, consider the transition coefficients in the $v_I$ and $v_I^*$ bases of morphisms coming from composites of products, duals, evaluations, coevaluations, and pivotal isomorphisms (see p.~\pageref{pg:transition_coeff}). An edge labeled $S$ corresponds to $v_S$, and the proper labeling condition precisely ensures the relevant product map around an internal vertex is non-zero. The boundary labeling corresponds to the input variables and hence to the monomial $x_{\partial(\phi)}$. The transition coefficients of these morphisms are of the form $(\pm q)^i$. The negative signs arise only from the product maps around internal vertices, and the overall sign is $(-1)^{\ell_W(\phi)}$. Powers of $q$ without accompanying negative signs arise only when the pivotal isomorphism is used. The first equality in \eqref{eq:q.Laplace} now follows.

    The second equality in \eqref{eq:q.Laplace} asserts that $\ell_W(\phi)$ depends only on $\partial(\phi)$. This is a consequence of \eqref{eq:q.Laplace.2}, which is equivalent to \cite[Lem.~5.4]{Fraser-Lam-Le}, in the case that $W$ has type $(1, \ldots, 1)$. The general type case follows from this one by ``attaching claws'' and tracking signs.

    Finally, let $W'$ be obtained from $W$ by moving a tag past an edge of multiplicity $k$. By \Cref{lem:tag-sign},
      \[ [W']_q = (-1)^{k(r-k)} [W]_q. \]
    If $S_1 \sqcup \cdots \sqcup S_n = [r]$ and $|S_n|=k$, then
      \[ (-1)^{\ell(S_n, S_1, \ldots, S_{n-1})} = (-1)^{k(r-k)} (-1)^{\ell(S_1, \ldots, S_n)}. \]
    Hence
      \[ (-1)^{\ell_W(\phi)} = (-1)^{k(r-k)} (-1)^{\ell_{W'}(\phi)}. \]
    Applying \eqref{eq:q.Laplace} to $[W]_q$ and $[W']_q$ now yields $\wgt_W(\phi) = \wgt_{W'}(\phi)$.
\end{proof}

Finally, we introduce a term order on $\mathbb{C}(q)[x_{i, S}]$ with respect to which our $U_q(\fsl_4)$-web bases will be unitriangular.

\begin{definition}\label{def:grevlex}
    Suppose $w = w_1 \cdots w_n$ is a sequence of elements of $\pm[r]$. Let $\widetilde{i}$ be a new symbol representing either $i$ or $\overline{r-i+1}$. Let $\widetilde{w} = \widetilde{w_1} \cdots \widetilde{w_n}$. Define an order $w >_{\tlex} w'$ if and only if $\widetilde{w}$ is lexicographically greater than $\widetilde{w'}$, where $\widetilde{1} < \widetilde{2} < \cdots < \widetilde{r}$. Extend $\widetilde{\bullet}$ and $>_{\tlex}$ to sequences $w = w_1 \cdots w_n$ of elements of $\mathcal{A}_r$, writing subsets in increasing $\mathbb{Z}$-order. The \emph{total degree} $\deg(w)$ is~$n$, which is also the total degree of $x_w$.

    The \emph{grevlex order} on $\mathbb{C}(q)[x_{i, S}]$ is given by
      \[ x_w <_{\grevlex} x_{w'} \qquad\Leftrightarrow\qquad \deg(w) < \deg(w') \text{ or } (\deg(w) = \deg(w') \text{ and } \widetilde{w} >_{\tlex} \widetilde{w'}). \]
\end{definition}

The tensor invariant associated to a web has $\grevlex$-leading monomial given by proper labelings with the $\tlex$-minimal boundary word. In \Cref{sec:forwards-map}, we will show that there is a unique proper labeling with $\tlex$-minimal boundary word associated to each of our $U_q(\fsl_4)$-basis webs.

\subsection{Plabic graphs} A major theme in this work is the view of webs as versions of \emph{plabic graphs}.

\begin{definition}[Postnikov \cite{Postnikov-arxiv}]
\label{def:plabic-graph}
A \emph{plabic graph} is a planar undirected graph, embedded in a disk, whose boundary vertices have degree $1$ and are labeled $b_1,b_2,\ldots$ in clockwise order and whose internal vertices are each colored black or white.
\end{definition}

Plabic graphs were introduced by Postnikov \cite{Postnikov-arxiv} in his study of the totally positive Grassmannian, and have since proven to be fundamental objects in the theories of cluster algebras \cite{Scott} and KP solitons \cite{Kodama-Williams} and in the physics of scattering amplitudes \cite{Arkani-Hamed-book}. See \cite{Postnikov-ICM} for a recent survey.

\begin{definition}
\label{def:plabic-trip-perm}
For each boundary vertex $b_i$ of a plabic graph $G$, one may obtain another boundary vertex $b_{\pi(i)}$ by beginning a walk on the unique edge incident to $b_i$ and turning left at each white vertex and right at each black vertex, until the boundary is reached. We call the sequence of vertices and edges in such a walk a \emph{trip strand}. The function $\pi$ is a permutation, called the \emph{trip permutation} of $G$ and denoted $\trip(G)$. The turning rules are called \emph{the rules of the road}.
\end{definition}

\begin{figure}[ht]
    \centering
    \includegraphics{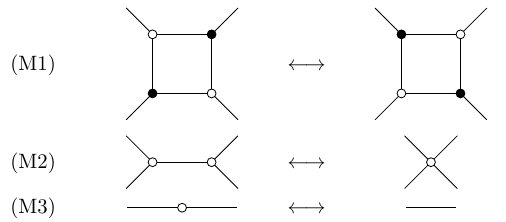}
    \caption{The moves for plabic graphs. }
    \label{fig:plabic-moves}
\end{figure}

\begin{definition}
Two plabic graphs $G$ and $G'$ are \emph{move-equivalent}, denoted $G \sim G'$ if they can be connected by a sequence of the \emph{moves} shown in \Cref{fig:plabic-moves}.
\end{definition}

It is easily checked that the moves (M1), (M2), (M3) from \Cref{fig:plabic-moves} preserve trip permutations, so if $G \sim G'$, then $\trip(G)=\trip(G')$.

In addition to moves on plabic graphs, the \emph{reduction} (R1), shown in \Cref{fig:plabic-reductions}, will be important. Unlike moves, reductions do not preserve the trip permutation.

\begin{figure}[ht]
    \centering
    \includegraphics{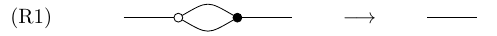}
    \caption{The reduction for plabic graphs. }
    \label{fig:plabic-reductions}
\end{figure}

A plabic graph is called \emph{leafless} if it contains no leaves (i.e., vertices of degree $1$), other than perhaps leaves incident to the boundary. An \emph{isolated component} is a connected component that is not connected to the boundary.

\begin{definition}
\label{def:plabic-reduced}
A leafless plabic graph $G$ with no isolated components is called \emph{reduced} if no $G' \sim G$ admits a parallel edge reduction (R1).
\end{definition}

Two trip strands $\ell \neq \ell'$ in a plabic graph $G$ are said to have a \emph{bad double crossing} if they traverse some edge $e$ in opposite directions and subsequently (in the forward direction on both strands) traverse another edge $e'$ in opposite directions. A trip strand has an \emph{essential self-intersection} if it traverses some edge in both directions. Finally, $G$ is said to have a \emph{round trip} if following the rules of the road (\Cref{def:plabic-trip-perm}) starting from some edge not incident to the boundary produces an infinite walk.

\begin{theorem}[Postnikov \cite{Postnikov-arxiv}]
\label{thm:plabic-reduced-iff-no-bad-crossings}
A leafless plabic graph $G$ with no isolated components is reduced if and only if all of the following conditions hold:
\begin{itemize}
    \item[(1)] $G$ has no round trips,
    \item[(2)] $G$ has no trip strands with essential self-intersections,
    \item[(3)] $G$ has no pair of trip strands with a bad double crossing, and
    \item[(4)] if $\trip(G)(i)=i,$ then $G$ has a leaf attached to the boundary vertex $b_i$.
\end{itemize}
\end{theorem}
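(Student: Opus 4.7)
The plan is to prove both directions by contrapositive, exploiting the fact that the moves (M1), (M2), (M3) transform the collection of trip strands in a controlled way: each is a local modification that preserves the endpoints and the crossing/homotopy pattern of the strands viewed as curves in the disk. As an initial step, I would verify directly (by checking each of the three moves against the rules of the road) that conditions (1)--(4) are invariants of move-equivalence classes, just as the trip permutation itself is. This reduces the theorem to a statement about how the single reduction (R1) interacts with the strand picture.

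For the forward direction, I argue the contrapositive: if one of (1)--(4) fails, then some $G' \sim G$ admits an (R1) reduction. Condition (1): a round trip traces out a closed curve bounding an innermost disk region; an induction on the number of interior faces, using (M1)--(M3) to strictly decrease this count, exhibits an isolated bigon. Condition (2): if a strand traverses some edge $e$ twice, the arc of the strand between these traversals bounds a disk region, and a similar inductive simplification pushes the two occurrences of $e$ into a parallel-edge configuration. Condition (3): a bad double crossing between strands $\ell \neq \ell'$ on edges $e, e'$ encloses a bigon-like region whose boundary consists of arcs of $\ell$ and $\ell'$; again, simplification by moves forces $e$ and $e'$ to become parallel edges. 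Condition (4): if $\trip(G)(i) = i$ with no boundary leaf at $b_i$, then tracing the strand from $b_i$ back to itself forces an essential self-intersection, reducing to case~(2).

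For the backward direction, suppose $G$ is not reduced, so some $G' \sim G$ admits an (R1) reduction at a parallel pair of edges between a black and a white vertex. I would analyze the two strand segments that must locally pass through this bigon: depending on the global structure, they either close up to form a round trip (if the bigon is detached from the boundary), belong to a single strand that thus has an essential self-intersection, or belong to two distinct strands that together exhibit a bad double crossing. Thus $G'$ violates one of (1)--(3), and by the move-invariance established in the first step, so does $G$. The case $\trip(G)(i)=i$ without a boundary leaf is forced to reduce to one of (1)--(3) by the same trace argument as above, giving the full contrapositive.

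The main obstacle is the geometric simplification inside the forward direction for conditions (2) and (3): one must argue that the bounded disk region between the two traversals of an edge (or the two shared edges of a double crossing) can always be simplified via the allowed moves until only a bigon remains. This is not automatic, since moves are local while the offending region can be large and complicated. The cleanest route is induction on the number of faces of this region, using a parity argument on strand crossings inside to exhibit at each stage either a square face ready for (M1) or a contractible bigon/trigon configuration ready for (M2) or (M3). Verifying that such a candidate always exists is essentially the combinatorial core of Postnikov's original proof, and I expect replicating it cleanly to be the hardest part of writing out the full argument.
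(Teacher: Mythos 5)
This statement is quoted in the paper as Postnikov's theorem (Theorem 13.2 of \cite{Postnikov-arxiv}) and is not proved there, so there is no in-paper proof to compare against; your proposal has to be judged as an attempted proof of Postnikov's result itself. Your overall architecture is the standard one: establish that conditions (1)--(4) are invariant under the moves (M1)--(M3), handle one direction by locating the forbidden trip configurations around a parallel bigon produced by (R1), and for the hard direction show that any forbidden configuration can be driven by moves to a parallel-edge configuration. The backward direction and the move-invariance step are routine (though note that condition (4) is not literally a special case of condition (2): the lollipop traversal $b_i \to v \to b_i$ traverses the boundary edge in both directions, so one must be precise about which traversals count as ``essential'' before reducing (4) to (2)).

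The genuine gap is exactly where you place it, and it is not a small one: the claim that the disk region bounded by a round trip, a self-intersecting strand, or a doubly-crossing pair can always be simplified by (M1)--(M3) until a parallel edge appears. Your proposed induction --- ``exhibit at each stage either a square face ready for (M1) or a contractible bigon/trigon configuration ready for (M2) or (M3)'' --- is asserted, not justified, and is not obviously true: a region bounded by two strand arcs need not contain a square face or an immediately contractible configuration, and which moves apply depends on vertex degrees and colours in ways the sketch does not control. This step is the entire content of Postnikov's argument (and of the later careful treatments by Oh--Speyer and by Fomin--Williams--Zelevinsky, which proceed via a structure theory of trips rather than a naive face count). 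As written, the proposal is a plan whose hardest step is deferred, not a proof.
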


The \emph{decorated trip permutation} ${\trip'}(G)$ of a reduced plabic graph records the extra data of the color of the leaf attached to $b_i$ for all fixed points $\trip(G)(i)=i$.

\begin{theorem}[Postnikov \cite{Postnikov-arxiv}]
\label{thm:plabic-trip-iff-move}
Two reduced plabic graphs $G$ and $G'$ are move-equivalent if and only if $\trip'(G)=\trip'(G')$.
\end{theorem}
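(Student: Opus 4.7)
The plan is to prove the two implications separately. The forward implication, that $G \sim G'$ implies $\trip'(G) = \trip'(G')$, is essentially already observed in the discussion preceding the statement: each of the three local moves (M1), (M2), (M3) preserves trip strands (one checks this directly by following the rules of the road through both sides of each move picture), and no move creates or destroys a leaf attached to the boundary, so the decoration at fixed points is also preserved.

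The reverse direction is the substantive one, and my strategy is to build a canonical normal form out of the decorated trip permutation. To any decorated permutation $\pi$ I would associate a specific reduced plabic graph $G_\pi$ via a \emph{bridge decomposition}: fix a reduced expression for $\pi$ in terms of simple transpositions (viewed as a decorated / bounded affine permutation) and, starting from a ``trivial'' graph of boundary leaves colored according to the decorations at fixed points, glue in a standard bridge gadget for each simple transposition. The proof of the reverse implication then has three parts.

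(a) Verify that $\trip'(G_\pi) = \pi$, by a direct strand calculation on the bridge gadgets.
(b) Show that the move-equivalence class of $G_\pi$ is independent of the reduced word chosen. This reduces to checking that each commutation relation and each braid relation among simple transpositions translates into a finite sequence of moves (M1)--(M3) applied to the corresponding bridge gadgets, a finite local verification.
(c) Show that every reduced plabic graph $G$ with $\trip'(G)=\pi$ is move-equivalent to $G_\pi$. I would argue this by induction on a complexity statistic such as the number of interior faces, using Theorem~\ref{thm:plabic-reduced-iff-no-bad-crossings} as the key engine: the absence of round trips, essential self-intersections, and bad double crossings in $G$ guarantees that, unless $G$ is already a bridge decomposition of $\pi$, there must be some interior face whose boundary strands admit a simplifying square move (M1) or bigon move (M2), possibly after a preparatory contraction/expansion (M3). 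Iterating the move brings $G$ to a minimal reduced graph with the same decorated trip permutation, which by the uniqueness built into the bridge construction must coincide with $G_\pi$ up to further moves.

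The main obstacle is step (c): converting the \emph{global} prohibition on pathological strand configurations supplied by Theorem~\ref{thm:plabic-reduced-iff-no-bad-crossings} into the \emph{local} existence of an applicable simplifying move is the technical heart of the argument. It requires a delicate analysis of how trip strands wind through the interior faces of $G$ and an ``innermost face'' or extremal argument identifying a face whose strand pattern forces (M1) or (M2) to apply. Steps (a) and (b) are finite diagrammatic checks and should pose no serious difficulty, but step (c) is where Postnikov's original proof does essentially all its work.
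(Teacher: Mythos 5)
The paper does not actually prove this statement: it is imported verbatim from Postnikov and used as a black box (e.g.\ in the proof of \Cref{thm:hourglass-trips-determine-move-equivalence}), so there is no internal proof to compare against. Judged on its own terms, your forward implication is fine, and your reverse-direction strategy --- a canonical normal form $G_\pi$ built from a bridge decomposition of a reduced word for the decorated permutation, with (a) a strand check, (b) well-definedness under braid/commutation relations, and (c) reduction of an arbitrary reduced $G$ to $G_\pi$ --- is a legitimate and well-known route (it is essentially the BCFW-bridge argument found in later treatments such as Lam's lecture notes, rather than Postnikov's original argument via $\Gamma$-diagrams/Le-graphs).

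However, there is a genuine gap, and you have named it yourself: step (c) is asserted, not proved, and the entire content of the theorem lives there. Worse, the specific induction you propose for (c) cannot work as stated. The moves (M1)--(M3) all preserve the number of faces of a plabic graph, and indeed the face count is an invariant of the move-equivalence class of a reduced graph (it equals one plus the dimension of the associated positroid cell). So there is no ``simplifying'' square or bigon move that strictly decreases the number of interior faces while keeping the graph reduced; your complexity statistic never goes down. The induction that actually closes the argument runs on the length of the decorated permutation, not on the faces of a fixed graph: one shows, using \Cref{thm:plabic-reduced-iff-no-bad-crossings}, that for any reduced $G$ with $\trip'(G)=\pi$ not of minimal length there is a pair of adjacent boundary vertices $b_i,b_{i+1}$ at which, after a sequence of moves, a bridge can be exposed and removed, reducing both $G$ and $\pi$ simultaneously and allowing recursion on the shorter permutation. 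Establishing the existence of that removable bridge from the no-bad-double-crossing conditions is the technical heart you have deferred, so the proposal as written is a plan rather than a proof.
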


\section{Hourglass plabic graphs and \symm six-vertex configurations} 
\label{sec:hourglass-and-six-vertex}

 We now specialize $U_q(\fsl_r)$ to the case $r=4$ for \Cref{sec:hourglass-and-six-vertex,sec:backwards-map,sec:forwards-map,sec:bijection-and-basis,sec:reduction-rules,sec:cc-asm-pp}; see \Cref{sec:two_column} for a discussion of how our methods can be adapted to unify the known web bases in other cases.

\subsection{Hourglass plabic graphs}
An \emph{hourglass graph} $G$ is an underlying planar embedded graph $\widehat{G}$, together with a positive integer multiplicity $m(e)$ for each edge $e$. The hourglass graph $G$ is drawn in the plane by replacing each edge $e$ of $\widehat{G}$ with $m(e)>1$ with $m(e)$ strands, twisted so that the clockwise orders of these strands around the two incident vertices are the same. For $m(e) \geq 2$, we call this twisted edge an \emph{$m(e)$-hourglass}, and call an edge with $m(e)=1$ a \emph{simple edge}. Note that we do not consider a simple edge to be an hourglass. 

The \emph{degree} $\deg(v)$ of a vertex $v \in G$ is the number of edges incident to $v$, counted with multiplicity, while its \emph{simple degree} $\widehat{\deg}(v)$ is its degree in the underlying graph $\widehat{G}$.

\begin{definition}
\label{def:hourglass-plabic-graph}
An \emph{hourglass plabic graph} is a bipartite hourglass graph $G$, with a fixed proper black-white vertex coloring, embedded in a disk, with all internal vertices of degree 4, and all boundary vertices of simple degree one, labeled clockwise as $b_1,b_2,\ldots, b_n$. See \Cref{fig:hourglass-example-with-trips} and \Cref{fig:growth-example} (lower right) for examples. We consider $G$ up to planar isotopy fixing the boundary circle. We write $\col(v)=1$ for a black vertex $v$ and $\col(v)=-1$ for a white vertex.
\end{definition}

\begin{remark}
In later sections, we will view hourglass plabic graphs as $U_q(\fsl_4)$-webs, with hourglass multiplicities becoming the web edge multiplicities. However in this section we focus on developing the underlying combinatorics.
\end{remark}

\begin{definition}
\label{def:oscillating-version-of-G}
The \emph{type} $\underline{c}$ of an hourglass plabic graph $G$ is the sequence $(c_1,\ldots,c_n)$ where $c_i=\col(b_i)\deg(b_i)$ for $i=1,\ldots,n$. We say the type is \emph{oscillating}\footnote{So named because graphs of this type will be seen to correspond to \emph{(generalized) oscillating tableaux}.} if $|c_i|=1$ for all $i$. A type denoted $\underline{o}$ is assumed throughout to be oscillating. For general $G$, the \emph{oscillization} of $G$, $\osc(G)$, is the hourglass plabic graph of oscillating type obtained from $G$ by replacing each boundary vertex $b$ of degree $d>1$ (connected by a $d$-hourglass to an internal vertex $v$) with $d$ boundary vertices of the same color, each connected to $v$ by a simple edge. (If $b$ is instead connected directly to another boundary vertex $b'$ by an edge $e$, first apply the un-contraction move of \Cref{fig:contraction-hourglass-moves} to $e$ to produce internal vertices $v$ and $v'$ adjacent to $b$ and $b'$, respectively, and then proceed as above.) We call this set of $d$ simple edges in $\osc(G)$ the \emph{claw} associated to $b$.
\end{definition}

Note that, for any hourglass plabic graph $G$, the underlying graph $\widehat{G}$ is a bipartite plabic graph (\Cref{def:plabic-graph}) using the vertex colors from $G$. Unlike plabic graphs, for which a single trip permutation is traditionally considered, we will associate a $3$-tuple of trip permutations to each hourglass plabic graph. 

\begin{definition}\label{def:trip_perms}
Let $G$ be an hourglass plabic graph of oscillating type with boundary vertices $b_1,\ldots, b_n$. For $1 \leq a \leq 3$, the \emph{$a$-th trip permutation} $\trip_a(G)$ is the permutation of $[n]$ obtained as follows: for each $i$, begin at $b_{i}$ and walk along the edges of $G$, taking the $a$-th leftmost turn at each white vertex, and $a$-th rightmost turn at each black vertex, until arriving at a boundary vertex $b_{j}$. Then $\trip_a(G)(i)=j$. The walk taken is called the \emph{$\trip_a$-strand}. See \Cref{fig:hourglass-example-with-trips} for an example. Note that $\trip_1(G)^{-1}=\trip_3(G)$ and $\trip_2(G)$ is an involution. We write $\trip_{\bullet}(G)$ for the tuple of these trip permutations. For general $G$, we define $\trip_{\bullet}(G)=\trip_{\bullet}(\osc(G))$. We will sometimes view $\trip_a(G)$ as a permutation of the boundary vertices $b_1,\ldots,b_n$ themselves, rather than their indices $[n]$, and also sometimes omit the graph $G$ from the notation when it is clear from context.
\end{definition}
    
\begin{figure}[hbtp]
    \centering
    \includegraphics[scale=1.2]{figures/hourglass-with-trips.pdf} \quad
    \includegraphics[scale=1.2]{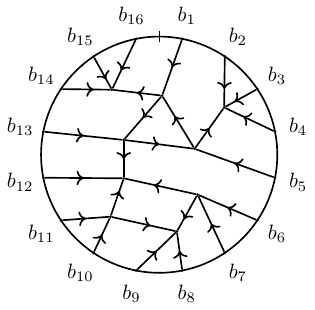}
    \caption{Left, an hourglass plabic graph $G$, with purple ($\textcolor{amethyst}{\blacksquare}$) $\trip_1$-, orange ($\textcolor{amber}{\blacksquare}$) $\trip_2$-, and green ($\textcolor{dark}{\blacksquare}$) $\trip_3$-strands drawn, showing that $\trip_i(G)(1)=5, 10,$ and $13$ for $i=1,2,$ and $3$, respectively.  Right, the corresponding six-vertex configuration, as constructed in \Cref{def:6-vertex-hourglass-bijection}.}     
    \label{fig:hourglass-example-with-trips}
\end{figure}

\begin{proposition}\label{prop:trip1_agrees}
    Suppose $G$ is an hourglass plabic graph of oscillating type. Then the $\trip_1$-strands of $G$ agree with the $\trip$-strands of $\widehat{G}$, so in particular $\trip_1(G)=\trip(\widehat{G})$.
\end{proposition}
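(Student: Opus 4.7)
The plan is to prove the stronger statement that the $\trip_1$-strands of $G$ descend, under the quotient $G \twoheadrightarrow \widehat{G}$ collapsing each hourglass to a single edge, exactly to the $\trip$-strands of $\widehat{G}$. The key local observation is that at every internal vertex $v$, the strands of any incident hourglass occupy consecutive positions in the clockwise order around $v$, so collapsing them preserves the cyclic order of distinct incident edges. Consequently, at any vertex entered via one edge and exited via another, so long as neither edge is an interior strand of an hourglass, the ``leftmost at white, rightmost at black'' rule for $\trip_1$ in $G$ coincides with the rule of the road in $\widehat{G}$.

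The only substantive case is a passage through an $m$-hourglass between a white vertex $v$ and a black vertex $u$. Labeling the strands $h_1, \ldots, h_m$ so that $h_k$ sits at clockwise position $p+k-1$ at $v$ and at clockwise position $j+k-1$ at $u$ (well-defined by the twist condition of \Cref{def:hourglass-plabic-graph}), I would prove that any $\trip_1$-strand entering this hourglass must follow one of two scenarios: (i) enter $v$ at position $p+m$, exit on $h_m$ by the leftmost rule, arrive at $u$ on $h_m$, and exit $u$ on the edge at position $j+m$ by the rightmost rule; or (ii) the color-dual scenario entering $u$ at position $j-1$, crossing on $h_1$, and exiting $v$ at position $p-1$. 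In each case the entry and exit edges, viewed in $\widehat{G}$, are exactly the two edges flanking the collapsed hourglass in the cyclic order around each endpoint, so the strand descends correctly.

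To rule out entries on interior strands, I would backward-trace the rules: exiting $u$ on $h_k$ with $k \geq 2$ forces entry to $u$ on $h_{k-1}$ via the rightmost rule, which forces exit from $v$ on $h_{k-1}$, which by the leftmost rule forces entry to $v$ on $h_k$. This exhibits a self-sustaining round-trip bouncing between $v$ and $u$ along adjacent hourglass strands, receiving no input from any edge outside the hourglass. Since $G$ has oscillating type, every $\trip_1$-strand originates on a simple boundary edge, and so can never enter such a bouncing cycle. The main obstacle is this position-bookkeeping step; the twist condition is precisely what ensures that in scenarios (i) and (ii), the entry and exit edges at $v$ and at $u$ are the ``same'' flanking edges under the collapse $G \twoheadrightarrow \widehat{G}$, whereupon the desired agreement of strand paths, and the identity $\trip_1(G)=\trip(\widehat{G})$, follows.
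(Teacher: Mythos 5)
Your proposal is correct and takes essentially the same approach as the paper, whose entire proof is the one-line observation that the sequence of vertices visited by a $\trip_1$-strand is unaffected by the multiplicities of the edges it follows. You have simply carried out in full the positional bookkeeping that the paper leaves implicit — in particular the verification that the twist condition makes the flanking edges match under collapse, and that the ``bouncing'' configurations on adjacent hourglass strands form closed round trips that a boundary-originating strand can never join.
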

\begin{proof}
    Observe that the sequence of vertices visited by a $\trip_1$-strand is not affected by the multiplicities of the edges it follows.
\end{proof}

As for plabic graphs, hourglass plabic graphs admit certain \emph{moves}. These are the \emph{benzene move} and \emph{square moves} shown in \Cref{fig:main-hourglass-moves} and the \emph{contraction moves} shown in \Cref{fig:contraction-hourglass-moves}. None of the vertices appearing in the moves is allowed to be a boundary vertex; hence, moves do not change the type of the hourglass plabic graph.

\begin{definition}
\label{def:hourglass-move-equivalence}
Two hourglass plabic graphs $G$ and $G'$ are \emph{move-equivalent}, written $G \sim G'$, if some sequence of benzene, square, and contraction moves transforms $G$ into $G'$. An hourglass plabic graph is \emph{contracted} if contraction moves have been applied to convert all subgraphs from the left side of \Cref{fig:contraction-hourglass-moves} to the corresponding graph from the right side. We write $\cg(\underline{c})$ for the set of contracted hourglass plabic graphs of type $\underline{c}$. Note that a contracted hourglass plabic graph of oscillating type only contains simple and $2$-hourglass edges.
\end{definition}

 For example, the move equivalence class of the contracted hourglass plabic graph $G$ from \Cref{fig:hourglass-example-with-trips} contains three other graphs, constructed by applying the benzene move, the square move, or both. 

\begin{figure}[hbtp]
    \centering
    \includegraphics[scale=1.1]{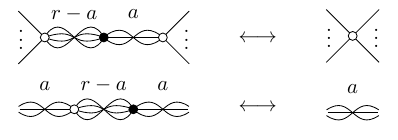}
    \caption{The contraction moves for hourglass plabic graphs. The parameter $a$ runs over $0,\ldots,r$; we have $r=4$ except in \Cref{sec:two_column}. The color reversals of these moves are also allowed.}
    \label{fig:contraction-hourglass-moves}
\end{figure}

\begin{proposition}
\label{prop:trip-of-underlying-plabic}
Let $G \sim G'$ be move-equivalent hourglass plabic graphs. Then:
\begin{itemize}
\item[(a)] $\trip_{\bullet}(G)=\trip_{\bullet}(G')$, and
\item[(b)] the underlying plabic graphs $\widehat{G}$ and $\widehat{G'}$ are move-equivalent.
\end{itemize}
\end{proposition}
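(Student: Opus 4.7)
The plan is to verify both parts by a direct local case analysis of each of the three classes of moves (benzene, square, and contraction) depicted in \Cref{fig:main-hourglass-moves,fig:contraction-hourglass-moves}.

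For part (a), it suffices to show, for each local move $G \rightsquigarrow G'$ and each $a \in \{1,2,3\}$, that every $\trip_a$-strand of $G$ entering the affected local region exits along the same boundary edge of that region (and in the same direction) as the corresponding $\trip_a$-strand of $G'$. Since $G$ and $G'$ agree outside the affected region, matching exits forces the closed-up boundary-to-boundary strands to coincide, and hence the trip permutations to coincide. Using $\trip_1 = \trip_3^{-1}$, only $\trip_1$ and $\trip_2$ require direct verification. Invariance of $\trip_1$ will in fact follow from part (b) together with \Cref{prop:trip1_agrees}, since $\trip_1$-strands of $G$ coincide with ordinary $\trip$-strands of $\widehat{G}$, and plabic moves are known to preserve the plabic trip permutation. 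Thus the remaining content is to check $\trip_2$-exits for each move, which amounts to a finite enumeration of possible strand entries along each boundary edge of the local region, keeping in mind that an $m$-hourglass edge carries $m$ distinct strands indexed by their common clockwise position at both endpoints.

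For part (b), we exhibit for each hourglass plabic move $G \rightsquigarrow G'$ a sequence of plabic moves (M1), (M2), (M3) from \Cref{fig:plabic-moves} realizing $\widehat{G} \sim \widehat{G'}$. The square moves of \Cref{fig:main-hourglass-moves} project directly to the plabic square move (M1), after possibly preparing or finishing with (M2) and (M3) at neighboring vertices. The benzene move projects to a hexagon flip on $\widehat{G}$, which is classically known to decompose as three applications of (M1) interleaved with (M2) and (M3) expansions/contractions of degree-two vertices. The contraction moves of \Cref{fig:contraction-hourglass-moves} alter only hourglass multiplicities in $G$, so their effect on $\widehat{G}$ is either trivial (when no incident simple edges are created or destroyed) or an edge contraction/expansion between oppositely colored vertices, realized by (M3) (together with (M2) in the higher-valence cases).

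The main obstacle will be the direct $\trip_2$-verification through the benzene move: each of the six edges of the benzene hexagon can admit multiple strand entries (through hourglass edges incident to the hexagon or within it), each of which must be followed through the ``second-leftmost'' turning rule on both sides of the move. Nonetheless, by locality the number of cases is bounded and the verification is mechanical; the contraction moves require a separate but entirely analogous local check, and the square moves are the easiest of the three.
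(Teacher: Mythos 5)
Your overall strategy (local case-check for (a), decomposition into plabic moves for (b)) is the paper's, and your shortcut of deducing $\trip_1$-invariance from part (b) together with \Cref{prop:trip1_agrees} and the fact that (M1)--(M3) preserve $\trip$ is legitimate. But there is one concrete error: your treatment of the benzene move in part (b). The benzene move does \emph{not} induce a ``hexagon flip'' on $\widehat{G}$, and it does not decompose into three applications of (M1). It only permutes which of the six edges of the hexagonal face carry hourglass multiplicity $2$ versus $1$ (it is a rotation of a local dimer cover); the vertex set, edge set, and embedding are untouched. Since $\widehat{G}$ by definition forgets multiplicities, a benzene move satisfies $\widehat{G}=\widehat{G'}$ on the nose --- the paper makes exactly this point in \Cref{remark:benzene} (``the benzene move\ldots is trivial on the level of $\widehat{G}$''). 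Your proposed decomposition would fail if you tried to execute it: there is no hexagon-flip among Postnikov's moves, and a generic benzene hexagon in $\widehat{G}$ has no square faces on which to apply (M1). The fix makes your life easier, not harder: for the benzene move, part (b) is immediate because the underlying graphs are equal.

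This misreading does not sink part (a): your plan to track $\trip_2$-exits through the hexagon is still the right (and only remaining) computation, and it is exactly what the paper's one-line ``easily checked'' conceals. Just be sure that when you do that case analysis you are comparing the two alternating hourglass configurations on the \emph{same} hexagon, not two different hexagon graphs. For the square and contraction moves your description of part (b) matches the paper's: expand corners with (M2) as needed, apply (M1), recontract with (M2) for the square moves; and the contraction moves act on $\widehat{G}$ by inserting or deleting degree-two vertices, handled by (M3) (with (M2) where unicolored edges appear).
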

\begin{proof}
Part (a) is easily checked by observing that the benzene, square, and contraction moves all locally preserve $\trip_{\bullet}$. For part (b), notice that each of the moves for hourglass plabic graphs is a composite of the plabic graph moves from \Cref{fig:plabic-moves}. For example, after passing to $\widehat{G}$, the hourglass square moves correspond to expanding the corners of the square with (M2) if necessary, applying the plabic square move (M1), and then contracting the remaining corners with another application of (M2). 
\end{proof}

\begin{remark}\label{remark:benzene}
As noted in the proof of \Cref{prop:trip-of-underlying-plabic}, the hourglass square moves and contraction moves are lifts to $G$ of (composites of) usual plabic moves on $\widehat{G}$. The benzene move, on the other hand, is trivial on the level of $\widehat{G}$.

Compositions of benzene moves on an hourglass plabic graph can be seen as a transformation on a \emph{dimer cover} (perfect matching) of the subgraph of vertices of simple degree $3$, where the hourglass edges correspond to the included edges of the dimer cover. For hexagonal lattices, this transformation is well-known to correspond to adding and removing boxes from an associated \emph{plane partition} (see e.g.~\cite{WhatIsDimer}). This is discussed further in \Cref{sec:plane-partitions}.
\end{remark}

\subsection{Fully reduced hourglass plabic graphs}

The following definition is central to this work, and should be compared to the notion of reduced plabic graph (\Cref{def:plabic-reduced}).

\begin{definition}
\label{def:fully-reduced}
An hourglass plabic graph $G$ with no isolated components is called \emph{fully reduced} if no $G' \sim G$ has a 4-cycle containing an hourglass (a \emph{forbidden 4-cycle}). We write $\rg(\underline{c})$ for the set of fully reduced hourglass plabic graphs of type $\underline{c}$. We write $\crg(\underline{c})$ for the set of these which are contracted (as in \Cref{def:hourglass-move-equivalence}).
\end{definition}

\begin{remark}
\Cref{thm:6-vertex-hourglass-correspondence} and \Cref{cor:fully-reduced-iff-monotonic} provide alternative characterizations of fully reduced graphs which avoid the (\emph{a priori} infinite) move-equivalence class exploration in \Cref{def:fully-reduced}.
\end{remark}

\begin{remark}
\Cref{def:fully-reduced} implies, in particular, that a fully reduced hourglass plabic graph $G$ does not have two or more distinct edges connecting a pair of vertices $u,v$; otherwise one of the edges could be expanded using a contraction move, creating a forbidden 4-cycle.
\end{remark}

The following proposition partially justifies the use of ``reduced" in ``fully reduced". Note that the converse  is not true: $G$ being fully reduced is a stronger condition than $\widehat{G}$ being reduced.

\begin{proposition}
\label{prop:underlying-plabic-is-reduced}
Let $G$ be a fully reduced hourglass plabic graph. Then the underlying plabic graph $\widehat{G}$ is reduced.
\end{proposition}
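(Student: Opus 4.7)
The plan is to argue by contrapositive, showing that if $\widehat{G}$ is not reduced, then $G$ is not fully reduced. Since $\widehat{G}$ being reduced requires it to be leafless, have no isolated components, and have no move-equivalent graph admitting the parallel-edge reduction (R1), I would handle these three potential failures separately, using \Cref{thm:plabic-reduced-iff-no-bad-crossings} (Postnikov's characterization).

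If $\widehat{G}$ has an isolated component, the same vertices and edges form an isolated component in $G$, contradicting the fully reduced assumption. If $\widehat{G}$ has an interior leaf $v$ not attached to the boundary, then $v$ has simple degree $1$ in $G$. Since $v$ is interior, $\deg(v) = 4$, so the unique edge at $v$ must be a $4$-hourglass to some vertex $u$. Because $v$ is not attached to the boundary, $u$ is also interior; but then $u$ too has all its degree taken up by this single hourglass, so $\{u, v\}$ forms an isolated component of $G$, again a contradiction.

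In the main case, $\widehat{G}$ is leafless with no isolated components, but some $\widehat{G'}$ in its move-equivalence class admits reduction (R1) and hence contains a pair of parallel edges. I would lift the sequence of plabic moves from $\widehat{G}$ to $\widehat{G'}$ to a sequence of hourglass plabic graph moves from $G$ to some $G'$ whose underlying plabic graph is $\widehat{G'}$. Each plabic (M1) square move on a $4$-cycle in $\widehat{G}$ corresponds to an appropriate hourglass square move from \Cref{fig:main-hourglass-moves}, the choice depending on the multiplicities of the four cycle edges in $G$; each plabic (M2) or (M3) move corresponds to a hourglass contraction move from \Cref{fig:contraction-hourglass-moves}, possibly combined with benzene moves.

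Since $\widehat{G'}$ has parallel edges, $G'$ itself has two or more distinct edges between the same pair of vertices (parallel edges in $\widehat{G'}$ cannot come from a single hourglass edge, which projects to a single edge in $\widehat{G'}$). By the remark following \Cref{def:fully-reduced}, one of these parallel edges may be expanded via a contraction move to yield a $4$-cycle containing an hourglass in some $G'' \sim G' \sim G$. This forbidden $4$-cycle contradicts the fully reduced assumption on $G$. The main obstacle is the lifting step: one must verify that the square moves in \Cref{fig:main-hourglass-moves} realize every plabic (M1) move on $\widehat{G}$, a case analysis on the possible multiplicity patterns of the four cycle edges.
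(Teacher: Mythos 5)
Your treatment of leaves and isolated components is fine and matches the paper's. The problem is the main case: the ``lifting step'' you flag as an obstacle is not a routine case analysis but a genuine gap that your argument cannot cross as stated. You want to take an arbitrary sequence of plabic moves carrying $\widehat{G}$ to some $\widehat{G'}$ with parallel edges and lift it to a sequence of hourglass moves on $G$. But the intermediate plabic graphs in such a sequence need not be the underlying graph of \emph{any} hourglass plabic graph: move (M2) can merge same-colored vertices or split vertices into pieces of arbitrary degree, and (M3) inserts degree-$2$ vertices, so the bipartiteness and the degree-$4$ constraint that characterize graphs of the form $\widehat{G'}$ are destroyed along the way. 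The paper does carry out a lifting argument of exactly this flavor — but only later, in the proof of \Cref{thm:hourglass-trips-determine-move-equivalence}, and there it crucially \emph{uses} \Cref{prop:underlying-plabic-is-reduced}: knowing $\widehat{G}$ is reduced is what lets one normalize the move sequence so that each (M1) is applied to a genuine bipartite square face of a graph of the form $\widehat{G'}$ (in particular, reducedness rules out $2$-gonal faces). So building your proof on that lifting machinery would be circular, and without it the lifting claim is unsupported.

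The paper takes a different route that avoids move sequences on plabic graphs entirely: it invokes Postnikov's \emph{trip-strand} characterization of reducedness (\Cref{thm:plabic-reduced-iff-no-bad-crossings}) — no round trips, no essential self-intersections, no bad double crossings — which is an intrinsic condition on $\widehat{G}$ itself. Conditions (1), (2), (4) follow from \Cref{prop:trip1_agrees} and \Cref{lem:trip1-does-not-revisit-vertex}, and the absence of bad double crossings is deduced from the monotonicity of fully reduced graphs (\Cref{cor:fully-reduced-iff-monotonic}) together with \Cref{lem:how-trip1-trip2-diverge}: after two $\trip_1$-strands cross, a pair of $\trip_2$-strands is trapped between them, and monotonicity prevents a second forward crossing. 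If you want to complete your proof, you should switch to this characterization rather than trying to lift move sequences.
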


\Cref{prop:underlying-plabic-is-reduced} is key to the proof of
\Cref{thm:hourglass-trips-determine-move-equivalence}, which is an analogue of \Cref{thm:plabic-trip-iff-move} for hourglass plabic graphs, and is a fundamental ingredient in the construction of our web basis.

\begin{theorem}
\label{thm:hourglass-trips-determine-move-equivalence}
Two fully reduced hourglass plabic graphs $G_1$ and $G_2$ of the same type are move-equivalent if and only if $\trip_{\bullet}(G_1)=\trip_{\bullet}(G_2)$.
\end{theorem}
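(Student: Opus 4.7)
The forward direction is immediate from Proposition~\ref{prop:trip-of-underlying-plabic}(a), so I focus on the converse. The plan is to first match the underlying plabic graphs using $\trip_1$ via Postnikov's classification, and then correct the remaining hourglass placements using $\trip_2$. It suffices to treat the oscillating case, since the oscillization $G\mapsto \osc(G)$ differs from $G$ by un-contraction moves at the boundary and preserves $\trip_\bullet$ by construction; so assume $G_1,G_2\in \rg(\underline{o})$ with $\trip_\bullet(G_1)=\trip_\bullet(G_2)$.

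Stage 1 (matching the underlying plabic graphs). By Proposition~\ref{prop:trip1_agrees}, $\trip(\widehat{G_i})=\trip_1(G_i)$, and by Proposition~\ref{prop:underlying-plabic-is-reduced} each $\widehat{G_i}$ is a reduced plabic graph. Postnikov's Theorem~\ref{thm:plabic-trip-iff-move} then yields a sequence of plabic moves carrying $\widehat{G_1}$ to $\widehat{G_2}$. As noted in the proof of Proposition~\ref{prop:trip-of-underlying-plabic}(b), each plabic move lifts to a composite of hourglass square and contraction moves; applying such a lift to $G_1$ produces $G_1'\sim G_1$ with $\widehat{G_1'}=\widehat{G_2}$, and $G_1'\in \rg(\underline{o})$ because full reducedness is an invariant of the move-equivalence class. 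By Proposition~\ref{prop:trip-of-underlying-plabic}(a), we also have $\trip_\bullet(G_1')=\trip_\bullet(G_2)$.

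Stage 2 (matching hourglass placements). At this point $G_1'$ and $G_2$ share the same underlying plabic graph and differ only in the locations of hourglass edges; since the benzene move is the unique move that preserves $\widehat{G}$, it remains to show $G_1'$ and $G_2$ are connected by benzene moves alone. Following Remark~\ref{remark:benzene}, I would interpret the hourglass edges as a perfect matching on the subgraph of simple-degree-$3$ vertices of $\widehat{G_2}$, so that benzene moves become hexagonal-face flips on this matching. The local structure of $\trip_2$ at each internal vertex depends on which incident edge is an hourglass, so a $\trip_2$-strand entering a hexagonal face records the orientations of the three hourglasses bordering that face; I would then argue that $\trip_2(G_1')=\trip_2(G_2)$ forces the two matchings to be flip-equivalent, giving $G_1'\sim G_2$.

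The main obstacle is Stage 2, namely the implication from ``$\trip_2$-equivalent'' to ``flip-equivalent.'' Dimer-cover flip-connectivity is classical in simply connected planar regions, but here one has genuine global and topological constraints, and a priori two distinct fully reduced hourglass placements on the same $\widehat{G}$ could record identical $\trip_2$ data. I expect the proof to pass to the symmetrized six-vertex reformulation promised in the introduction, in which $\trip_2$ becomes transparent, and to use full reducedness in its equivalent monotonicity/forbidden $4$-cycle form to cut the problem down to a restricted class of six-vertex configurations where $\trip_2$ faithfully encodes the matching, making flip-connectivity into a manageable local statement.
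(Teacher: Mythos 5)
Your two-stage architecture is exactly the paper's: first align the underlying plabic graphs via Postnikov's theorem, then fix the hourglass placement with benzene moves controlled by $\trip_2$. However, both stages as written have genuine gaps. In Stage 1, you invoke \Cref{prop:trip-of-underlying-plabic}(b) to "lift" plabic moves, but that proposition goes the other way: it says hourglass moves project to composites of plabic moves, not that an arbitrary sequence of plabic moves between $\widehat{G_1}$ and $\widehat{G_2}$ lifts to hourglass moves on $G_1$. A generic plabic move sequence passes through intermediate graphs that are not the underlying graph of any hourglass plabic graph, and the (M2) expansions/contractions have no hourglass counterpart on their own. The paper repairs this by choosing a move sequence minimizing the number of square moves, observing that (since $G_1$ is contracted, bipartite, with all internal vertices of degree $3$ or $4$ and no $2$-gons) the first square face is already a bipartite square of $\widehat{G_1}$, so the square move together with its surrounding (M2) moves is the shadow of a single hourglass square move; one then inducts on the number of square moves. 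Some argument of this kind is required.

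Stage 2 is where the real work lives, and you have correctly identified the obstacle without resolving it. The paper's resolution has two separate ingredients, both nontrivial: (a) a Tits--Matsumoto-style connectivity statement (\Cref{prop:pm-diagrams-connected-under-Yang--Baxter}) saying that any two \emph{matching diagrams} --- underlying undirected graphs of well-oriented \symm six-vertex configurations --- realizing the same $\trip_2$ matching are connected by Yang--Baxter moves, whose proof constructs a canonical diagram via an ``abc-property'' and crucially uses the absence of $4$-crossings guaranteed by full reducedness; and (b) a reconstruction lemma (\Cref{lem:underlying-plabic-and-matching-diagram-determine-graph}) showing that the underlying plabic graph together with the matching diagram determines the hourglass plabic graph uniquely, so that after transporting the Yang--Baxter moves back to benzene moves one gets literal equality $G_1''=G_2$ rather than merely matching $\trip_2$ data. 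Your worry that ``two distinct fully reduced hourglass placements on the same $\widehat{G}$ could record identical $\trip_2$ data'' is precisely what (b) rules out, and your hope that dimer flip-connectivity suffices is not enough on its own --- the connectivity must be established for the $\trip_2$-strand diagrams, not for hexagonal-lattice dimers, and then converted back to the graph. As it stands, the proposal is a correct outline of the paper's strategy with the two key lemmas left unproven.
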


We now transform contracted fully reduced hourglass plabic graphs to another form which will be useful in the proofs of \Cref{prop:underlying-plabic-is-reduced} and \Cref{thm:hourglass-trips-determine-move-equivalence}. These proofs appear in \Cref{sec:move-equiv-and-trips}. 

\subsection{Symmetrized six-vertex configurations}
\label{sec:six-vertex-correspondence}
In this subsection, we define certain six-vertex configurations on $4$-valent graphs embedded in a disk and show these configurations are in bijection with contracted hourglass plabic graphs. The \emph{well-oriented} configurations (\Cref{def:symm6v-well-oriented}) correspond under this bijection to the fully reduced hourglass plabic graphs (\Cref{thm:6-vertex-hourglass-correspondence}). In the later sections, we convert freely between these objects, as convenient. 

\begin{definition}
\label{def:symm6v}Let $G$ be a planar graph embedded in a disk, with all internal vertices of degree 4 and all boundary vertices of degree $1$ labeled clockwise as $b_1,\ldots,b_n$. A \emph{\symm six-vertex configuration} $D$ on $G$ is a directed graph with underlying undirected graph $G$ such that the orientation of edges around each vertex is any rotation of:
  \[
  \includegraphics[scale=1]{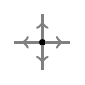} \qquad
  \includegraphics[scale=1]{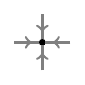} \qquad
  \includegraphics[scale=1]{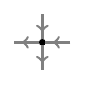}
     \]
These are \emph{sources}, \emph{sinks}, and \emph{transmitting vertices}, respectively; note that there are six possibilities for the orientation around a single vertex. We write $\ssv(G)$ for the set of \symm six-vertex configurations on $G$.
\end{definition}

\begin{remark}\label{remark:symm6vconnections}
    Symmetrized six-vertex configurations can be seen as a symmetrized version of the well-studied \emph{six-vertex model} (see e.g.\ \cite{BaxterBook}),   
a model equivalent to ours by reversing the direction of all vertical edges in the allowed vertex configurations. 
On a square grid, six-vertex configurations with domain wall boundary conditions (in our convention, corresponding to all boundary edges oriented inward) are well-known to correspond to \emph{alternating sign matrices}, matrices whose entries are in $\{0,\pm1\}$, whose rows and columns sum to $1$, and whose nonzero entries alternate in sign along each row and column (see e.g.\ \cite{Kuperberg_ASM_pf,ProppManyFaces}). On our graphs, sinks correspond to $1$, sources to $-1$, and transmitting vertices to $0$. See \Cref{sec:asm} for further discussion.
\end{remark}

Like (hourglass) plabic graphs, \symm six-vertex configurations have a notion of trip permutation.
\begin{definition} 
Let $D \in \ssv(G)$. Define $\trip_2(D)$ as the permutation of $[n]$ obtained as follows: for each $i$, begin at $b_{i}$ and walk along the edges of $G$, going straight across each vertex to the opposite edge, until arriving at a boundary vertex $b_{j}$. Then $\trip_2(D)(i)=j$ and each walk constructed by this rule is called a $\trip_2$-strand.  

We define $\trip_1(D)$ as the permutation of $[n]$ obtained as follows: for each $i$, begin at $b_{i}$ and walk along the edges of $D$, turning as indicated in \Cref{fig:six-vertex-trips} until arriving at a boundary vertex $b_{j}$. Then $\trip_1(D)(i)=j$ and each such walk is called a $\trip_1$-strand. Define $\trip_3(D)$ via reversal of the $\trip_1$-strands. Note that $\trip_1(D)^{-1}=\trip_3(D)$ and 
$\trip_2(D)$ is an involution.
\end{definition}

\begin{figure}[ht]
    \centering
    \includegraphics[scale=1.25]{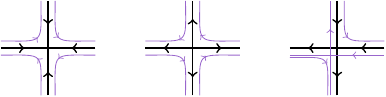}
    \caption{The behavior of $\trip_1$-strands in the symmetrized six-vertex model.}
    \label{fig:six-vertex-trips}
\end{figure} 

As with (hourglass) plabic graphs, we consider an equivalence relation on \symm six-vertex configurations generated by \emph{moves}. These are the \emph{Yang--Baxter} and \emph{ASM} moves shown in \Cref{fig:main-6v-moves}. 

\begin{remark}\label{remark:ASM_Yang_Baxter}
The moves of \Cref{fig:main-6v-moves} are so named because of their correlation to studied moves on six-vertex configurations and alternating sign matrices. The Yang--Baxter move corresponds to the \emph{star-triangle relation} associated to the \emph{Yang--Baxter equation} \cite{BaxterBook}, while the ASM move may be seen as a \emph{fiber toggle} in the alternating sign matrix tetrahedral poset~\cite{RazStrogRow} or as traveling along an edge of the \emph{alternating sign matrix polytope} \cite{StrikerASMPoly}. Appropriate compositions of these toggles produce the well-studied \emph{gyration} action of~\cite{Wieland2000} on \emph{fully-packed loops}, objects in bijection with alternating sign matrices. Fully-packed loops on generalized domains (such as ours) were considered by Cantini and Sportiello in their proof of the \emph{Razumov{\textendash}Stroganov conjecture} \cite{CantiniSportiello2} and relate to \emph{chained alternating sign matrices} \cite{ChainedASM}.
\end{remark}

\begin{definition}
Two \symm six-vertex configurations $D$ and $D'$ are \emph{move-equivalent}, written $D \sim D'$, if some sequence of Yang--Baxter and ASM moves transforms $D$ into $D'$.
\end{definition}

\begin{figure}[hbtp]
    \centering
    \includegraphics{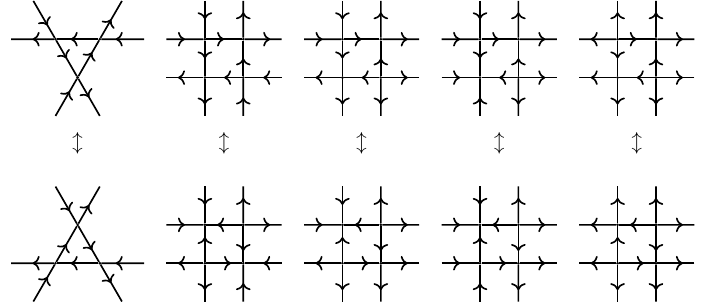}
    \caption{The Yang--Baxter move (leftmost) and the ASM moves for \symm six-vertex configurations. The edge reversals of these ASM moves are also allowed.}
    \label{fig:main-6v-moves}
\end{figure}

\begin{definition}
\label{def:symm6v-well-oriented}
    A \symm six-vertex configuration $D$ with no isolated components is \emph{well oriented}  if for every $D'$ with $D\sim D'$, the underlying undirected graph of $D'$ is simple (no loops and no $2$-cycles) and every $3$-cycle of $D'$ is (cyclically) oriented. We write $\wssv(G)$ for the set of well oriented configurations $D$ from $\ssv(G)$.
\end{definition} 

Consider the following properties that $D \in \ssv(G)$ may have:
    \begin{itemize}
         \item[(P1)] The $\trip_2$-strands of $D$ do not double-cross or self-intersect.
         \item[(P2)] The segments between the three intersection points of three pairwise crossing $\trip_2$-strands are oriented (\emph{big triangles} are oriented). 
         \item[(P3)] Among any four $\trip_2$-strands, there is a pair of strands which do not cross.
    \end{itemize}

\Cref{thm:6-vertex-hourglass-correspondence} shows that well-oriented \symm six-vertex configurations are in bijection with contracted fully reduced hourglass plabic graphs. The conditions of \Cref{def:symm6v-well-oriented} parallel those for full-reducedness. However, \Cref{prop:trip2_6V} gives an effective way to determine whether a \symm six-vertex configuration is well oriented by checking (P1) and (P2).

\begin{lemma}
\label{lem:p1-p2-implies-p3}
If $D \in \ssv(G)$ satisfies (P1) and (P2), then it satisfies (P3).
\end{lemma}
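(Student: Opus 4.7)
The plan is to argue by contradiction. Suppose four $\trip_2$-strands $s_1, s_2, s_3, s_4$ of $D$ pairwise cross. By (P1) each pair crosses at a unique point $v_{ij}$, and by (P2) each of the four triangles $T_{ijk}$ (formed by a triple of these strands) is oriented as a directed 3-cycle whose sides are segments of strands endowed with their natural directions. We may assume the orientation of $T_{123}$ is counterclockwise, which fixes the positive directions of $s_1, s_2, s_3$ along the triangle sides: $s_1$ points $v_{12} \to v_{13}$, $s_2$ points $v_{23} \to v_{12}$, and $s_3$ points $v_{13} \to v_{23}$.

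Since $s_4$ has both endpoints on $\partial D$ and meets each of $s_1, s_2, s_3$ exactly once, applying the Jordan curve theorem to the simple closed curve $\partial T_{123}$ yields two cases. In \textbf{Case A}, $s_4$ enters $T_{123}$; after a cyclic relabeling of $\{1,2,3\}$ (which preserves the counterclockwise orientation), I may assume $v_{14}$ and $v_{24}$ lie on the $s_1$- and $s_2$-sides of $T_{123}$, while $v_{34}$ lies on one of the two $s_3$-arms outside $T_{123}$. If that arm precedes $v_{13}$ in $s_3$'s positive direction, then in $T_{134}$ both the $s_1$-side (directed $v_{14} \to v_{13}$) and the $s_3$-side (directed $v_{34} \to v_{13}$) point into $v_{13}$, making $v_{13}$ a sink; if instead the arm succeeds $v_{23}$, then in $T_{234}$ the $s_2$-side (directed $v_{23} \to v_{24}$) and the $s_3$-side (directed $v_{23} \to v_{34}$) both emanate from $v_{23}$, making $v_{23}$ a source. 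Either conclusion prevents that triangle from being a directed 3-cycle, contradicting (P2).

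In \textbf{Case B}, $s_4$ avoids $T_{123}$, so each $v_{i4}$ lies on one of the two arms of $s_i$ outside $T_{123}$; classifying each such arm as flowing into or out of $T_{123}$ along $s_i$'s positive direction yields $2^3 = 8$ sub-cases. In each sub-case, I would check that one of $T_{124}, T_{134}, T_{234}$ has a source or sink at one of $v_{12}, v_{13}, v_{23}$: for instance, in the all-flowing-in sub-case the sides of $T_{134}$ on $s_1$ and $s_3$ both point into $v_{13}$, and in the all-flowing-out sub-case they both emanate from $v_{13}$; the mixed sub-cases are handled identically by locating the appropriate triangle whose two non-$s_4$ sides share a common head or tail. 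The main obstacle is the bookkeeping across the eight sub-cases in Case B, but each follows a uniform template: read off the positive direction of a strand along its arm from the fixed orientation of $T_{123}$, match the resulting arrow at a vertex of $T_{123}$, and observe that the adjacent triangle containing $s_4$ has a source or sink there, contradicting (P2).
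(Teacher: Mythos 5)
Your case division is sound --- the Jordan-curve parity argument correctly shows that $s_4$ meets the boundary of $T_{123}$ in either two sides or none --- but the way you propagate orientations contains a genuine gap. You repeatedly read off the direction of a segment of $s_i$ lying \emph{outside} $T_{123}$ from ``$s_i$'s positive direction,'' as though the orientation fixed on the side of $T_{123}$ persisted along the whole strand. It does not: in a symmetrized six-vertex configuration the edge directions along a $\trip_2$-strand reverse at every source and sink, so only \emph{sub-segments} of the sides of $T_{123}$ inherit a direction from its orientation. Concretely, in your first sub-case of Case A the $s_3$-side of $T_{134}$ (from $v_{34}$ to $v_{13}$) is disjoint from the sides of $T_{123}$, so nothing forces it to point into $v_{13}$; (P2) applied to $T_{134}$ itself forces it to point \emph{out} of $v_{13}$, and $T_{134}$ can then be consistently oriented (take $s_1\colon y=0$, $s_2\colon x=0$, $s_3\colon x+y=2$, $s_4\colon 2x+3y=3$, so $v_{14}=(1.5,0)$, $v_{24}=(0,1)$, $v_{34}=(3,-1)$, and check). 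The genuine contradiction sits one step further along: the forced orientations of $T_{123}$ and $T_{134}$ make both $s_3$-edges at $v_{13}$ point away from $v_{13}$, so the $s_3$-side of $T_{234}$, which contains $v_{13}$ in its interior, cannot be consistently oriented (equivalently, $v_{13}$ would have to be a source while its $s_1$-edge arriving from $v_{14}$ points in). The same defect affects your second sub-case of Case A and all eight sub-cases of Case B, where the relevant sides of $T_{134}$ and $T_{234}$ lie on outer arms whose directions are simply not determined by $T_{123}$ alone.

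The repair is to let (P2) and the vertex conditions do the propagation. One clean route: at each crossing $v_{ij}$, if $T_{ijk}$ is cyclically oriented then its two sides at $v_{ij}$ contribute one incoming and one outgoing edge there, so $T_{ijk}$ occupies a ``mixed'' sector of $v_{ij}$; among the allowed vertex types of $\ssv(G)$, sources and sinks have no mixed sectors and a transmitting vertex has exactly two, which are opposite. Hence $T_{ijk}$ and $T_{ijl}$ must occupy equal or opposite sectors, which translates to: $v_{ij}$ is the middle of the three crossings on $s_i$ if and only if it is the middle of the three crossings on $s_j$. Since each strand has exactly one middle crossing, this symmetric relation would have to be a perfect matching on $\{1,2,3,4\}$ --- impossible in either of your cases, because in Case A both $v_{14}$ and $v_{24}$ are middles (so $1$ and $2$ would both be matched to $4$), while in Case B no $v_{i4}$ with $i\le 3$ is a middle (so $4$ is unmatched). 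Alternatively, keep your case analysis but determine each unknown side direction from the directed-cycle condition on its own triangle before locating the clash in the remaining triangle or at a vertex.
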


\begin{proof}  
If four $\trip_2$-strands in $D$ are pairwise crossing, then by (P1) these strands form $\binom{4}{3}$ big triangles, but there is no way to consistently direct the edges of these triangles satisfying (P2).
\end{proof}

\begin{proposition}
\label{prop:trip2_6V}
     A configuration $D \in \ssv(G)$ is well oriented if and only if (P1) and (P2) hold.
\end{proposition}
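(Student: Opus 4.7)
The plan is to establish the equivalence by exploiting the fact that the Yang--Baxter and ASM moves preserve the $\trip_2$-strands as unparameterized curves in the disk (up to ambient isotopy), together with their pairwise crossing pattern and the orientations of the edges lying on them. The Yang--Baxter move literally slides one $\trip_2$-strand across the intersection point of two others, while each ASM move leaves the $\trip_2$-pairing of opposite edges at each involved vertex intact and only globally reverses the arrows around a face. Consequently, the set of $\trip_2$-strand self-intersections and double crossings, as well as the collection of non-oriented big triangles, is the same for every $D' \sim D$. In particular, (P1) and (P2) are move-invariant properties of the equivalence class of $D$, and by \Cref{lem:p1-p2-implies-p3} so is (P3).

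For the direction $(P1) \wedge (P2) \Rightarrow$ well oriented, I would argue by contradiction. Suppose some $D' \sim D$ contains a loop at a vertex $v$; the unique $\trip_2$-strand through that loop edge enters and exits $v$ at the same vertex along the loop, producing a self-intersection and contradicting (P1) for $D'$ and hence for $D$. If $D'$ contains a $2$-cycle between vertices $u$ and $v$, the two parallel edges carry two distinct $\trip_2$-strands, both passing through $u$ and $v$; this is a double crossing, again contradicting (P1). Finally, a $3$-cycle with vertices $u,v,w$ in $D'$ is precisely the big triangle of the three pairwise-crossing $\trip_2$-strands that run through its three edges, so (P2) forces it to be cyclically oriented.

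For the direction well oriented $\Rightarrow (P1) \wedge (P2)$, I would attack each forbidden configuration by reducing it to a forbidden subgraph. If some $\trip_2$-strand self-intersects in $D$, choose an innermost bigon bounded by two arcs of the strand and use Yang--Baxter moves to slide any strand passing through that bigon outward; the resulting $D'$ has a bigon with no interior crossings, which, after cancelling the two intersection points via ASM moves, creates either a loop or a $2$-cycle, violating well-orientedness. The argument for double crossings of two distinct strands is the same. If three pairwise-crossing $\trip_2$-strands bound a non-oriented big triangle in $D$, choose an innermost such big triangle and use Yang--Baxter moves to eliminate any fourth strand meeting its interior (using (P3), which holds by (P1), (P2), and \Cref{lem:p1-p2-implies-p3}); the resulting big triangle collapses under a single Yang--Baxter move to a $3$-cycle whose edge orientations are inherited from the three strand segments, hence it is not cyclically oriented, contradicting well-orientedness.

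The main obstacle is the innermost-reduction step in the forward direction: one must argue that Yang--Baxter moves really can clear the interior of a chosen bigon or big triangle of extraneous crossings without trading them for new obstructions. The key is that every interior crossing involves a strand that enters and exits the region transversally, and (P3) together with planarity ensures that such strands can be pushed out by finitely many Yang--Baxter moves, shrinking the innermost region until it contains no interior crossings and is amenable to an ASM move or a direct Yang--Baxter collapse.
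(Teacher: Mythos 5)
Your overall strategy (move-invariance of the crossing pattern for one direction; innermost-region reduction via Yang--Baxter moves for the other) is the same as the paper's, but both directions have genuine gaps. In the direction $(\mathrm{P1})\wedge(\mathrm{P2})\Rightarrow$ well-oriented, your key claim---that because an ASM move ``only globally reverses the arrows around a face,'' the collection of non-oriented big triangles is the same for every $D'\sim D$---is a non sequitur, and the conclusion is not obviously true. Reversing the arrows around a face reverses the orientations of exactly those edges of a big triangle that border that face, so an ASM move can perfectly well change whether a given big triangle is oriented; likewise a Yang--Baxter move relocates intersection points and can create new $3$-cycles with nearby strands. The entire content of this direction is showing that such changes cannot produce an unoriented $3$-cycle when $D$ satisfies (P1) and (P2): the paper does this by analyzing the local configurations (an unoriented $3$-cycle adjacent to the ASM face forces a pre-existing unoriented \emph{big} triangle in $D$; the new $3$-cycles created by a Yang--Baxter move correspond to big triangles of $D$ and so are oriented by (P2)). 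Your proposal assumes this away with an incorrect justification.

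In the converse direction, your innermost-reduction is circular and skips the induction that makes it work. You invoke (P3) to push extraneous strands out of a bigon or big triangle, but at that point in the argument you have not yet established (P1) and (P2) for $D$, so (P3) is not available; moreover, Yang--Baxter moves can only be applied to \emph{oriented} triangles, so ``pushing strands out'' presupposes knowing that the small triangles inside your region are oriented---which is part of what is being proved. The paper resolves this by a specific dependency order: first prove (P2) \emph{assuming} (P1), by induction on the number of vertices inside the big triangle (each transversal strand cuts off a smaller big triangle, which is oriented by induction, forcing the boundary vertices to be transmitting and the side orientations to agree); then prove (P1) by taking a region $R$ bounded by a double crossing that is \emph{minimal over the whole move-equivalence class}, viewing its interior as a standalone well-oriented configuration satisfying (P1), hence (P2) and (P3) by the part already proved, and then shrinking $R$ by one Yang--Baxter move to contradict minimality. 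Your proposal proves (P1) first and (P2) second, which breaks this dependency; it also misuses ASM moves (they reverse orientations but never ``cancel intersection points''---an empty bigon is already a $2$-cycle and hence already a violation of well-orientedness). To repair the proof you would need to restore the paper's induction on interior vertices for (P2) and the sub-configuration/minimality argument for (P1).
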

\begin{proof}
Suppose $D \in \ssv(G)$ satisfies (P1) and (P2) and $D' \sim D$. Yang--Baxter and ASM moves do not change the multiplicity of intersection of any pair of $\trip_2$-strands. Since $\trip_2$-strands in $D$ do not double-cross or self-intersect, the same is true for $D'$, and in particular $D'$ has no loops or 2-cycles. We now argue that $D'$ has oriented 3-cycles. We may assume without loss of generality that $D'$ differs from $D$ by a single move. An ASM move applied to a face $F$ of $D$ cannot create an unoriented 3-cycle, for such a cycle would need to contain an edge bordering $F$, in which case $D$ would contain an unoriented big triangle ($\triangle \ell_1 \ell_2 \ell_4$ on the left side of \Cref{fig:moves-dont-create-bad-3-cycle}), violating (P2). A Yang--Baxter move applied to $D$ cannot create an unoriented 3-cycle either, for the new 3-cycles in $D'$ (see the right side of \Cref{fig:moves-dont-create-bad-3-cycle}) are forced to be oriented by property (P2) of $D$. Thus $D'$ has oriented 3-cycles and $D$ is well oriented.

\begin{figure}[ht]
    \centering
    \includegraphics[scale=0.9]{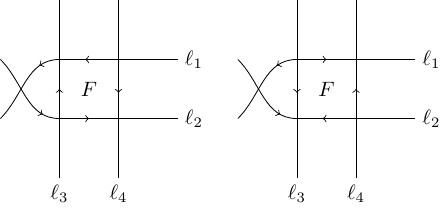}
    \includegraphics[scale=0.9]{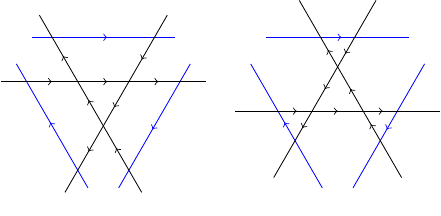} 
    \caption{Left: a 3-cycle adjacent to the face $F$ in $D$ and in $D'$ from the proof of \Cref{prop:trip2_6V}. Right: the 3-cycles potentially created by a Yang--Baxter move applied to $D$.}
    \label{fig:moves-dont-create-bad-3-cycle}
\end{figure}

Now suppose that $D$ is well oriented; we show that (P1) and (P2) must hold. We begin by proving (P2) in the case that $D$ satisfies (P1). Under this assumption, if $\ell_1,\ell_2,\ell_3$ are three pairwise crossing $\trip_2$-strands, let $T=\triangle \ell_1\ell_2\ell_3$ denote the big triangle they form.  We prove that the boundary of $T$ is oriented by induction on the number of vertices inside $T$, inclusive of the boundary. As a base case, if $T$ is a $3$-cycle, it must be oriented, since $D$ is well oriented. Otherwise, some strand must cross through $T$. Let $v$ be any vertex on the boundary of $T$, but not a corner of $T$. The strand passing into $T$ at $v$ forms a smaller big triangle $T'$ with the boundary of $T$. By induction $T'$ is oriented, so $v$ is a transmitting vertex. Thus the edge directions along any side of $T$ all agree (otherwise some $v$ along the side would be a source or sink).  Since $T'$ is oriented, the orientations of the sides it crosses, say $\ell_1$ and $\ell_2$, must agree. If $T$ is not oriented, it then follows that the remaining side $\ell_3$ has a single edge in $T$. In this case, all strands crossing through $T$ must cross $\ell_1$ and $\ell_2$ and not cross each other, forming a nested set of triangles sharing vertex $\ell_1 \cap \ell_2$ and sharing a common orientation. By a sequence of Yang--Baxter moves at $\ell_1 \cap \ell_2$, we may remove these strands from $T$. Since $D$ is well oriented, the resulting $D'$ has oriented 3-cycles; in particular, $\ell_3$ is oriented compatibly with $\ell_1$ and $\ell_2$. Thus $T$ is oriented.
    
Finally, we prove (P1). Suppose $D$ has a double-crossing or self-intersection in its $\trip_2$-strands and let $R$ denote the region of the graph this bounds. Suppose also that $R$ contains the minimal number of vertices among all such regions in all move-equivalent configurations. In particular, there are no double-crossings or self-intersections inside $R$. If $R$ were bounded by a self-intersection, any other $\trip_2$-strand crossing into $R$ would create a smaller double-crossing, so we may assume that $R$ is bounded by a double-crossing between $\trip_2$-strands $\ell_1$ and $\ell_2$. Therefore, viewing the interior of $R$ as a well-oriented \symm six-vertex configuration $D''$ with $\ell_1$ and $\ell_2$ forming the boundary circle, (P1) is satisfied. By the above argument, so is (P2), and by \Cref{lem:p1-p2-implies-p3}, $D''$ satisfies (P3). Since $D$ is well oriented, there must be a pair of strands intersecting inside $R$. Choose such a pair $a$ and $b$ so that the number of vertices in $\triangle a b \ell_1$ is minimized (see \Cref{fig:eye}). All strands cutting through this triangle must cross both $a$ and $b$, and they may not cross each other by (P3). Thus a sequence of Yang--Baxter moves may be applied at $a \cap b$ to remove these strands from the triangle. Finally, a Yang--Baxter move may be applied to $\triangle ab\ell_1$ to obtain a smaller region bounded by a double-crossing, contradicting the minimality of $R$. This completes the proof of (P1).
\end{proof}

\begin{figure}[th]
    \centering
    \includegraphics{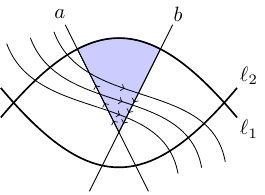}
    \caption{The diagram described in the final paragraph of the proof of \Cref{prop:trip2_6V}.}
    \label{fig:eye}
\end{figure}

The numbers $\underline{o}=(o_1,\ldots,o_n)$ where $o_i=1$ (resp.\ $o_i=-1$) if the edge incident to $b_i$ is directed inwards (resp.\ outwards) form the \emph{boundary conditions} of a \symm six-vertex configuration. We write $\ssv(\underline{o})$ (and $\wssv(\underline{o})$) for the (well-oriented) \symm six-vertex configurations with boundary conditions $\underline{o}$. 

We now define the correspondence $\varphi$ between contracted hourglass plabic graphs and \symm six-vertex configurations. 

\begin{definition}
\label{def:6-vertex-hourglass-bijection}
Given $G \in \cg(\underline{o})$, $\varphi(G)$ is the \symm six-vertex configuration formed from $G$ by orienting all simple edges from black vertex to white, removing the vertex coloring, and contracting each 2-hourglass and incident vertices to a single $4$-valent vertex. \Cref{fig:hourglass-example-with-trips} shows an example of an hourglass plabic graph and its image under $\varphi$.
\end{definition}

\begin{proposition}
\label{prop:welldefvarphi}
For $G \in \cg(\underline{o})$, we have $\varphi(G) \in  \ssv(\underline{o})$. 
\end{proposition}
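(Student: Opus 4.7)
The plan is to verify the two requirements for membership in $\ssv(\underline{o})$: that the boundary orientations of $\varphi(G)$ agree with $\underline{o}$, and that every interior vertex of $\varphi(G)$ carries one of the six allowed local orientations (source, sink, or transmitting). The boundary check is immediate from the definition of $\varphi$: each boundary vertex $b_i$ of $G$ has simple degree $1$ and is incident to a single simple edge, which $\varphi$ orients from its black endpoint to its white endpoint. Hence if $b_i$ is black the edge points into the disk, giving $o_i=+1$, and if $b_i$ is white the edge points outward, giving $o_i=-1$; in either case this matches $o_i=\col(b_i)\deg(b_i)$.

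For the interior vertices, the main combinatorial tool I would introduce is the \emph{hourglass skeleton} $H\subseteq G$, the subgraph spanned by the $2$-hourglass edges (every non-simple edge in an oscillating contracted hourglass plabic graph is a $2$-hourglass by \Cref{def:hourglass-move-equivalence}). Each interior vertex of $G$ has degree $4$ and each $2$-hourglass contributes $2$ to that degree, so every vertex of $G$ has hourglass-degree at most $2$; hence every connected component of $H$ is either a single vertex, a path, or a cycle. A cycle in $H$ would consist entirely of vertices of hourglass-degree $2$ and therefore of simple-degree $0$, so it would form an isolated component of $G$ disconnected from the boundary; ruling such components out, every component of $H$ is a path $P$, which $\varphi$ collapses to a single $4$-valent vertex $\widetilde v$.

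The next step is to count and orient the simple edges at $\widetilde v$. An interior vertex of $P$ has hourglass-degree $2$ and no simple edges, while each endpoint of $P$ has hourglass-degree at most $1$ and therefore contributes exactly $2$ simple edges (or $4$ in the degenerate case that $P$ has length $0$). Thus $\widetilde v$ is incident to exactly $4$ simple edges, all contributed by endpoints of $P$. Since $\varphi$ orients every simple edge from black to white, edges from a black endpoint are outgoing at $\widetilde v$ and edges from a white endpoint are incoming. Bipartiteness of $G$ implies the two endpoints of $P$ share color precisely when $P$ has even length, so $\widetilde v$ becomes a source or a sink in that case and carries exactly two incoming and two outgoing arrows otherwise.

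The one point I expect to require real care is the cyclic arrangement of these arrows, because being a transmitting vertex requires the two incoming arrows to be cyclically adjacent, and similarly for the outgoing arrows. This is exactly where the half-twist built into the hourglass notation pays off: by \Cref{def:hourglass-plabic-graph}, the clockwise strand orders at the two endpoints of an hourglass agree, so contracting an hourglass splices the cyclic orders of its two endpoints in such a way that the simple edges surviving from each endpoint form a single unbroken cyclic arc. Iterating along $P$ shows that at $\widetilde v$ the simple edges inherited from one endpoint of $P$ form one arc in the cyclic order and those from the other endpoint form the complementary arc, which is precisely the shape of a source, sink, or transmitting vertex. This establishes $\varphi(G)\in\ssv(\underline{o})$.
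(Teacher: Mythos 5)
Your proof is correct and follows essentially the same route as the paper's (a vertex-by-vertex check: simple black vertices become sources, simple white vertices become sinks, contracted hourglasses become transmitting vertices, and boundary colors become edge orientations); you are in fact more careful than the paper about why the two incoming arrows at a transmitting vertex end up cyclically adjacent. The one part I would flag is the hourglass-skeleton detour. For $G\in\cg(\underline{o})$ each internal vertex is incident to at most \emph{one} $2$-hourglass --- a contracted oscillating graph has no pair of consecutive $2$-hourglasses (this is exactly the configuration eliminated by the contraction moves, and is used explicitly in the proof of \Cref{lem:how-trip1-trip2-diverge}) --- so every component of your skeleton $H$ is a single vertex or a single edge, and the analysis of longer paths and cycles is vacuous. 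Moreover, your stated reason for excluding cycles (``ruling out isolated components'') is not available here: \Cref{def:hourglass-plabic-graph} and the definition of $\cg(\underline{o})$ do not forbid isolated components (only full reducedness does). Cycles in $H$ are instead impossible because they would require adjacent $2$-hourglasses, which contractedness forbids. With that substitution the argument is airtight, and the rest matches the paper's proof.
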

\begin{proof}
  Let $D =\varphi(G)$. When a 2-hourglass in $G$ is collapsed to a vertex in $D$, the resulting vertex is still $4$-valent. Every white vertex in $G$ incident to no hourglass becomes a sink in $D$. Every black vertex in $G$ incident to no hourglass becomes a source in $D$. Each contracted hourglass results in a transmitting vertex. The map $\varphi$ converts boundary colors to edge orientations, so $\varphi(G)$ has boundary conditions $\underline{o}$. 
\end{proof}

The following notion will be used in \Cref{sec:nicelabelings}.
\begin{definition}\label{def:ssv-proper-labeling}
For $D \in \ssv(\underline{o})$, a \emph{proper labeling} is a labeling of the edges by elements of $\{1,2,3,4\}$ such that
the labels adjacent to each source or sink are distinct and each transmitting vertex has its two incoming edges labeled $a \neq b$, and its two outgoing edges labeled $a$ and $b$.
\end{definition}

\begin{theorem}
\label{thm:6-vertex-hourglass-correspondence}
The map $\varphi : \cg(\underline{o}) \to \ssv(\underline{o})$ is a bijection intertwining benzene moves with Yang--Baxter moves, intertwining square moves with ASM moves, and preserving trip permutations and proper labelings. Moreover, $G$ is fully reduced if and only if $\varphi(G)$ is well oriented. 
\end{theorem}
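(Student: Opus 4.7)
The plan is to handle the five claims of the theorem (bijection, benzene/Yang--Baxter intertwining, square/ASM intertwining, preservation of trip permutations and of proper labelings, and the full-reducedness equivalence) by exhibiting an explicit inverse $\psi$ and then checking each claim locally. To define $\psi\colon \ssv(\underline{o}) \to \cg(\underline{o})$, I would replace each source of $D$ by a black vertex, each sink by a white vertex, and each transmitting vertex $t$ by a $2$-hourglass whose black endpoint receives the two outgoing simple edges at $t$ and whose white endpoint receives the two incoming simple edges, with the hourglass twist forced by the alternating in/out cyclic order at $t$. Bipartiteness of $\psi(D)$ follows because no edge of $D$ can join two sources or two sinks (a single edge cannot be outgoing from both endpoints), and the local pictures show $\psi\circ\varphi=\mathrm{id}$ and $\varphi\circ\psi=\mathrm{id}$.

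For the move intertwinings, I would match the local pictures of \Cref{fig:main-hourglass-moves} with those of \Cref{fig:main-6v-moves}: the three alternating $2$-hourglasses of a benzene hexagon contract to the three transmitting vertices of a Yang--Baxter configuration, and the two benzene orientations match the two sides of the Yang--Baxter move; the square moves similarly contract to the ASM moves, with the color/orientation reversals aligning correctly. For trip permutations, the key observation is that at any $4$-valent vertex the ``second leftmost turn at a white vertex'' and the ``second rightmost turn at a black vertex'' both amount to going straight across. A $\trip_2$-strand entering one strand of an hourglass therefore exits on the other strand and then on the opposite simple edge; contracting the hourglass collapses this two-hop passage into a single straight-across pass at the transmitting vertex, matching the $\trip_2$ rule on $\ssv(\underline{o})$. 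For $\trip_1$ (and hence $\trip_3$ by reversal), one checks the four possible incoming edges at each of the three vertex types in $D$ against \Cref{fig:six-vertex-trips}, tracing through the hourglass twist to confirm that the leftmost-at-white / rightmost-at-black rule on $G$ reproduces the turning rules on $D$.

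The proper labelings claim is essentially algebraic: at an hourglass between a black vertex $b$ and a white vertex $w$, the hourglass edge receives a $2$-subset $S \subseteq [4]$ and the four incident simple edges receive singletons, with the four labels at each of $b$ and $w$ partitioning $[4]$; this forces the two incoming and two outgoing labels at the corresponding transmitting vertex of $D$ to share the same $2$-set $[4]\setminus S$, which is precisely the condition of \Cref{def:ssv-proper-labeling}. The inverse assignment recovers $S$ uniquely as the complement of the common label $2$-set, producing a bijection between proper labelings. For the final claim, since $\varphi$ intertwines moves, it suffices to show that a $4$-cycle in $G'$ contains an hourglass exactly when the corresponding face of $\varphi(G')$ is a loop, a $2$-cycle of the underlying undirected graph, or an unoriented $3$-cycle. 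One does this by case analysis on the number and adjacency of hourglasses in the $4$-cycle, noting in particular that contracting a single hourglass in a bipartite $4$-cycle, where all three remaining simple edges are oriented black-to-white, yields a $3$-cycle in which two of the three vertices are forced to be sources or sinks of the cycle, so the $3$-cycle cannot be cyclically oriented.

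The main obstacle is the trip-permutation check, particularly for $\trip_1$ and $\trip_3$, where one must carefully track which of the two hourglass strands a walk enters on, how the planar twist permutes these at the opposite endpoint, and how the leftmost/rightmost convention interacts with the alternating vertex colors along the walk. The $\trip_2$ case is cleaner because ``straight across'' is independent of color, but the asymmetric nature of $\trip_1$ is where the hourglass twist genuinely plays a role (and is ultimately the reason hourglasses, rather than doubled plain edges, are needed) and where the bookkeeping requires the most care.
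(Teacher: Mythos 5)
Your proposal is correct and follows essentially the same route as the paper's proof: exhibit the explicit inverse (sources black, sinks white, transmitting vertices expanded to hourglasses with incoming edges at the white end and outgoing at the black end), match the move pictures locally, observe that the trip rules and proper-labeling conditions on symmetrized six-vertex configurations were defined precisely to correspond under $\varphi$, and identify forbidden $4$-cycles with loops, $2$-cycles, and unoriented $3$-cycles. You in fact supply more detail than the paper does at the steps it dismisses as ``by construction'' (the $\trip_1$ bookkeeping through the hourglass twist and the unoriented-$3$-cycle case analysis), and those details are accurate.
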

\begin{proof}
The map $\varphi$ has an inverse which colors sinks white, sources black, and expands each transmitting vertex to an hourglass from a white to a black vertex in the direction of transmission. The output of $\varphi^{-1}$ is contracted, since this procedure does not produce 3- or 4-hourglasses or adjacent pairs of 2-hourglasses.

The Yang--Baxter move of \Cref{fig:main-6v-moves} is the image under $\varphi$ of the benzene move of \Cref{fig:main-hourglass-moves}. The ASM moves of \Cref{fig:main-6v-moves} are the images under $\varphi$ of the corresponding square moves of \Cref{fig:main-hourglass-moves}. The preservation of trip permutations and proper labelings is by construction. The behavior in \Cref{fig:six-vertex-trips} was chosen to reflect the rules of the road after applying $\varphi$, while the labelings of \Cref{def:ssv-proper-labeling} correspond under $\varphi^{-1}$ to proper labelings in the usual sense of hourglass plabic graphs (by labeling hourglass edges with the complement in $\{1,2,3,4\}$ of the labels that appear on adjacent edges).

Suppose $G\in \crg(\underline{o})$. Then $G$ has no isolated components and no $G'\sim G$ has a $4$-cycle containing an hourglass. An unoriented $3$-cycle, a $2$-cycle, or a loop corresponds under $\varphi^{-1}$ to a $4$-cycle containing an hourglass. Thus $D$ has no isolated components and no $D' \sim D=\varphi(G)$ contains an unoriented $3$-cycle, a $2$-cycle, or a loop. By definition, this means $D$ is well oriented. The converse works similarly.
\end{proof}

\subsection{Matching diagrams}
In this subsection, we establish \Cref{prop:pm-diagrams-connected-under-Yang--Baxter}, which will be used in the proof of \Cref{thm:hourglass-trips-determine-move-equivalence}.

A \emph{matching diagram} $M$ is the underlying graph of some well-oriented \symm six-vertex configuration $D$ (with edge directions forgotten). The \emph{matching} of $M$ is $\trip_2(D)$, viewed as a matching of the boundary vertices $b_1,\ldots,b_n$. By \Cref{lem:p1-p2-implies-p3} and \Cref{prop:trip2_6V}, among any four strands in a matching diagram $M$, there is a pair which does not cross (that is, $M$ has no \emph{4-crossings}). We may apply Yang--Baxter moves to matching diagrams as to \symm six-vertex configurations, but without remembering the edge orientations.

The following is closely related to the Tits--Matsumoto theorem (see, e.g., \cite[Thm.~3.3.1]{Bjorner.Brenti}) on reduced words of permutations; see \cite[\S7]{Curtis.Ingerman.Morrow} for a related result for circular planar graphs.

\begin{proposition}
\label{prop:pm-diagrams-connected-under-Yang--Baxter}
Any matching diagrams for the same matching are connected by Yang--Baxter moves.
\end{proposition}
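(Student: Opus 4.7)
The plan is to mimic the Tits--Matsumoto theorem on reduced words in the symmetric group, treating matching diagrams as ``wiring diagrams'' for the matching $m$ and Yang--Baxter moves as braid relations. The hypotheses built into matching diagrams --- no double-crossings (from (P1)) and no $4$-crossings (by \Cref{lem:p1-p2-implies-p3}) --- are the precise analogues of reducedness and simplicity of a wiring diagram, so every pair of strands that is linked (i.e., whose endpoints separate each other around the boundary) crosses exactly once, and any triple of pairwise crossing strands bounds a triangular face.

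I would induct on the number $n$ of strands, with $n\le 1$ vacuous. For the inductive step, fix a boundary vertex $b_1$, let $b_k=m(b_1)$, and in a matching diagram $M$ realizing $m$ let $s$ denote the strand from $b_1$ to $b_k$. The central step is a \emph{straightening lemma}: by a sequence of Yang--Baxter moves one can transform $M$ so that $s$ is a simple chord of the disk, meeting the strands linked to $s$ in a prescribed canonical order (say, by the index of the opposite endpoint). Applying this lemma to both $M_1$ and $M_2$ puts the $s$-strand in the same canonical position in each; removing $s$ then splits the disk into two sub-disks carrying matching subdiagrams for strictly smaller matchings, to which the inductive hypothesis applies. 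Gluing the resulting equivalences across the canonical $s$ yields $M_1\sim M_2$.

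To prove the straightening lemma, consider the region $R$ between $s$ and the intended canonical chord $\bar s$. Property (P1) forbids a bigon between $s$ and any other strand, so any nonempty component of $R$ has at least three sides; hence every deviation of $s$ from $\bar s$ is witnessed by a triangular face one of whose sides is an arc of $s$. Property (P3), applied locally to the strands crossing into $R$, guarantees that an \emph{innermost} such triangle exists (otherwise four strands including $s$ would be pairwise crossing). A Yang--Baxter move at that triangle strictly decreases a natural complexity measure such as the number of faces of $R$, and iterating drives this measure to zero, placing $s=\bar s$.

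The main obstacle will be making the straightening lemma precise: one needs to show that an innermost triangle having $s$ as one of its sides can always be located (not merely an innermost triangle among some other three strands), and that successive Yang--Baxter moves can be arranged so that progress is monotone and no new deviations of $s$ are created. The crucial combinatorial input is (P3); the analysis must keep careful track of strands that lie entirely on one side of $\bar s$ and do not cross $s$, since these can obstruct the innermost triangle and must themselves be rearranged by preliminary Yang--Baxter moves before $s$ can be straightened further.
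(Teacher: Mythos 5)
Your strategy is genuinely different from the paper's. The paper does not induct on the number of strands or remove a strand: it defines a single global canonical diagram $\tilde{M}$ via an ``abc-property'' (for any three pairwise-crossing strands $\ell_a,\ell_b,\ell_c$ with $a<b<c$, the crossing $\ell_b\cap\ell_c$ lies outside $\ell_a$), shows this property pins down $\tilde M$ uniquely and is realizable by an explicit drawing recipe, and then reduces the number of abc-violations one at a time. The extremal choice of violation ($a$ maximal, then $b$ minimal, then $c$ maximal) forces the corresponding triangle to be empty --- any strand entering it would cross exactly two of the three sides (no $4$-crossings) and create a violation contradicting extremality --- so a Yang--Baxter move applies immediately. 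This sidesteps both of the delicate points in your plan: there is no need to clear a triangle of intervening strands, and no need to decompose the disk. Your route (straighten one strand, cut along it, induct) is in principle workable, and the triangle-clearing mechanism you would need is essentially the nested-triangles argument already used in the proof of \Cref{prop:trip2_6V}.

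That said, two steps of your sketch would fail as written. First, the canonical order of crossings along $s$ cannot be ``by the index of the opposite endpoint'': if two strands $\ell,\ell'$ both cross $s$ but not each other, the order in which they meet $s$ is forced by planarity and need not agree with any prescribed indexing. Yang--Baxter moves can only transpose \emph{consecutive, mutually crossing} strands along $s$, so the correct statement of the straightening lemma is that the crossing sequence along $s$ can be brought to any linear extension of the forced partial order; you must choose the canonical order to be such an extension. Relatedly, the ``region $R$ between $s$ and $\bar s$'' is not well defined, since $\bar s$ is not a strand of the diagram; the clean argument works directly with adjacent transpositions of the crossing sequence, clearing each triangle $\triangle\, s\,\ell\,\ell'$ by pushing out the nested strands at the corner $\ell\cap\ell'$ (using \Cref{lem:p1-p2-implies-p3} to see they are pairwise non-crossing). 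Second, ``removing $s$ splits the disk into two sub-disks carrying matching subdiagrams'' glosses over the fact that every strand crossing $s$ is cut in two, so the inductive objects are diagrams in a half-disk with some endpoints on the chord $\bar s$; you must check that these generalized diagrams still satisfy the no-double-crossing and no-$4$-crossing hypotheses (they do, being subconfigurations) and that the matching of each half is determined by $m$ together with the canonical crossing order on $\bar s$. With these repairs the induction closes, but the paper's single global induction on abc-violations is considerably shorter.
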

\begin{proof}
We will show that all matching diagrams with a fixed matching $m$ are move-equivalent by constructing a canonical matching diagram $\tilde{M}$ with matching $m$ to which all diagrams are connected.

We denote a strand in a matching diagram with endpoints $b_i$ and $b_j$ with $i<j$ by $\ell_i$. The region of the disk bounded by $\ell_i$ and the boundary segments connecting $b_i,b_{i+1},\ldots,b_j$ is called the \emph{inside} of $\ell_i$ and the remainder of the disk, bounded by $\ell_i$ and the boundary segments connecting $b_j, b_{j+1},\ldots,b_n, b_1,\ldots,b_i$ is the \emph{outside}. 

The matching diagram $\tilde{M}$ is uniquely characterized by the following property (the \emph{abc-property}): if $a<b<c$ and strands $\ell_a, \ell_b, \ell_c$ all cross each other, then $\ell_b$ and $\ell_c$ cross on the outside of $\ell_a$. The abc-property is simultaneously achievable for all $a<b<c$ since $\tilde{M}$ may be drawn according to the following recipe: Cut the boundary of the disk between $b_n$ and $b_1$ and unfold the boundary onto a line; then, for each $i=1,2,\ldots$ in order, draw $\ell_i$ with an initial vertical segment, during which it crosses exactly those previously drawn strands $\ell_j$ such that $\ell_i,\ell_j$ are required to cross in $m$; finish drawing $\ell_i$ with a horizontal segment followed by a vertical segment, during which it does not cross any previously drawn strands. See \Cref{fig:matching-diagram-recipe} for an example. Furthermore the abc-property uniquely specifies the matching diagram $\tilde{M}$, since it determines the relative positions of the crossings in every 3-crossing $\ell_a,\ell_b,\ell_c$.

Suppose that $M$ is a matching diagram with matching $m$ having some number $d>0$ of triples $a<b<c$ which violate the abc-property. Among these violations, choose the one with $a$ maximal, $b$ minimal with respect to $a$, and $c$ maximal with respect to $a,b$. By assumption, $\ell_b$ and $\ell_c$ cross on the inside of $\ell_a$. If no strands enter the region bounded by $\ell_a,\ell_b,\ell_c$, then we may immediately apply a Yang--Baxter move to eliminate this violation. Otherwise there is some strand entering this region which crosses exactly two of $\ell_a,\ell_b,\ell_c$, since a 4-crossing is impossible by \Cref{prop:trip2_6V}; see \Cref{fig:matching-diagram-recipe}. But in any case, the new strand $\ell_{a'},\ell_{b'},$ or $\ell_{c'}$ creates a new violation $\{\ell_{a'},\ell_b,\ell_c\}, \{\ell_{a},\ell_{b'},\ell_c\},$ or $\{\ell_{a},\ell_b,\ell_{c'}\}$ of the abc-property which contradicts the minimality and maximality assumptions imposed on $a,b,c$. Thus we can reduce the number of violations of the abc-property using the Yang--Baxter move, and we see by induction that $M \sim \tilde{M}$.
\end{proof}

\begin{figure}[th]
    \centering
    \includegraphics[scale=0.8]{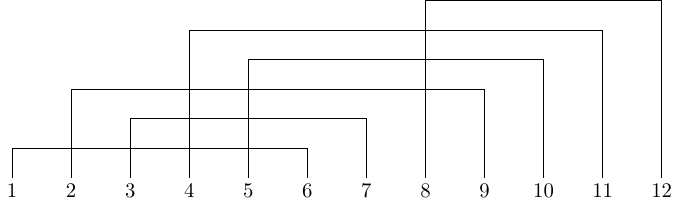}\quad
    \includegraphics[scale=0.65]{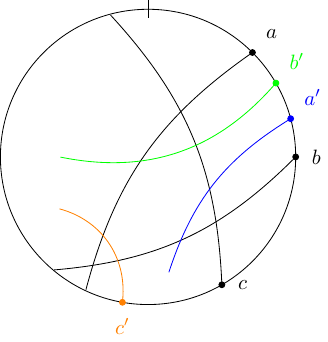}
    \caption{Left: the drawing of $\tilde{M}$ for $m=(1 \: 6) (2 \: 9) (3 \: 7) (4\: 11) (5 \: 10) (8 \: 12)$. Right: the violation of the abc-property discussed in the proof of \Cref{prop:pm-diagrams-connected-under-Yang--Baxter}.}
    \label{fig:matching-diagram-recipe}
\end{figure}

\subsection{Well-oriented configurations and monotonicity}

We now establish a key $\trip_\bullet$-theoretic property, called \emph{monotonicity}, of well-oriented configurations and fully reduced hourglass plabic graphs.
Our goal is to show that monotonicity is equivalent to being well-oriented and fully reduced. 

\begin{definition}
\label{def:six-vertex-monotonicity}
A symmetrized six-vertex configuration $D$ is \emph{monotonic} if $\trip_2$-strands do not revisit vertices or double cross, and if for every $\trip_1$-strand $\ell_1$, passing through vertices $U_1$, and every $\trip_2$-strand $\ell_2$, passing through vertices $U_2$, the vertices in the intersection $U_1 \cap U_2$ are consecutive along both $\ell_1$ and $\ell_2$. Note that we could equivalently impose this condition for $\trip_3$-strands instead of $\trip_1$-strands.
\end{definition}

\begin{theorem}
\label{thm:well-oriented-implies-monotonic}
Let $D$ be a well-oriented symmetrized six-vertex configuration, then $D$ is monotonic.
\end{theorem}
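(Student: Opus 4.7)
The first half of monotonicity — that $\trip_2$-strands do not self-intersect or double cross — is exactly property (P1), which holds for well-oriented configurations by \Cref{prop:trip2_6V}. So the work is to prove the second half: that for a $\trip_1$-strand $\ell_1$ with vertex set $U_1$ and a $\trip_2$-strand $\ell_2$ with vertex set $U_2$, the intersection $U_1 \cap U_2$ appears as a consecutive block along both $\ell_1$ and $\ell_2$.

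My plan is to argue by contradiction using a minimality technique in the spirit of the second half of the proof of \Cref{prop:trip2_6V}. Suppose some pair $(\ell_1, \ell_2)$ violates consecutivity. Choose vertices $v, w \in U_1 \cap U_2$ with no vertex of $U_1 \cap U_2$ strictly between them along $\ell_1$ but with some vertex of $\ell_1$ or $\ell_2$ strictly between them (on the relevant strand) which does not lie in $U_1 \cap U_2$. Let $\alpha \subseteq \ell_1$ and $\beta \subseteq \ell_2$ be the arcs joining $v$ to $w$ picked out this way; they bound a disk region $R$ of the plane. Across all $\trip_1/\trip_2$ pairs and all such choices of $v, w$ in all move-equivalent configurations, pick the one minimizing the number of vertices interior to $R$.

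Next, I would analyze $R$. By (P1), no $\trip_2$-strand crosses $\beta$ itself, so any $\trip_2$-strand entering $R$ must enter and leave across $\alpha$, each time crossing $\ell_1$ transversally. Any two such strands inside $R$, together with $\ell_2$, form a big triangle which by (P2) is oriented; by (P3), no four of them are pairwise crossing; hence iterated Yang--Baxter moves at the apex $\ell_2$ can be used, exactly as in the proof of \Cref{prop:trip2_6V}, to push these auxiliary $\trip_2$-strands out of $R$. This yields a move-equivalent configuration in which $R$ is crossed by no $\trip_2$-strand at all, i.e.\ every interior vertex of $R$ lies on $\alpha$, and by minimality $R$ itself is trivially structured.

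With $R$ now empty of auxiliary $\trip_2$-strands, the edge orientations inside $R$ (and along $\alpha$) are tightly constrained by the forced source/sink/transmitting patterns, and the turning rules of \Cref{fig:six-vertex-trips} for $\trip_1$ can be applied vertex-by-vertex along $\alpha$. The contradiction will be that the local rule forces $\ell_1$ either to stay on $\beta$ all the way from $v$ to $w$ (so that all intermediate vertices of $\beta$ lie in $U_1$, against the assumption that the intersection is not consecutive on $\ell_2$) or to exit $R$ through $\alpha$ before reaching $w$ (against our choice of $\alpha$ as a single arc of $\ell_1$ between $v$ and $w$). I expect the main obstacle to be this last step: the local case analysis of the $\trip_1$ turning rule at each transmitting vertex of $\alpha$, together with the compatibility check between $\ell_1$ entering at $v$ and exiting at $w$ relative to the induced orientations on $\alpha$. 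A cleaner execution may be available by translating through $\varphi^{-1}$ (\Cref{thm:6-vertex-hourglass-correspondence}) and using $\trip_1(G) = \trip(\widehat{G})$ (\Cref{prop:trip1_agrees}) to argue in the hourglass plabic graph, where the rules of the road are uniform.
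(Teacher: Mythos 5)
Your reduction of the first half of monotonicity to (P1) is fine, and the overall shape of your argument --- a minimal counterexample region $R$ bounded by an arc $\alpha$ of $\ell_1$ and an arc $\beta$ of $\ell_2$ --- matches the polygon the paper works with. But the mechanism you propose for emptying and then analyzing $R$ has genuine gaps. First, the claim that ``by (P1), no $\trip_2$-strand crosses $\beta$ itself'' is false: (P1) only forbids \emph{double} crossings and self-intersections, so an auxiliary $\trip_2$-strand may cross $\ell_2$ once and hence enter or leave $R$ through $\beta$. Second, the Yang--Baxter clearing step does not transfer from the proof of \Cref{prop:trip2_6V}: there the region was bounded by two $\trip_2$-strands meeting at an apex, so the strands cutting through it were forced to cross both sides, form big triangles, and could be removed by Yang--Baxter moves at that apex. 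Here one side of $R$ is a $\trip_1$-strand; the auxiliary strands entering through $\alpha$ need not cross $\ell_2$ or one another, and there is no triangle of $\trip_2$-strands at which to apply the move. Third --- and you flag this yourself --- the concluding local analysis along $\alpha$ is exactly where the content of the theorem lies, and it is not supplied. (The detour through $\varphi^{-1}$ and reducedness of $\widehat{G}$ also risks circularity, since \Cref{prop:underlying-plabic-is-reduced} is itself deduced from this theorem via \Cref{cor:fully-reduced-iff-monotonic}.)

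The paper closes these holes with a different device. It first proves \Cref{lem:left-right-alternate}: the turns of a $\trip_1$-strand alternate between left and right, with left turns occurring exactly where both traversed edges point into the vertex and right turns where both point out. It then inducts on the number $k$ of turning vertices $v_1,\dots,v_k$ of $\ell_1$ strictly between $v$ and $v'$. For $k=1$, the $\trip_2$-strands through $\{v,v_1\}$ and $\{v_1,v'\}$ together with $\ell_2$ form an unoriented triangle, contradicting (P2). For $k\ge 2$, the $\trip_2$-strand $s$ through $v_1,v_2$ cannot exit the polygon through the nearby sides (that would create an unoriented triangle or a double crossing), so it exits further along $\ell_1$, and replacing $(\ell_2, v, v')$ by $(s, v_2, u)$ gives a strictly smaller instance. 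I would recommend importing that alternation lemma; without it, the orientation bookkeeping along $\alpha$ that your last paragraph gestures at does not close.
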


We use the following lemma in the proof of \Cref{thm:well-oriented-implies-monotonic}.
\begin{lemma}
\label{lem:left-right-alternate}
Let $\ell$ be a $\trip_1$-strand in a symmetrized six-vertex configuration $D$. Then the following hold.
\begin{itemize}
    \item[(a)] Suppose $\ell$ passes through edges $e = uv$ and $e' = vw$. Then $\ell$ turns left at $v$ if $e,e'$ are oriented toward $v$, turns right if $e,e'$ are oriented away from $v$, and goes straight through $v$ otherwise.
    \item[(b)] The turns of $\ell$ alternate between left and right.
\end{itemize}
\end{lemma}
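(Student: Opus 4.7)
Both parts reduce to a case check, with (a) unpacking \Cref{fig:six-vertex-trips} in a uniform way and (b) following from (a) by a short local argument.

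For part (a), I would proceed by case analysis on the orientation type of $v$, which by \Cref{def:symm6v} is one of source, sink, or one of four rotations of a transmitting vertex. One clean way is to transfer to the contracted hourglass plabic graph $G = \varphi^{-1}(D)$ via \Cref{thm:6-vertex-hourglass-correspondence}: sinks correspond to white vertices (all incident edges oriented toward $v$), sources to black vertices (all incident edges oriented away from $v$), and the rules of the road (first left at white, first right at black) then directly give the first two cases of the claim. A transmitting vertex $v$ expands under $\varphi^{-1}$ to a 2-hourglass between a white vertex on the ``two incoming edges'' side of $v$ and a black vertex on the ``two outgoing edges'' side; tracing $\trip_1$-strands through this hourglass in each of the six possible entry/exit pairs recovers the trichotomy: entering and leaving via the two incoming edges at $v$ gives a left turn at the white end (matching the case when both $e, e'$ are oriented toward $v$), entering and leaving via the two outgoing edges at $v$ gives a right turn at the black end (matching both $e, e'$ oriented away from $v$), and every remaining pair is ``mixed'' and corresponds to passing straight through the hourglass.

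For part (b), I would argue directly from (a). Suppose $\ell$ turns left at $v$, entering via $e$ and leaving via $e' = vw$; by (a), $e'$ is oriented toward $v$, hence away from $w$. I claim, by induction on the number of vertices $\ell$ traverses without turning, that whenever $\ell$ enters a vertex $u$ along an edge oriented away from $u$, its exit edge at $u$ is oriented toward $u$ (so the situation persists at the next vertex) unless $\ell$ turns at $u$, in which case the turn is a right turn. Both conclusions are immediate from the trichotomy in (a): the only way to go straight is the mixed case, which forces the exit edge to be oriented toward $u$; and the only turning case compatible with the incoming edge being oriented away from $u$ is the ``both away'' case, which is a right turn. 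The symmetric statement handles right-to-left. Alternation of turns follows.

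I do not expect a substantial obstacle; the content of part (a) is essentially the definition of \Cref{fig:six-vertex-trips} repackaged in terms of edge orientations at $v$, and part (b) is a direct consequence. The most delicate point is making the transmitting-vertex case of (a) airtight, which is handled cleanly by the $\varphi^{-1}$ expansion so that the rules of the road can be applied to a pair of properly colored vertices rather than to the unfolded six-vertex diagram.
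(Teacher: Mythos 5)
Your proof is correct and follows essentially the same route as the paper: part (a) is the same case inspection of the $\trip_1$ turning rules (the paper reads it off \Cref{fig:six-vertex-trips} directly, while you unfold the transmitting vertex via $\varphi^{-1}$ and apply the rules of the road, which is how that figure was designed in the first place), and your part (b) — tracking whether the strand travels with or against the edge orientation and noting that this persists through straight passes and flips only at turns — is precisely the paper's argument.
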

\begin{proof}
A simple inspection of the turning rules for $\trip_1$ yields part (a). For part (b), note that if $\ell$ has just turned right at $v$, then the orientation of $\ell$ agrees with the orientation of the edge it is traversing, and this will continue to be the case as $\ell$ passes straight through any vertices. While this condition is satisfied, $\ell$ cannot turn right by part (a). Thus $\ell$ must take a left turn between any two right turns, and, symmetrically, must take a right turn between any two left turns.
\end{proof}

\begin{proof}[Proof of \Cref{thm:well-oriented-implies-monotonic}]
Let $D$ be a well-oriented symmetrized six-vertex configuration. By \Cref{prop:trip2_6V}, $\trip_2$-strands do not revisit vertices or double cross. Suppose that $D$ is not monotonic, with some $\trip_1$-strand $\ell_1$ intersecting a $\trip_2$-strand $\ell_2$, separating from it after some vertex $v$, and then re-intersecting at some vertex $v'$ (see \Cref{fig:monotonic-proof}).

Let $v_1,\ldots,v_k$ be the vertices between $v,v'$ along $\ell_1$ at which $\ell_1$ turns, and write $v=v_0, v'=v_{k+1}$ for convenience. By \Cref{lem:left-right-alternate}, these turns alternate between left and right, with inward pointing edges at left turns and outward pointing edges at right turns, and with orientations not changing along the intervening segments on which $\ell_1$ goes straight. 

If $k=1$, then $\ell_2$ and the $\trip_2$-strands $s,s'$, passing through $\{v,v_1\}$ and $\{v_1,v'\}$, respectively, form an unoriented triangle, contradicting the fact that $D$ is well oriented.

Thus we have $k \geq 2$. The polygon with vertices $v,v',v_1,\ldots,v_k$ has two $\trip_2$-strands entering the interior of the polygon at $v_2$; these strands $s$ and $s'$ must recross the boundary of the polygon elsewhere. Let $s$ be the strand passing through $v_1$ and $v_2$. 

The strand $s$ may not exit the polygon through any of the segments $\overline{v v_1}, \overline{v_1 v_2}, \overline{v_2 v_3},$ or $\overline{vv'}$, lest it create an unoriented triangle or double crossing of $\trip_2$-strands. Thus $s$ exits the polygon at some vertex $u$, further along $\ell_1$. But then, replacing $\ell_2$ by $s$, $v$ by $v_2$, and $v'$ by $u$, we have a smaller instance of $\trip_1$- and $\trip_2$-strands intersecting, separating, and re-intersecting; by induction on $k$ this is impossible.
\end{proof}

\begin{figure}[th]
    \centering
    \includegraphics[scale=0.85]{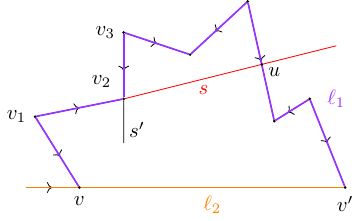}
    \caption{The strands discussed in the proof of \Cref{thm:well-oriented-implies-monotonic}.}
    \label{fig:monotonic-proof}
\end{figure}

\begin{definition}[cf.\ \Cref{def:six-vertex-monotonicity}]
\label{def:hourglass-monotonicity}
An hourglass plabic graph $G$ is \emph{monotonic} if $\trip_2$-strands do not revisit vertices or double cross, and if for every $\trip_1$-strand $\ell_1$, passing through vertices $U_1$, and every $\trip_2$-strand $\ell_2$, passing through vertices $U_2$, the vertices in the intersection $U_1 \cap U_2$ are consecutive along both $\ell_1$ and $\ell_2$. Note that we could equivalently impose this condition for $\trip_3$-strands instead of $\trip_1$-strands.
\end{definition}

\Cref{prop:monotonic-6-vertex-implies-monotonic-hourglass} is immediate from the definition of $\varphi$ (\Cref{def:6-vertex-hourglass-bijection}). 

\begin{proposition}
\label{prop:monotonic-6-vertex-implies-monotonic-hourglass}
A symmetrized six-vertex configuration $D$ is monotonic if and only if the contracted hourglass plabic graph $\varphi^{-1}(D)$ is.
\end{proposition}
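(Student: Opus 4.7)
The plan is to unwind the definition of $\varphi$ from \Cref{def:6-vertex-hourglass-bijection} and combine it with the trip-permutation preservation established in \Cref{thm:6-vertex-hourglass-correspondence}. Recall that $\varphi$ acts trivially on vertices not incident to a $2$-hourglass (apart from discarding the vertex color and orienting each simple edge from black to white), while collapsing each $2$-hourglass together with its two endpoints into a single transmitting vertex. Thus $\varphi$ induces a surjective projection $\pi$ from the vertex set of $G \coloneqq \varphi^{-1}(D)$ onto the vertex set of $D$, which is $2$-to-$1$ on pairs of $2$-hourglass endpoints and $1$-to-$1$ elsewhere.

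At the level of strands, $\pi$ carries each $\trip_a$-strand in $G$ bijectively to the corresponding $\trip_a$-strand in $D$: a passage along a $2$-hourglass in $G$ visits both of its endpoints consecutively and projects to a single passage through the transmitting vertex in $D$. This is essentially the content of the trip-permutation preservation in \Cref{thm:6-vertex-hourglass-correspondence}, now strengthened to a statement about strands rather than just their endpoints. Consequently, the two defining clauses of monotonicity (\Cref{def:six-vertex-monotonicity}, \Cref{def:hourglass-monotonicity}) transfer across $\pi$. First, a $\trip_2$-strand in $G$ revisits a vertex, or two $\trip_2$-strands double cross, if and only if the projected strands in $D$ do the same: the opposite-edge rule for $\trip_2$ at a transmitting vertex forces any revisit in $D$ to correspond to genuine revisits of \emph{both} hourglass endpoints in $G$, so that no spurious cases arise. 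Second, the intersection $U_1 \cap U_2$ of a $\trip_1$- and a $\trip_2$-strand forms a consecutive block along each strand in $G$ if and only if its $\pi$-image does so in $D$, because the two vertices of a $2$-hourglass are themselves consecutive along any strand that uses them.

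The biconditional then follows. The only point requiring real care is verifying that $2$-hourglass passages neither create nor destroy revisits or double crossings; I expect this small bookkeeping step, handled via the opposite-edge rule for $\trip_2$ at transmitting vertices, to be the only substantive obstacle. The rest of the argument is a direct unpacking of $\varphi$, consistent with the remark in the paper that the result is ``immediate.''
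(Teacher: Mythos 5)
Your proposal is correct and takes the same route as the paper, which simply declares the proposition immediate from the definition of $\varphi$; you have fleshed out exactly the intended unpacking (strands correspond under the contraction of each $2$-hourglass to a transmitting vertex, and both clauses of monotonicity transfer). The only point you flag as needing care---that $\trip_2$-strands meet hourglass endpoints in consecutive pairs, so revisits and double crossings match up---is handled correctly by your observation about the opposite-edge rule.
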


\begin{proposition}
\label{prop:associated-oscillating-is-still-fully-reduced}
An hourglass plabic graph $G$ is fully reduced if and only if $\osc(G)$ is fully reduced.
\end{proposition}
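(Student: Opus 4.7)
The plan is to exploit the fact that all of the moves and all forbidden $4$-cycles are purely internal phenomena. The benzene, square, and contraction moves are each local operations on internal vertices and do not touch boundary vertices or boundary edges; and since every boundary vertex has simple degree $1$, a $4$-cycle in an hourglass plabic graph must lie entirely among internal vertices. The oscillization $\osc(G)$, on the other hand, modifies $G$ only at the boundary (and, when $G$ has a direct boundary-to-boundary edge, via a local un-contraction near that edge). Consequently, one expects moves and forbidden $4$-cycles to be blind to the difference between $G$ and $\osc(G)$.

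To make this precise, first reduce to the case that $G$ has no direct boundary-to-boundary edges (handled separately below). In this case, $G$ and $\osc(G)$ share the identical set of internal vertices and the identical hourglass-edge structure among those internal vertices; they differ only in that each internal vertex formerly incident to a $d$-hourglass going to one boundary vertex is now incident to $d$ simple edges going into a claw of $d$ boundary vertices. With this identification, any sequence of moves applied to $G$ transports verbatim to a sequence of moves applied to $\osc(G)$ and vice versa, inducing a bijection between the move-equivalence classes of $G$ and of $\osc(G)$ via the operation $\osc$ itself. Because forbidden $4$-cycles are internal, $G' \sim G$ contains a forbidden $4$-cycle if and only if $\osc(G')$ does; combined with the obvious fact that isolated components are preserved by $\osc$ (replacing a boundary-incident hourglass by a claw only increases boundary contact), this yields the claimed equivalence.

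The main obstacle is to handle cleanly the un-contraction step in the definition of $\osc$ at a boundary-to-boundary edge, which is the only place where $\osc$ introduces new internal vertices. The point is that this un-contraction is itself a contraction move applied in reverse, so, since full reducedness is a property of the entire move-equivalence class, one may freely replace $\osc(G)$ with the graph obtained by skipping the un-contraction and proceed with the argument above. What requires care is checking that the inserted local configuration does not, in itself, realize a forbidden $4$-cycle or an isolated component not already present in the move class; this follows by direct inspection of \Cref{fig:contraction-hourglass-moves}, since the inserted piece is a local tree-like gadget meeting the rest of the graph only at $b$ and $b'$ and so cannot contribute to any $4$-cycle of the required type.
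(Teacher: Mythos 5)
Your proposal is correct and follows essentially the same route as the paper, whose entire proof is the observation that a sequence of moves on $\osc(G)$ produces a forbidden $4$-cycle if and only if the same sequence applied to $G$ does. Your additional care with the boundary-to-boundary un-contraction case and the internality of $4$-cycles is a legitimate (and slightly more thorough) elaboration of that same idea, not a different argument.
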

\begin{proof}
A sequence of moves on $\osc(G)$ produces a forbidden 4-cycle if and only if the same sequence of moves applied to $G$ does.
\end{proof}

\begin{corollary}
\label{cor:fully-reduced-iff-monotonic}
An hourglass plabic graph $G$ is fully reduced if and only if it is monotonic.
\end{corollary}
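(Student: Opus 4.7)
The plan is to reduce the statement to the contracted oscillating setting, where we may apply the bijection $\varphi$ of \Cref{thm:6-vertex-hourglass-correspondence} and argue directly in the symmetrized six-vertex picture. I would first observe that both full reducedness and monotonicity are preserved under passage from $G$ to $\osc(G)$: for full reducedness this is \Cref{prop:associated-oscillating-is-still-fully-reduced}, and for monotonicity the claws introduced in forming $\osc(G)$ only attach very short $\trip_a$-strand tails at the internal vertices adjacent to the original boundary, so none of the $\trip_2$-revisit/double-cross conditions nor the $\trip_1/\trip_2$-consecutivity conditions can be altered. I would then apply contraction moves to replace $\osc(G)$ by a move-equivalent contracted hourglass plabic graph $G'$; these preserve $\trip_\bullet$ by \Cref{prop:trip-of-underlying-plabic}(a) and hence preserve monotonicity, while they preserve full reducedness by definition of move-equivalence.

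In this reduced case, \Cref{thm:6-vertex-hourglass-correspondence} tells us that $G'$ is fully reduced if and only if $D := \varphi(G')$ is well-oriented, and \Cref{prop:monotonic-6-vertex-implies-monotonic-hourglass} tells us that $G'$ is monotonic if and only if $D$ is. The corollary therefore reduces to the claim that $D \in \ssv(\underline{o})$ is well-oriented if and only if it is monotonic. The forward implication is exactly \Cref{thm:well-oriented-implies-monotonic}. For the converse, I would invoke \Cref{prop:trip2_6V} and verify that a monotonic $D$ satisfies conditions (P1) and (P2). Condition (P1) is immediate from the definition of monotonicity.

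For (P2), the plan is to argue by contradiction: assume three $\trip_2$-strands $\ell_1, \ell_2, \ell_3$ form an unoriented big triangle $T$ and choose such a $T$ enclosing the minimum number of vertices across all configurations move-equivalent to $D$ (noting that Yang--Baxter and ASM moves preserve monotonicity, since they preserve trip strands up to local relabeling of vertices). Mimicking the reduction in the proof of (P2) in \Cref{prop:trip2_6V}, any $\trip_2$-strand passing through the interior of $T$ could be shown to either create a smaller unoriented big triangle or to be removable by Yang--Baxter moves at a corner of $T$, so one reduces to the case that $T$ is literally a $3$-cycle in the underlying graph. A finite case analysis at the three corners of this $3$-cycle using \Cref{lem:left-right-alternate} then produces, from any unoriented orientation pattern on $T$, a $\trip_1$-strand that must exit $T$ along one of $\ell_1, \ell_2, \ell_3$ and later re-enter along the same $\trip_2$-strand, giving a pair of vertices in $\ell_1 \cap \ell_2^{(1)}$ that are not consecutive along the $\trip_2$-strand and contradicting monotonicity.

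The main obstacle will be this final 3-cycle analysis for (P2): checking that each local orientation pattern compatible with an unoriented 3-cycle forces a non-consecutive $\trip_1/\trip_2$-intersection. The argument closely parallels the proof of (P2) in \Cref{prop:trip2_6V}, but replaces the well-oriented hypothesis used there (which enabled inductive use of Yang--Baxter moves through oriented triangles) with direct monotonicity-based reasoning about how \Cref{lem:left-right-alternate} constrains $\trip_1$-trajectories through a 3-cycle whose edge orientations fail to agree.
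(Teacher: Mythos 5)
Your proposal is correct, and the forward direction (fully reduced $\Rightarrow$ monotonic) is exactly the paper's argument: reduce to the contracted oscillating case, pass through $\varphi$, and chain \Cref{thm:6-vertex-hourglass-correspondence}, \Cref{thm:well-oriented-implies-monotonic}, and \Cref{prop:monotonic-6-vertex-implies-monotonic-hourglass}. For the converse, however, you take a genuinely different and much heavier route. The paper argues by a direct contrapositive that never leaves the hourglass picture: since full reducedness is \emph{defined} by the absence of forbidden $4$-cycles in move-equivalent graphs, it suffices to check (i) that each forbidden $4$-cycle fails monotonicity (a finite inspection, recorded in \Cref{fig:forbidden-4-cycles}) and (ii) that the moves preserve monotonicity; a non-fully-reduced monotonic graph would then yield a move-equivalent monotonic graph containing a non-monotonic local configuration, a contradiction. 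You instead set out to prove the converse of \Cref{thm:well-oriented-implies-monotonic} itself --- that a monotonic \symm six-vertex configuration satisfies (P1) and (P2) and is hence well oriented --- by rerunning the delicate big-triangle induction from \Cref{prop:trip2_6V} with the well-orientedness hypothesis replaced by a minimality-over-the-move-class hypothesis. This can be made to work (you will also need a local version of \Cref{lem:p1-p2-implies-p3} to see that strands crossing through $T$ do not cross each other before applying Yang--Baxter moves, and you should note that the unoriented-$3$-cycle case analysis you defer to the end is, under $\varphi$, literally the same check as \Cref{fig:forbidden-4-cycles}, which also covers loops and $2$-cycles via (P1)). What your route buys is a slightly stronger statement --- the equivalence of monotonicity and well-orientedness at the six-vertex level, i.e., a true converse to \Cref{thm:well-oriented-implies-monotonic} --- but that equivalence already follows by combining the corollary with \Cref{thm:6-vertex-hourglass-correspondence}, so the paper's two-line contrapositive is the more economical proof.
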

\begin{proof}
First, suppose $G$ is fully reduced. We may assume without loss of generality that $G$ is contracted, since contraction moves do not affect monotonicity.

Suppose moreover that $G$ is of oscillating type. Then by \Cref{thm:6-vertex-hourglass-correspondence} , $D=\varphi(G)$ is well oriented, and thus by \Cref{thm:well-oriented-implies-monotonic} $D$ is monotonic. By \Cref{prop:monotonic-6-vertex-implies-monotonic-hourglass}, we conclude that $G$ is monotonic.

If $G$ is of general type, \Cref{prop:associated-oscillating-is-still-fully-reduced} implies that $\osc(G)$ is fully reduced, and so by the above argument $\osc(G)$ is monotonic. It is clear then by construction that $G$ must also be monotonic.

For the converse, \Cref{fig:forbidden-4-cycles} demonstrates that each of the forbidden 4-cycles fails to be monotonic. Furthermore, the moves for hourglass plabic graphs preserve monotonicity. Thus monotonic graphs are fully reduced.
\end{proof}

\begin{figure}[ht]
    \centering
    \includegraphics[scale=1.2]{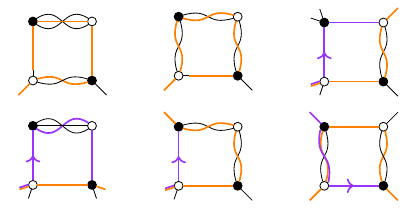}
    \caption{The 4-cycles forbidden in a fully reduced hourglass plabic graph. The $\trip_2$-strands, drawn in orange ($\textcolor{amber}{\blacksquare}$), and $\trip_1$-strands, drawn in purple ($\textcolor{amethyst}{\blacksquare}$), demonstrate the failure of monotonicity in each case.}
    \label{fig:forbidden-4-cycles}
\end{figure}

\subsection{Move equivalence and trip permutations}
\label{sec:move-equiv-and-trips}

We now give some final $\trip_\bullet$-theoretic lemmas and the proofs of \Cref{prop:underlying-plabic-is-reduced} and \Cref{thm:hourglass-trips-determine-move-equivalence}.

\begin{lemma}
\label{lem:trip1-does-not-revisit-vertex}
Let $G \in \rg(\underline{c})$, let $\ell$ be a $\trip_1$-strand in $G$, and let $v$ be an internal vertex. Then $\ell$ visits $v$ at most once.
\end{lemma}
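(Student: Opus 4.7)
The plan is to combine the monotonicity of fully reduced hourglass plabic graphs (\Cref{cor:fully-reduced-iff-monotonic}) with the reducedness of the underlying plabic graph $\widehat{G}$ (\Cref{prop:underlying-plabic-is-reduced}). By \Cref{prop:associated-oscillating-is-still-fully-reduced}, I may assume $G$ is of oscillating type, so every interior vertex has total degree $4$.

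Suppose for contradiction that the $\trip_1$-strand $\ell$ visits an interior vertex $v$ twice. Each visit consumes an incoming-outgoing pair of edges at $v$. I first show that the two pairs are disjoint: if they shared a common edge $e$, then since the $\trip_1$ turning rule at $v$ deterministically assigns distinct outgoing edges to distinct incoming edges, $\ell$ would traverse $e$ in both directions, giving an essential self-intersection of the corresponding $\trip$-strand in $\widehat{G}$ (via \Cref{prop:trip1_agrees}). This contradicts the reducedness of $\widehat{G}$ together with \Cref{thm:plabic-reduced-iff-no-bad-crossings}(2). Cyclically labeling the four edges at $v$ as $e_1, e_2, e_3, e_4$, we may therefore arrange that $\ell$ traces $e_1 \to e_2$ at one visit and $e_3 \to e_4$ at the other.

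Let $\gamma$ be the portion of $\ell$ between these two visits, regarded as a closed curve based at $v$. Then $\gamma$ cuts the disk into regions; let $R$ be the one lying in the sector between $e_2$ and $e_3$ at $v$. A short local analysis of the four sectors around $v$ shows that at least one of the two $\trip_2$-strands through $v$ enters $R$ at $v$; call it $\ell_2$. Since both endpoints of $\ell_2$ lie on the boundary of the disk, outside of $R$, the strand $\ell_2$ must exit $R$, necessarily crossing $\gamma$ at some interior vertex $w \neq v$. Then $v, w \in U_1 \cap U_2$, yet along $\ell$ they appear in the order $v, \ldots, w, \ldots, v$, so they are not consecutive along $\ell$. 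This contradicts the monotonicity of $G$ (\Cref{def:hourglass-monotonicity}).

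The main obstacle is the planar local analysis at $v$ identifying a $\trip_2$-strand that enters $R$; this requires carefully tracking cyclic sectors and flag orientations around $v$, ideally by passing through the associated symmetrized six-vertex picture where the behavior of $\trip_2$-strands (straight across) and $\trip_1$-strands (per \Cref{fig:six-vertex-trips}) is most transparent. The remaining steps are clean applications of reducedness of $\widehat{G}$ and monotonicity of $G$.
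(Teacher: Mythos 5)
Your proposal has a circularity problem: you invoke \Cref{prop:underlying-plabic-is-reduced} (reducedness of $\widehat{G}$, via condition (2) of \Cref{thm:plabic-reduced-iff-no-bad-crossings}) to rule out the case where the two visits to $v$ share an edge, but the proof of \Cref{prop:underlying-plabic-is-reduced} is deferred to \Cref{sec:move-equiv-and-trips} and explicitly relies on \Cref{lem:trip1-does-not-revisit-vertex}. So reducedness of the underlying plabic graph is not available here; the paper's proof deliberately avoids it, working only with monotonicity (\Cref{cor:fully-reduced-iff-monotonic}), bipartiteness, and the fact that a fully reduced graph contains no pair of distinct edges between the same two vertices.

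There are also two problems in the topological core of your argument. First, the two $\trip_2$-strands through $v$ pair \emph{opposite} edges, i.e.\ $e_1$ with $e_3$ and $e_2$ with $e_4$; since the sector strictly between the cyclically consecutive edges $e_2$ and $e_3$ contains no edge of $v$, neither $\trip_2$-strand enters $R$ at $v$, and moreover the region adjacent to that sector need not be the bounded component of the complement of $\gamma$, so ``$\ell_2$ must exit $R$ because its endpoints lie on the disk boundary'' does not follow. The paper instead locates the entering $\trip_2$-strand at some other vertex $v_i$ of the enclosed polygon: bipartiteness guarantees a polygon vertex at which the two non-$\ell$ edges point into the polygon's interior. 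Second, your closing contradiction is too quick: from $v,w\in U_1\cap U_2$ with $w$ occurring between the two occurrences of $v$ along $\ell$, one cannot immediately conclude that $U_1\cap U_2$ fails to be consecutive along $\ell$, since a priori every intermediate vertex of $\gamma$ could also lie on $\ell_2$. The paper's contradiction is different: monotonicity forces the $\trip_2$-strand to exit the polygon at a vertex adjacent (along the polygon) to where it entered, producing two distinct edges between the same pair of vertices, which is impossible in a fully reduced graph. The overall strategy (monotonicity plus a separating $\trip_2$-strand crossing the enclosed region) is the right one, but these three steps need to be repaired.
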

\begin{proof}
By \Cref{cor:fully-reduced-iff-monotonic}, $G$ is monotonic. Suppose $\ell$ visits $v$ twice, enclosing a polygon with vertices $v=v_0,v_1,\ldots,v_k$. Since $G$ is bipartite, this polygon has a vertex, say $v_i$, at which some $\trip_2$-strand $\ell_2$ enters the interior of the polygon. Now, $\ell_2$ must also exit the polygon at some vertex $v_j$. Since $G$ is fully reduced, $\ell_2$ may not have a self-intersection, so $v_i \neq v_j$. The only other possibility consistent with the monotonicity of the pair $\ell_1,\ell_2$ is $v_j=v_{i+1}$. But then $G$ must contain two separate edges between $v_i$ and $v_{i+1}$ (the edges traversed by $\ell_1$ and $\ell_2$), impossible in a fully reduced graph.
\end{proof}

\begin{lemma}
\label{lem:how-trip1-trip2-diverge}
Let $G\in \crg(\underline{o})$, and let $\ell_1$ and $\ell_2$ be $\trip_1$- and $\trip_2$-strands, respectively. Let $v_1,\ldots,v_k$ be the vertices shared by $\ell_1$ and $\ell_2$, ordered according to the direction of $\ell_1$. For $i=1,\ldots,k-1$, let $e_i$ be the edge between $v_i$ and $v_{i+1}$. Then:
\begin{itemize}
    \item[(a)] The multiplicities $m(e_i)$ alternate between $1$ and $2$.
    \item[(b)] If $m(e_1)=2$ then $k=2$.
    \item[(c)] $k$ is even, so $\col(v_k) \neq \col(v_1)$; thus if $v_1$ and $v_k$ are not boundary vertices, $\ell_1$ ends on the opposite side of $\ell_2$ from which it started.
    \item[(d)] If $m(e_i)=1$, then $\col(v_i)=\col(v_1)$.
\end{itemize}
\end{lemma}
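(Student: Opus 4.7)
The plan is a careful local analysis at each shared vertex $v_{i+1}$, using the trip rules for $\trip_1$ and $\trip_2$ together with the structural results \Cref{cor:fully-reduced-iff-monotonic} (monotonicity) and \Cref{lem:trip1-does-not-revisit-vertex} (non-revisit of $\trip_1$-strands). Recall that at any $4$-valent interior vertex, $\trip_2$ takes the straight-across (opposite) edge while $\trip_1$ takes the sharpest turn---sharpest left at white, sharpest right at black. It will be convenient to also pass to the symmetrized six-vertex picture $D = \varphi(G)$ via \Cref{thm:6-vertex-hourglass-correspondence}, wherein $2$-hourglasses collapse to transmitting vertices and $\trip_2$ still goes straight at every vertex.

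For (a), I analyze $v_{i+1}$ as follows. If $m(e_i) = 1$, both strands enter $v_{i+1}$ via the same simple edge; the trip rules force their exits to occupy two cyclically adjacent positions, and for the shared segment to continue, these exits must lead to the same next vertex $v_{i+2}$. Since $G$ is contracted, $v_{i+1}$ and $v_{i+2}$ are joined by at most one (multi-)edge, which must therefore be a $2$-hourglass, giving $m(e_{i+1}) = 2$. Dually, if $m(e_i) = 2$, the strands enter $v_{i+1}$ through the two (adjacent) hourglass strands; the non-revisit property rules out an immediate return through the other strand, and the trip rules then force both strands to exit via a common simple edge, giving $m(e_{i+1}) = 1$. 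This establishes the alternation.

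For (b), the first-shared-vertex constraint at $v_1$ is key: $\ell_1$ and $\ell_2$ must enter $v_1$ from distinct external edges (else their common predecessor would be shared too). With $m(e_1) = 2$, the trip rules combined with this constraint force both strands to exit $v_1$ via a common strand of the hourglass, so both enter $v_2$ from the same edge. The analysis at $v_2$ is then of the ``$m = 1$'' type above and would require $m(e_2) = 2$ for the segment to extend, contradicting the alternation forced by $m(e_1) = 2$. Hence $k = 2$.

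For (c), a dual analysis at the last-shared vertex $v_k$ together with the non-revisit property rules out $k$ odd: an odd $k$ would force $m(e_{k-1}) = 2$ (by alternation starting from $m(e_1) = 1$), but the terminal analysis then either produces a $\trip_1$-loopback or extends the segment past $v_k$. Hence $k$ is even, and bipartiteness gives $\col(v_k) = (-1)^{k-1}\col(v_1) \neq \col(v_1)$; the ``opposite side'' claim follows from counting crossings of $\ell_1$ with $\ell_2$ along the shared segment. Part (d) is then immediate: by (a) and (b), $m(e_i) = 1$ forces $i$ odd, so bipartiteness yields $\col(v_i) = \col(v_1)$. The main obstacle is the cyclic-position bookkeeping at each shared vertex---verifying that trip rules, hourglass strand positions, and strand directions (same vs.\ opposite traversal of $\ell_1, \ell_2$) are simultaneously consistent. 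The first/last shared vertex constraints and the non-revisit property are essential for excluding spurious configurations.
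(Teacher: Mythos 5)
Your overall approach---a local case analysis at each shared vertex using the trip rules, the structure of contracted oscillating graphs (multiplicities at most $2$, no consecutive $2$-hourglasses, no parallel edges), and monotonicity/non-revisiting---is the same as the paper's, and the analyses of (a), (b), and (d) match the paper's in substance. The one genuinely different route is (c): the paper handles the case $m(e_{k-1})=2$ by reversing $\ell_1$ into a $\trip_3$-strand and invoking the (b)-type argument at the \emph{other} end of the segment, whereas you argue directly at $v_k$ that entering through the two distinct hourglass strands forces both strands to exit through a common simple edge, so the shared segment would extend past $v_k$. Your direct version is valid (and arguably more self-contained, since it avoids checking that the (b) analysis transfers to $\trip_3$ versus $\trip_2$), at the cost of having to track which strand of each hourglass each trip uses all the way to $v_{k-1}$.

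There is one concrete gap: your parity argument for (c) only rules out odd $k\geq 3$ (it needs $e_{k-1}$ to exist), so it does not exclude $k=1$. The paper handles this separately: if $\ell_1$ and $\ell_2$ share a vertex, they must share an edge, because at a degree-$4$ vertex $\ell_1$ occupies a cyclically adjacent pair of edge-ends while $\ell_2$ occupies an opposite pair, and any such pairs intersect; hence $k=0$ or $k\geq 2$. You should add this. A second, harmless imprecision: in (a) you assert that when $m(e_i)=2$ the two strands enter $v_{i+1}$ through \emph{different} hourglass strands. This can fail (it fails exactly when $i=1$ in the situation of (b), where both strands exit $v_1$ through a common strand, as you yourself note), but in that case they diverge at $v_{i+1}$, so $e_{i+1}$ does not exist and the claim $m(e_{i+1})=1$ is vacuous; alternatively, $m(e_i)=m(e_{i+1})=2$ is already impossible because a contracted oscillating graph has no vertex incident to two $2$-hourglasses, which is how the paper disposes of this case.
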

\begin{proof}
Since $G$ is contracted and of oscillating type, it has no edges of multiplicity greater than two, and no pair of consecutive 2-hourglasses. The case $m(e_i)=m(e_{i+1})=1$ is impossible, since $\ell_1$ and $\ell_2$ could not share both edges; this establishes (a).

If $m(e_1)=2$, then $\ell_1$ and $\ell_2$ must enter $v_1$ on different simple edges, share $e_1$, and then leave $v_2$ on different simple edges, so $k=2$.

A simple case check shows that if $\ell_1$ and $\ell_2$ share at least one vertex, then they share an edge, so $k=0$ or $k \geq 2$. If $k>2$, then by parts (a) and (b), we know $m(e_1)=m(e_3)=\cdots=1$ and $m(e_2)=m(e_4)=\cdots=2$. If $m(e_{k-1})=2$, then letting $\ell_3$ be the $\trip_3$-strand obtained by reversing $\ell_1$, the same arguments as above would require $k=2$. Thus we must have $m(e_{k-1})=1$ and $k$ is even, proving (c). Finally, if $m(e_i)=1$, then $k>2$, so $m(e_1)=1$; then (d) follows from (a) and the bipartiteness of $G$. 
\end{proof}

\begin{proof}[Proof of \Cref{prop:underlying-plabic-is-reduced}]
Let $G \in \rg(\underline{c})$ have underlying plabic graph $\widehat{G}$. We wish to show that $\widehat{G}$ is reduced, which we will do by verifying that the conditions of \Cref{thm:plabic-reduced-iff-no-bad-crossings} are satisfied.

First, $\widehat{G}$ is leafless, since any vertex $v$ with $\widehat{\deg}(v)=1$ must be incident in $G$ to a 4-hourglass. Since all vertices of $G$ have degree 4, the endpoints of $e$ would form an isolated component in $G$, violating full reducedness. This also implies that $\widehat{G}$ has no isolated components. 

Assume now that $G$ is of oscillating type. \Cref{prop:trip1_agrees} and \Cref{lem:trip1-does-not-revisit-vertex} ensure that $\widehat{G}$ satisfies conditions (1),(2), and (4) of \Cref{thm:plabic-reduced-iff-no-bad-crossings}, so it remains to check that $\widehat{G}$ has no bad double crossings.

Suppose that two $\trip_1$-strands $\ell_1$ and $\ell_1'$ cross; then in $G$, we have one of the local configurations shown in \Cref{fig:no-bad-double-crossing}. In each case, $\trip_2$-strands $\ell_2$ and $\ell_2'$ are indicated. By \Cref{lem:how-trip1-trip2-diverge}, the $\trip_1$-strands diverge from the $\trip_2$-strands as indicated in the figure. In particular, after the crossing, $\ell_2$ and $\ell_2'$ lie between $\ell_1$ and $\ell_1'$. Since $G$ is fully reduced, it is monotonic by \Cref{cor:fully-reduced-iff-monotonic}. Since each of $\ell_1,\ell_1'$ has already shared vertices with both $\ell_2$ and $\ell_2'$, neither can retouch these strands, and thus $\ell_1,\ell_1'$ cannot recross in the forward direction.   

If $G$ is not of oscillating type, the above argument implies that $\widehat{\osc(G)}$ is reduced. It then follows that $\widehat{G}$ is likewise reduced.
\end{proof}

\begin{figure}[ht]
    \centering
    \includegraphics[scale=1.5]{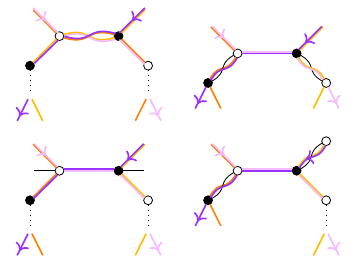}
    \caption{The possibilities for the local configuration of a pair of crossing $\trip_1$-strands $\ell_1$ (light purple $\textcolor{lavender}{\blacksquare}$) and $\ell_1'$ (dark purple $\textcolor{amethyst}{\blacksquare}$). In each case a pair $\ell_2$ (orange $\textcolor{amber}{\blacksquare}$) and $\ell_2'$ (yellow $\textcolor{clay}{\blacksquare}$) of $\trip_2$-strands prevents $\ell_1$ and $\ell_1'$ from crossing again, by monotonicity.}
    \label{fig:no-bad-double-crossing}
\end{figure}

\begin{lemma}
\label{lem:underlying-plabic-and-matching-diagram-determine-graph}
Let $G_1,G_2 \in \rg(\underline{o})$, and suppose $\widehat{G_1}=\widehat{G_2}=P$ and that the symmetrized six-vertex configurations $\varphi(G_1)$ and $\varphi(G_2)$ have the same underlying matching diagram $M$. Then $G_1=G_2$.
\end{lemma}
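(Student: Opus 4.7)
Plan: I will show $G_1 = G_2$ by proving their 2-hourglass edges, viewed as subsets $H_1, H_2 \subseteq E(P)$, coincide. Since $\varphi$ requires a contracted input, $G_1, G_2 \in \crg(\underline{o})$; in this setting (oscillating, contracted) the only non-simple edges are 2-hourglasses, and $G_i$ is uniquely determined by the pair $(P, H_i)$. So $H_1 = H_2$ will force $G_1 = G_2$.

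The map $\varphi$ realizes $M$ as a quotient of $P$: the quotient map $q_i : P \to M$ fixes the boundary, restricts to a bijection $E(P) \setminus H_i \leftrightarrow E(M)$, and merges exactly the endpoint pairs of edges in $H_i$ (while leaving the remaining vertices of $P$ untouched). The key assertion is that $q_i$ is determined by the data $(P, M)$ alone, independent of $i$; once this is shown, $q_1 = q_2$ gives $H_1 = H_2$ directly.

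The proof of this assertion is inductive, propagating an identification of vertices of $M$ with (possibly merged) vertices of $P$ from the boundary inward. Each boundary vertex and its unique incident edge are common to $P$ and $M$. At an interior vertex $w \in V(M)$ reached from the boundary, the local cyclic order of $4$ edges forces exactly one of two scenarios: either (a) $w$ arises from a simple-degree-$4$ vertex $v \in V(P)$, with $M$'s cyclic order at $w$ matching $P$'s cyclic order at $v$; or (b) $w$ arises from merging a simple-degree-$3$ vertex $v$ with a partner $v'$ via a contracted hourglass $vv' \in H_i$, and $M$'s cyclic order at $w$ is the splice of $P$'s cyclic orders at $v$ and $v'$ around $vv'$ (so the two edges from $v$ and the two edges from $v'$ appear as consecutive pairs). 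Comparing the observed cyclic order at $w$ in $M$ with the cyclic orders at $v$ and its neighbors in $P$, at most one scenario is consistent; in case (b) the partner $v'$ is pinned down because $G_i$ is fully reduced and so $P$ has no parallel edges (as noted after \Cref{def:fully-reduced}), ruling out coincidental matches. Traversing $M$-edges outward recovers $q_i$ at every vertex of $P$.

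The main obstacle is rigorously verifying the local case-distinction: that the cyclic order at each $w \in V(M)$ cannot be consistent with two genuinely different preimage structures in $P$. This reduces to the fact that contraction of an edge in a planar embedding preserves cyclic orders via the natural splicing rule, combined with the absence of parallel edges in $P$, which together prevent any ambiguity in identifying the hourglass partner in scenario (b). Once this local step is justified, the inductive identification runs to completion and yields $q_1 = q_2$, hence $H_1 = H_2$, hence $G_1 = G_2$.
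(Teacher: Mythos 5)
Your overall strategy --- recover the set $H_i$ of hourglass edges from the pair $(P,M)$, since $M=P/H_i$ and $G_i$ is determined by $(P,H_i)$ --- is exactly the paper's, and your case split at an internal vertex $w$ of $M$ is sound as far as it goes: the arriving $M$-edge corresponds to a known non-contracted edge of $P$ ending at a known vertex $v$, and whether $w$ is the image of a single vertex or of a merged pair is forced by $\widehat{\deg}(v)\in\{4,3\}$. The gap is in case (b). When $\widehat{\deg}(v)=3$, the contracted edge at $v$ is one of its two non-arriving edges, and if \emph{both} of these lead to simple-degree-$3$ vertices of $P$, the local cyclic order at $w$ does not select between them: contracting either candidate produces a $4$-valent vertex of $M$ with the arriving edge in the same position, and the two splicings differ only in \emph{which} of the remaining three $M$-edges correspond to which $P$-edges --- data that can only be checked by following the quotient map further outward, i.e., by exactly the identification your induction has not yet established at that point. ``$P$ has no parallel edges'' only rules out the two candidate partners being the same vertex; it does not pin down which one is the partner. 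So the assertion ``at most one scenario is consistent'' is the entire content of the lemma at that step, and it is asserted rather than proved. (That some global input is genuinely needed is suggested by the benzene hexagon: its two dimer covers both contract to a triangle, and they are distinguished only by how the six external edges regroup, not by any single vertex's local picture.)

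The paper sidesteps this by arranging the induction so that every reconstruction step is forced. It first splits $M$ and $P$ along any $\trip_2$-strand that crosses no other strand, reducing to the case where, by a standard pseudoline argument, there exist \emph{consecutive boundary vertices} $b,b'$ of $M$ adjacent to a common internal vertex. Boundary edges are never contracted, so the $P$-endpoints of the edges at $b$ and $b'$ either coincide (then that vertex has simple degree $4$ and carries no hourglass) or are distinct and must be joined by the contracted hourglass edge --- in either case the preimage is determined with no choice to disambiguate. One then deforms the boundary past this configuration and recurses on a strictly smaller $(P_1,M_1)$. To repair your argument you would either need to reproduce this kind of boundary forcing, or prove a genuinely global uniqueness statement for the splicing (e.g., that no two distinct matchings of the degree-$3$ vertices of a reduced $P$ yield the same embedded quotient), neither of which follows from the facts you cite.
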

\begin{proof}
We show, by induction on the number of vertices, how to uniquely recover $G$ from $P$ and $M$.

If $M$ contains some $\trip_2$-strand that does not intersect any other $\trip_2$-strand, then both $M$ and $P$ may be split in two pieces, taking that strand as part of the boundary of the pieces, giving smaller pairs of plabic graphs and matching diagrams which, by induction, may each be uniquely lifted to reconstruct $G$. Thus we may assume that $M$ contains no such strand.

In this case, there exists some consecutive boundary vertices $b$ and $b'$ which are both adjacent in $M$ to the same internal vertex $v$. The possible neighborhoods of $b, b'$ in $P$ are shown in \Cref{fig:H-M-boundary-deformation}. In the latter two cases, the edge $e$ must come from a 2-hourglass in $G$, allowing us to uniquely reconstruct this neighborhood of $G$. After doing so, we can replace the boundary segment surrounding $b,b'$ with the dashed line shown in the figure, to obtain a smaller matching diagram $M_1$ and plabic graph $P_1$. By induction, that information allows us to uniquely reconstruct the remainder of $G$.
\end{proof}

\begin{figure}[ht]
    \centering
    \includegraphics[width=0.8\linewidth]{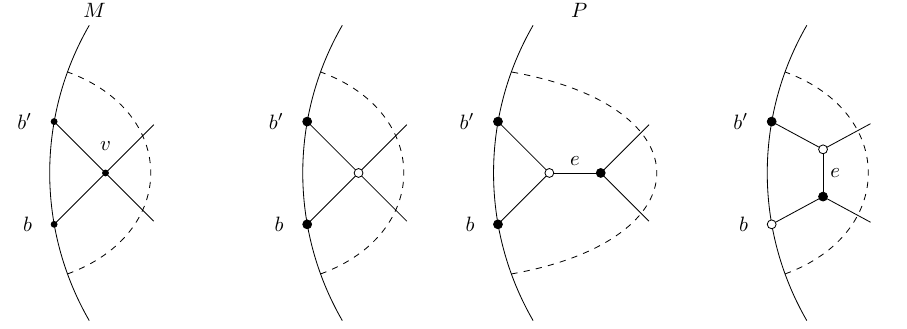}
    \caption{Consecutive boundary vertices with a common neighbor in a matching diagram $M$ (left) and the three possible corresponding configurations in the plabic graph $P$ (right). See the proof of \Cref{lem:underlying-plabic-and-matching-diagram-determine-graph}.}
    \label{fig:H-M-boundary-deformation}
\end{figure}

\begin{proof}[Proof of \Cref{thm:hourglass-trips-determine-move-equivalence}]
Let $G_1,G_2 \in \rg(\underline{c})$. If $G_1 \sim G_2$, then $\trip_{\bullet}(G_1)=\trip_{\bullet}(G_2)$ by \Cref{prop:trip-of-underlying-plabic}(a).

Conversely, suppose $\trip_{\bullet}(G_1)=\trip_{\bullet}(G_2)$. Since any hourglass plabic graph is move-equivalent to a unique contracted graph, we may also assume without loss of generality that $G_1,G_2$ are contracted.

Suppose first that $G_1,G_2$ are of oscillating type. 
By \Cref{prop:underlying-plabic-is-reduced}, the underlying plabic graphs $P_1=\widehat{G_1}$ and $P_2=\widehat{G_2}$ are reduced and by \Cref{prop:trip1_agrees}, we have  $\trip(P_1)=\trip_1(G_1)=\trip_1(G_2)=\trip(P_2)$. Thus, by \Cref{thm:plabic-trip-iff-move}, $P_1$ and $P_2$ are equivalent under the moves from \Cref{fig:plabic-moves}. Consider a sequence of moves transforming $P_1$ into $P_2$ that minimizes the number $s$ of times that the square move (M1) is applied, and also such that the only moves occurring before the first square move are those necessary to make the square move applicable. Let $F$ be the face on which the first square move acts. Since $G_1$ is bipartite, so is $P_1$, and since $G_1$ is contracted with all internal vertices of degree $4$, $P_1$ has all internal vertices of degree $3$ or $4$. Since $P_1$ is reduced, it has no $2$-gonal faces. From this we conclude that $F$ is already a bipartite square face of $P_1$, so the only moves occurring before the square move at $F$ are the expansions of any degree-$4$ corners of $F$ into two degree-$3$ vertices. Let $P_1^\dagger$ be the plabic graph obtained by applying these expansion moves, then the square move at $F$, and then contracting with their unique neighbors outside of $F$ all those corners of $F$ that were not expanded. Then $P_1^\dagger=\widehat{G_1^\dagger}$, where $G_1^\dagger$ is obtained from $G_1$ by applying the appropriate square move from \Cref{fig:main-hourglass-moves}. Furthermore, $P_1^\dagger$ can be transformed into $P_2$ by $s-1$ square moves. Hence, by induction on $s$, there exists a contracted fully reduced hourglass plabic graph $G_1'$ with $G_1 \sim G_1'$ and $\widehat{G_1'}=P_2$.

It remains to show that $G_1'$ may be transformed into $G_2$ by a sequence of benzene moves. Let $\varphi$ be the bijection from \Cref{def:6-vertex-hourglass-bijection} and define $D_1'=\varphi(G_1')$ and $D_2=\varphi(G_2)$. The symmetrized six-vertex configurations $D_1',D_2$ are well oriented by \Cref{thm:6-vertex-hourglass-correspondence} and satisfy $\trip_2(D_1')=\trip_2(D_2)$. Thus, by \Cref{prop:pm-diagrams-connected-under-Yang--Baxter}, the matching diagram $M_1'$ for $D_1'$ can be transformed into the matching diagram $M_2$ for $D_2$ by a sequence of Yang--Baxter moves. Consider the hourglass plabic graph $G_1''$ obtained by transporting these Yang--Baxter moves to benzene moves on $G_1'$ via $\varphi^{-1}$. Since benzene moves do not change the underlying plabic graph, we have $\widehat{G_1''}=\widehat{G_1'}=P_2=\widehat{G_2}$, and by construction the matching diagram for $\varphi(G_1'')$ is equal to the matching diagram $M_2$ for $\varphi(G_2)$. By \Cref{lem:underlying-plabic-and-matching-diagram-determine-graph} we conclude that in fact $G_1''=G_2$, as desired.

If $G_1,G_2$ are not of oscillating type, we oscillize to $\osc(G_1),\osc(G_2)$, which are fully reduced by \Cref{prop:associated-oscillating-is-still-fully-reduced}. Indeed, these still satisfy $\trip_{\bullet}(\osc(G_1))=\trip_{\bullet}(\osc(G_2))$, so by the previous argument, $\osc(G_1) \sim \osc(G_2)$. Since benzene and square moves cannot be applied to boundary faces, the sequence of moves yielding this move equivalence can also be applied to yield $G_1 \sim G_2$.
\end{proof}

The following useful fact is immediate from the preceding proof.

\begin{corollary}
\label{cor:benzene-then-square}
Suppose that $G, G' \in \crg(\underline{c})$ are move-equivalent. Then they may be connected by a sequence of benzene moves followed by a sequence of square moves, or vice versa.
\end{corollary}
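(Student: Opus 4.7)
The plan is to derive both halves of the statement from the proof of \Cref{thm:hourglass-trips-determine-move-equivalence}, which in fact already constructs a ``square moves then benzene moves'' decomposition as its intermediate step. Reading that proof, for contracted $G, G' \in \crg(\underline{c})$ with $\trip_{\bullet}(G) = \trip_{\bullet}(G')$, one first applies Postnikov's \Cref{thm:plabic-trip-iff-move} to $\widehat{G}$ and $\widehat{G'}$ (reduced by \Cref{prop:underlying-plabic-is-reduced}) to lift a sequence of reduced plabic moves to a sequence of hourglass square moves, producing an intermediate $H \in \crg(\underline{c})$ with $G \sim H$ by square moves only and $\widehat{H} = \widehat{G'}$. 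One then invokes \Cref{prop:pm-diagrams-connected-under-Yang--Baxter} on the matching diagrams of $\varphi(H)$ and $\varphi(G')$ (which share the same abstract matching, since $\trip_2$ is a move invariant) to lift Yang--Baxter moves to benzene moves taking $H$ to some graph with the same underlying plabic graph and same matching diagram as $G'$; \Cref{lem:underlying-plabic-and-matching-diagram-determine-graph} then identifies this graph with $G'$ itself. This handles the ``square then benzene'' direction.

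For the ``benzene then square'' direction, I plan to use a symmetry argument. Applying the previous paragraph to the reversed equivalence $G' \sim G$ yields an intermediate $H' \in \crg(\underline{c})$ together with a sequence of square moves from $G'$ to $H'$ and then a sequence of benzene moves from $H'$ to $G$. Reversing this entire sequence of moves produces, in order, a sequence of benzene moves from $G$ to $H'$ followed by a sequence of square moves from $H'$ to $G'$, as desired.

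The only real step to verify will be that reversing a sequence of benzene (respectively, square) moves yields another sequence of benzene (respectively, square) moves---that is, that move reversal preserves the move class. This is immediate by inspection of \Cref{fig:main-hourglass-moves}: each displayed benzene or square move is its own inverse up to a color reversal, and the figure caption explicitly permits color reversals. I do not anticipate any further obstacle, noting only the routine check that the intermediate graphs remain contracted (neither the displayed benzene nor the displayed square moves introduce $3$- or $4$-hourglasses or adjacent pairs of $2$-hourglasses).
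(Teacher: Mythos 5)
Your proposal is correct and matches the paper's intent exactly: the paper states that \Cref{cor:benzene-then-square} is ``immediate from the preceding proof,'' and the preceding proof of \Cref{thm:hourglass-trips-determine-move-equivalence} is precisely the square-moves-then-benzene-moves construction you describe, with the opposite order obtained by the same reversal-of-symmetric-moves observation you make. No gaps.
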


\section{Fluctuating tableaux and separation labelings}
\label{sec:backwards-map}

In this section, we recall the notion of \emph{fluctuating tableaux} of $r$ rows from \cite{fluctuating-paper}. For $r = 4$, we establish a map (shown later, in \Cref{thm:main-bijection}, to be a bijection) between move-equivalence classes of fully reduced hourglass plabic graphs and rectangular fluctuating tableaux. See \Cref{sec:two_column} for related constructions outside the $r=4$ setting.

\subsection{Fluctuating tableaux and promotion permutations}
\label{sec:background-fluctuating}
In this subsection, we recall needed material from the companion paper \cite{fluctuating-paper}.

A \emph{generalized partition} with $r$ rows is a tuple $\lambda = (\lambda_1,\ldots, \lambda_r) \in \mathbb{Z}^r$ where $\lambda_1 \geq \cdots \geq \lambda_r$. We visualize generalized partitions as \emph{diagrams}, which are semi-infinite collections of \emph{cells}. 

We write $\mathbf{e}_i$ for the $i$-th standard basis vector of $\mathbb{Z}^r$. If $S \in \mathcal{A}_r$ is a positive subset of $\{\pm 1,\ldots,\pm r\}$, we define $\mathbf{e}_S = \sum_{i \in S} \mathbf{e}_i$, while if $S$ is a negative subset, we define $\mathbf{e}_S = - \sum_{i \in S} \mathbf{e}_{-i}$. We say two $r$-row generalized partitions $\lambda, \mu$ \emph{differ by a skew column} if $\lambda=\mu + \mathbf{e}_S$ for some $S \in \mathcal{A}_r$. For $c \geq 0$, we furthermore write $\mu \too{c} \lambda$ if $\lambda$ is obtained from $\mu$ by adding a skew column of $c$ boxes and $\mu \too{-c} \lambda$ if $\lambda$ is obtained from $\mu$ by removing a skew column of $c$ boxes. 

\begin{definition}
An $r$-row \emph{fluctuating tableau} of \emph{length} $n$ is a sequence
\[ T = (\mathbf{0}=\lambda^0 \too{c_1} \lambda^1 \too{c_2} \cdots \too{c_n} \lambda^n) \]
of $r$-row generalized partitions such that $\lambda^{i-1}$ and $\lambda^i$ differ by a skew column obtained by adding $c_i$ or removing $-c_i$ cells for all $1 \leq i \leq n$. The partition  $\lambda^n$ is called the \emph{shape} of $T$. The sequence $\underline{c} = (c_1, \ldots, c_n) \in \{0, \pm 1, \ldots, \pm r\}^n$ is the \emph{type} of $T$. Let $\mathrm{FT}(r,\lambda, \underline{c})$ be the set of fluctuating tableaux with $r$ rows, shape $\lambda$, and type $\underline{c}$. We will drop some parameters from $\mathrm{FT}(r, \lambda, \underline{c})$ as convenient. In particular, unless otherwise specified, we assume $r=4$ in the remainder of the paper.

An $r$-row fluctuating tableau is \emph{rectangular} if its shape $\lambda$ satisfies $\lambda_1=\cdots=\lambda_r$. In this case $\lambda$ is called a \emph{generalized rectangle}. We write $\rft(\underline{c})$ for the set of rectangular fluctuating tableaux of type $\underline{c}$, where no $c_j = 0$. We say a fluctuating tableau is of \emph{oscillating type}  
  if each $c_j\in \{\pm 1\}$.
\end{definition}

\begin{remark}\label{rem:drawing_tabs}
Building on work of Patrias \cite{Patrias} in the oscillating case, we visualize fluctuating tableaux by writing $i$ in the added cells of $\lambda^i - \lambda^{i-1}$ or $\overline{i}$ in the removed cells of $\lambda^{i-1} - \lambda^i$; see \Cref{fig:ftexample} for an example.
Note that each box is labeled by an alternating string of barred and unbarred numbers. Indeed, given any sequence of $n$ symbols, we can visualize a fluctuating tableau by writing the $i$-th symbol unbarred in the added cells of $\lambda^i - \lambda^{i-1}$ or the $i$-th symbol barred in the removed cells of $\lambda^{i-1} - \lambda^i$; this extra flexibility will be useful to us in defining a jeu de taquin using the symbol $\bullet$.
\end{remark}

Fluctuating tableaux interest us because of the following fact.

\begin{proposition}[{see \cite[Prop.~2.10]{fluctuating-paper}}]
\label{prop:fluc-tab-give-dimension}
For any type $\underline{c}$, we have
\begin{equation}\label{eq:fluc-tab-give-dimension}
  |\rft(\underline{c})| = \dim_{\mathbb{C}} \Inv_{\SL_r}(\bigwedge\nolimits^{\underline{c}} V) = \dim_{\mathbb{C}(q)} \Inv_{U_q(\fsl_r)}(\bigwedge_q\nolimits^{\underline{c}} V_q).
\end{equation}
\end{proposition}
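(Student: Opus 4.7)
The plan is to compute both dimensions via iterated applications of the (dual) Pieri rule, obtaining in both cases the same combinatorial count, and then to reduce the quantum statement to the classical one using the standard fact that Hom spaces between irreducibles are preserved under generic $q$-deformation.

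First, I would pass from $\SL_r$ to $\GL_r$, where irreducibles $V^\lambda$ are indexed by $r$-row generalized partitions $\lambda$. The Pieri rule gives $V^\mu \otimes \bigwedge^c V = \bigoplus V^\nu$ summed over $\nu = \mu + \mathbf{e}_S$ for positive $S \in \mathcal{A}_r$ with $|S|=c$; twisting by powers of the determinant produces the dual Pieri rule $V^\mu \otimes (\bigwedge^c V)^* = \bigoplus V^\nu$ with $\nu = \mu - \mathbf{e}_S$ for the same range of $S$. Iterating these two rules on the tensor product $\bigwedge^{\underline{c}} V$ starting from $V^\mathbf{0} = \mathbb{C}$ shows that $\dim \Hom_{\GL_r}(V^\lambda, \bigwedge^{\underline{c}} V)$ equals the number of fluctuating tableaux in $\mathrm{FT}(r,\lambda,\underline{c})$. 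Since $V^\lambda$ restricts to the trivial $\SL_r$-module precisely when $V^\lambda$ is a power of the determinant, i.e. when $\lambda$ is a generalized rectangle, summing over all generalized rectangles $\lambda$ yields $\dim \Inv_{\SL_r}(\bigwedge^{\underline{c}} V) = |\rft(\underline{c})|$.

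For the quantum statement, the key input is that the category of finite-dimensional $U_q(\fsl_r)$-modules is semisimple for generic $q$ and its decomposition multiplicities agree with those of finite-dimensional $\SL_r$-modules. The quantum exterior power $\bigwedge_q^c V_q$ is the $q$-deformation of $\bigwedge^c V$, and the quantum Pieri rule takes the same form on highest weights (see, e.g., Berenstein--Zwicknagl or Cautis--Kamnitzer--Morrison), so the identical argument yields $\dim_{\mathbb{C}(q)} \Inv_{U_q(\fsl_r)}(\bigwedge_q^{\underline{c}} V_q) = |\rft(\underline{c})|$. Alternatively, one may observe that the dimension of the invariant space is constant along the flat family of algebras interpolating between $U(\fsl_r)$ and $U_q(\fsl_r)$.

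The main obstacle is the careful handling of duals and the sign conventions attached to negative $c_i$. One must verify that the additive encoding $\lambda = \mu + \mathbf{e}_S$ with $S$ ranging over $\mathcal{A}_r$ (allowing both positive and negative subsets) correctly matches the Pieri and dual-Pieri rules, and that the constraint defining a generalized partition (namely $\lambda_1 \geq \cdots \geq \lambda_r$) corresponds exactly to the condition that $\nu/\mu$ or $\mu/\nu$ is a vertical strip. Modulo this bookkeeping, which is precisely what makes fluctuating tableaux the right generalization of standard and oscillating tableaux, the proof is a direct application of well-known tools.
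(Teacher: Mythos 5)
Your proposal is correct and is essentially the argument given in the cited companion paper: fluctuating tableaux are by construction walks in the $\GL_r$ weight lattice recording iterated applications of the Pieri and dual Pieri rules, the trivial $\SL_r$-constituents correspond to generalized rectangles (determinant powers), and the quantum case follows from semisimplicity at generic $q$ with identical decomposition multiplicities. The bookkeeping you flag at the end (matching $\mathbf{e}_S$ for signed $S \in \mathcal{A}_r$ with vertical strips) is exactly the content of the definition of fluctuating tableaux, so nothing further is needed.
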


The \emph{lattice word} associated to a fluctuating tableau $T \in \ft(\underline{c})$ is the word $L = L(T) = w_1 \cdots w_n$ on 
$\mathcal{A}_r$, 
  where $\lambda^i = \lambda^{i-1} + \mathbf{e}_{w_i}$.
 We may recover $T$ from $L(T)$, so we sometimes identify $T$ and $L(T)$. 

A word $w$ is a lattice word of a fluctuating tableau if and only if for every prefix $w_1 \cdots w_k$ and every $1 \leq a \leq b \leq r$,
\begin{equation}\label{eq:lattice_inequalities}
  (\mathbf{e}_{w_1} + \cdots + \mathbf{e}_{w_k})_a \geq (\mathbf{e}_{w_1} + \cdots + \mathbf{e}_{w_k})_b,
\end{equation}
where $\mathbf{e}_{\overline{S}} = -\mathbf{e}_S$. More concretely, in each prefix we require the number of $a$'s minus the number of $\overline{a}$'s to be weakly greater than the number of $b$'s minus the number of $\overline{b}$'s. We write $\slack_{a}(w_1\cdots w_k)$ for the difference $(\mathbf{e}_{w_1} + \cdots + \mathbf{e}_{w_k})_a - (\mathbf{e}_{w_1} + \cdots + \mathbf{e}_{w_k})_{a+1}$. A fluctuating tableau is rectangular if and only if equality holds when $k=n$, in which case we call $L$ \emph{balanced}.

\begin{definition}\label{def:varpietc}
  Consider a word $L = w_1 \cdots w_n$ in the alphabet $\mathcal{A}_r$. Define
  \begin{enumerate}[(i)]
    \item $\tau(L) = \overline{w_n} \cdots \overline{w_1}$;
  \item $\varpi(L) = \varpi(w_1) \dots \varpi(w_n)$, where $\varpi(w)$ replaces each element $i$ in $w$ with $-\sgn(i)(r-|i|+1)$; and
    \item $\varepsilon(L) = \varepsilon(w_n) \dots \varepsilon(w_1)$, where $\varepsilon(w)$ replaces each element $i$ in $w$ with $\sgn(i)(r-|i|+1)$.
  \end{enumerate}
\end{definition}

    Consider $\mu \too{c} \lambda$. Let $\lambda = \mu + \mathbf{e}_{\{i_1 < \cdots < i_c\}}$. 
The \emph{oscillization} of $\mu \too{c} \lambda$ is the sequence
  \[ \osc(\mu \too{c} \lambda) \coloneqq \mu \to \mu^1 \to \cdots \to \mu^{|c| - 1} \to \lambda, \]
where
\[
\mu^j =  \mu^{j-1} + \mathbf{e}_{i_j} 
\]
with $\mu^0 \coloneqq \mu$ and $\mu^{|c|} = \lambda$.
Given a fluctuating tableau $T = (\mathbf{0}=\lambda^0 \too{c_1} \lambda^1 \too{c_2} \cdots \too{c_n} \lambda^n)$, let $\osc(T)$ be the concatenation of $\osc(\lambda^{i-1} \to \lambda^i)$ for $1 \leq i \leq n$. A fluctuating tableau is in the image of oscillization if and only if it is of oscillating type.
Oscillization may be described easily in terms of lattice words. To compute $L(\osc(T))$, we simply erase the braces from $L(T)$, where elements of each letter are written in increasing order, keeping the same initial and final shape. 

\ytableausetup{boxsize=0.85cm}
\begin{figure}[htbp]
\begin{center}
\begin{tikzpicture}
\node (tab) at (0,0) {$
T=\begin{ytableau}
  \none & 1 & 3\,\overline{7} \\
  \none & 1 & 4\,\overline{5} \\
  \none & 3\,\overline{5}\,6 \\
  \overline{2}\,3 & 6 \\
\end{ytableau}$};
\draw[ultra thick,black] ([xshift=1.70cm,yshift=.14cm]tab.south west)--([xshift=1.70cm,yshift=-.13cm]tab.north west);
\node [right = 7 em of tab] (tabo) {$
\osc(T)=\begin{ytableau}
  \none & 1 & 4\,\overline{12} \\
  \none & 2 & 7\,\overline{9} \\
  \none & 5\,\overline{8}\,10 \\
  \overline{3}\,6 & 11 \\
\end{ytableau}$};
\draw[ultra thick,black] ([xshift=2.46cm,yshift=.14cm]tabo.south west)--([xshift=2.46cm,yshift=-.13cm]tabo.north west);
\end{tikzpicture}
\end{center}
\caption{An example visual depiction of a rectangular fluctuating tableau $T$ with $n = 7$ and $\underline{c} = (2,\overline{1},2,1,\overline{2},2,\overline{1})$, together with its oscillization.}
\label{fig:ftexample}
\end{figure}

We now define promotion on fluctuating tableaux; see \cite[\S4.2,~\S4.4]{fluctuating-paper} for detailed examples and further discussion. We use the visual representation of fluctuating tableaux from \Cref{rem:drawing_tabs}.

\begin{definition}
      Fix $1 \leq i \leq n-1$. Let $T \in \ft(\underline{c})$ be a fluctuating tableau whose diagram is labeled by $1 < 2 < \cdots < i-1 < \bullet < i < \cdots$ and their negatives. 
 If $i$ and $\bullet$ appear in $T$ with opposite signs, then for each row $R$ which does not contain any of $i, \bullet, \overline{i},$ or $\overline{\bullet}$, we identify a cell $\bbb_R$ in $R$ and call it \emph{open}. Let $j$ be the largest absolute value of an entry in $R$ less than $i$, if any exist.
    \begin{itemize}
      \item[$\ast$] If the $\bullet$'s in $T$ are unbarred, let $\bbb_R$ be the cell immediately right of the cell containing $j$, or the cell containing $\overline{j}$, or if $j$ does not exist then let $\bbb_R$ be first cell of row $R$.
      \item[$\ast$] If the $\bullet$'s in $T$ are barred, let $\bbb_R$ be the cell containing $j$, or the cell immediately left of the cell containing $\overline{j}$, or otherwise the first negative cell of $R$.
    \end{itemize}

  The \emph{jeu de taquin slides} for $\JDT_{i-1}$ are the following. See \cite[Figure~4.2]{fluctuating-paper} for schematic diagrams.
  \begin{enumerate}[(a)]
  \item $\bullet$'s first move right by swapping places with $i$'s, then move down as far as possible by swapping places with $i$'s.
  \item $\overline{\bullet}$'s first move left by swapping places with $\overline{i}$'s, then move up as far as possible by swapping places with $\overline{i}$'s.
  \item all $\bullet\overline{i}$ pairs move down as far as possible into open cells, and then they move left one column.
  \item all $\overline{\bullet}i$ pairs move up as far as possible into open cells, and then move right one column.
  \end{enumerate}
  All other entries are left unchanged.     This results in a fluctuating tableau whose diagram is labeled by $1 < 2 < \cdots < i < \bullet < i+1 < \cdots$ and their negatives.
\end{definition}

\begin{definition}
  Let $T$ be a length $n$ fluctuating tableau.
  \begin{enumerate}[(i)]
    \item The \emph{promotion} $\promotion(T)$ is the result of replacing $\pm 1$'s with $\pm \bullet$'s, sequentially applying jeu de taquin slides $\JDT_1, \dots, \JDT_{n-1}$, replacing $\pm \bullet$'s with $\pm (n+1)$'s, and subtracting $1$ from each entry's absolute value.
    \item The \emph{evacuation} $\evacuation(T)$ is the result of first replacing $\pm 1$'s with $\pm \bullet_n$'s, sliding them past $\pm n$'s, replacing $\pm 2$'s with $\pm \bullet_{n-1}$'s, sliding them past $\pm n$'s, etc., and finally replacing $\pm \bullet_i$'s with $\pm i$'s.
    \item The \emph{dual evacuation} $\devacuation(T)$ is the result of first replacing $\pm n$'s with $\pm \bullet_1$'s, sliding backward past $\pm 1$'s, replacing $\pm (n-1)$'s with $\pm \bullet_2$'s, sliding backward past $\pm 1$'s, etc., and finally replacing $\pm \bullet_i$'s with $\pm i$'s.
  \end{enumerate}
\end{definition}

Promotion of fluctuating tableaux can be reduced to the oscillating case. Going forward, we often make this reduction.

\begin{lemma}[see {\cite[Lem.~4.16]{fluctuating-paper}}]
\label{lem:prom-of-osc}
    Let $T \in \ft(\underline{c})$ be a fluctuating tableau. Then
    \[
    \osc(\promotion(T)) = \promotion^{|c_1|} ( \osc(T)).
    \]
\end{lemma}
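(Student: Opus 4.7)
The plan is to compare two ways of ``unpacking'' a single promotion on $T$: promotion applied directly to $T$ treats the initial skew column of $|c_1|$ cells as a single block of $\pm 1$ entries that is replaced by $\pm \bullet$'s and then slid through the rest of the tableau simultaneously, whereas oscillizing first splits this initial skew column into $|c_1|$ single-cell columns labeled $1, 2, \ldots, |c_1|$, so that $|c_1|$ successive promotions on $\osc(T)$ peel these off one at a time. The equality $\osc(\promotion(T)) = \promotion^{|c_1|}(\osc(T))$ is therefore the assertion that simultaneous jeu de taquin in the fluctuating setting factors as iterated single-cell jeu de taquin in the oscillating setting, in a way compatible with the standardization used by $\osc$.

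First I would establish a \emph{local compatibility} lemma: each fluctuating slide $\JDT_{i-1}$ applied to an intermediate tableau whose first skew column consists of $\pm \bullet$'s is equivalent, up to oscillization, to performing the corresponding oscillating slides one $\bullet$ at a time, in the order in which $\osc$ introduces those cells (increasing absolute value for positive skew columns and decreasing for negative ones). Cases (a)--(d) of the slide definition each involve a row or column configuration in which the $\bullet$'s (or $\overline{\bullet}$'s) must move past a block of $i$'s (or $\overline{i}$'s). The verification amounts to a finite case analysis: in each case the $\bullet$'s within a single row or column are totally ordered in a manner compatible with the dynamics, each single-cell slide is unambiguously defined on a valid oscillating-type tableau, and the cumulative effect reproduces the simultaneous slide. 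The identification of the chosen open cell $\bbb_R$ in each row $R$ matches exactly the open cell selected when the $\bullet$'s are handled one at a time, because $\bbb_R$ is determined by the unaltered entries of absolute value less than $i$.

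With the local compatibility lemma in hand, the main statement follows by straightforward bookkeeping across $\JDT_1, \ldots, \JDT_{n-1}$. Writing one round of promotion on $T$ as the composition of (i) the $\pm 1 \mapsto \pm \bullet$ relabeling, (ii) the sequence of fluctuating slides $\JDT_1, \ldots, \JDT_{n-1}$, and (iii) the $\pm \bullet \mapsto \pm(n+1)$ relabeling with an absolute-value decrement, we oscillize the result by splitting each newly-standardized column. Applying the local compatibility lemma at each slide $\JDT_{i-1}$ shows that this agrees, step-by-step, with carrying out $|c_1|$ promotions on $\osc(T)$: each such promotion removes the smallest remaining cell originally in the first skew column of $T$, slides it out via the oscillating jeu de taquin, and decrements the remaining labels so that the next promotion targets the next original cell in order. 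The final relabelings agree because $\osc$ assigns consecutive labels $|\underline{c}'| - |c_1| + 1, \ldots, |\underline{c}'|$ to the entries that promotion sends to the end of the tableau, in the same order in which they are produced by the successive promotions.

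The main obstacle is the local compatibility lemma, and within that the verification that the order in which $\osc$ introduces cells of a skew column is always a valid order in which to process the fluctuating slides without producing an ill-defined intermediate configuration. This is a finite case check, but it requires separately examining each of the four slide types (a)--(d) and the interactions between $\pm \bullet$ pairs and $\pm i$ pairs; the key technical observation is that the totally-ordered standardization used by $\osc$ prevents any $\bullet$ from being blocked by a $\bullet$ that should be processed later, so each single-cell slide is unambiguous.
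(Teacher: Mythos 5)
First, a point of comparison: the paper does not actually prove this lemma in the text --- it is imported from the companion paper \cite{fluctuating-paper} (Lem.~4.16) --- so there is no in-paper argument to measure your sketch against; I can only assess it on its own terms.

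Your strategy (decompose the simultaneous block jeu de taquin of $\promotion(T)$ into single-cell slides and match them against $|c_1|$ promotions of $\osc(T)$) is the natural one, but as written there is a genuine gap: the two sides do not perform their elementary slides in the same order, and the ``local compatibility lemma'' does not bridge this. In computing $\promotion(T)$, all $|c_1|$ bullets move past the block of $i$'s during $\JDT_{i-1}$, for $i=2,\ldots,n$ in turn --- value by value, with the bullets travelling together. In computing $\promotion^{|c_1|}(\osc(T))$, the first bullet is slid past \emph{every} remaining letter, lands at the end, is relabeled as the largest entry, and only then does the second bullet begin its journey, during which it must at the final stages interact with the entry into which the first bullet turned. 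So even granting that each block slide $\JDT_{i-1}$ factors into single-cell slides in the $\osc$-order, you are left comparing two globally different interleavings of elementary moves, and the proposal offers no commutation argument (e.g.\ that single-cell slides involving disjoint value pairs commute, or a growth-diagram formulation in which both orders of evaluation compute the same array). This reordering --- together with the check that a previously landed bullet, after the $\pm\bullet\mapsto\pm(N+1)$ relabeling and decrement, behaves toward later bullets exactly as the members of the bullet block behave toward one another in the simultaneous computation --- is the real content of the lemma, and it is precisely what ``straightforward bookkeeping'' conceals.

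A secondary issue is that the intermediate objects on the two sides have different types: on the left one has a fluctuating tableau containing a skew column of $\bullet$'s mid-slide, while on the right one has an oscillating tableau in which some first-column cells have already been cycled to the end and renumbered. These do not literally oscillize to one another, so the compatibility statement needs a more careful formulation (for instance, an induction on $|c_1|$ that peels off one cell of the first skew column at a time and tracks a hybrid intermediate tableau). None of this makes the approach unworkable, but the proposal as written omits the step on which the proof actually turns.
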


\ytableausetup{boxsize=1.05cm}
\begin{figure}
    \centering
\[
\begin{tikzpicture}
\node (begin) at (0,0) {$S = $};
\node [right = 0.0cm of begin] (0)  {\begin{ytableau}
  \none & 1 & 4\,\overline{12} \\
  \none & 2 & 7\,\overline{9} \\
  \none & 5\,\overline{8}\,10 \\
  \overline{3}\,6 & 11 \\
\end{ytableau}};
 \node [right = 0.5cm of 0] (1) {\begin{ytableau}
  \none & \bullet & 4\,\overline{12} \\
  \none & 2 & 7\,\overline{9} \\
  \none & 5\,\overline{8}\,10 \\
  \overline{3}\,6 & 11 \\
\end{ytableau}};
 \node [right = 0.5cm of 1] (2) {\begin{ytableau}
  \none & 2 & 4\,\overline{12} \\
  \none & \bullet & 7\,\overline{9} \\
  \none & 5\,\overline{8}\,10 \\
  \overline{3}\,6 & 11 \\
\end{ytableau}};
 \node [right = 0.5cm of 2] (3) {\begin{ytableau}
  \none & 2 & 4\,\overline{12} \\
  \none & \bullet & 7\,\overline{9} \\
  \none & 5\,\overline{8}\,10 \\
  \overline{3}\,6 & 11 \\
\end{ytableau}};
 \node [below = 0.5cm of 3] (4) {\begin{ytableau}
  \none & 2 & 4\,\overline{12} \\
  \none & \bullet & 7\,\overline{9} \\
  \none & 5\,\overline{8}\,10 \\
  \overline{3}\,6 & 11 \\
\end{ytableau}};
 \node [left = 0.5cm of 4] (5) {\begin{ytableau}
  \none & 2 & 4\,\overline{12} \\
  \none & 5 & 7\,\overline{9} \\
  \none & \bullet \,\overline{8}\,10 \\
  \overline{3}\,6 & 11 \\
\end{ytableau}};
 \node [left = 0.5cm of 5] (6)  {\begin{ytableau}
  \none & 2 & 4\,\overline{12} \\
  \none & 5 & 7\,\overline{9} \\
  \none & \bullet \,\overline{8}\,10 \\
  \overline{3}\,6 & 11 \\
\end{ytableau}};
 \node [left = 0.5cm of 6] (7) {\begin{ytableau}
  \none & 2 & 4\,\overline{12} \\
  \none & 5 & 7\,\overline{9} \\
  \none & \bullet \,\overline{8}\,10 \\
  \overline{3}\,6 & 11 \\
\end{ytableau}};
 \node [below = 0.5cm of 7] (8) {\begin{ytableau}
  \none & 2 & 4\,\overline{12} \\
  \none & 5 & 7\,\overline{9} \\
  \none & 10 \\
  \overline{3}\,6 \, \overline{8} \, \bullet & 11 \\
\end{ytableau}};
 \node [right = 0.5cm of 8] (9) {\begin{ytableau}
  \none & 2 & 4\,\overline{12} \\
  \none & 5 & 7\,\overline{9} \\
  \none & 10 \\
  \overline{3}\,6 \, \overline{8} \, \bullet & 11 \\
\end{ytableau}};
 \node [right = 0.5cm of 9] (10) {\begin{ytableau}
  \none & 2 & 4\,\overline{12} \\
  \none & 5 & 7\,\overline{9} \\
  \none & 10 \\
  \overline{3}\,6 \, \overline{8} \, \bullet & 11 \\
\end{ytableau}};
 \node [right = 0.5cm of 10] (11) {\begin{ytableau}
  \none & 2 & 4\,\overline{12} \\
  \none & 5 & 7\,\overline{9} \\
  \none & 10 \\
  \overline{3}\,6 \, \overline{8} \, 11 & \bullet \\
\end{ytableau}};
 \node [below = 0.5cm of 11] (12) {\begin{ytableau}
  \none & 2 & 4\,\overline{12} \\
  \none & 5 & 7\,\overline{9} \\
  \none & 10 \\
  \overline{3}\,6 \, \overline{8} \, 11 & \bullet \\
\end{ytableau}};
 \node [left = 0.5cm of 12] (13) {\begin{ytableau}
  \none & 2 & 4\,\overline{12} \\
  \none & 5 & 7\,\overline{9} \\
  \none & 10 \\
  \overline{3}\,6 \, \overline{8} \, 11 & 13 \\
\end{ytableau}};
 \node [left = 0.5cm of 13] (14) {\begin{ytableau}
  \none & 1 & 3\,\overline{11} \\
  \none & 4 & 6\,\overline{8} \\
  \none & 9 \\
  \overline{2}\,5 \, \overline{7} \, 10 & 12 \\
\end{ytableau}};
  \node [left = 0.5cm of 14] (end) {$\promotion(S)=$};
 \draw[pil] (0) -- (1) node[midway,above] {}; 
 \draw[pil] (1) -- (2) node[midway,above] {$\JDT_1$};
 \draw[pil] (2) -- (3) node[midway,above] {$\JDT_2$};
 \draw[pil] (3) -- (4) node[midway,right] {$\JDT_3$};
 \draw[pil] (4) -- (5) node[midway,above] {$\JDT_4$};
 \draw[pil] (5) -- (6) node[midway,above] {$\JDT_5$};
 \draw[pil] (6) -- (7) node[midway,above] {$\JDT_6$};
 \draw[pil] (7) -- (8) node[midway,left] {$\JDT_7$};
 \draw[pil] (8) -- (9) node[midway,above] {$\JDT_8$};
 \draw[pil] (9) -- (10) node[midway,above] {$\JDT_9$};
 \draw[pil] (10) -- (11) node[midway,above] {$\JDT_{10}$};
 \draw[pil] (11) -- (12) node[midway,right] {$\JDT_{11}$};
 \draw[pil] (12) -- (13) node[midway,above] {};
 \draw[pil] (13) -- (14) node[midway,above] {$-1$};
 \foreach \i in {0,...,14}
 {
 \draw[ultra thick,black] ([xshift=1.2cm,yshift=.14cm]\i.south west)--([xshift=1.2cm,yshift=-.13cm]\i.north west);
 }
\end{tikzpicture}
\]
    \caption{Let $S = \osc(T)$ be as in \Cref{fig:ftexample}.  We compute $\promotion(S)$ by a sequence of jeu de taquin slides. Notice in particular the type (c) move that occurs with the application of $\JDT_7$. For the full promotion orbit of $S$, see \cite[Fig.~6.1]{fluctuating-paper}.}
    \label{fig:jdt}
\end{figure}
\ytableausetup{boxsize=0.8cm}

\begin{definition}[{\cite[Def.~8.21]{fluctuating-paper}}]\label{def:balance}
  Let $w_1 \ldots w_{n} \in \rft(\underline{o})$ be the lattice word of a rectangular oscillating tableau. An \emph{$i$-balance point} is an index $j$ such that $\slack_i(w_1 \ldots w_j)=0$. 
\end{definition}
See \Cref{ex:balance_point} for an example of this definition and the following two propositions.

\begin{proposition}[{\cite[Prop.~8.22]{fluctuating-paper}}]\label{prop:first_balance}
Let $w \coloneqq w_1 \ldots w_{n} \in \rft(\underline{o})$. For $i=1,\ldots,r-1$, let $j_i \geq 2$ be the unique value such that $\JDT_{j_i - 1}$ moves a $\bullet$ across the boundary between rows $i$ and $i+1$ or moves a $\overline{\bullet}$ across the boundary between rows $r+1-i$ and $r-i$; set $j_0=j_r=1$. If $w_1$ is positive, $j_i$ is the first $i$-balance point of $w$ weakly after $j_{i-1}$.  In particular, the sequence $2 \leq j_1 \leq \cdots \leq j_{r-1} \leq n$ weakly increases. If $w_1$ is negative, $j_i$ is the first $i$-balance point of $w$ weakly after $j_{i+1}$.  In particular, the sequence $n \geq j_1 \geq \cdots \geq j_{r-1} \geq 2$ weakly decreases.
\end{proposition}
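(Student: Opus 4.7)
The plan is to prove the proposition by induction on $i$, tracking the positions of the $\bullet$'s and $\overline{\bullet}$'s as JDT slides are applied during promotion. In the rectangular oscillating setting, the lattice condition combined with $\lambda^0 = \emptyset$ forces $w_1 = 1$, and rectangularity forces $\#\{k\text{'s}\} = \#\{\overline{k}\text{'s}\}$ for each $k$. Consequently, $\bullet$'s begin in row $1$ and $\overline{\bullet}$'s begin in row $r$, paired under the involution $\tau$ of \Cref{def:varpietc}.

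The central technical step is a shape-sequence analysis of the intermediate tableau states. After applying $\JDT_1, \ldots, \JDT_{k-2}$, the intermediate tableau is itself a valid fluctuating tableau whose shape at the position of the $\bullet$ equals $\lambda^{k-1} - \mathbf{e}_1 + \mathbf{e}_{\rho}$, where $\rho$ is the current row of the $\bullet$. Validity of this shape sequence, together with a careful analysis of the JDT rules (a)--(d), forces the equivalence: $\JDT_{k-1}$ moves a $\bullet$ from row $i$ to row $i+1$ iff $\lambda^k_i = \lambda^k_{i+1}$, which by definition of \Cref{def:balance} is exactly the condition that $k$ be an $i$-balance point of $w$.

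The base case $i = 1$ (starting from $j_0 = 1$) then identifies $j_1$ as the first $1$-balance point. Inductively, once $\bullet$'s occupy row $i$ at time $j_{i-1}$, the next boundary crossing occurs at the first $i$-balance point weakly after $j_{i-1}$, which is $j_i$. The $\overline{\bullet}$ case is handled in parallel by $\tau$-duality: the first $i$-balance point of $w$ corresponds to an analogous balance condition from the right for the reversed word, governing the motion of $\overline{\bullet}$'s from row $r+1-i$ to row $r-i$. The monotonicity $j_1 \leq j_2 \leq \cdots \leq j_{r-1}$ follows immediately from the inductive construction.

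I expect the main obstacle to be the careful case analysis of the JDT rules, especially the paired rules (c) and (d) in which a $\bullet$ and $\overline{\bullet}$ move together. Here, both cross their respective row boundaries in a single step, and the balance-point characterization must be verified to remain consistent under these simultaneous crossings. This synchronization is enforced by the rectangularity and lattice conditions, but rigorously verifying it requires the somewhat intricate bookkeeping of multi-cell JDT transitions in the fluctuating setting.
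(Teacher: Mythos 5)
This proposition is not proved in the paper you were given: it is imported from the companion paper \cite[Prop.~8.22]{fluctuating-paper}, and the surrounding citations (the crystal raising algorithm, the bracketing rule, \cite[Thm.~8.24]{fluctuating-paper}) indicate that the proof there is crystal-theoretic rather than a direct jeu-de-taquin computation. Your plan --- induct on $i$, track the bullet through the slides, and match row-boundary crossings with the shape equalities $\lambda^k_i = \lambda^k_{i+1}$ --- is the natural direct route, and its skeleton is sound: a crossing from row $i$ to row $i+1$ at step $\JDT_{k-1}$ does force $\lambda^k_i = \lambda^k_{i+1}$, and while the bullet sits at the end of row $i$ the requirement that the promoted prefix shape $\lambda^{k-1} - \mathbf{e}_i$ be a valid generalized partition forces $\lambda^{k-1}_i > \lambda^{k-1}_{i+1}$, which is exactly what rules out earlier balance points. (Your ``iff'' must be stated conditionally on the bullet's current position for precisely this reason.)

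However, several of your supporting claims are wrong in ways that would derail the case analysis you identify as the crux. First, the lattice condition does \emph{not} force $w_1 = 1$: generalized partitions may have negative parts, so $w_1 \in \{1, \overline{r}\}$ (see, e.g., the second row of the promotion--evacuation diagram in \Cref{ex:promotion-evacuation}, which begins with $\overline{4}$). Consequently an oscillating tableau carries exactly one of $\bullet$ or $\overline{\bullet}$, never both, and your reading of slides (c) and (d) as ``a $\bullet$ and $\overline{\bullet}$ mov[ing] together'' is a misreading: rule (c) moves a $\bullet\overline{i}$ pair, i.e., the unbarred bullet together with a barred \emph{non-bullet} entry sharing its cell (see the $\JDT_7$ step in \Cref{fig:jdt}), and rule (d) is its mirror. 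The genuine subtlety you should be worrying about instead is that such a pair can drop through \emph{several} rows of open cells in a single slide, so one application of $\JDT_{k-1}$ can produce $j_i = j_{i+1} = \cdots$, and the claim that $k$ is simultaneously an $i$-, $(i+1)$-, $\ldots$-balance point must be checked for all of these at once. Finally, your intermediate shape formula is off: each slide merely permutes entries among occupied cells, so after $\JDT_1,\dots,\JDT_{k-2}$ the net shape of the prefix $\{2,\dots,k-1,\bullet\}$ is $\lambda^{k-1}$ itself, with the bullet at the end of its current row $\rho$ and the promoted prefix of shape $\lambda^{k-1} - \mathbf{e}_\rho$, not $\lambda^{k-1} - \mathbf{e}_1 + \mathbf{e}_\rho$.
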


We refer to the following as the ``first balance point characterization'' of promotion. It is a generalization of a characterization that is implicitly used in the literature (see e.g.~\cite{Patrias,Petersen-Pylyavskyy-Rhoades,Tymoczko2012}).

\begin{proposition}[see {\cite[Prop.~8.22]{fluctuating-paper}}]
\label{prop:promotion-from-balance-points}
Let $w \coloneqq w_1 \ldots w_{n} \in \rft(\underline{o})$ and let $j_i$ be as in \Cref{prop:first_balance}. For $j=1,\ldots,n$, let $\theta_j$ act on the letter $w_j$ by $r \mapsto r-1 \mapsto \cdots \mapsto 1 \mapsto r$ and $\bar{1} \mapsto \bar{2} \mapsto \cdots \mapsto \bar{r} \mapsto \bar{1}$. Then
\[
\promotion(w)=\shuffle \circ \left(\prod_{i=0}^{r-1} \theta_{j_i} \right)(w),
\]
where $\shuffle$ cyclically rotates the word to the left.
\end{proposition}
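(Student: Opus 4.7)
The plan is to analyze the promotion procedure directly on the lattice word, decomposing it into its constituent phases and applying the first balance characterization from \Cref{prop:first_balance}. Recall that $\promotion(T)$ is computed in four phases: (i) replace each $\pm 1$ in $T$ by $\pm \bullet$; (ii) apply $\JDT_1, \ldots, \JDT_{n-1}$ sequentially; (iii) replace $\pm\bullet$ by $\pm(n+1)$; (iv) decrement all absolute values by $1$. I would first observe that phases (iii)--(iv) together effect the cyclic shift $\shuffle$ on the lattice word: the cell carrying label $k \geq 2$ at the end of phase (ii) ends up with label $k-1$, while the cell carrying $\bullet$ ends up with label $n$, so positions in the word are cyclically shifted.

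Next, I would account for the row-changes induced by phase (ii). By \Cref{prop:first_balance}, the only slides that move bullets across row boundaries are $\JDT_{j_i - 1}$ for $i = 1, \ldots, r-1$; all other slides either leave bullets in place or rearrange them within a single row, so the row-value of every letter of the lattice word is unaffected by them. At slide $\JDT_{j_i - 1}$, in the oscillating case the unique cell with time-label $j_i$ swaps with the appropriate $\bullet$ (or $\bar\bullet$), moving that cell from row $i+1$ down to row $i$ (respectively, from row $r-i$ up to row $r+1-i$). In the lattice word this is precisely the action of $\theta_{j_i}$ at position $j_i$: a positive letter $i+1$ becomes $i$, and a negative letter $\overline{r-i}$ becomes $\overline{r+1-i}$. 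The factor $\theta_{j_0} = \theta_1$ handles position $1$: since $\lambda^0 = \mathbf{0}$ in the oscillating case, we must have $w_1 \in \{1, \bar r\}$, and the corresponding initial $\bullet$ (or $\bar\bullet$) is carried all the way from row $1$ to row $r$ (respectively, from row $r$ to row $1$) over the full sequence of slides. This ``big jump'' matches $\theta_1$'s wraparound behavior $1 \mapsto r$ and $\bar r \mapsto \bar 1$. Combining these observations with the shuffle from phases (iii)--(iv) yields the stated formula.

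The main technical obstacle is verifying the local-effect claim: that the swap during $\JDT_{j_i - 1}$ changes the row of the unique label-$j_i$ cell in precisely the way described while leaving all other letters of the word untouched. This requires a careful case analysis of the four types of jdt moves (a)--(d), particularly the subtler mixed cases involving $\bullet\bar i$ and $\bar\bullet i$ pairs and the conventions for the open cells $\bbb_R$ appearing in the jdt definition. A further subtlety arises when $j_i = j_{i+1}$: the single slide $\JDT_{j_i - 1}$ then induces two simultaneous row-crossings, and the resulting double application of $\theta$ at the common position must be checked to correctly compose the two effects.
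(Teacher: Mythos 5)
The paper itself contains no proof of this statement---it is imported verbatim from the companion paper \cite[Prop.~8.22]{fluctuating-paper}, and the surrounding remarks (cf.\ the comment after \Cref{prop:prom-cyclically-monotonic}) suggest the cited proof is crystal-theoretic, whereas you argue directly from the jeu de taquin slides. Your route is sound and more elementary, and the decomposition is correct: relabeling $\pm\bullet \mapsto \pm(n+1)$ followed by the global decrement is exactly $\shuffle$; in the oscillating case a cell labeled $k$ can change row only by being swapped with the bullet during $\JDT_{k-1}$, which by \Cref{prop:first_balance} involves a row-boundary crossing precisely when $k = j_i$ for some $i \geq 1$; and in each slide type the induced change at position $j_i$ is exactly $\theta$ (types (a)/(b): $i+1 \mapsto i$, resp.\ $\overline{r-i} \mapsto \overline{r+1-i}$; types (c)/(d): the partner $\overline{j_i}$ or $j_i$ travels with the bullet across the boundary, giving $\overline{i} \mapsto \overline{i+1}$, resp.\ $r+1-i \mapsto r-i$). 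The two subtleties you flag are the right ones, and both resolve favorably: for the second, in the oscillating case $j_i = j_{i+1}$ can only arise from a type (c)/(d) pair passing through several open cells in a single slide (a type (a)/(b) swap with the unique cell labeled $j_i$ moves the bullet by one row only), and each boundary crossed contributes one application of $\theta$ at that position, so the repeated factor $\theta_{j_i}\theta_{j_{i+1}}$ composes correctly. One cosmetic slip: a cell moving from row $i+1$ to row $i$ moves \emph{up}, not down (and the type (b) partner moves down); the row indices and resulting letter changes you state are nevertheless correct.
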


\begin{example}
\label{ex:balance_point}
The word of the tableau $S$ from \Cref{fig:jdt} is $w=12\bar{4}1342\bar{3}\bar{2}34\bar{1}$.
The first $1$-balance point is $2$. The first $2$-balance point weakly after the first $1$-balance point is $5$.  The first $3$-balance point weakly after the first $2$-balance point is $8$. Note in the computation of promotion in \Cref{fig:jdt}, $\JDT_1$ moves a $\bullet$ between rows $1$ and $2$, so $j_1=2$; $\JDT_4$ moves a $\bullet$ between rows $2$ and $3$, so $j_2=5$; and $\JDT_7$ moves a $\bullet$ between rows $3$ and $4$, so $j_3=8$. This illustrates \Cref{prop:first_balance}. 

Using \Cref{prop:promotion-from-balance-points}, we compute $\promotion(S)$ from $w$. First, decrement the numbers in positions $1$, $2$, $5$, and $8$, obtaining $\left(\prod_{i=0}^{r-1} \theta_{j_i} \right)(w)41\bar{4}1242\bar{4}\bar{2}34\bar{1}$. 
Then cyclically rotate the word to the left, obtaining $\shuffle\circ\left( \prod_{i=0}^{r-1} \theta_{j_i} \right)(w)=1\bar{4}1242\bar{4}\bar{2}34\bar{1}4$, which is the lattice word of $\promotion(S)$, as seen in \Cref{fig:jdt}.
\end{example}

We define \emph{promotion permutations} as follows. See \cite[$\mathsection 6$]{fluctuating-paper} for a more general construction and further details.

\begin{definition}\label{def:prom-perms}
    Let $T \in \ft(r,\underline{c})$ be a fluctuating tableau and $1\leq i\leq r-1$. Then the \emph{$i$-th promotion permutation} $\prom_i(T)(b) \equiv |a|+b-1 \pmod n$ if and only if $a$ is the unique value that crosses the boundary between rows $i$ and $i+1$ in the application of promotion to  $\promotion^{b-1}(\osc(T))$.
    We write $\prom_{\bullet}(T)$ for the tuple of these promotion permutations.
\end{definition}

\begin{definition}\label{def:prom-mats}
    Given $T \in \ft(r,\underline{c})$ and $1\leq i\leq r-1$, the \textit{$i$-th promotion matrix} is the matrix $\PM^i(T)$ where the $(b, d)$-th entry is $1$ if $\prom_i(T)(b) = d$ and $0$ otherwise. Let $\PM(T) = (\PM^1(T), \ldots, \PM^{r-1}(T))$. We may also think of $\PM(T)$ as a single matrix whose entries are tuples.
\end{definition}

\begin{example}
    The promotion permutations (in cycle notation) of the tableau $S = \osc(T)$ from \Cref{fig:ftexample} are
    \begin{align*}
        \prom_1(S)& = (1\ 2\ 7\ 10\ 11\ 4\ 5\ 6\ 3\ 12\ 9\ 8), \\
\prom_2(S) &= (1\ 5)(2\ 10)(3\ 9)(4\ 6)(7\ 12)(8\ 11), \text{ and} \\
\prom_3(S) &= (1\ 8\ 9\ 12\ 3\ 6\ 5\ 4\ 11\ 10\ 7\ 2).
    \end{align*}
    The corresponding promotion matrices are
    \[
    \PM^1(T) =
\left(\begin{smallmatrix}
\cdot & 1 & \cdot & \cdot & \cdot & \cdot & \cdot & \cdot & \cdot & \cdot & \cdot & \cdot \\
\cdot & \cdot & \cdot & \cdot & \cdot & \cdot & 1 & \cdot & \cdot & \cdot & \cdot & \cdot \\
\cdot & \cdot & \cdot & \cdot & \cdot & \cdot & \cdot & \cdot & \cdot & \cdot & \cdot & 1 \\
\cdot & \cdot & \cdot & \cdot & 1 & \cdot & \cdot & \cdot & \cdot & \cdot & \cdot & \cdot \\
\cdot & \cdot & \cdot & \cdot & \cdot & 1 & \cdot & \cdot & \cdot & \cdot & \cdot & \cdot \\
\cdot & \cdot & 1 & \cdot & \cdot & \cdot & \cdot & \cdot & \cdot & \cdot & \cdot & \cdot \\
\cdot & \cdot & \cdot & \cdot & \cdot & \cdot & \cdot & \cdot & \cdot & 1 & \cdot & \cdot \\
1 & \cdot & \cdot & \cdot & \cdot & \cdot & \cdot & \cdot & \cdot & \cdot & \cdot & \cdot \\
\cdot & \cdot & \cdot & \cdot & \cdot & \cdot & \cdot & 1 & \cdot & \cdot & \cdot & \cdot \\
\cdot & \cdot & \cdot & \cdot & \cdot & \cdot & \cdot & \cdot & \cdot & \cdot & 1 & \cdot \\
\cdot & \cdot & \cdot & 1 & \cdot & \cdot & \cdot & \cdot & \cdot & \cdot & \cdot & \cdot \\
\cdot & \cdot & \cdot & \cdot & \cdot & \cdot & \cdot & \cdot & 1 & \cdot & \cdot & \cdot
\end{smallmatrix}\right)
    \PM^2(T) = 
\left(\begin{smallmatrix}
\cdot & \cdot & \cdot & \cdot & 1 & \cdot & \cdot & \cdot & \cdot & \cdot & \cdot & \cdot \\
\cdot & \cdot & \cdot & \cdot & \cdot & \cdot & \cdot & \cdot & \cdot & 1 & \cdot & \cdot \\
\cdot & \cdot & \cdot & \cdot & \cdot & \cdot & \cdot & \cdot & 1 & \cdot & \cdot & \cdot \\
\cdot & \cdot & \cdot & \cdot & \cdot & 1 & \cdot & \cdot & \cdot & \cdot & \cdot & \cdot \\
1 & \cdot & \cdot & \cdot & \cdot & \cdot & \cdot & \cdot & \cdot & \cdot & \cdot & \cdot \\
\cdot & \cdot & \cdot & 1 & \cdot & \cdot & \cdot & \cdot & \cdot & \cdot & \cdot & \cdot \\
\cdot & \cdot & \cdot & \cdot & \cdot & \cdot & \cdot & \cdot & \cdot & \cdot & \cdot & 1 \\
\cdot & \cdot & \cdot & \cdot & \cdot & \cdot & \cdot & \cdot & \cdot & \cdot & 1 & \cdot \\
\cdot & \cdot & 1 & \cdot & \cdot & \cdot & \cdot & \cdot & \cdot & \cdot & \cdot & \cdot \\
\cdot & 1 & \cdot & \cdot & \cdot & \cdot & \cdot & \cdot & \cdot & \cdot & \cdot & \cdot \\
\cdot & \cdot & \cdot & \cdot & \cdot & \cdot & \cdot & 1 & \cdot & \cdot & \cdot & \cdot \\
\cdot & \cdot & \cdot & \cdot & \cdot & \cdot & 1 & \cdot & \cdot & \cdot & \cdot & \cdot
\end{smallmatrix}\right)
    \PM^3(T) =
\left(\begin{smallmatrix}
\cdot & \cdot & \cdot & \cdot & \cdot & \cdot & \cdot & 1 & \cdot & \cdot & \cdot & \cdot \\
1 & \cdot & \cdot & \cdot & \cdot & \cdot & \cdot & \cdot & \cdot & \cdot & \cdot & \cdot \\
\cdot & \cdot & \cdot & \cdot & \cdot & 1 & \cdot & \cdot & \cdot & \cdot & \cdot & \cdot \\
\cdot & \cdot & \cdot & \cdot & \cdot & \cdot & \cdot & \cdot & \cdot & \cdot & 1 & \cdot \\
\cdot & \cdot & \cdot & 1 & \cdot & \cdot & \cdot & \cdot & \cdot & \cdot & \cdot & \cdot \\
\cdot & \cdot & \cdot & \cdot & 1 & \cdot & \cdot & \cdot & \cdot & \cdot & \cdot & \cdot \\
\cdot & 1 & \cdot & \cdot & \cdot & \cdot & \cdot & \cdot & \cdot & \cdot & \cdot & \cdot \\
\cdot & \cdot & \cdot & \cdot & \cdot & \cdot & \cdot & \cdot & 1 & \cdot & \cdot & \cdot \\
\cdot & \cdot & \cdot & \cdot & \cdot & \cdot & \cdot & \cdot & \cdot & \cdot & \cdot & 1 \\
\cdot & \cdot & \cdot & \cdot & \cdot & \cdot & 1 & \cdot & \cdot & \cdot & \cdot & \cdot \\
\cdot & \cdot & \cdot & \cdot & \cdot & \cdot & \cdot & \cdot & \cdot & 1 & \cdot & \cdot \\
\cdot & \cdot & 1 & \cdot & \cdot & \cdot & \cdot & \cdot & \cdot & \cdot & \cdot & \cdot
\end{smallmatrix}\right)
    \]
\end{example}

Let $\sigma = (1\,2\,\cdots\,N)$ be the long cycle and let $w_0 = (1, N)(2, N-1)\cdots$ be the longest element in the symmetric group $\mathfrak{S}_N$. The following collects the key results we need on promotion permutations.
\begin{theorem}[{\cite[Thm.~6.7]{fluctuating-paper}}]\label{thm:prom_perms}
Let $T\in\ft(r,\underline{c})$, where $|c_1| + \cdots + |c_n| = N$. Then for all $1 \leq i \leq r-1$:
 \begin{enumerate}[(i)]
   \item $\prom_i(T)$ is a permutation in $\mathfrak{S}_N$,
   \item $\prom_i(T) = \prom_{r-i}(T)^{-1}$,
   \item $\prom_i(\promotion(T)) = \sigma^{-|c_1|} \prom_i(T) \sigma^{|c_1|}$,
   \item $\prom_i(\evacuation(T)) = w_0 \prom_{r-i}(T) w_0$.
 \end{enumerate}
\end{theorem}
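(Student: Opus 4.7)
The plan is to prove the four parts in the order (iii), (i), (ii), (iv), reducing throughout to the oscillating case via \Cref{lem:prom-of-osc}, which gives $\osc(\promotion(T)) = \promotion^{|c_1|}(\osc(T))$.

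For part (iii), I would unwind the definition of $\prom_i$. Since $\promotion^{b-1}(\osc(\promotion(T))) = \promotion^{b-1+|c_1|}(\osc(T))$ by \Cref{lem:prom-of-osc}, the row-$(i,i+1)$ crossing event defining $\prom_i(\promotion(T))(b)$ coincides with the event defining $\prom_i(T)(b+|c_1|)$. Matching the additive shifts in the congruence $|a|+b-1 \pmod N$ yields $\prom_i(\promotion(T))(b) \equiv \prom_i(T)(b+|c_1|) - |c_1| \pmod N$, which is exactly the conjugation by $\sigma^{|c_1|}$ on the right. This step is essentially formal.

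For part (i), I would first use (iii) and \Cref{lem:prom-of-osc} to reduce to the case where $T$ has oscillating type (so $N=n$). There, the balance-point characterization \Cref{prop:promotion-from-balance-points} identifies $\prom_i(T)(b)$ with the shifted first $i$-balance point $j_i^{(b)} + b - 1 \pmod n$, where $j_i^{(b)}$ is the first $i$-balance point of $\promotion^{b-1}(\osc(T))$. Bijectivity then amounts to showing these shifted positions are distinct as $b$ ranges over $[n]$. I would argue this by tracking, for each $b$, the unique ``ancestor letter'' in $\osc(T)$ whose cyclically shifted image sits at the balance-point position in $\promotion^{b-1}(\osc(T))$; the fact that each letter of $\osc(T)$ is the ``letter at position $1$'' in exactly one of the iterates $\promotion^0(\osc(T)), \ldots, \promotion^{n-1}(\osc(T))$, combined with the deterministic jeu de taquin trajectory, should make this ancestor assignment injective, yielding bijectivity. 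This distinctness step is where I foresee the main technical obstacle.

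For part (ii), the identity $\prom_i(T) = \prom_{r-i}(T)^{-1}$ reflects a row-flipping duality. Using the involutive operators $\tau, \varpi, \varepsilon$ from \Cref{def:varpietc}, row-$(i,i+1)$ crossings in the promotion orbit of $\osc(T)$ correspond to row-$(r-i, r-i+1)$ crossings in an appropriately reversed orbit, and the reversal interchanges $\prom_i$ with the inverse permutation. For part (iv), I would combine (iii) with the Sch\"utzenberger-type identity $\evacuation \circ \promotion = \promotion^{-1} \circ \evacuation$, which extends to fluctuating tableaux via jeu de taquin, and with (ii), to produce the $w_0$-conjugation formula. Once (i) and (iii) are in hand, (ii) and (iv) should follow from well-chosen involutive symmetries (row-flipping and time-reversal) and are more structural than technical.
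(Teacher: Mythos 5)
First, a structural point: this paper does not prove the statement at all --- it is imported verbatim from the companion paper \cite{fluctuating-paper} (Thm.~6.7 there), with only a typo in (iv) corrected. So there is no in-paper proof to compare against, and I can only assess your sketch on its own terms. Part (iii) is indeed essentially formal from \Cref{def:prom-perms} and \Cref{lem:prom-of-osc}, with one silent assumption: to wrap the index $b+|c_1|$ around modulo $N$ you need $\promotion^N(\osc(T)) = \osc(T)$ for rectangular fluctuating tableaux, which is itself a nontrivial input.

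The genuine gaps are in (i), (ii) and (iv). For (i) you flag the problem yourself, and the ``ancestor letter'' heuristic is not yet an argument; the route that works (visible in embryo in the proof of \Cref{lem:crystal-phases} and in \cite[Thm.~8.24]{fluctuating-paper}) is to read $\PM^i(T)$ by columns: $\prom_i(T)(b)=d$ exactly when $e_i$ acts at the tracked position $d$ during the $b$-th promotion, and following a fixed tracked position through the promotion--evacuation diagram the letter there passes through $1,2,\ldots,r$ (or $\overline{r},\ldots,\overline{1}$), so each $e_i$ acts there exactly once. This gives exactly one $1$ per column, hence (i), and simultaneously gives (ii), since under this tracking ``$e_i$ acts at position $d$ at time $b$'' is equivalent to ``$e_{r-i}$ acts at position $b$ at time $d$.'' By contrast, your proposed mechanism for (ii) --- the involutions $\tau,\varpi,\varepsilon$ --- cannot work on its own: those involutions relate $\prom_\bullet$ of two \emph{different} tableaux up to $w_0$-conjugation (that is exactly what (iv) asserts), whereas (ii) is a row-versus-column statement about a \emph{single} orbit, namely $\PM^i(T)=\PM^{r-i}(T)^\top$, and no composition of these symmetries produces an inverse permutation. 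Similarly, your derivation of (iv) from (iii), (ii) and $\evacuation\circ\promotion=\promotion^{-1}\circ\evacuation$ does not close: after unwinding you still need to know which value crosses the row boundary in a single promotion of an evacuated tableau, which is the quantity being computed. The workable argument for (iv) is that $\evacuation$ acts on lattice words by the involution $\varepsilon$ of \Cref{def:varpietc}, which reverses positions (conjugation by $w_0$) and exchanges the row-$i$ and row-$(r-i)$ boundaries.
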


\begin{remark}
    There is a typographical error in the statement of \cite[Thm.~6.7(iv)]{fluctuating-paper}, which is corrected in the statement above: part (iv) should have $w_0\prom_{r-i}(T)w_0$ on the right-hand side, rather than $w_0\prom_{i}(T)w_0$.
\end{remark}

We will need the following additional property of promotion permutations. It follows easily from the sliding definition. It may also be seen crystal-theoretically as in \cite[Prop.~8.22]{fluctuating-paper}.

\begin{proposition}\label{prop:prom-cyclically-monotonic}
    Let $T \in \rft(\underline{o})$. Then
    \begin{align*}
      0 < (\prom_1(T)(i) - i)\pmod{n} \leq \cdots \leq (\prom_{r-1}(T)(i) - i)\pmod{n} \qquad&\text{if }o_i = 1, \\
      0 < (i - \prom_1(T)(i))\pmod{n} \leq \cdots \leq (i - \prom_{r-1}(T)(i))\pmod{n} \qquad&\text{if }o_i = -1,
    \end{align*}
    where $a \pmod{n} \in [0, n-1]$.
\end{proposition}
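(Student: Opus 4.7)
The plan is to reduce to the case where the position is $i=1$ via the cyclic-shift identity, and then identify $\prom_k(T)(1)$ directly with the balance points from \Cref{prop:first_balance}.

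Since $T$ is oscillating, $|c_1| = 1$, so iterating \Cref{thm:prom_perms}(iii) gives $\prom_k(\promotion^{i-1}(T)) = \sigma^{-(i-1)} \prom_k(T) \sigma^{i-1}$. Evaluating at $1$ and using $\sigma^{i-1}(1) = i$ yields $\prom_k(\promotion^{i-1}(T))(1) \equiv \prom_k(T)(i) - (i-1) \pmod n$, and subtracting one further, $\prom_k(\promotion^{i-1}(T))(1) - 1 \equiv \prom_k(T)(i) - i \pmod n$. Hence the inequalities for $T$ at position $i$ are equivalent to those for $T' \coloneqq \promotion^{i-1}(T)$ at position $1$, noting that the first letter of $T'$ has sign $o_i$. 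It therefore suffices to treat the case $i=1$ with each possible sign of $o_1$.

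Suppose $o_1 = 1$. Then the first letter of $L(T)$ is $1$, which is replaced by $\bullet$ when computing $\promotion(T)$. By \Cref{prop:first_balance}, there exist indices $2 \leq j_1 \leq j_2 \leq \cdots \leq j_{r-1} \leq n$ such that $\JDT_{j_k - 1}$ moves $\bullet$ across the boundary between rows $k$ and $k+1$. By \Cref{def:prom-perms}, this forces $\prom_k(T)(1) = j_k$. Consequently $(\prom_k(T)(1) - 1) \pmod n = j_k - 1 \in [1, n-1]$, which is positive and weakly increasing in~$k$, as required.

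Suppose instead $o_1 = -1$. Now the first letter is $\overline{1}$, which becomes $\overline{\bullet}$ and moves upward. In this case, \Cref{prop:first_balance} states that $\JDT_{j_k - 1}$ moves $\overline{\bullet}$ across the boundary between rows $r+1-k$ and $r-k$; this boundary is indexed $r-k$ in the promotion-permutation labeling. So $\prom_{r-k}(T)(1) = j_k$, i.e.\ $\prom_k(T)(1) = j_{r-k}$. Since $j_{r-k}$ weakly decreases as $k$ increases, $(1 - \prom_k(T)(1)) \pmod n = n + 1 - j_{r-k} \in [1, n-1]$ is positive and weakly increasing in $k$. The argument is essentially bookkeeping once one reads $\prom_k(T)(1)$ off the first-balance-point description of promotion; the only delicate point is the reversal of indexing in the $o_1 = -1$ case, which matches the fact that $\overline{\bullet}$ slides across row boundaries in the opposite order from $\bullet$.
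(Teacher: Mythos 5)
Your proof is correct and is essentially the argument the paper intends — the paper only remarks that the proposition ``follows easily from the sliding definition,'' and your reduction to position $1$ via \Cref{thm:prom_perms}(iii) followed by reading $\prom_k(T)(1)$ off the weakly increasing balance points $2 \leq j_1 \leq \cdots \leq j_{r-1} \leq n$ of \Cref{prop:first_balance} (together with \Cref{def:prom-perms}) is a valid way to make that precise. One small slip: when $o_1 = -1$ the first letter of the lattice word is $\overline{r}$ (a cell is removed from row $r$), not $\overline{1}$ — it is the tableau \emph{entry} in that cell that is labeled $\overline{1}$ and becomes $\overline{\bullet}$ — but this does not affect your argument, which only uses that the bullet is barred and hence that the second alternative of \Cref{prop:first_balance} applies.
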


\subsection{The separation labeling}\label{sec:sep_labeling}
To connect to fluctuating tableaux, we now define an intrinsic labeling of contracted fully reduced hourglass plabic graphs.

Recall from \Cref{def:proper-labeling} the notion of a (proper) labeling, which we now apply to hourglass plabic graphs rather than webs. A labeling assigns to each $m$-hourglass an $m$-subset of $\{1,2,3,4\}$.

\begin{definition}
\label{def:separation-labeling}
Let $G \in \crg(\underline{o})$. The \emph{base face} of $G$ is the face $F_0$ incident to the boundary segment between $b_n$ and $b_1$. We define the \emph{separation labeling} $\sep$ of $G$ as follows:
\begin{itemize}
    \item For $e$ a simple edge of $G$, let $F(e)$ be the face incident to $e$ which is on the right when traversing $e$ from the black vertex to the white vertex, and let $\ell_1,\ell_2,\ell_3$ be, respectively, the $\trip_1$-,$\trip_2$-, and $\trip_3$-strands traversing $e$ in this direction. If exactly $a$ of $\ell_1,\ell_2,\ell_3$ separate $F(e)$ from $F_0$, then set $\sep(e)=\{a+1\}$. We often omit braces when writing singleton sets. 
    \item For an  hourglass $e$, let $v$ be either endpoint, and define
\begin{equation}\label{eq:sep-label-for-hourglass}
\sep(e) = \{1,2,3,4\} \setminus \bigcup_{\substack{e' \ni v \\ e' \neq e}} \sep(e').
\end{equation}
\end{itemize}
See \Cref{fig:sep-label-equivariant-proof2} for an example.

If instead $G \in \crg(\underline{c})$, we define $\sep$ of $G$ by first computing $\sep$ of $\osc(G)$ (see \Cref{def:oscillating-version-of-G}), then labeling each simple edge and each internal hourglass of $G$ by the label on the corresponding edge of $\osc(G)$, and finally labeling each boundary hourglass by the set of labels appearing on the corresponding claw of $\osc(G)$.
\end{definition}

\begin{figure}[htbp]
    \centering
    \includegraphics[scale=1]{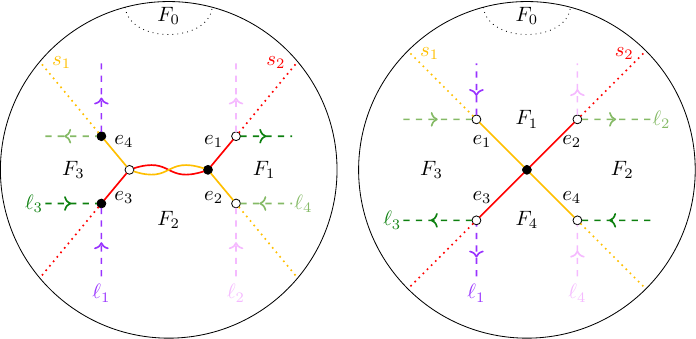}
    \caption{The $\trip_1/\trip_3$-strands $\ell_i$ and $\trip_2$-strands $s_i$ passing through the simple edges surrounding an hourglass (left) or a simple vertex (right). See the proof of \Cref{prop:sep-label-well-defined-and-proper}.}
    \label{fig:sep-label-well-defined}
\end{figure}

\begin{proposition}
\label{prop:sep-label-well-defined-and-proper}
Let $G \in \crg(\underline{c})$. 
Then the separation labeling is well defined and proper.
\end{proposition}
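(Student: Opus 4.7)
The plan is to reduce to the oscillating case and then verify, vertex-by-vertex, that the simple-edge labels at each internal vertex are distinct and that the labels incident to each vertex partition $\{1,2,3,4\}$. By the construction of $\sep$ for non-oscillating $G$, it suffices to treat $G \in \crg(\underline{o})$, in which case each internal vertex has degree $4$ and is either \emph{simple} (incident to four simple edges) or an \emph{hourglass vertex} (incident to two simple edges plus a 2-hourglass).

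At a simple internal vertex $v$ with incident edges $e_1, \ldots, e_4$ in clockwise order and surrounding sectors $F_i$ between $e_i$ and $e_{i+1}$, the rules of the road identify the three strands $\ell_1, \ell_2, \ell_3$ of \Cref{def:separation-labeling} through each $e_i$ as two ``corner'' $\trip_1/\trip_3$-curves (joining $e_i$ to an adjacent edge at $v$) and one ``diagonal'' $\trip_2$-curve (joining $e_i$ to the opposite edge). This yields six distinct undirected curves near $v$ (four corners, one per sector, and two diagonals), each a simple curve in the disk by \Cref{lem:trip1-does-not-revisit-vertex} together with the monotonicity of \Cref{cor:fully-reduced-iff-monotonic}. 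The key identity is that the strand triples for consecutive edges $e_i, e_{i+1}$ share exactly the corner curve lying in $F_i$. Using this identity together with monotonicity, a direct accounting of how the separation counts change as one cycles from edge to edge around $v$ forces the four values $\sep(e_1), \ldots, \sep(e_4)$ to realize each element of $\{1,2,3,4\}$ exactly once.

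At an hourglass vertex $v_b$ with simple edges $f_1, f_2$ and 2-hourglass $h$ (cf.\ the left of \Cref{fig:sep-label-well-defined}), the analogous local analysis establishes $\sep(f_1) \neq \sep(f_2)$, so formula \eqref{eq:sep-label-for-hourglass} defines $\sep(h)$ as a 2-element subset of $\{1,2,3,4\}$. To verify well-definedness across both endpoints of $h$, I would track the pairs of strands of each color that traverse the hourglass: by the twist structure of $h$ together with \Cref{lem:how-trip1-trip2-diverge}, the pair $\{\sep(f_1), \sep(f_2)\}$ equals $\{\sep(f_3), \sep(f_4)\}$ for the simple edges $f_3, f_4$ at the opposite endpoint $v_w$, and hence the two complements coincide.

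The main obstacle is the simple-vertex case: although the local trip-strand picture is immediate from the rules of the road, the statistic counted by $\sep$ is inherently global, depending on which side of each of the six curves contains the base face $F_0$. Monotonicity is the essential ingredient that reduces the global check to a tractable finite local case analysis, by ruling out the self-intersections and bad double crossings that would otherwise disrupt the clean cycling of $\sep$-values around the vertex.
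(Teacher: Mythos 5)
Your architecture matches the paper's: reduce to the oscillating case, then do a local analysis at each internal vertex with monotonicity (\Cref{cor:fully-reduced-iff-monotonic}) supplying the global control, and your description of the six curves near a simple vertex (four corners, two diagonals, consecutive edges sharing exactly one corner) is correct. The gap is that the step you call ``a direct accounting of how the separation counts change as one cycles from edge to edge'' is the entire content of the proof, and it does not follow from the local curve structure alone. To see this, record for each of the six simple arcs which side of it the base face $F_0$ lies on. Each corner curve locally cuts off one sector from the other three, and each diagonal splits the sectors two-and-two; a short computation then shows that the four quantities $\sep(e_i)-1$ always sum to $6$, but the local indicator data also permits multisets such as $\{1,1,2,2\}$ (e.g.\ when $F_0$ lies on the ``outside'' of every corner), which would violate properness. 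What rules these out --- and what the paper's proof actually establishes --- is a collection of \emph{global} constraints among the indicators: for each position of $F_0$ relative to the vertex one shows that specific strands must separate specific adjacent faces from $F_0$, using that two strands sharing an edge cannot re-meet after diverging (monotonicity, together with reducedness of $\widehat{G}$), and that each remaining corner separates exactly one of the two opposite sectors from $F_0$; only then does the multiset come out to $\{1,2,3,4\}$. The same remark applies at hourglass endpoints: your targets $\sep(f_1)\neq\sep(f_2)$ and $\{\sep(f_1),\sep(f_2)\}=\{\sep(f_3),\sep(f_4)\}$ are the right ones for well-definedness and properness, but the paper obtains them by the same explicit determination (both pairs equal $\{3,4\}$ in the configuration of \Cref{fig:sep-label-well-defined}) rather than by a formal symmetry of the hourglass twist. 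So the plan is sound and follows the paper's route, but the decisive case analysis is asserted rather than performed, and it is not automatic.
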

\begin{proof}
The separation labeling is clearly well defined on simple edges, so it suffices to show that \eqref{eq:sep-label-for-hourglass} does not depend on the choice of $v$ in the second bullet point when $e$ has two internal endpoints.

Consider the four simple edges $e_1,e_2,e_3,e_4$ adjacent to the two endpoints of an hourglass. See the left side of \Cref{fig:sep-label-well-defined} for notation we will use; in particular, $s_1$ and $s_2$ are $\trip_2$-strands and the $\ell_i$ are $\trip_1$- or $\trip_3$-strands. To compute $\sep(e_1)$, we must determine which of $s_2,\ell_2,\ell_3$ separate the face $F_1$ from the base face $F_0$. Since $G$ is fully reduced, and hence monotonic by \Cref{cor:fully-reduced-iff-monotonic}, the $\trip_2$-strands $s_1,s_2$ do not recross, nor do any $\trip_1$-strands retouch either $\trip_2$-strand. Thus we see that $s_2,\ell_3$ separate $F_1$ from $F_0$, and that $\ell_2$ separates the two faces if and only if it reaches the boundary to the right of $F_0$ in \Cref{fig:sep-label-well-defined}. Similarly, computing $\sep(e_2)$, we see that $s_1,\ell_4$ separate $F_2$ from $F_0$, and that $\ell_2$ does so if and only if it ends to the left of $F_0$. Thus we conclude that $\{\sep(e_1),\sep(e_2)\}=\{3,4\}$. A similar analysis shows $\{\sep(e_3),\sep(e_4)\}=\{3,4\}$, so that the hourglass may be consistently labeled by $\{1,2\}$; moreover, the resulting labeling is proper around these vertices. The cases differing from \Cref{fig:sep-label-well-defined} by a color reversal or the relative location of $F_0$ are analogous.

It remains to verify that $\sep$ gives a proper labeling around each simple vertex; see the right side of \Cref{fig:sep-label-well-defined}. It is easy to see that $F_2$ and $F_0$ are separated by $s_2,\ell_2$ and that $F_4$ and $F_0$ are separated by $s_1,\ell_3$. Furthermore, exactly one of $F_2,F_4$ is separated from $F_0$ by $\ell_4$. Similarly, exactly one of $F_1,F_3$ is separated from $F_0$ by $\ell_1$. Thus $\{\sep(e_1),\ldots,\sep(e_4)\}=\{1,2,3,4\}$ and $\sep$ is a proper labeling.
\end{proof}

 \subsection{Separation words, promotion, and evacuation} We now show that the separation labeling of a contracted fully reduced hourglass plabic graph gives rise to a rectangular fluctuating tableau and that promotion and evacuation of the tableau correspond to rotation and reflection of the graph.

\begin{definition}
\label{def:boundary-word}
Given $G \in \crg(\underline{c})$, with boundary vertices $b_1,\ldots,b_n$, incident to edges $e_1,\ldots,e_n$, respectively, the \emph{separation word} $\bw(G)=w_1 \ldots w_n$ is the word of type $\underline{c}$ given by setting $w_i=\col(b_i)\sep(e_i)$ for $i=1,\ldots,n$. That is, $\bw(G)=\partial(\sep)$.
\end{definition}

For $\pi \in \mathfrak{S}_n$, let $\aexc(\pi)\coloneqq \{i : \pi^{-1}(i)>i\}$ denote the set of \emph{antiexcedances} of $\pi$. 

\begin{proposition}
\label{prop:sep-boundary-label-from-trips}
Let $G \in \crg(\underline{o})$. Then:
\[
\sep(e_i) = \begin{cases} 1+\left| \{ a : i \not \in \aexc(\trip_a(G)) \} \right|, &\text{if $b_i$ is black;} \\ 1+\left| \{ a : i \in \aexc(\trip_a(G)) \} \right|, &\text{if $b_i$ is white.} \end{cases}
\]
\end{proposition}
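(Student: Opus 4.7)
The plan is to analyze, for each $a \in \{1, 2, 3\}$, the geometry of the $\trip_a$-strand $\ell_a$ traversing $e_i$ in the black-to-white direction, and determine exactly when $\ell_a$ separates $F(e_i)$ from $F_0$. Since $G$ has oscillating type, $e_i$ is simple, and $b_i$ is an endpoint of $\ell_a$: for $b_i$ black, $\ell_a$ starts at $b_i$ and ends at $b_{\trip_a(i)}$, while for $b_i$ white, $\ell_a$ ends at $b_i$ and starts at $b_{\trip_a^{-1}(i)}$.

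I would first handle the case $b_i$ white. Near $b_i$, the strand $\ell_a$ is directed outward, so the standard orientation convention places the right side of $\ell_a$ on the clockwise boundary side at $b_i$ (toward $b_{i+1}$). Thus $F(e_i)$ lies in the region of the disk bounded by $\ell_a$ and the clockwise boundary arc from $b_i$ to $b_{\trip_a^{-1}(i)}$. The base face $F_0$, located between $b_n$ and $b_1$, lies in this region precisely when the arc sweeps past the gap between $b_n$ and $b_1$, i.e.\ when $\trip_a^{-1}(i) < i$. Consequently, $\ell_a$ separates $F_0$ from $F(e_i)$ if and only if $\trip_a^{-1}(i) > i$, i.e., if and only if $i \in \aexc(\trip_a)$; summing over $a$ gives the desired formula.

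For $b_i$ black, the strand $\ell_a$ enters $b_i$ from the interior rather than leaving it, flipping the orientation so that the right side now sits on the counterclockwise boundary side (toward $b_{i-1}$). An analogous arc-sweep analysis shows that $\ell_a$ separates $F_0$ from $F(e_i)$ if and only if $\trip_a(i) < i$, yielding a count of $|\{a : \trip_a(i) < i\}|$ separating strands. To match the stated formula, I would rewrite this count using $\trip_3 = \trip_1^{-1}$ and $\trip_2 = \trip_2^{-1}$: the involution $a \leftrightarrow 4 - a$ on $\{1, 2, 3\}$ carries $\{a : \trip_a(i) < i\}$ bijectively to $\{a : \trip_a^{-1}(i) < i\}$, so these sets have equal cardinality. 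Finally, since $G$ is fully reduced with every interior vertex of degree $4$, there is no $\trip_a$-fixed point at $i$: a $\trip_1$-fixed point would force, via \Cref{prop:trip1_agrees} and \Cref{thm:plabic-reduced-iff-no-bad-crossings}(4), an internal leaf at $b_i$, which is precluded by the degree condition; a $\trip_2$-fixed point would produce a $\trip_2$-self-intersection, violating monotonicity from \Cref{cor:fully-reduced-iff-monotonic} via property (P1). Hence $\trip_a^{-1}(i) < i$ is equivalent to $\trip_a^{-1}(i) \leq i$, i.e., to $i \notin \aexc(\trip_a)$, completing the count.

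The main subtlety is correctly calibrating the ``right'' side of a strand at $b_i$: whether this corresponds to the clockwise or counterclockwise boundary arc depends on whether the strand exits or enters the disk at $b_i$, and hence on the color of $b_i$. This orientation flip is precisely what produces the different formulas in the two cases of the proposition; the rest is bookkeeping against the involution/inverse relations among the $\trip_a$.
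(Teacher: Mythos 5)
Your proof is correct and follows essentially the same route as the paper's (which simply observes that, for $b_i$ black, the $\trip_{4-a}$-strand separates $F(e_i)$ from $F_0$ exactly when $\trip_{4-a}(G)(i)<i$, and notes the white case is similar). Your version is more explicit about the orientation conventions and about ruling out fixed points of the $\trip_a$ — a point the paper leaves implicit — though note the phrase ``enters $b_i$ from the interior'' in your black case contradicts your own (correct) setup that $\ell_a$ starts at $b_i$; the geometric conclusion you draw from it (right side toward $b_{i-1}$) is nevertheless the right one for an inward-directed strand.
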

\begin{proof}
If $b_i$ is black, then the $\trip_{4-a}(G)(i)$-strand separates $F(e_i)$ and $F_0$ when $\trip_{4-a}(G)(i)<i$. This is equivalent to $i \not \in \aexc(\trip_{a}(G))$. The other case is similar.
\end{proof}

\begin{lemma}
\label{lem:trip1-2-3-layout}
Let $G \in \crg(\underline{o})$. If $b_i$ is black (resp. white), then $b_i, \trip_1(b_i), \trip_2(b_i),$ and $\trip_3(b_i)$ appear in clockwise (resp.\ counterclockwise) order. Some of these vertices may coincide.
\end{lemma}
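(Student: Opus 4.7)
The plan is to analyze the three trip strands $\ell_1,\ell_2,\ell_3$ emanating from $b_i$ locally at the first internal vertex they meet, and then use monotonicity together with a planarity argument to deduce the asserted cyclic order on the boundary.

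Assume $b_i$ is black (the white case is symmetric with left and right swapped). Let $\epsilon$ be the unique edge incident to $b_i$. If $\epsilon$ connects $b_i$ directly to a boundary vertex $b_j$, then $\trip_a(b_i)=b_j$ for all $a$ and the claim holds with three of the four vertices coinciding. Otherwise let $v$ be the internal endpoint of $\epsilon$; since $G$ is bipartite, $v$ is white. Label the three remaining edges at $v$ as $e_1,e_2,e_3$ in clockwise cyclic order (as drawn in the plane) starting just after $\epsilon$. Standing at $v$ and facing the forward direction of $\epsilon$, ``left'' corresponds to ``clockwise from the incoming edge'' in the planar embedding, so a direct inspection of the rules of the road gives that the $a$-th leftmost turn at $v$ continues along $e_a$. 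Hence $\ell_a$ begins with $\epsilon$, passes through $v$, and continues along $e_a$, eventually arriving at $\trip_a(b_i)$.

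Each $\ell_a$ is a simple curve in the disk: $\ell_1$ and $\ell_3$ do not revisit any vertex by \Cref{lem:trip1-does-not-revisit-vertex} (the latter as the reverse of a $\trip_1$-strand), while $\ell_2$ does not revisit vertices by monotonicity (\Cref{cor:fully-reduced-iff-monotonic}). Monotonicity further asserts that the vertices shared by $\ell_1$ and $\ell_2$ form a single consecutive block on each strand; since this block starts with $b_i,v$ and the vertices immediately after $v$ on $\ell_1$ and $\ell_2$ are the far endpoints of $e_1$ and $e_2$ respectively (which differ, since $\varphi(G)$ is well oriented and hence has no $2$-cycle by \Cref{thm:6-vertex-hourglass-correspondence} and \Cref{def:symm6v-well-oriented}), the block equals $\{b_i,v\}$. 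The same argument applies to $\ell_2$ and $\ell_3$.

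Consequently $\ell_2$ is a simple curve from $b_i$ to $\trip_2(b_i)$ partitioning the disk into two open regions $R_A$ and $R_B$. The strand $\ell_1$ past $v$ cannot cross $\ell_2$ (else it would share another vertex with $\ell_2$), so it lies entirely in one region; symmetrically $\ell_3$ lies in the other. Locally at $v$, the sector bounded by $\epsilon$ and $e_1$ (the one first clockwise from $\epsilon$) opens onto the boundary arc immediately clockwise from $b_i$; thus $R_A$ is the region whose disk boundary arc runs clockwise from $b_i$ to $\trip_2(b_i)$, so $\trip_1(b_i)$ lies on that clockwise arc. Symmetrically $\trip_3(b_i)$ lies on the clockwise arc from $\trip_2(b_i)$ back to $b_i$. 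Therefore $b_i,\trip_1(b_i),\trip_2(b_i),\trip_3(b_i)$ occur in clockwise order, as claimed. The main obstacle is bookkeeping the orientation convention that the $a$-th leftmost outgoing turn at a white vertex is the $a$-th edge clockwise from the incoming edge in the embedding; once this is pinned down, the rest is a clean planarity argument built on the monotonicity of fully reduced hourglass plabic graphs.
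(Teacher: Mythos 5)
Your overall strategy --- use the $\trip_2$-strand as a Jordan arc separating the disk and place $\trip_1(b_i)$ and $\trip_3(b_i)$ on opposite sides --- is sound, and your orientation bookkeeping (``$a$-th leftmost turn $=$ $a$-th edge clockwise from the incoming edge'') is correct. The paper's proof is a compressed version of the same idea, but it routes the key step through \Cref{lem:how-trip1-trip2-diverge}, and that is exactly where your argument has a genuine gap.

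The gap: you claim $\ell_1$ and $\ell_2$ diverge already at the first internal vertex $v$, on the grounds that the far endpoints of $e_1$ and $e_2$ differ because $\varphi(G)$ has no $2$-cycles. But $v$ need not be incident to four distinct simple edges: since $G$ is contracted of oscillating type, $v$ may be incident to $\epsilon$, one simple edge, and one $2$-hourglass. If the hourglass occupies the positions $e_1,e_2$ (or $e_2,e_3$), then those two ``edges'' are the two strands of a single hourglass and their far endpoints coincide; the ``no $2$-cycle'' condition on $\varphi(G)$ says nothing here, because $\varphi$ \emph{contracts} the hourglass to a single transmitting vertex rather than producing a $2$-cycle. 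In this situation $\ell_1$ and $\ell_2$ share the hourglass and, by \Cref{lem:how-trip1-trip2-diverge}(a), generally a further simple edge before diverging --- and, crucially, they \emph{cross} inside the twisted hourglass. So ``$\ell_1$ past $v$ lies entirely in one region'' is false (a portion of $\ell_1$ lies on $\ell_2$, and it switches sides at each shared hourglass), and the sector analysis at $v$ no longer determines which region $\trip_1(b_i)$ lands in. What rescues the statement is the parity fact in \Cref{lem:how-trip1-trip2-diverge}(c): the number of shared vertices is even, so the true divergence vertex $v_k$ has color opposite to $b_i$, and comparing turn priorities of $\ell_1$ and $\ell_2$ at that white vertex shows $\ell_1$ still exits on the clockwise side (and dually $\ell_3$ on the counterclockwise side). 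To repair your write-up, replace the ``shared block $=\{b_i,v\}$'' claim by an appeal to \Cref{lem:how-trip1-trip2-diverge} and carry out the side determination at $v_k$ rather than at $v$.
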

\begin{proof}
This claim follows by using \Cref{lem:how-trip1-trip2-diverge} to track the colors of the vertices at which the other strands diverge from the $\trip_2(b_i)$-strand.
\end{proof}

\begin{definition}
\label{def:rot-and-refl}
Given an hourglass plabic graph $G$, let $\rot(G)$ be the graph obtained by rotating $G$ one step counterclockwise with respect to the labeling $b_1,\ldots,b_n$ of the boundary, so that what was $b_1$ becomes $b_n$. Let $\refl(G)$ denote the graph obtained by reflecting $G$ with respect to the diameter of the boundary circle passing between $b_n$ and $b_1$.
\end{definition}

Recall that we often identify a fluctuating tableau $T$ with its lattice word $L(T)$. In particular, we write $\promotion(L(T))$ for $L(\promotion(T))$, and likewise for $\evacuation$.

\begin{theorem}
\label{thm:sep-label-promotion-equivariant}
Let $G \in \crg(\underline{c})$. Then:
\begin{enumerate}[(i)]
    \item \label{item:sep-is-lattice}$\bw(G)$ is the lattice word of a rectangular fluctuating tableau $\mathcal{T}(G) \in \rft(\underline{c})$,  
    \item \label{item:prom-is-rot} $\promotion(\bw(G))=\bw(\rot(G))$, and 
    \item \label{item:evac-is-refl}$\evacuation(\bw(G))=\bw(\refl(G))$.
\end{enumerate}
\end{theorem}
\begin{proof}
We may assume without loss of generality that $\underline{c}$ is oscillating, since $\bw(\osc(G))$ being a lattice word implies the same for $\bw(G)$ and since $\promotion^{|c_1|}(\osc(T))=\osc(\promotion(T))$ for $T \in \ft(\underline{c})$ by \Cref{lem:prom-of-osc} while we clearly have $\rot^{|c_1|}(\osc(G))=\osc(\rot(G))$. By \Cref{prop:sep-label-well-defined-and-proper}, $\sep$ is a proper labeling, so $\bw(G)$ is balanced. 

The claim (\ref{item:evac-is-refl}) is straightforward: evacuation \cite[Thm.~6.5]{fluctuating-paper} and reflection both have the effect of applying the involution $\varepsilon$ to $\bw(G)$. It remains to show that $\bw(G)=w_1\ldots w_n$ is a lattice word satisfying (\ref{item:prom-is-rot}). For $a=1,2,3$, let $\ell_a$ denote the $\trip_a$-strand starting at $b_1$, and let $b_{i_a}$ be its endpoint. If there are no internal vertices on $\ell_2$, then (\ref{item:sep-is-lattice}) reduces to a check of the claim for each connected component of $G$ and (\ref{item:prom-is-rot}) is clear. Otherwise, we proceed by induction on the number of internal vertices in $G$. Suppose $\col(b_1)=1$; otherwise we may reverse all colors, which has the effect of applying the involution $\varpi$ to $\bw(G)$, an operation which preserves the lattice word property and commutes (by \cite[Lem.~3.13]{two-column}) with promotion. 

Since $\trip_a$-strands rotate along with the graph, it is clear by \Cref{prop:sep-boundary-label-from-trips} that $\bw(\rot(G))_{j-1} = \bw(G)_j$ (with indices taken modulo $n$) unless $j \in \{1 \eqqcolon i_0,i_1,i_2,i_3\}$. Thus, by the first balance point characterization of promotion in \Cref{prop:promotion-from-balance-points}, for (\ref{item:prom-is-rot}) we need to show: 
\begin{equation*}
\text{$i_a$ is the smallest index weakly greater than $i_{a-1}$ such that $\slack_a(w_1 \ldots w_{i_a})=0$.} \tag{$\dagger$}
\end{equation*}

We now claim that we can reduce to the case where $\ell_1$ and $\ell_2$ share only their first edge and $i_1 \neq i_2$. Suppose that $\ell_1$ and $\ell_2$ share also their second edge. Then by \Cref{lem:how-trip1-trip2-diverge}, they also share their third edge, and these three edges are exactly as shown in \Cref{fig:sep-label-equivariant-proof1}. We consider a smaller hourglass plabic graph $C$ whose boundary contour $\bs{c}$ excises the first two edges and intersects the third, as shown in the figure. The boundary vertices of $C$ lie on the cut edges of its contour and their colors are determined by bipartiteness. These boundary vertices are numbered so that the marked starting point of the contour is taken as the base face of the graph. As $C$ has fewer internal vertices than $G$, it rotations and reflections satisfy (\ref{item:sep-is-lattice}) and (\ref{item:prom-is-rot}) by induction.

We wish to compare $\bw(C)$ with $\bw(G)$. All boundary edges $e$ of $C$ or $G$ have $\sep_C(e)=\sep_G(e)$ except for the endpoints of the four $\trip$-strands passing through the second or last boundary edges of $C$, as these are the only strands which end on opposite sides of the base faces of the two graphs. If these endpoints are distinct, with colors as in \Cref{fig:sep-label-equivariant-proof1}, then \Cref{lem:trip1-2-3-layout} and monotonicity imply that these separation labels are as indicated in the figure. (Note that if some endpoints coincide, then their colors are necessarily as indicated.) Thus we have
\begin{align*}
    \bw(C) &= 11 \circ a_1 \circ \bar{1} \circ a_2 \circ \bar{2} \circ a_3 \circ \bar{3} \circ a_4 \circ 3 \circ a_5 \circ 2 \circ a_6 \circ 1 \circ a_7 \circ \bar{1}, \\
    \bw(G) &= 1 \circ a_1 \circ \bar{2} \circ a_2 \circ \bar{3} \circ a_3 \circ \bar{4} \circ a_4 \circ 4 \circ a_5 \circ 3 \circ a_6 \circ 2 \circ a_7,
\end{align*}
where $\circ$ denotes concatenation, and where the $a_k$ are the intervening unchanged subwords. Since $C$ satisfies (\ref{item:sep-is-lattice}) and $\rot(C)$ satisfies (\ref{item:prom-is-rot}), there is no $1$-balance point in $1 \circ a_1$. Thus we may swap the $a_1$ and $\bar{1}$ in $\bw(C)$ and still have a lattice word $1 1 \bar{1} \circ a_1 \circ a_2 \circ \cdots$. We can always delete a consecutive $1 \bar{1}$ and remain a lattice word; similarly, we may freely insert $\bar{4} 4$ anywhere, and choose to do so following $a_4$. 
We can likewise commute $a_2$ with the following $\bar{2}$, since there is no $2$-balance point in $1 \circ a_1 \circ a_2$ inside the subword $a_2$. Continuing in this way from left to right in the word, and also from right to left at the other end of the word (using the inductive hypotheses for $\refl(C)$), we produce $\bw(G)$. Thus $\bw(G)$ is a lattice word, giving (\ref{item:sep-is-lattice}) for $G$. These operations have not changed the balance points relevant to ($\dagger$), with the exception of the first $3$-balance point after $b_{i_2}$, which moves as desired to $b_{i_3}=\trip_3(G)(b_1)$. If instead the endpoints are not distinct, some steps in the transformation of $\bw(C)$ into $\bw(G)$ can be omitted. If some endpoints have colors differing from \Cref{fig:sep-label-equivariant-proof1}, the analysis is similar and omitted. In any case, we have (\ref{item:prom-is-rot}) for $G$. 

If, after having performed this reduction, $\ell_1$ and $\ell_2$ still share more than their first edge, we can repeat until either they diverge at the first vertex or we reach a disconnected graph, which can be further reduced. This proves the claim.   

\begin{figure}[ht]
    \centering
    \includegraphics[scale=1.25]{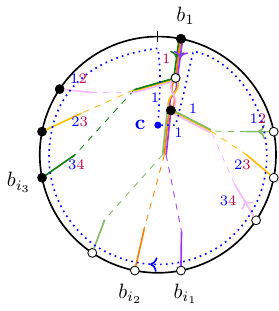}
    \includegraphics[scale=1.25]{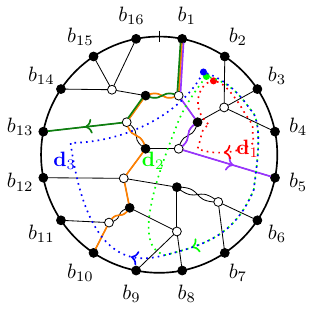}
    \caption{Left: The contour $\bs{c}$, its separation labels (in blue), and the separation labels for $G$ (in red). The two green trips may cross, as may the pink and purple trips, but this does not materially affect the argument.  Right: an example of the contours $\bs{d_1}, \bs{d_2},$ and $\bs{d_3}$.}
    \label{fig:sep-label-equivariant-proof1}
\end{figure}

\medskip

Having made the above reductions, we now prove $(\dagger)$ for $G$. As shown in \Cref{fig:sep-label-equivariant-proof1}, consider hourglass plabic graphs $D_a$ for $a=1,2,3$ cut out by contours $\bs{d}_a$ starting in the boundary face between $b_1$ and $b_2$ and encircling all internal vertices of $G$ that lie to the left of $\ell_a$. Since $\ell_3$ contains an internal vertex, the $D_a$ have strictly fewer internal vertices than $G$, so we may assume by induction they satisfy (\ref{item:sep-is-lattice}) and (\ref{item:prom-is-rot}).

\medskip
\noindent
{\sf Case 1 ($a=1$):} 
Let $\bw(D_1)=v_1\ldots v_m$; we wish to compare $w_2 \ldots w_{i_1-1}$ with $v_1 \ldots v_{i_1-2}$. 

First, we claim that for $j>i_1-2$ we have $v_j \in \{3,4\}$. These are obtained from the separation labeling of the boundary edges $e'$ of $D_1$ that are incident to $\ell_1$. The corresponding boundary vertices $b'$ of $D_1$ are black, as otherwise $\ell_1$ would have turned left to follow $e'$. Furthermore, since $G$ (and so $D_1$) is fully reduced, the $\trip_2$-strand passing through $e'$ must end at one of the genuine boundary vertices $b_2,\ldots,b_{i_1-1}$, which have a smaller index in $D_1$. Thus, by \Cref{prop:sep-boundary-label-from-trips} and \Cref{lem:trip1-2-3-layout}, we have $\sep_{D_1}(e') \geq 3$. This implies that, since $\bw(D_1)$ is balanced, $\slack_1(v_1\ldots v_{i_1-2})=0$.

Now, let $b_j$ be among $b_2,\ldots,b_{i_1-1}$ with incident edge $e_j$. Since $G$ is fully reduced and therefore monotonic by \Cref{cor:fully-reduced-iff-monotonic}, the $\trip_2$-strand through $b_j$ may not exit and reenter the subgraph $D_1$. This implies $\sep_{D_1}(e_j) \in \{1,2\}$ if and only if $\sep_G(e_j) \in \{1,2\}$. 

Moreover, we claim that if $\sep_{G}(e_j) =2$, then $\sep_{D_1}(e_j) = 2$. To see this, first assume that $b_j$ is black and consider the $\trip_3$-strand through $b_j$. If we had $\sep_{D_1}(e_j) = 1$, then this $\trip_3$-strand must cross the $\ell_1$. But then $\sep_{G}(e_j) =2$ implies that it must cross again, forming a bad double crossing in the underlying plabic graph $\widehat{G}$, contradicting that $\widehat{G}$ is reduced by \Cref{prop:underlying-plabic-is-reduced}.  If instead $b_j$ is a white vertex, then every strand through $b_j$ that separates in $G$ also separates in $D_1$. This establishes the claim.
An analogous argument, reversing the role of black and white vertices, shows that if $\sep_{G}(e_j) =1$, then $\sep_{D_1}(e_j) = 1$.

Together with the fact that $\slack_1(v_1\ldots v_{i_1-2})=0$, these results imply that $\slack_1(w_2 \ldots w_{i_1-1})=\slack_1(w_1 \ldots w_{i_1})=0$. If $\bw(G)$ had balanced $1$'s
and $2$'s for some shorter prefix, this would contradict the inductive hypothesis that $\bw(D_1)$ is a lattice word.

\medskip
\noindent
{\sf Case 2 ($a=2$):}
Let $\bw(D_2)=v_1\ldots v_m$; we wish to compare $w_2 \ldots w_{i_2-1}$ with $v_1 \ldots v_{i_2-2}$.

First, we claim that for $j>i_2-2$, we have $v_j \in \{\bar{1},4\}$. These values are obtained from the separation labeling of the boundary edges $e'$ of $D_2$ that are incident to $\ell_2$. Since $G$ is fully reduced and therefore monotonic by \Cref{cor:fully-reduced-iff-monotonic}, no $\trip_{\alpha}$-strand can double cross $\ell_2$ for any $\alpha$. Hence, every $\trip_{\alpha}$-strand passing through $e'$ must end at one of the genuine boundary vertices $b_2,\ldots,b_{i_2-1}$, which have a smaller index in $D_2$. Thus, by \Cref{prop:sep-boundary-label-from-trips} and \Cref{lem:trip1-2-3-layout}, we have $\sep_{D_2}(e') = 1$ if $b'$ is white and $\sep_{D_2}(e') = 4$ if $b'$ is black. This implies that, since $\bw(D_2)$ is balanced, we have $\slack_2(v_1\ldots v_{i_2-2})=0$. Moreover, for $1 \leq k \leq i_2-2$, we have $v_k  = w_{k+1}$, except for $k=i_1-2$ when $v_{i_1-2} = 1$ (or $\bar{4}$) and $w_{i_1-1} = 2$ (or $\bar{3}$).
Thus, $\slack_2(w_2 \ldots w_{i_2-1})=1$ and therefore $\slack_2(w_1 \ldots w_{i_2})=0$. If we had $\slack_2(w_1 \dots w_k)=0$ for some $i_1 \leq k < i_2$, this would contradict the inductive hypothesis that $\bw(D_2)$ is a lattice word.

\medskip
\noindent
{\sf Case 3 ($a=3$):}
Let $\bw(D_3)=v_1\ldots v_m$; we wish to compare $w_2 \ldots w_{i_3-1}$ with $v_1 \ldots v_{i_3-2}$. 

First, we claim that for $j>i_3-2$, we have $v_j \in \{\overline{1},\overline{2},\{\overline{1},\overline{2}\}\}$. These are obtained from the separation labeling of the boundary edges $e'$ of $D_3$ that are incident to $\ell_3$. The corresponding boundary vertices $b'$ of $D_3$ are white, as otherwise $\ell_3$ would have turned right to follow $e'$. If $e'$ is an hourglass, we find that the adjacent simple edges along $\ell_3$ have separation labels $4, 3$, so by \Cref{prop:sep-label-well-defined-and-proper}, $\sep_{D_3}(e') = \{1, 2\}$. We now assume $e'$ is a simple edge. Since $G$ (and so $D_3$) is fully reduced and therefore monotonic by \Cref{cor:fully-reduced-iff-monotonic}, the $\trip_2$-strand passing through $e'$ must end at one of the genuine boundary vertices $b_2,\ldots,b_{i_3-1}$, which have a smaller index in $D_3$. Thus, by \Cref{prop:sep-boundary-label-from-trips} and \Cref{lem:trip1-2-3-layout}, we have $\sep_{D_3}(e') \leq 2$. This implies that, since $\bw(D_3)$ is balanced, we must have $\slack_3(v_1\ldots v_{i_3-2}) = 0$.

Let $b_j$ be among $b_2,\ldots,b_{i_3-1}$ with incident edge $e_j$. Since $G$ is monotonic, the $\trip_2$-strand through $b_j$ may not exit and reenter the subgraph $D_3$. This implies that $\sep_{D_3}(e_j) \in \{3,4\}$ if and only if $\sep_G(e_j) \in \{3,4\}$, except that $\sep_G(e_{i_2})=3$ while $\sep_{D_3}(e_{i_2})=2$. Moreover, we claim for $j \neq i_2$ that if $\sep_{G}(e_j) = 3$, then $\sep_{D_3}(e_j) = 3$. 

Suppose first that $b_j$ is black. The $\trip_k$-strand through $b_j$ is separating in $D_3$ if and only if the strand ends at a smaller-indexed vertex of $D_3$. If the $\trip_k$-strand through $b_j$ lands at a smaller-indexed vertex of $D_3$, then certainly it lands at a smaller-indexed vertex in $G$. Hence, we have $\sep_{D_3}(e_j) \leq \sep_G(e_j)$. In particular, for $j \neq i_2$, if $\sep_G(e_j)=3$, then $\sep_{D_3}(e_j) = 3$. Now, suppose instead that $b_j$ is white. Suppose that $\sep_{G}(e_j) = 3$. Then the unique strand through $b_j$ that fails to separate in $G$ is the $\trip_1$-strand. If instead we had $\sep_{D_3}(e_j) = 4$, then the $\trip_1$-strand through $b_j$ must be separating in $D_3$. Thus, in $G$, the $\trip_1$-strand through $b_j$ must cross $\ell_3$ to produce a separation in $D_3$, but then cross back to land at a smaller-indexed vertex in $G$; this yields a bad double crossing in the underlying plabic graph $\widehat{G}$, contradicting the assumption that $\widehat{G}$ is reduced. This completes the proof of the claim.

We claim also that if $\sep_{G}(e_j) = 4$, then $\sep_{D_3}(e_j) = 4$. First suppose that $b_j$ is white. Any separating trip strand from $b_j$ which is separating in $G$ is also separating in $D_3$, hence $\sep_{G}(e_j) = 4$ implies $\sep_{D_3}(e_j) = 4$ in this case.
Now suppose that $b_j$ is black and $\sep_{G}(e_j) = 4$. If we had $\sep_{D_3}(e_j) = 3$, then it must be that the $\trip_1$-strand through $b_j$ crosses $\ell_3$, so as not to be separating for $D_3$, but then crosses $\ell_3$ again to land at a smaller-indexed vertex in $G$; this yields a bad double crossing in $\widehat{G}$, which is a contradiction. This proves the second claim.

Together with the fact that $\slack_3(v_1\ldots v_{i_3-2}) = 0$, these results imply that $\slack_3(w_2 \ldots w_{i_3-1}) = 1$, and therefore also $\slack_3(w_1 \ldots w_{i_3}) = 0$. If $\bw(G)$ had some shorter prefix $w_1 \dots w_k$ with $i_2 \leq k < i_3$ and $\slack_3(w_1 \dots w_k) = 0$, this would contradict the inductive hypothesis that $\bw(D_3)$ is lattice. 

\medskip

This completes the proof of ($\dagger$) for $G$. All that remains is to prove that $\bw(G)$ is lattice. Consider hourglass plabic graphs $C_a$ for $a=1,2,3$ cut out by contours $\bs{c}_a$, drawn as follows (see \Cref{fig:sep-label-equivariant-proof2}):
\begin{enumerate}
    \item $\bs{c}_1$ starts in the boundary face between $b_1$ and $b_2$ and includes all internal vertices to the left of $\ell_2$ (with respect to its orientation);
    \item $\bs{c}_2$ starts in the base face and includes the internal vertices of $G$ which lie on $\ell_2$;
    \item $\bs{c}_3$ starts in the base face and includes the internal vertices of $G$ which lie to the right of $\ell_2$.
\end{enumerate} 
Since $\ell_2$ contains an internal vertex, the graphs $C_a$ have strictly fewer internal vertices than $G$, with the possible exception of $C_2$. The exceptional case, where all internal vertices lie on $\ell_2$, is treated separately in \Cref{lem:backbone-is-lattice-word}. Thus we may assume by induction that each $C_a$ satisfies (\ref{item:sep-is-lattice}) and (\ref{item:prom-is-rot}).

Write $\bw'(C_i)$ for the boundary word obtained by using the separation edge labels of $G$, rather than the intrinsic separation labels used to compute $\bw(C_i)$. Since $C_2$ and $C_3$ have the same base face as $G$, it follows as in previous cases that $\bw'(C_2)=\bw(C_2)$ and $\bw'(C_3)=\bw(C_3)$, the key point being that $\trip_a$-strands do not reenter any of the contours after having left. Write $\bw'(C_1)=u'_1\ldots u'_m$ and $\bw(C_1)=u_1\ldots u_m$. In the same way, we see that $u'_j=u_j$ except when $j$ is $i_1-1$ or $m$. By \Cref{prop:sep-boundary-label-from-trips} and \Cref{lem:trip1-2-3-layout}, we have $u'_{i_1-1} \in \{2,\bar{1}\}$, $u_{i_1-1}\in \{1,\bar{2}\}$, $u'_m=\bar{2}$, and $u_m=\bar{1}$. 

\begin{figure}[ht]
    \centering
    \includegraphics[scale=1.25]{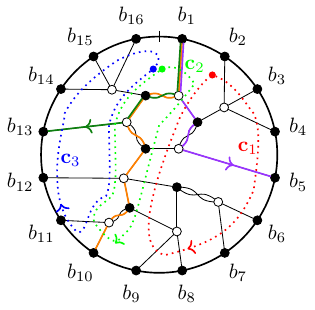}
    \includegraphics[scale=1.25]{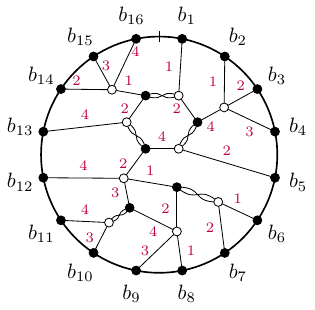}
    \caption{Left: the contours $\bs{c}_a$ discussed in the proof of \Cref{thm:sep-label-promotion-equivariant}. Right: the separation labeling of the same graph, where the labels on $2$-hourglass edges are omitted for visual clarity.}
    \label{fig:sep-label-equivariant-proof2}
\end{figure}

We claim that $1 \circ \bw'(C_1) \circ \bar{1}$ is a lattice word. By Case 1, we have that $\slack_1(1u'_1 \ldots u'_{i_1-1})=\slack_1(w_1\ldots w_{i_1})=0$ and thus $\slack_1(u_1\ldots u_{i_1-2})=0$ as well. Since $\bw(C_1)$ is a lattice word by induction, so is $\tau(\bw(C_1))=\bar{u}_m \ldots \bar{u}_1$. Furthermore, it is clear from the definitions that this is the separation word of the fully reduced graph $\tau(C_1)$ obtained by reversing the colors of $\refl(C_1)$ (note that $\tau(C_1)$ still has a black first boundary vertex). It is also immediate that $\trip_1(\tau(C_1))(1)=m-i_1+2$ (using the indexing intrinsic to $\tau(C_1)$). By (\ref{item:prom-is-rot}) applied inductively to $\tau(C_1)$, we therefore have that $\bar{u}_m \ldots \bar{u}_{i_1-1}$ has balanced $1$'s and $2$'s and is the shortest prefix of $\bw(\tau(C_1))$ with this property. Thus for $j>i_1-1$ we have $\slack_1(\bar{u}_m \ldots \bar{u}_j)>0$; since $\bw(\tau(C_1))$ is balanced, we likewise have $\slack_1(u_1 \ldots u_{j-1})>0$. This implies that $\slack_1(1u'_1\ldots u'_j)=\slack_1(u_1 \ldots u_{j-1})-1 \geq 0$, so $1 \circ \bw'(C_1)$ is a lattice word. By balancedness of $\bw'(C_1)$,  $1 \circ \bw'(C_1) \circ \bar{1}$ is a lattice word as well, proving the claim.

Consider the word $z=1 \circ \bw'(C_1) \circ \bar{1} \circ \bw(C_2) \circ \bw(C_3)$. In the example of \Cref{fig:sep-label-equivariant-proof2}, we have
\[z=1(123212134\bar{1}4\bar{2})\bar{1}(12\bar{4}1\bar{4}3444\bar{1})(1\bar{4}\bar{4}\bar{4}444234).\] Since concatenation preserves lattice words, $z$ is also lattice. By construction, we may re-parenthesize $z$ as 
\[z=112321213(4\bar{1}4\bar{2}\bar{1}12\bar{4}1\bar{4})3(444\bar{1}1\bar{4}\bar{4}\bar{4})444234,\] grouping together the reversed and sign-reversed substrings coming from the overlaps (with reversed orientations) of $\bs{c}_1$ and $\bs{c}_2$ and of $\bs{c}_2$ and $\bs{c}_3$. These grouped substrings are clearly balanced. We may remove any consecutive balanced substrings from a lattice word and obtain another lattice word, in this case $z'=1123212133444234$. By construction, we have $z'=\bw(G)$, so $\bw(G)$ is a lattice word.
\end{proof}

\begin{lemma}
\label{lem:backbone-is-lattice-word}
Let $G \in \crg(\underline{o})$. In the notation of the proof of \Cref{thm:sep-label-promotion-equivariant}, suppose that all internal vertices of $G$ lie on $\ell_2$ and that $\ell_1$ and $\ell_2$ share only a single edge. Then $G$ satisfies the conclusions of \Cref{thm:sep-label-promotion-equivariant} (\ref{item:sep-is-lattice}) and (\ref{item:prom-is-rot}).
\end{lemma}
\begin{proof}
Suppose that $\col(b_1)=1$; otherwise apply $\varpi$. Let $\bw(G)=w_1\ldots w_n$. By \Cref{lem:how-trip1-trip2-diverge}, \Cref{prop:sep-boundary-label-from-trips}, and \Cref{lem:trip1-2-3-layout} we have $w_{i_1}=2$, $w_{i_2} \in \{3,\bar{2}\}$ and $w_{i_3}=4$. Moreover, it is clear from the hypotheses that $i_1=2$. The remainder of the separation word is also easy to compute. For $2<j<i_2$, we have $w_j \in \{1, \bar{4}\}$ and for $i_2<j<n$ we have $w_j \in \{\bar{1},4\}$.

If $i_2 \neq i_3$, then $\bw(G)=12 \circ u \circ w_{i_2} \circ u'$, where $u$ consists of $1$'s and $\bar{4}$'s and $u'$ consists of $\bar{1}$'s and $4$'s. Any balanced word of this form is a lattice word, giving (\ref{item:sep-is-lattice}). Furthermore, the balance point conditions are also clearly satisfied (noting that $b_{i_3}$ must be the last black boundary vertex, so $w_{i_3}$ is the last $4$ in $\bw(G)$); this gives (\ref{item:prom-is-rot}).

If instead $i_2=i_3$, then $b_{i_2}$ is white by \Cref{lem:how-trip1-trip2-diverge}. Furthermore, there are no white vertices among $b_j$ for $2 < j < i_2$ nor black vertices among $b_j$ for $j>i_2$. Thus $\bw(G)=121^k\bar{2}\bar{1}^{k+1}$ for $k=\frac{1}{2}(n-4)$. This word is clearly a lattice word, and satisfies the balance point conditions since $\slack_2(w_1\ldots w_{i_2})=\slack_3(w_1\ldots w_{i_2})=0$.
\end{proof}

In \Cref{sec:hourglass-and-six-vertex,sec:backwards-map}, we have developed the theory of fully reduced hourglass plabic graphs in order to define the map $\mathcal{T}:\crg(\underline{c})/{\sim} \to \rft(\underline{c})$ and show that it intertwines rotation with promotion (\Cref{thm:sep-label-promotion-equivariant}). Our web basis $\mathscr{B}_q^{\underline{c}}$ for $\Inv_{U_q(\fsl_4)}(\bigwedge_q\nolimits^{\underline{c}} V_q)$ will consist of a special representative $W$ from each equivalence class in $\crg(\underline{c})/{\sim}$. In \Cref{sec:forwards-map}, we develop \emph{growth rules} in order to show that the invariants $[W]_q$ are linearly independent and to define a map $\mathcal{G}:\rft(\underline{c}) \to \crg(\underline{c})/{\sim}$. In \Cref{sec:bijection-and-basis}, we show that $\mathcal{T}$ and $\mathcal{G}$ are mutually inverse bijections and therefore (by \Cref{prop:fluc-tab-give-dimension}) that $\mathscr{B}_q^{\underline{c}}$ is the desired basis.

\section{Growth rules from crystals}
\label{sec:forwards-map}

In this section, we describe a set of \emph{growth rules} for producing fully reduced hourglass plabic graphs from rectangular fluctuating tableaux. This algorithm is necessary for our main results in \Cref{sec:bijection-and-basis}. \Cref{sec:bypassing_growth_rules} gives alternate proofs of weaker versions of some of these results, bypassing the growth rules developed here; for discussion of what is lost in the weakening, see \Cref{remark:what_we_lose}. The reader who is satisfied with these weaker results may skip the current section on a first reading.

While there are $14$ growth rules in the $r=3$ case in \cite{Khovanov-Kuperberg}, our $r=4$ algorithm involves $88$ \emph{short} growth rules of length $2$ or $3$ (falling into $10$ families), with $2$ of those families extending to \emph{long} growth rules of arbitrary length; see \Cref{fig:growth-rules}. Our proof primarily involves carefully tracking trip and promotion permutations simultaneously by using a novel crystal-theoretic algorithm, which we introduce; see \Cref{thm:good_degen}. Our approach is fundamentally inductive and verifies that many desirable properties are preserved when applying an additional growth rule.

\subsection{The growth algorithm}
We now give an algorithm which takes as input the lattice word of a rectangular fluctuating tableau with $r=4$ and constructs a corresponding fully reduced hourglass plabic graph; see \Cref{thm:growth_algorithm}. The growth algorithm broadly proceeds as follows; see \Cref{fig:growth-example} and \Cref{ex:growth_figure}. We begin with a balanced lattice word $w$ with dangling strands labeled by each corresponding letter. The algorithm proceeds by selecting certain subwords $w_{j+1} \cdots w_{j+p}$ of $w$ and creating either a new oriented $\sfX$ or a $\cup$ (an \emph{end cap}) by, respectively, crossing or joining two adjacent strands; all other strands are extended further down without other modification. This process results in a new dangling lattice word $w'$ obtained from $w$ by substituting a subword $w_{j+1}' \cdots w_{j+q}'$ for $w_{j+1} \cdots w_{j+p}$. This is iterated reading the remaining labels of the dangling strands from left to right until there are none left.

\begin{algorithm}[\textsc{Growth algorithm}]\label{alg:growth}
  Let $T \in \rft(\underline{c})$ and $w=L(T)$ with $r=4$.
  \begin{enumerate}[(G1)]
    \item Make a horizontal line of boundary vertices and downward dangling strands for each letter in $\osc(w)$. Orient strands down for positive letters and up for negative letters.
    \item Apply the growth rules from \Cref{fig:growth-rules} by crossing dangling strands or applying end caps until there are no dangling strands.
    \item Convert the resulting \symm six-vertex configuration to an hourglass plabic graph and de-oscillize boundary vertices by combining into hourglasses.
  \end{enumerate}
  \textsc{Return} the resulting hourglass plabic graph $\mathcal{G}(T)$.
\end{algorithm}

Furthermore, \Cref{alg:growth} produces a proper labeling of its output. More precisely, for $G=\mathcal{G}(T)$, the \emph{growth labeling} $\Gamma(G)$ is the proper labeling of $G$ obtained by applying the growth rules and ignoring bars on the labels. Note that $\partial(\Gamma(G)) = w$.

\begin{figure}[htbp]
    \centering
    \includegraphics[width=0.95\linewidth]{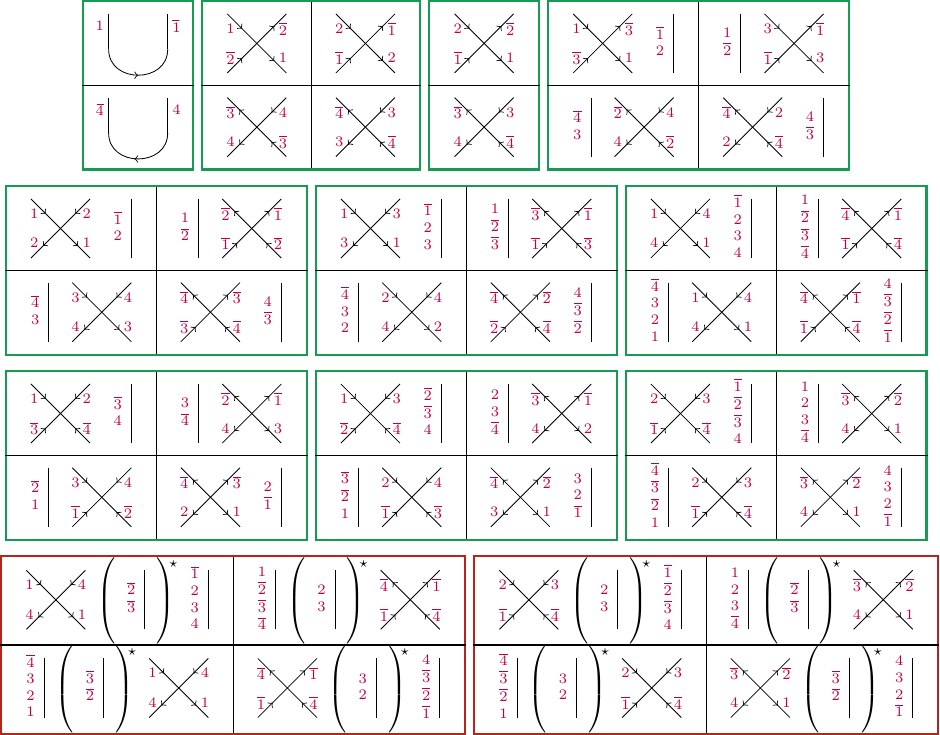}
    \caption{The growth rules of \Cref{alg:growth}. The $88$ short rules of length $2$ or $3$ are boxed in green ($\textcolor{ForestGreen}{\blacksquare}$) and fall into $10$ families. Vertical lines with multiple labels indicate that a \emph{witness} with one of these labels must be present. Two additional families of long rules extend two of the short families and are boxed in red ($\textcolor{BrickRed}{\blacksquare}$). Each of these long rules includes a witness in parentheses with a $\star$ beside it; this means multiple such witnesses (or none) may appear. Note we often refer to these growth rules as ``top labels'' $\rightarrow$ ``bottom labels''; for instance, the top left rule is denoted $1\overline{1}\rightarrow\varnothing$ while the rule to its right is denoted $1\overline{2}\rightarrow\overline{2}1$.
}
    \label{fig:growth-rules}
\end{figure}

\begin{remark}
Before stating the key properties of \Cref{alg:growth} precisely, we note the following.
\begin{itemize}
    \item[---] Step (G2) of \Cref{alg:growth} involves arbitrary choices. We will show in \Cref{thm:growth_algorithm} that any sequence of choices eventually terminates in some fully reduced hourglass plabic graph.
    \item[---] In contrast to the growth rules in \cite{Khovanov-Kuperberg} for the case $r=3$, the output is \textit{not} independent of the choices made. For instance, beginning with $12132434$ and first applying either $132 \to 312$ or $324 \to 342$ results in two hourglass plabic graphs related by a square move.
    \item[---] All choices produce hourglass plabic graphs in the same move-equivalence class, by \Cref{thm:hourglass-trips-determine-move-equivalence} and \Cref{thm:growth_algorithm}(i)--(ii) below.
    \item[---] The rules presented here do not in general produce all elements of all move-equivalence classes. For instance, only one orientation of the basic benzene graph corresponding to $1\overline{4}2\overline{2}4\overline{1}$ is produced.
    \item[---] We will later see in \Cref{thm:main-bijection} that every move-equivalence class of $\crg(\underline{c})$ has an element produced by the growth algorithm.
    \item[---] Furthermore, we will also see that every element $G$ of each move-equivalence class has a unique $\tlex$-minimal (\Cref{def:grevlex}) proper labeling with boundary $w$, which is the growth labeling when $G$ is the output of \Cref{alg:growth}.
    \item[---] Westbury \cite{Westbury} gives a deterministic growth algorithm to produce web bases for all $\mathfrak{gl}_r$. When $r=4$ for $w=11223344$, Westbury's algorithm agrees with ours after translation to hourglass plabic graphs and produces a basic square. For $w=12132434$, the promotion of $11223344$, Westbury's algorithm produces a web with two squares glued along an hourglass edge, which is not fully reduced.
\end{itemize}
\end{remark}

\begin{example}\label{ex:growth_figure}
  Let $w = 11\overline{4}21\overline{3}\{\overline{4},\overline{3},\overline{2},\overline{1}\}3\overline{1}\overline{1}\{\overline{2},\overline{1}\}4$. Begin with dangling strands labeled by $\osc(w)$. First apply growth rule $\overline{3}\overline{2}\overline{1} \to 41\overline{1}$, then apply $\overline{4}4 \to \varnothing$ and $1\overline{1} \to \varnothing$, etc. See \Cref{fig:growth-example} for the resulting \symm six-vertex configuration and the final fully reduced hourglass plabic graph $G$, with its growth labeling. 
\end{example}

\begin{figure}[htbp]
    \centering
    \includegraphics[width=.89\linewidth]{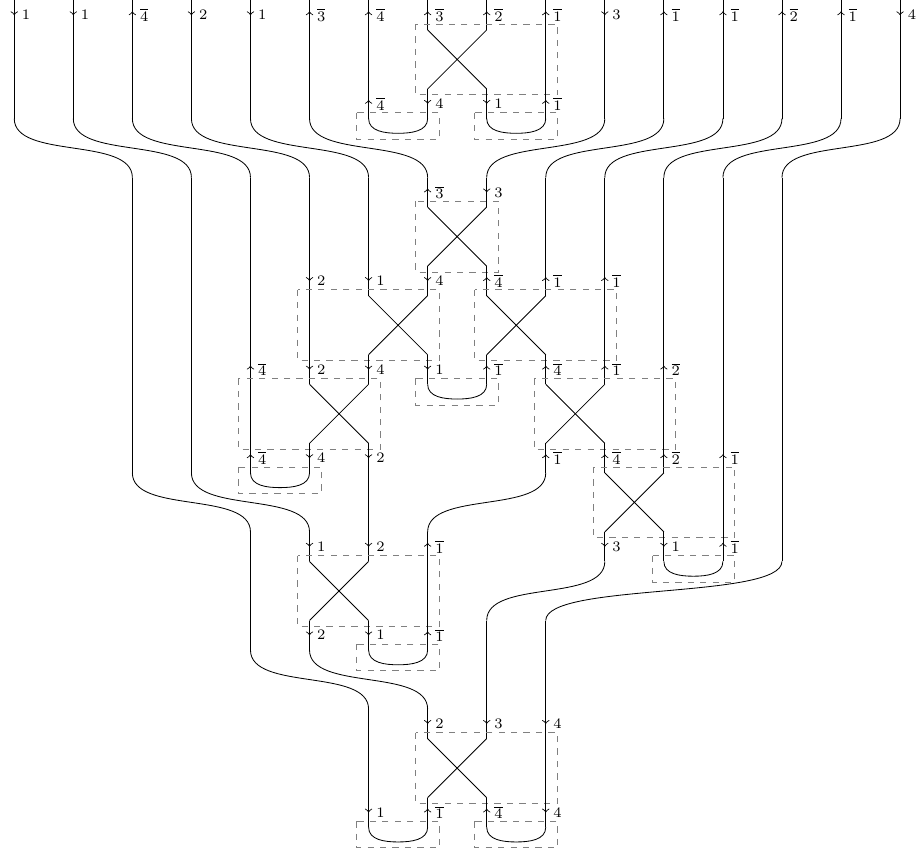}
    
    \includegraphics[width=0.4\textwidth]{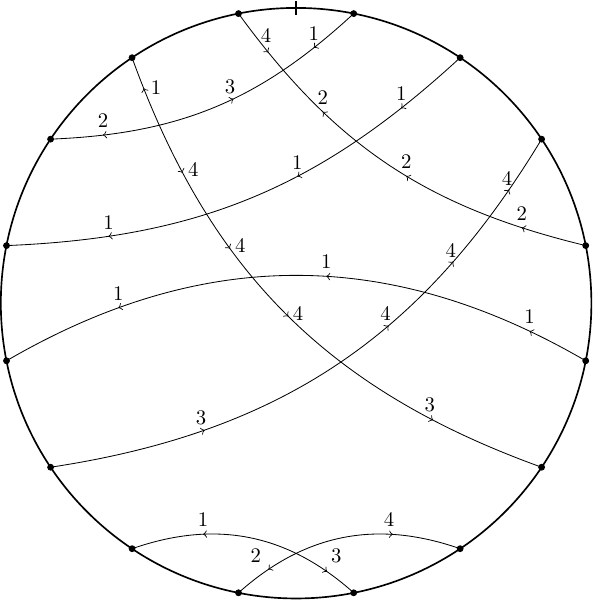} \quad \quad 
    \includegraphics[width=0.4\textwidth]{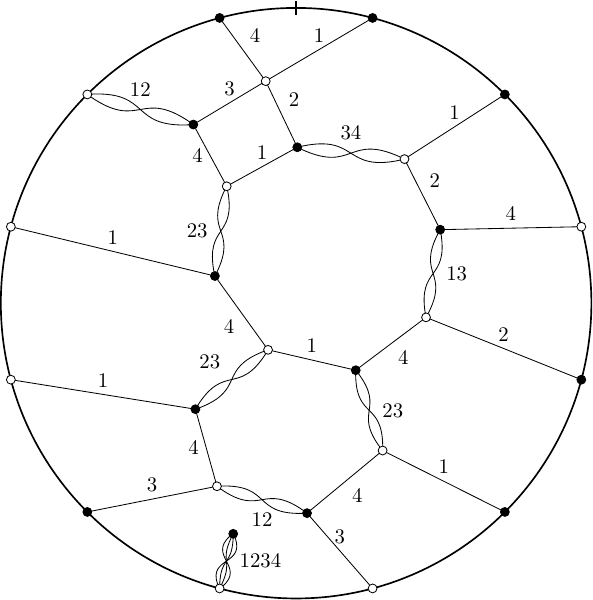}
    \caption{An example of the growth algorithm, \Cref{alg:growth}. The top diagram illustrates the process of applying the growth rules of \Cref{fig:growth-rules}; the lower left diagram is a resulting symmetrized six-vertex configuration; and the lower right diagram is the resulting fully reduced hourglass plabic graph $G$, together with its growth labeling. The $|$ on each circle marks the location of the base face. See \Cref{ex:growth_figure} for discussion.}
    \label{fig:growth-example}
\end{figure}

The following is the main result of this section. 

\begin{theorem}\label{thm:growth_algorithm}
  Applying the growth algorithm (\Cref{alg:growth}) to $T \in \rft(\underline{c})$ always terminates in finitely many steps at an hourglass plabic graph $G = \mathcal{G}(T) \in \crg(\underline{c})$. Moreover,
  \begin{enumerate}[(i)]
    \item $G$ is fully reduced,
    \item $\trip_{\bullet}(G) = \prom_{\bullet}(T)$,
    \item $\Gamma(G)$ is the unique proper labeling of $G$ with boundary $w=L(T)$,
    \item all other proper labelings $\phi$ of $G$ have $\partial(\phi) >_{\tlex} w$, and
    \item $i \in \Des(\osc(w))$ (see \Cref{def:Des}) if and only if boundary vertices $b_i$ and $b_{i+1}$ of $\osc(G)$ are connected to the same vertex.
  \end{enumerate}
\end{theorem}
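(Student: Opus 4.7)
The plan is strong induction on $n$, the length of $\osc(w)$ with $w=L(T)$. The base case $n=0$ is trivial. For the inductive step, I pick one legal growth step, apply the induction hypothesis to the resulting shorter dangling word, and verify that (i)--(v) persist after attaching the new plumbing. The first task is \emph{existence of a growth step}: every balanced lattice word in $\pm[4]$ of positive length admits some position where one of the $10+2$ families of \Cref{fig:growth-rules} fires. This is a finite combinatorial check on consecutive letters, together with the witness patterns for the two long families. \emph{Termination} follows because each end-cap family strictly reduces the length of the dangling word, while each $\sfX$-crossing family can be applied at most once to any fixed ordered pair of strands (two $\trip_2$-strands in a fully reduced hourglass plabic graph cross at most once by monotonicity, \Cref{cor:fully-reduced-iff-monotonic}); hence the total number of growth steps is $O(n^2)$.

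For (i), I verify that each individual plumbing is fully reduced, by checking on the \symm six-vertex side of \Cref{sec:six-vertex-correspondence} that its vertex configuration satisfies (P1)--(P2) of \Cref{prop:trip2_6V}. Attaching a plumbing at the top of an inductively fully reduced graph preserves monotonicity of $\trip_1$- and $\trip_2$-strands, hence preserves full reducedness by \Cref{cor:fully-reduced-iff-monotonic}. For (iii) and (iv), each plumbing admits a unique proper labeling extending its top labels to its bottom labels (the growth labeling), and any proper labeling of $G$ restricts to proper labelings of the individual plumbings; the growth labeling is distinguished among these by being $\tlex$-minimal on the top. Combined with the inductive hypothesis on the truncated graph, this yields (iii) and (iv) for $G$.

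The main obstacle is (ii), the identity $\trip_\bullet(G)=\prom_\bullet(T)$, where the novel crystal-theoretic technique is essential. My approach is to track the local change of $\prom_\bullet$ on the tableau side and the local trip behavior of the inserted plumbing simultaneously. On the tableau side, the first-balance-point characterization of \Cref{prop:promotion-from-balance-points} reduces the computation of $\prom_i$ to locating first $i$-balance points in $\osc(w)$; these positions are exactly where specific growth rules fire. For each of the $10+2$ families I would verify that the substitution it performs on $w$ shifts the first balance points in the same way that the trip permutations of the plumbing move boundary indices, and that no non-local effect on $\prom_\bullet$ is introduced. Concretely, each growth rule is realized as a local crystal isomorphism on lattice words whose effect on promotion permutations is dictated by the same combinatorial data computing the plumbing's trip permutations. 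Composing these local identifications along the full growth history, and invoking part (iii) to remove labeling ambiguity, establishes $\trip_\bullet(G)=\prom_\bullet(T)$.

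Finally, (v) is immediate from the structure of the end-cap families: the only way two adjacent boundary vertices of $\osc(G)$ can share an internal neighbor is via an end-cap rule at the top, and the firing condition of an end-cap rule on the pair $w_i w_{i+1}$ is exactly the descent condition at $i$ in $\osc(w)$. Conversely, if $b_i$ and $b_{i+1}$ share an internal neighbor in $\osc(G)$, then after commuting away any intervening $\sfX$-crossings (which do not change adjacency at the top boundary), an end-cap step must be applied at position $i$, forcing $i$ to be a descent of $\osc(w)$.
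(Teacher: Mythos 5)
Your high-level architecture (induct along the growth history, verify each rule locally, track $\prom_\bullet$ and $\trip_\bullet$ simultaneously via crystals) matches the paper's, but several of your key steps have genuine gaps. First, termination: your induction on $n$ does not go through at the $\sfX$-rules, which preserve the length of the dangling word, and your $O(n^2)$ patch is circular --- the claim that a fixed pair of strands can cross at most once invokes monotonicity of the \emph{completed} graph, i.e.\ conclusion (i), which is not available while the algorithm is still running. The paper instead proves (\Cref{cor:growth_no_cycles}, via \Cref{lem:growth_lex}) that every growth rule either shortens the word or strictly increases it in $\tlex$-order, giving a well-founded measure. Second, for (i) you assert that attaching a monotonic plumbing to a monotonic graph preserves monotonicity; this is false in general, since the new $\trip_1$-strands through the attached $\sfX$ can separate from and re-meet $\trip_2$-strands of the old graph. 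Ruling this out is the actual content of \Cref{lem:growth-rules-monotonic-mostly} and requires the intersection analysis of crystal appliances, \Cref{lem:touching-rho}, and, for the long rules, \Cref{lem:star-witness} together with the finite-state-machine analysis of \Cref{lem:long-degens-DFA-412p3p4}.

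Your mechanism for (v) is backwards: the end-cap rules fire on $1\overline{1}$ and $\overline{4}4$, which are \emph{not} descents in the sense of \Cref{def:Des}, and two adjacent boundary vertices of $\osc(G)$ share an internal neighbour precisely when an $\sfX$ (not a $\cup$) is eventually applied to their strands, e.g.\ via $14\to 41$. The forward direction therefore needs the case analysis of \Cref{lem:descents-are-connected}, showing that for each descent pattern no other rule can act on only one of the two letters, so an $\sfX$ must eventually be placed there; the converse uses the $\tlex$-minimality from (iv). For (iii)--(iv), your claim that each plumbing has a unique proper labeling extending its top labels is false (top labels $1,\overline{1}$ on an $\sfX$ admit bottoms $\overline{2}2$, $\overline{3}3$, $\overline{4}4$); the induction instead needs the three $\tlex$-extremality properties of \Cref{lem:growth_lex} as in \Cref{thm:nice-unitriangularity}. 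Finally, for (ii), the first-balance-point characterization controls only a single application of promotion, whereas $\prom_\bullet$ aggregates all $n$ powers; showing that a substitution $v\to v'$ perturbs the promotion matrices only in the rows and columns of $v$, uniformly in the context $u,w$, is the good-degeneration property, and the reduction of that infinite family of conditions to a finite check over crystal raising and lowering paths (\Cref{thm:good_degen}) is the engine missing from your sketch.
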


The proof of \Cref{thm:growth_algorithm} occupies the remainder of \Cref{sec:forwards-map}. The proof may be skipped by a reader willing to accept \Cref{thm:growth_algorithm} as a black box; however, we believe the crystal-theoretic techniques developed here may be of independent interest. We next give a high-level summary of the argument, to help motivate the constructions and elucidate the main ideas. The reader may find it useful to return to this discussion after reading the later subsections. 

The growth rules are related to a modification of Knuth equivalence that we call \emph{$\SL_r$-Knuth equivalence}. Recall that the elementary Knuth equivalences are
\begin{equation}
  txy \sim_K tyx\text{ if }t \in [x, y) \qquad\text{and}\qquad xyt \sim_K yxt\text{ if }t \in (x, y],
\end{equation}
e.g.,~$334 \sim_K 343$ and $122 \sim_K 212$. We call such $t$ a \textit{witness}. Observe that every balanced lattice word coming from an $r \times c$ rectangular standard tableau is Knuth equivalent to $(123\ldots r)^c$.

Now, we define $\SL_r$-Knuth equivalence on words in $\pm [r]$ by additionally imposing $i_1 i_2 \cdots i_k \sim_K \overline{j_1} \cdots \overline{j_{r-k}}$ whenever $\{i_1 < \cdots < i_k\} \sqcup \{j_1 > \cdots > j_{r-k}\} = [r]$. Consequently, we find that $12\ldots r \sim_K 1\overline{1} \sim_K \varnothing \sim_K \overline{r}r$, and indeed every balanced oscillating lattice word is $\SL_r$-Knuth equivalent to $\varnothing$.

The growth rules in \Cref{fig:growth-rules} in fact all come from pairs of $\SL_r$-Knuth equivalent words. $\SL_r$-Knuth equivalence respects much of the structure of promotion permutations. More precisely, if $v \sim_K v'$, then the promotion matrices $\PM(uvw)$ and $\PM(uv'w)$ from \Cref{def:prom-mats} agree after deleting the rows and columns corresponding to $v$ and $v'$.

A guiding principle behind the growth algorithm is to inductively create an hourglass plabic graph with $\trip_\bullet = \prom_\bullet$. This requires controlling the changes in $\PM(uvw)$ and $\PM(uv'w)$ even at rows and columns corresponding to $v$ and $v'$. In \Cref{sec:plumbing,sec:crystal-algorithm}, we introduce \textit{appliances} and other crystal-theoretic machinery to track all possible changes for a given pair $v, v'$ as $u$ and $w$ vary. In certain cases, these appliances always differ by the same fragment of an hourglass plabic graph, which we call a \textit{(realized) plumbing} (\Cref{def:plumbing}). For example, when $r=4$ we will show that one may always obtain $\PM(u122w)$ from $\PM(u212w)$ by attaching the plumbing $\mathsf{XI}$ which is part of the growth rule $122 \to 212$ in \Cref{fig:growth-rules}. This is an example of a \textit{good degeneration} (\Cref{def:good_degens}).  In \Cref{sec:long-good-degenerations}, we show that all growth rules are in fact good degenerations. A key feature of good degenerations is the appearance of extensive \textit{witnesses}; for example, in the growth rule $14\overline{2}^k4 \to 41\overline{2}^k4$, the witnesses are $\overline{2}^k4$. Single witnesses appear already in Knuth equivalence, but we require further control of certain crystal configurations, which is provided by the use of multiple witnesses.

However, not all possible good degenerations are appropriate for our purposes. Some good degenerations arise from \textit{virtual} plumbings, which do not have a particular hourglass plabic graph realization. An example which is important to some of our proofs is \Cref{lem:long_good_degens}(iv). More problematically, some realizations do not have good properties.
For example, one may show that $\overline{3}2 \leftrightarrow 2\overline{3}$ with an $\mathsf{H}$ plumbing is a good degeneration in either direction; see \Cref{lem:pair_degens}(i). Applying this repeatedly would result in an infinite loop. Conceptually, doing so results in bad double crossings of $\trip_1$-strands. Indeed, one may realize this $\mathsf{H}$ as in \Cref{fig:bad-good-degens} (left) involving a non-reduced square. However, if we allowed this in the growth rules, the patterns of barred and unbarred letters would result in hourglass plabic graphs which are not bipartite. Even more problematically, consider the realized plumbing $\mathsf{L}$ in \Cref{fig:bad-good-degens} (right). It may be used in the identity good degeneration $a\overline{b} \to a\overline{b}$ for $a, b$ arbitrary, resulting in infinite loops and bad double crossings of $\trip_1$- and $\trip_2$-strands.

\begin{figure}[htbp]
    \centering
    \includegraphics[width=0.35\textwidth]{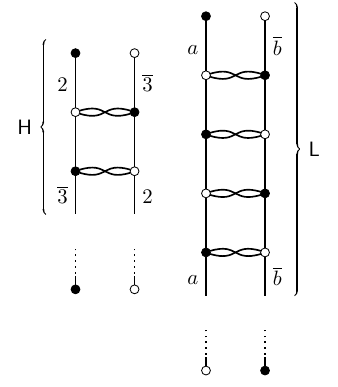}
    \caption{(\textsc{Left}) A problematic realization of the $\mathsf{H}$ plumbing used in \Cref{lem:pair_degens}(i). (\textsc{Right}) A problematic realization $\mathsf{L}$ of the identity plumbing.}
    \label{fig:bad-good-degens}
\end{figure}

The specific growth rules in \Cref{fig:growth-rules} have been carefully chosen to avoid infinite loops and in fact to monotonically increase the lattice word in a certain total order. In \Cref{sec:nicelabelings,sec:completeness}, we show that they always terminate at $\varnothing$. A key argument, \Cref{lem:growth_lex}, shows that the specific labelings used in the growth rules inductively preserve the property of having a unique $\tlex$-minimal boundary word. Consequently, \Cref{thm:nice-unitriangularity} shows that the invariant associated to the output of the growth algorithm has $\mathbb{Z}[q,q^{-1}]$-unit leading coefficient. To extend this result to all hourglass plabic graphs in the move-equivalence class, in \Cref{sec:nicelabelings} we introduce the machinery of \textit{nice labelings} of \textit{linearized diagrams} where we perform a sort of surgery on such graphs.

The main remaining piece to complete the verification of \Cref{thm:growth_algorithm} is to verify that the growth rules preserve full reducedness. This is carried out in \Cref{sec:growth-reduced} by a  careful analysis of the possible configurations of appliances and plumbings arising from the growth rules. All of the preceding arguments are combined in \Cref{sec:growth-theorem-proof} to prove \Cref{thm:growth_algorithm}.

\begin{remark}
    The growth rules in \Cref{fig:growth-rules} were the result of a lengthy iterative process of trial and error. Extensive computer experimentation was involved. We first developed certain heuristics guided by the $\trip_\bullet = \prom_\bullet$ property to compute likely valid properly labeled symmetrized six-vertex configurations corresponding to an input balanced lattice word. The growth rules in \Cref{fig:growth-rules} were largely empirically obtained by examining thousands of pairs of webs which differed only locally by small modifications. For example, the general rule $14 \to 41$ was originally included, though $14$ and $41$ are not $\SL_r$-Knuth equivalent. It turns out this rule is valid given the presence of any of a very wide variety of witnesses, some of which are listed in the long growth rules. Indeed, the crystal of $14$ is a subcrystal of the crystal for $41$.
    
    The crystal-theoretic machinery was also developed in steps as part of the iterative trial and error, first proving the $\trip_\bullet = \prom_\bullet$ property and then extending to the various other necessary properties. Using a computer implementation of the crystal-theoretic good degeneration algorithm allowed us to find many virtual or realized good degenerations and focus on finding enough rules to ensure the algorithm was able to handle any balanced lattice word. The long growth rules were then uncovered as the key missing piece to complete the argument, which required understanding the finite state machines as in \Cref{fig:long-degens-DFA-412p3p4}.
\end{remark}

\begin{remark}\label{rem:complete-redundant}
    One may ask whether the growth rules in \Cref{fig:growth-rules} are complete or redundant. They are complete in the sense that they reduce arbitrary balanced lattice words for $r=4$ to $\varnothing$. They are incomplete in the sense that, for instance, $2233 \to 2323$ with realized plumbing $\mathsf{IXI}$ is in fact a good degeneration which preserves monotonicity. Using this additional rule for $11223344$ gives a web related by a square move to the output of the algorithm without this rule. However, our argument shows we do not need to allow this rule, and we didn't add it to avoid redundancy. It might be possible to remove some rules from \Cref{fig:growth-rules} without affecting \Cref{thm:growth_algorithm}. Indeed, while we do not need this fact, we note that $2$ of the $14$ growth rules from \cite{Khovanov-Kuperberg} may be removed while still reducing all balanced lattice words with $r=3$ to $\varnothing$. In our notation, these redundant $r=3$ rules are $2\overline{2} \to \overline{1}1$ and $\overline{2}2 \to 3\overline{3}$ with $\mathsf{H}$ plumbing.
    
    In this way, we see that our proposed list is not unique. To any proposed list of growth rules which are good degenerations preserving monotonicity, we could add the growth rules in \Cref{fig:growth-rules}. We suspect there is no finite list.  In particular, in the long growth rules $14\overline{2}^k3 \to 41\overline{2}^k3$, the final witness~$3$ plays a special role as in \Cref{lem:star-witness}. The conditions needed for us to allow a proposed rule are quite constrictive. The initial and final words of the rule must be $\SL_r$-Knuth equivalent, be good degenerations preserving $\trip_\bullet = \prom_\bullet$ using realizable plumbing, preserve monotonicity as in \Cref{sec:growth-reduced}, respect descents as in \Cref{sec:descents}, and satisfy the $\tlex$ conditions in \Cref{lem:growth_lex}. Indeed, the possible extra rule $2233 \to 2323$ mentioned above \textit{fails} \Cref{lem:growth_lex}(iii). The $\tlex$ order in \Cref{lem:growth_lex} is natural in the sense that applying crystal raising operators increases in $\tlex$ order. It is not clear if other orderings may be useful and yield a very different set of growth rules. It is an interesting question what conditions may need to be dropped to produce a finite list.
\end{remark}

\subsection{Plumbings, promotion appliances, and good degenerations}\label{sec:plumbing}

We use the notions of \textit{appliances} and \textit{plumbings} in \Cref{def:plumbing,def:appliance} to track partially defined trip and promotion permutations through each step of the growth algorithm; see \Cref{fig:plumbings-appliances} for an example. We summarize this application before giving technical details.

Given two balanced lattice words $uvw$ and $uv'w$, consider the permutation matrices $\PM^i(uvw)$ and $\PM^i(uv'w)$ encoding their $i$-th promotion permutations (see \Cref{def:prom-mats}). We are interested in substitutions $v \to v'$ where $\PM^i(uvw)$ and $\PM^i(uv'w)$ only change in the rows and columns corresponding to $v$ and $v'$. A promotion appliance will encode the portion of $\PM^i(uvw)$ or $\PM^i(uv'w)$ with rows corresponding to $v$ or $v'$. If $v \to v'$ is a substitution as in the growth rules, the two appliances differ only slightly. Attaching a plumbing to one of the appliances will implement the difference between them.

\begin{example}\label{ex:prom-appliances-1}
    Consider the growth rule $\overline{3}24 \to \overline{3}\overline{1}\overline{3}$ applied to the lattice words $w = 1\overline{4}\overline{4}\overline{3}2434\overline{2}\overline{1}$ and $w' = 1\overline{4}\overline{4}\overline{3}\overline{1}\overline{3}34\overline{2}\overline{1}$. We have $\prom_1(w) = 5\,10\,7\,3\,6\,2\,8\,4\,1\,9$ and $\prom_1(w') = 5\,10\,7\,3\,4\,2\,8\,6\,1\,9$, giving permutation matrices
    \[
    \arraycolsep=3.2pt
    \PM^1(w) = \left(\begin{array}{ccc|ccc|cccc}
\cdot & \cdot & \cdot & \cdot & 1 & \cdot & \cdot & \cdot & \cdot & \cdot \\
\cdot & \cdot & \cdot & \cdot & \cdot & \cdot & \cdot & \cdot & \cdot & 1 \\
\cdot & \cdot & \cdot & \cdot & \cdot & \cdot & 1 & \cdot & \cdot & \cdot \\\hline
\cdot & \cdot & 1 & \cdot & \cdot & \cdot & \cdot & \cdot & \cdot & \cdot \\
\cdot & \cdot & \cdot & \cdot & \cdot & 1 & \cdot & \cdot & \cdot & \cdot \\
\cdot & 1 & \cdot & \cdot & \cdot & \cdot & \cdot & \cdot & \cdot & \cdot \\\hline
\cdot & \cdot & \cdot & \cdot & \cdot & \cdot & \cdot & 1 & \cdot & \cdot \\
\cdot & \cdot & \cdot & 1 & \cdot & \cdot & \cdot & \cdot & \cdot & \cdot \\
1 & \cdot & \cdot & \cdot & \cdot & \cdot & \cdot & \cdot & \cdot & \cdot \\
\cdot & \cdot & \cdot & \cdot & \cdot & \cdot & \cdot & \cdot & 1 & \cdot
\end{array}\right)
\qquad
\PM^1(w') = \left(\begin{array}{ccc|ccc|cccc}
\cdot & \cdot & \cdot & \cdot & 1 & \cdot & \cdot & \cdot & \cdot & \cdot \\
\cdot & \cdot & \cdot & \cdot & \cdot & \cdot & \cdot & \cdot & \cdot & 1 \\
\cdot & \cdot & \cdot & \cdot & \cdot & \cdot & 1 & \cdot & \cdot & \cdot \\\hline
\cdot & \cdot & 1 & \cdot & \cdot & \cdot & \cdot & \cdot & \cdot & \cdot \\
\cdot & \cdot & \cdot & 1 & \cdot & \cdot & \cdot & \cdot & \cdot & \cdot \\
\cdot & 1 & \cdot & \cdot & \cdot & \cdot & \cdot & \cdot & \cdot & \cdot \\\hline
\cdot & \cdot & \cdot & \cdot & \cdot & \cdot & \cdot & 1 & \cdot & \cdot \\
\cdot & \cdot & \cdot & \cdot & \cdot & 1 & \cdot & \cdot & \cdot & \cdot \\
1 & \cdot & \cdot & \cdot & \cdot & \cdot & \cdot & \cdot & \cdot & \cdot \\
\cdot & \cdot & \cdot & \cdot & \cdot & \cdot & \cdot & \cdot & 1 & \cdot
\end{array}\right).
    \]
    Only portions of the matrices between the lines differ. This example is continued in \Cref{ex:prom-appliances-2}.
\end{example}

\begin{definition}
\label{def:plumbing}
  A \emph{plumbing} from set $B'$ to set $B$ is a tuple $\pi_\bullet = (\pi_1, \ldots, \pi_{r-1})$ of bijections $\pi_i \colon B' \sqcup B \to B' \sqcup B$, such that $\pi_s^{-1} = \pi_{r-s}$ for all $s \in [r-1]$. The \emph{transpose} $\pi_\bullet^\top$ of $\pi_\bullet$ is the plumbing defined by $\pi_s^\top = \pi_{r-s}$. The \emph{composite} of plumbings $\sigma_\bullet$ from $B''$ to $B'$ and $\pi_\bullet$ from $B'$ to $B$ is the plumbing $\pi_\bullet \circ \sigma_\bullet$ from $B''$ to $B$ where $(\pi_\bullet \circ \sigma_\bullet)_s \colon B'' \sqcup B \to B'' \sqcup B$ sends each $i \in B$ to the first value of $\cdots \circ \pi_s \circ \sigma_s \circ \pi_s(i)$ that is in $B'' \sqcup B$ and  sends each $i \in B''$ to the first value of $\cdots \circ \sigma_s \circ \pi_s \circ \sigma_s(i)$ that is in $B'' \sqcup B$.
\end{definition}

We visualize a plumbing $\pi_{\bullet}$  from $B'$ to $B$ as a network of connections from a line of nodes $B'$ at the top to nodes $B$ at the bottom. \Cref{fig:plumbings-appliances} shows this visualization for a single bijection $\pi_s$ in the tuple $\pi_{\bullet}$.  (One could visualize the entire plumbing on one diagram by drawing each bijection $\pi_s$ in a different color.)  We think of inputs $i$ to $\pi_s$ as inputs to the network, initially headed downward for $i \in B'$ and upward for $i \in B$. Outputs $i$ of $\pi_s$ are outputs of the network, headed upward for $i \in B'$ and downward for $i \in B$. Plumbings are composed by concatenating networks.

\begin{definition}
\label{def:appliance}
  An \emph{appliance} $g_\bullet = (g_1, \ldots, g_{r-1})$ on $A \sqcup B \sqcup C$ is a tuple of functions $g_s \colon B \to A \sqcup B \sqcup C$, such that $g_s(i) = j \Leftrightarrow g_{r-s}(j) = i$ for all $s \in [r-1]$ and $i, j \in B$. The transpose $g_\bullet^\top$ of $g_\bullet$ is the appliance defined by $g_s^\top = g_{r-s}$. A plumbing $\pi_\bullet$ from $B'$ to $B$ acts on an appliance $g_\bullet$ on $A \sqcup B \sqcup C$ on the right and results in an appliance $g_\bullet \cdot \pi_\bullet$ on $A \sqcup B' \sqcup C$ where $(g_\bullet \cdot \pi_\bullet)_s$ sends each $i \in B'$ to the first value of $\cdots \circ \pi_s \circ g_s \circ \pi_s(i)$ that is in $A \sqcup B' \sqcup C$.

  Finally, if $h \colon A \sqcup C \to A' \sqcup C'$ is any function, then $h \cdot g_\bullet$ is the appliance on $A' \sqcup B \sqcup C'$ where $(h \cdot g_\bullet)_s = \check{h} \circ g_s$ where $\check{h} \colon A \sqcup B \sqcup C \to A' \sqcup B \sqcup C'$ is the extension of $h$ that fixes $B$.
\end{definition}

 An appliance $g_\bullet$ on $A \sqcup B \sqcup C$ is visualized as a line of nodes from $A$, $B$, $C$. Inputs $i \in B$ to $g_s$ initially head downward through a network and finish by heading upward into $A$, $B$, or $C$. Plumbings act on on appliances also by concatenation. The actions satisfy $g_\bullet \cdot (\pi_\bullet \circ \sigma_\bullet) = (g_\bullet \cdot \pi_\bullet) \cdot \sigma_\bullet$ and $(h \cdot g_\bullet) \cdot \pi_\bullet = h \cdot (g_\bullet \cdot \pi_\bullet)$.

\begin{figure}[htbp]
    \centering
    \includegraphics[width=0.95\textwidth]{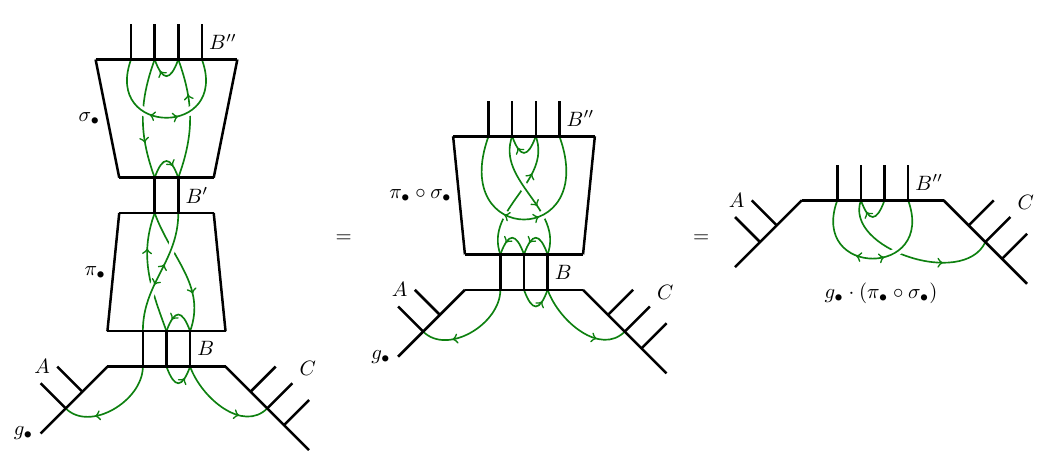}
    \caption{On the left is a bijection $\sigma_s$ of the plumbing $\sigma_\bullet$ from $B''$ to $B'$ concatenated to a bijection $\pi_s$ of the plumbing $\pi_\bullet$ from $B'$ to $B$, concatenated to a function $g_s$ of the appliance $g_\bullet$. In the middle is an equivalent network, where the two plumbings have been composed. On the right is the result of composing the plumbings with the appliance.}
    \label{fig:plumbings-appliances}
\end{figure}

A plumbing that arises from the trip permutations of a portion of an hourglass plabic graph (or a \symm six-vertex configuration) is \emph{realized}. A plumbing without such a chosen realization is called \emph{virtual}. There are natural notions of the \emph{identity} plumbing $\id$ and of the \emph{product} of two plumbings, which is given by horizontal concatenation and denoted by $\times$. For a rectangular fluctuating tableau $T$ or an hourglass plabic graph $G$, $\prom_\bullet(T)$ and $\trip_\bullet(G)$ can be thought of as appliances from $[n]$ to $\varnothing \sqcup [n] \sqcup \varnothing$.

\begin{definition}
  Suppose that $v$ is a word on $\mathcal{A}_r$ and that $uvw$ is a balanced oscillating lattice word. The \emph{promotion appliance} associated to the inclusion $v \to uvw$ is the appliance $\varrho_\bullet(u, v, w)$ on $A \sqcup B \sqcup C = \{1, \ldots, |u|\} \sqcup \{|u|+1, \ldots, |u|+|v|\} \sqcup \{|u|+|v|+1, \ldots, |u|+|v|+|w|\}$ defined by
      \[ \varrho_s(u,v,w)(i) = \prom_s(uvw)(i) \qquad \text{for }i \in B. \]
\end{definition}

The promotion appliance $\varrho_\bullet(u, v, w)$ encodes the rows of the promotion matrix $\PM(uvw)$ (see \Cref{def:prom-mats} and \cite[Def.~5.14]{fluctuating-paper}) corresponding to entries from $v$.

\begin{example}\label{ex:prom-appliances-2}
    Continuing \Cref{ex:prom-appliances-1}, the pieces of the promotion appliances for $\prom_1(w)$ and $\prom_1(w')$ are pictured below. A plumbing sending one to the other is also depicted, which in this case is realized as a fragment of a symmetrized six-vertex configuration (which we sometimes draw with edge orientations omitted, but trips drawn). 
    \[
    \includegraphics{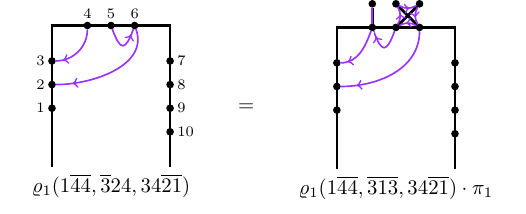}
    \]
\end{example}

The substitutions allowed in the growth algorithm are of the following sort. The growth rules in \Cref{fig:growth-rules} will be our main examples.

\begin{definition}\label{def:good_degens}
  A \emph{good degeneration} is a substitution $v_1 \cdots v_p \to v_1' \cdots v_q'$ where $v_i, v_i' \in \mathcal{A}_r$, together with a plumbing $\pi_\bullet$ from $[p]$ to $[q]$, such that the following hold for every balanced lattice word $w = w_1 \cdots w_n$:
  \begin{enumerate}[(i)]
    \item Replacing a consecutive substring $v_1 \cdots v_p$ in $w$ with $v_1' \cdots v_q'$ results in a new balanced lattice word $w' = w_1 \cdots w_j v_1' \cdots v_q' w_{j+p+1} \cdots w_n$.
    \item We have
      \begin{equation}\label{eq:good_degen.1}
        \prom_\bullet(w) = \prom_\bullet(w') \cdot \breve{\pi}_\bullet,
      \end{equation}
      where the plumbing $\breve{\pi}_\bullet = \id^j \times \pi_\bullet \times \id^{n-j-p}$ corresponds to the replacement $w \to w'$.
  \end{enumerate}
\end{definition}

Good degenerations have a natural $4$-fold symmetry. Recall the operations on words $\tau,\varpi,\epsilon$ from \Cref{def:varpietc}. These operations are applied to hourglass plabic graphs or realized plumbings as follows: $\varpi$ acts by inverting the color of all vertices; $\varepsilon$ acts by reflection through the vertical axis; and $\tau$ acts by both inversion and reflection. The following is a simple consequence of the interaction between these involutions and $\prom_\bullet(T)$ using \cite[Lem.~5.12]{fluctuating-paper}.

\begin{lemma}\label{lem:degen-symmetries}
  Suppose $v \to v'$ is a good degeneration with (realized) plumbing $\pi_\bullet$. Then $\iota(v) \to \iota(v')$ is a good degeneration with (realized) plumbing $\iota(\pi_\bullet)$ for all $\iota \in \{\tau, \varpi, \varepsilon\}$.
\end{lemma}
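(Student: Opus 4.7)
The plan is to verify conditions (i) and (ii) of Definition \ref{def:good_degens} separately for each involution $\iota \in \{\tau, \varpi, \varepsilon\}$, then reduce $\tau = \varpi \circ \varepsilon$ to the other two cases.

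For condition (i), the claim is that $\iota$ preserves balanced lattice words and commutes with local substitution. Using the lattice inequalities \eqref{eq:lattice_inequalities}, it is routine to check that each of $\tau, \varpi, \varepsilon$ maps balanced lattice words to balanced lattice words; this is essentially contained in \cite{fluctuating-paper}. Since $\varpi$ acts letterwise and $\tau, \varepsilon$ act letterwise after reversal, if $w = u v w''$ and $w' = u v' w''$, then $\iota(w)$ is obtained from $\iota(w')$ by the analogous local substitution $\iota(v) \leftrightarrow \iota(v')$ (possibly in a mirrored position for the reversing involutions $\tau, \varepsilon$). Hence (i) holds for $\iota(v) \to \iota(v')$ because it holds for $v \to v'$.

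For condition (ii), I would invoke \cite[Lem.~5.12]{fluctuating-paper}, which expresses $\prom_\bullet(\iota(w))$ in terms of $\prom_\bullet(w)$ by swapping the indices $s \leftrightarrow r-s$ (for $\varpi$), reversing and conjugating by $w_0$ (for $\varepsilon$), or both (for $\tau$), in parallel to Theorem \ref{thm:prom_perms}(ii)--(iv). The definition of $\iota(\pi_\bullet)$ on plumbings is chosen to mirror these operations: $\varpi$ swaps the components $\pi_s \leftrightarrow \pi_{r-s}$, $\varepsilon$ reflects the plumbing diagram left-to-right (reindexing $B' \sqcup B$ by reversal), and $\tau$ does both. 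With these definitions, applying $\iota$ to the equation $\prom_\bullet(w) = \prom_\bullet(w') \cdot \breve{\pi}_\bullet$ yields, letter-by-letter and using \cite[Lem.~5.12]{fluctuating-paper},
\[
  \prom_\bullet(\iota(w)) \;=\; \prom_\bullet(\iota(w')) \cdot \widetilde{\iota(\pi)}_\bullet,
\]
where $\widetilde{\iota(\pi)}_\bullet = \id^{j'} \times \iota(\pi_\bullet) \times \id^{n-j'-p}$ for the appropriate position $j'$ (equal to $j$ when $\iota = \varpi$, and $n - j - p$ when $\iota \in \{\varepsilon, \tau\}$). Realizability is preserved because the involutions act on hourglass plabic graphs by vertex color inversion ($\varpi$), reflection ($\varepsilon$), and their composition ($\tau$), and these operations commute with extraction of trip permutations in the manner dictated by $\iota(\pi_\bullet)$.

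The only genuinely delicate step is the bookkeeping for the reversing involutions $\varepsilon$ and $\tau$: when the word is reversed, the substring $v$ moves to the mirrored position, the plumbing diagram must be reflected left-to-right so that the labels of $B'$ and $B$ at the top and bottom correspond correctly, and the extended identity plumbings on either side of $\breve{\pi}_\bullet$ swap length. Once this reindexing is checked, the identity \eqref{eq:good_degen.1} pulls back through $\iota$ directly. Since $\tau = \varpi \circ \varepsilon$ and the actions of $\varpi$ and $\varepsilon$ on plumbings, appliances, and promotion permutations all commute in the required sense, the $\tau$ case follows formally from the other two.
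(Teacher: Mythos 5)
Your proposal is correct and follows the same route the paper indicates: the paper gives no detailed proof, asserting only that the lemma is "a simple consequence of the interaction between these involutions and $\prom_\bullet(T)$ using \cite[Lem.~5.12]{fluctuating-paper}," which is precisely the argument you spell out (checking conditions (i) and (ii) of \Cref{def:good_degens} via the letterwise/reversal descriptions of $\tau,\varpi,\varepsilon$ and the transformation law for promotion permutations). Your extra bookkeeping for the reversing involutions and the reduction $\tau=\varpi\circ\varepsilon$ are both accurate and simply make explicit what the paper leaves implicit.
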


The $10$ families of growth rules in \Cref{fig:growth-rules} are orbits under this $4$-fold symmetry, where the left-right symmetry is $\tau$, the top-bottom symmetry is $\varepsilon$, and the diagonal symmetry is $\varpi$. The $14$ growth rules in \cite{Khovanov-Kuperberg} can similarly be grouped into $5$ families, two of size $4$ and three of size $2$. 

\subsection{A crystal-theoretic good degeneration algorithm}\label{sec:crystal-algorithm}

We now provide an algorithm for establishing that a given substitution satisfies \eqref{eq:good_degen.1}. Our proof relies on Kashiwara's theory of crystals \cite{Kashiwara-Crystals}. See \cite[\S8]{fluctuating-paper} for further background and conventions; our conventions mostly match those of the textbook \cite{Bump-Schilling}, except that we follow \cite{Kashiwara-Crystals} in using the original tensor product convention, which is opposite to that of \cite{Bump-Schilling}.

We briefly recall necessary background. Let $\mathcal{B}(\bigwedge\nolimits^{\underline{c}} V)$ be the $U_q(\fsl_r)$-crystal of words associated to the representation $\bigwedge\nolimits^{\underline{c}} V$. The highest weight elements of $\mathcal{B}(\bigwedge\nolimits^{\underline{c}} V)$ are the lattice words of type $\underline{c}$ and the balanced lattice words are the highest weight elements of weight zero. All crystals we consider will consist of unions of connected components of such crystals. We write $\mathcal{C}(v)$ for the connected crystal containing the word $v$ and $v^\uparrow$ and $v^\downarrow$ for its highest and lowest weight elements, respectively. The actions of the crystal raising and lowering operators, $e_i$ and $f_i$, can be computed for oscillating lattice words using the following well-known combinatorial rule. See \cite[Ex.~8.12]{fluctuating-paper} for an example.

\begin{definition}[The \textit{bracketing rule}]\label{def:bracketing-rule}
   To apply $e_i$ or $f_i$ to $w_1 \otimes \cdots \otimes w_n$ where each $w_j \in \pm [r]$ (and $1 \leq i \leq r-1$), do the following.
    \begin{enumerate}[(1)]
      \item Place $[$ below each $i$ and $\overline{i+1}$.
      \item Place $]$ below each $i+1$ and $\overline{i}$.
      \item Match brackets from the inside out.
      \item Now $f_i$ acts on the letter with the leftmost unmatched $[$ by replacing $i$ with $i+1$ or $\overline{i+1}$ with $\overline{i}$.
      \item Similarly, $e_i$ acts on the letter with the rightmost unmatched $]$ by replacing $i+1$ with $i$ or $\overline{i}$ with $\overline{i+1}$.
    \end{enumerate}
\end{definition}

The \textit{crystal raising algorithm} in \cite[Prop.~8.20]{fluctuating-paper} computes $\promotion(\osc(uvw))$ by cutting off the first letter, applying raising operators to reach the highest weight element, and appending the unique letter which yields the original weight. To facilitate computation, one may form a rhombus whose $i$-th row lists $\promotion^{i-1}(\osc(uvw))$, called a \textit{promotion-evacuation diagram} in \cite{fluctuating-paper}; see \Cref{ex:promotion-evacuation}. The promotion matrix $\PM(uvw)$ can be read off from the promotion-evacuation diagram by inferring where raising operators were applied and using the following fact. By \cite[Thm.~8.24]{fluctuating-paper}, $\PM^s(uvw)_{i,i+j}$ is $1$ if and only if the raising operator $e_s$ is applied at index $j$ when applying $\promotion$ to $\promotion^{i-1}(\osc(uvw))$, where $i+j$ is taken modulo $|uvw|$ (equivalently, if and only if $\prom_s(uvw)(i) = i+j$).

When computing the promotion-evacuation diagram of $uvw$, tracking the portion corresponding to $v$ results in regions I, II, III, and IV depicted in \Cref{fig:crystal-phases}. Region I ends with the first row where the subword corresponding to $u$ has been completely cut off, and the subword corresponding to $v$ has become the highest weight element $v^\uparrow$ in the crystal $\mathcal{C}(v)$ containing $v$. Dually, the bottom of region III lists $uvw$, and the subword corresponding to $v$ at the top of region III is the lowest weight element $v^\downarrow$ in $\mathcal{C}(v)$. Regions II and IV can be computed from $v^\uparrow$ and $v^\downarrow$ by evacuation and dual evacuation, respectively; see \cite[Def.~3.7]{fluctuating-paper}.

\begin{figure}[htbp]
    \centering
    \includegraphics{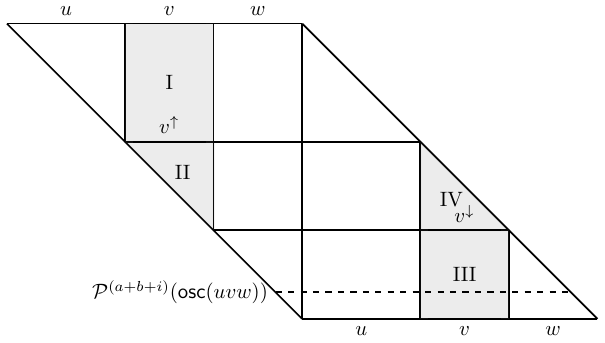}
    \caption{Phases corresponding to $v$ when repeatedly applying the crystal raising algorithm to compute $\PM(uvw)$.}
    \label{fig:crystal-phases}
\end{figure}

\begin{example}\label{ex:promotion-evacuation}
The promotion-evacuation diagram of $w = 1\overline{4}\overline{4}\overline{3}2434\overline{2}\overline{1}$ is below. Letters where a raising or lowering operator has been applied when using the crystal raising algorithm to compute promotion powers have been noted. Locations where $e_i$ or $f_i$ have been applied correspond to the locations of $1$'s in the $i$-th promotion matrix of $w$; see \Cref{ex:prom-appliances-1} for the $i=1$ case. The subword corresponding to $v = \overline{3}24$ has been highlighted in bold, together with regions I--IV from \Cref{fig:crystal-phases}.

\definecolor{greenish}{rgb}{0.05, 0.5, 0.06}
\definecolor{orangish}{rgb}{1.0,0.5,0}
\definecolor{purplish}{rgb}{.6,0.2,1}

\def\eone{\scriptstyle \textcolor{greenish}{e_1}}
\def\etwo{\scriptstyle \textcolor{orangish}{e_2}}
\def\ethree{\scriptstyle \textcolor{purplish}{e_3}}
\def\fone{\scriptstyle \textcolor{greenish}{f_1}}
\def\ftwo{\scriptstyle \textcolor{orangish}{f_2}}
\def\fthree{\scriptstyle \textcolor{purplish}{f_3}}

\def\promotionorbit{{
{"$1$","$\overline{4}$","$\overline{4}$","$\mathbf{\overline{3}}$","$\mathbf{2}$","$\mathbf{4}$","$3$","$4$","$\overline{2}$","$\overline{1}$"},
{"$\overline{4}$","$\overline{4}$","$\mathbf{\overline{3}}$","$\mathbf{1}$","$\mathbf{4}$","$3$","$4$","$\overline{4}$","$\overline{1}$","$4$"}
,{"$\overline{4}$","$\mathbf{\overline{3}}$","$\mathbf{1}$","$\mathbf{3}$","$2$","$4$","$\overline{4}$","$\overline{2}$","$4$","$\overline{1}$"}
,{"$\mathbf{\overline{4}}$","$\mathbf{1}$","$\mathbf{2}$","$1$","$4$","$\overline{4}$","$\overline{2}$","$4$","$\overline{1}$","$\overline{1}$"},
{"$\mathbf{1}$","$\mathbf{2}$","$1$","$3$","$\overline{4}$","$\overline{3}$","$4$","$\overline{1}$","$\overline{2}$","$\mathbf{\overline{1}}$"},
{"$\mathbf{1}$","$1$","$2$","$\overline{4}$","$\overline{3}$","$3$","$\overline{1}$","$\overline{2}$","$\mathbf{\overline{1}}$","$\mathbf{4}$"},
{"$1$","$2$","$\overline{4}$","$\overline{3}$","$3$","$\overline{2}$","$\overline{3}$","$\mathbf{\overline{1}}$","$\mathbf{3}$","$\mathbf{4}$"},
{"$1$","$\overline{4}$","$\overline{3}$","$3$","$\overline{3}$","$\overline{4}$","$\mathbf{\overline{1}}$","$\mathbf{3}$","$\mathbf{4}$","$4$"},{"$\overline{4}$","$\overline{3}$","$3$","$\overline{3}$","$\overline{4}$","$\mathbf{\overline{2}}$","$\mathbf{2}$","$\mathbf{4}$","$3$","$4$"},
{"$\overline{4}$","$1$","$\overline{3}$","$\overline{4}$","$\mathbf{\overline{2}}$","$\mathbf{2}$","$\mathbf{4}$","$3$","$4$","$\overline{1}$"},{"$1$","$\overline{4}$","$\overline{4}$","$\mathbf{\overline{3}}$","$\mathbf{2}$","$\mathbf{4}$","$3$","$4$","$\overline{2}$","$\overline{1}$"}
}}
\def\decorations{{{{3,"$\downarrow \eone$"},{7,"$\downarrow \ethree\circ\etwo$"},{7,""}},{{3,"$\downarrow \ethree$"},{4,"$\downarrow \etwo$"},{7,"$\downarrow \eone$"}},{{0,"$\downarrow \ethree$"},{2,"$\downarrow \etwo$"},{3,"$\downarrow \eone$"}},{{3,"$\downarrow \ethree$"},{5,"$\downarrow \etwo$"},{8,"$\downarrow \eone$"}},{{0,"$\uparrow \fone$"},{2,"$\downarrow \etwo$"},{5,"$\uparrow \fthree$"}},{{5,"$\uparrow \fone$"},{6,"$\uparrow \ftwo$"},{8,"$\uparrow \fthree$"}},{{0,"$\uparrow \fone$"},{4,"$\uparrow \ftwo$"},{5,"$\uparrow \fthree$"}},{{5,"$\uparrow \fone$"},{6,"$\uparrow \ftwo$"},{8,"$\uparrow \fthree$"}},{{0,"$\downarrow \ethree$"},{1,"$\uparrow \ftwo\circ\fone$"},{1,""}},{{1,"$\uparrow \fthree$"},{3,"$\uparrow \ftwo$"},{8,"$\uparrow \fone$"}}}}
\begin{tikzpicture}
  \foreach\k in {0,...,10}
  {
  \foreach \l in {0,...,9}
   \node at ($({(\l+\k)*0.75},{(9-\k)*0.75})$){\pgfmathparse{\promotionorbit[\k][\l]}\pgfmathresult};
  }
  \foreach\k in {0,...,9} {
  \foreach \i in {0,1,2} {
   \pgfmathsetmacro\x{\decorations[\k][\i][0]}
   \pgfmathsetmacro\d{\decorations[\k][\i][1]}
   \node[anchor=west] at ($({(\x+\k+0.71)*0.75},{(9-\k-0.5)*0.75})$){\d};
   
   }
  }
  \draw[dashed, gray] (2.5*0.75,9*0.75) -- (5.5*0.75,9*0.75) -- (5.5*0.75,6*0.75) --(2.5*0.75,6*0.75) -- (2.5*0.75,9*0.75);

  \draw[dashed, gray] (5.5*0.75,6*0.75) -- (5.5*0.75,3*0.75) -- (2.5*0.75,6*0.75);

    \draw[dashed, gray] (12.5*0.75,-1*0.75) -- (15.5*0.75,-1*0.75) -- (15.5*0.75,3*0.75) --(12.5*0.75,3*0.75) -- (12.5*0.75,-1*0.75);

\draw[dashed, gray] (15.5*0.75,3*0.75) -- (12.5*0.75,6*0.75) -- (12.5*0.75,3*0.75);
\end{tikzpicture}

\noindent Phases I and III yield the following paths from $v$ to the highest and lowest weight elements of its component within the crystal $\mathcal{C}(v)$:
\begin{align*}
  v &= \overline{3}24 \too{\eone} \overline{3}14 \too{\ethree} \overline{3}13 \too{\ethree,\etwo} \overline{4}12 = v^\uparrow \\
  v &= \overline{3}24 \too{\ftwo} \overline{2}24 \too{\fone,\ftwo} \overline{1}34 = v^\downarrow.
\end{align*}
\end{example}

Thus, associated to each $uvw$, we have a path $\vec{e}$ from $v$ to $v^\uparrow$ in the crystal $\mathcal{C}(v)$ recording the positions at which raising operators were applied, and a path $\vec{f}$ from $v$ to $v^\downarrow$ recording lowering operators. These paths can be read off from regions I and III, respectively, and depend on $u$ and $w$. The following structure encodes the portion of the promotion-evacuation diagram in regions I--IV using only crystal-theoretic data from these paths. We will see shortly that it results in a well-defined appliance corresponding to the columns of $\PM(uvw)$ with entries from $v$.

\begin{definition}\label{def:crystal-appliances}
  Let $v$ be an oscillating word with $|v| = b$. Fix increasing and decreasing paths
  \begin{align*}
    v &= v^0 \xrightarrow{e_{s_1}} v^1 \xrightarrow{e_{s_2}} \cdots \xrightarrow{e_{s_a}} v^a = v^{\uparrow}, \\
    v &= u^c \xrightarrow{f_{t_c}} u^{c-1} \xrightarrow{f_{t_{c-1}}} \cdots \xrightarrow{f_{t_1}} u^0 = v^{\downarrow}.
  \end{align*}
  Let $\vec{e} = (s_1, \ldots, s_a)$ and $\vec{f} = (t_c, \ldots, t_1)$. The \emph{crystal appliance} associated to these paths is the appliance $\rho_\bullet(\vec{e}, v, \vec{f})$ on $\{1, \ldots, a\} \sqcup \{a+1, \ldots, a+b\} \sqcup \{a+b+1, \ldots, a+b+c\}$ defined as follows:
  \begin{enumerate}[(1)]
    \item If $v^i$ is obtained from $v^{i-1}$ by $e_{s_i}$ acting on the $j$-th letter, set $\rho_{s_i}(a+j) = i$. 
    \item If $u^{i-1}$ is obtained from $u^i$ by $f_{t_i}$ acting on the $j$-th letter, set $\rho_{t_i}(a+j) = a+b+i$.
    \item If $\prom_{s}(v^{\uparrow}x)(i) = j$ for some $i,j \in [b]$ and $s \in [r-1]$, set $\rho_s(a+j) = a+i$. Here $x$ is any word which makes $v^\uparrow x$ a balanced lattice word.
  \end{enumerate}
\end{definition}

The fact that crystal appliances are well-defined appliances will be verified in the course of the proof of \Cref{lem:crystal-phases}. We will later refer to \Cref{def:crystal-appliances}(1) as phase I, (2) as phase III, and (3) as phases II and IV, which will correspond to regions I--IV in promotion appliances.

The following result relates promotion appliances to crystal appliances. Recall that $\varrho_\bullet(u, v, w)$ corresponds to the rows of $v$ in $\PM(uvw)$ while $\rho_\bullet(\vec{e}, v, \vec{f})$ corresponds to the columns of $v$ in $\PM(uvw)$. However, since $\prom_s(uvw) = \prom_{r-s}(uvw)^{-1}$, $\varrho_\bullet(u, v, w)^\top$ also corresponds to the columns of $v$ in $\PM(uvw)$.

\begin{lemma}\label{lem:crystal-phases}
  Let $uvw$ be a balanced oscillating lattice word. Then there exist $\vec{e}$, $\vec{f}$, and $h$ such that
  \begin{equation}\label{eq:crystal-phases}
    \varrho_\bullet(u, v, w)^\top = h \cdot \rho_\bullet(\vec{e}, v, \vec{f}).  
  \end{equation}
  Here $|u|=a$, $|v|=b$, $|w|=c$, $|\vec{e}|=a'$, $|\vec{f}| =c'$, and $h \colon \{1, \ldots, a'\} \sqcup \{a'+b+1, \ldots, a'+b+c'\} \to \{1, \ldots, a\} \sqcup \{a+b+1, \ldots, a+b+c\}$ is the disjoint union of two weakly increasing functions.
\end{lemma}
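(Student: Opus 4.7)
The plan is to read off $\vec{e}$, $\vec{f}$, and $h$ directly from the promotion-evacuation diagram of $uvw$, using the phase decomposition of Figure~5.1 and the crystal raising algorithm (\cite[Prop.~8.20]{fluctuating-paper}) together with its companion result \cite[Thm.~8.24]{fluctuating-paper}, which says that $\PM^s(uvw)_{i,i+j}=1$ precisely when $e_s$ is applied at position $j$ during promotion of $\promotion^{i-1}(\osc(uvw))$. Since $\prom_s = \prom_{r-s}^{-1}$, the transpose $\varrho_\bullet(u,v,w)^\top$ encodes the columns of $\PM(uvw)$ indexed by positions of $v$; so the equality to prove is a statement about these columns.

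First I would set up the phases. Let $\alpha$ be the smallest row index at which the $u$-subword has been completely promoted off, so that the $v$-subword has become its highest weight element $v^\uparrow$, and let $\gamma$ be the largest row index at which the $v$-subword appearing at the top of the diagram equals $v^\downarrow$. Define $\vec{e}=(s_1,\ldots,s_{a'})$ by listing, in order of row, the raising operators $e_{s_i}$ applied \emph{to a letter of the $v$-subword} in rows $1,\ldots,\alpha$; this produces a path $v=v^0 \xrightarrow{e_{s_1}}\cdots\xrightarrow{e_{s_{a'}}} v^\uparrow$ in $\mathcal{C}(v)$. Dually, define $\vec{f}=(t_{c'},\ldots,t_1)$ from the raising operators applied to the $v$-subword in rows $\gamma,\ldots,n$, reversed; by evacuation this gives a lowering path $v=u^{c'}\xrightarrow{f_{t_{c'}}}\cdots\xrightarrow{f_{t_1}} v^\downarrow$. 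Let $h_{\mathrm{I}}\colon\{1,\ldots,a'\}\to\{1,\ldots,a\}$ and $h_{\mathrm{III}}\colon\{a'+b+1,\ldots,a'+b+c'\}\to\{a+b+1,\ldots,a+b+c\}$ be the maps sending the $i$-th entry of $\vec{e}$ (resp.~$\vec{f}$) to the row of the diagram in which that raising operator was applied, translated to the appropriate index range; let $h = h_{\mathrm{I}}\sqcup h_{\mathrm{III}}$. Both $h_{\mathrm{I}}$ and $h_{\mathrm{III}}$ are weakly increasing because we listed the raising operators in row order.

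Next I would verify \eqref{eq:crystal-phases} by comparing the value of $(h\cdot\rho_\bullet(\vec e,v,\vec f))_s$ and $\varrho_s(u,v,w)^\top$ on each $a+j \in\{a+1,\ldots,a+b\}$, using the phase decomposition to split into the four cases of \Cref{def:crystal-appliances}. The phase~I case is immediate from the definition of $\vec{e}$, $h_{\mathrm{I}}$, and \cite[Thm.~8.24]{fluctuating-paper}, and the phase~III case is dual via \cite[Thm.~6.5]{fluctuating-paper} relating dual evacuation to evacuation/promotion. The crucial step is the phase~II case: one must show that once the $v$-portion has reached $v^\uparrow$ (after row $\alpha$) and before it begins to descend toward $v^\downarrow$, any raising operator acting on a position of the $v$-subword corresponds exactly to $\prom_s(v^\uparrow x)$ for an arbitrary completion $x$ of $v^\uparrow$ to a balanced lattice word. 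This follows from the bracketing rule of \Cref{def:bracketing-rule}: when applying $e_s$ to a letter in the $v^\uparrow$-block, the matching bracket is necessarily inside $v^\uparrow$ (otherwise there would be an unmatched bracket in $v^\uparrow$, contradicting its being highest weight), so the choice of tail does not affect which position inside $v^\uparrow$ is acted on. Hence condition~(3) of \Cref{def:crystal-appliances} is independent of the completion $x$, which simultaneously shows the crystal appliance is well defined and that it matches $\varrho_\bullet(u,v,w)^\top$ on the middle block.

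The main obstacle is this phase~II argument: one must show that during the promotion iterations strictly between phases~I and~III, the raising operators acting on $v$-positions depend only on $v^\uparrow$ and not on the surrounding words. The bracketing-rule locality above handles individual operators, but to run the argument over many rows one needs that the $v$-subword continues to be exactly $v^\uparrow$ throughout this range (with raising operators applied purely within it being consistent with global promotion). This is where the phase decomposition is essential, and where the requirement ``first value of $\cdots\circ\pi_s\circ g_s\circ\pi_s(i)$'' in \Cref{def:appliance} must be reconciled with the iterative application of promotion. Once this is established, the ranges of $h_{\mathrm{I}}$ and $h_{\mathrm{III}}$ are disjoint by construction, $h$ is well defined as claimed, and the equality \eqref{eq:crystal-phases} follows on all of $\{a+1,\ldots,a+b\}$.
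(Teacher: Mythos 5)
Your plan follows the paper's proof essentially verbatim: read $\vec{e}$ and $h$ off the raising operators applied to the $v$-subword during phase I (with $h$ weakly increasing because operators are recorded in row order), obtain $\vec{f}$ dually by running promotion in reverse through phase III, and match phases II/IV to condition (3) of the crystal-appliance definition. The one point you flag as the main obstacle is resolved in the paper not by the $v$-subword remaining equal to $v^\uparrow$ throughout phase II (it does not --- letters are cut off and operators applied), but by observing that $\promotion^{a+i-1}(\osc(uvw)) = \promotion^{i-1}(v^\uparrow x)$ for the specific tail $x$ produced by phase I, so the relevant upper triangle of the $v$-block is literally the upper triangle of $\PM(v^\uparrow x)$; its independence of the choice of $x$, together with the dual phase-IV statement, is supplied by \cite[Thm.~8.25]{fluctuating-paper}.
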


\begin{proof}
  Consider repeatedly applying the crystal raising algorithm to compute the promotion-evacuation diagram of $uvw$. During this procedure, the subword corresponding to $\osc(v)$ shifts leftward as initial letters are repeatedly cut off. The initial phase I corresponding to region I in \Cref{fig:crystal-phases} consists of $a$ promotion steps. After this phase, the subword corresponding to $v$ has shifted to the beginning and has become the highest weight word $v^\uparrow$. Each promotion step computes one row of $\PM(uvw)$. Raising operators applied outside of the subword do not appear in the columns of $v$ in $\PM(uvw)$ and may be ignored. If $a'$ raising operators are applied to the subword during phase I, we have a path $v \to v^\uparrow$ in $\mathcal{C}(v)$ with $a'$ steps, by the associativity of the crystal tensor product. If $e_s$ is applied at the $j$-th letter of the subword during step $i'$ of the crystal path, then $e_s$ is applied at the $(a+j)$-th letter of the original word when computing some row $i$ of $\PM(uvw)$. In this case, $\rho_s(a+j) = i'$ and $\prom_s(uvw)(i) = a+j$. The function $i' \mapsto i$ is weakly increasing and fulfills \eqref{eq:crystal-phases} for these values.

  After phase I, the next phase II consists of $b$ promotion steps. It computes the upper triangle of the rows and columns of $v$ in $\PM(uvw)$. This calculation depends only on the initial $b$ letters, namely $v^\uparrow$. Hence $\prom_s(uvw)(a+i) = a+j$ if and only if $\prom_s(v^\uparrow x)(i) = j$ for $1 \leq i < j \leq b$, where $x$ is any word such that $v^\uparrow x$ is balanced. Thus \eqref{eq:crystal-phases} also holds for these values. Note that when any particular letter of $v$ reaches the beginning of the lattice word in phase II, it has been transformed into $1$ or $\overline{r}$.

  Phases III and IV, as illustrated in \Cref{fig:crystal-phases}, determine the other half of the columns of $v$ in $\PM(uvw)$. We may imagine the promotion algorithm is applied in reverse from the bottom up since $\promotion^n(uvw) = uvw$, in which case phase III produces a decreasing path $v \to v^\downarrow$ of length $c'$. During phase IV, the relevant lower triangle of $\PM(v^\uparrow x)$ agrees with that of $\PM(uvw)$ by \cite[Thm.~8.25]{fluctuating-paper}. Thus \eqref{eq:crystal-phases} holds.

  Note that if we track the letters in a particular column of $v$ in $\PM(uvw)$ through this process from phase II to I to III to IV, we have $1 \to \cdots \to r$ or $\overline{r} \to \cdots \to \overline{1}$, so $\rho_\bullet(\vec{e}, v, \vec{f})$ is indeed well-defined. The same argument applies even if $\vec{e}$ and $\vec{f}$ have not necessarily come from some $uvw$, so crystal appliances are well-defined in general.
\end{proof}

If $v$ is a subword of a balanced oscillating lattice word $z=uvw$, we write $\vec{e}(v,z)$ and $\vec{f}(v,z)$ for a choice of tuples which satisfy 
\[
\varrho_\bullet(u,v,w)^{\top}=h \cdot \rho_\bullet(\vec{e}(v,z),v,\vec{f}(v,z))
\]
for some $h$. Such $\vec{e}(v,z)$ and $\vec{f}(v,z)$ exist by \Cref{lem:crystal-phases}.

The following result is our main tool for proving that a particular substitution is a good degeneration and, in turn, for proving the validity of the growth rules. Our setup applies for general $r$ and in particular can be used to easily re-verify the $r=3$ growth rules of Khovanov--Kuperberg \cite{Khovanov-Kuperberg}. In this paper we focus on the case $r=4$, but we aim in future work to apply these techniques for higher $r$ (and in other Lie types).

\begin{theorem}\label{thm:good_degen}
  Let $v, v'$ be words on $\mathcal{A}_r$ with $B \sqcup B' =\{1,\ldots,|v|\} \sqcup \{|v|+1,\ldots,|v|+|v'|\}$. Suppose there is a $U_q(\fsl_r)$-crystal isomorphism $\mathcal{C}(v) \cong \mathcal{C}(v')$ sending $v$ to $v'$. If $\pi_\bullet$ is a plumbing from $B$ to $B'$ such that
  \begin{equation}\label{eq:strong-equivalence}
    \rho_\bullet(\vec{e}, v, \vec{f})^\top = \rho_\bullet(\vec{e}, v', \vec{f})^\top \cdot \pi_\bullet
  \end{equation}
  for all raising and lowering paths $\vec{e}$ and $\vec{f}$, then $v \to v'$ with plumbing $\pi_\bullet$ is a good degeneration.
\end{theorem}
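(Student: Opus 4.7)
My plan is to verify, in turn, the two conditions of \Cref{def:good_degens}.

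For condition~(i), the crystal isomorphism $\mathcal{C}(v) \cong \mathcal{C}(v')$ extends, by the tensor-product construction of $U_q(\fsl_r)$-crystals, to an isomorphism
\[
\mathcal{B}(u) \otimes \mathcal{C}(v) \otimes \mathcal{B}(w) \cong \mathcal{B}(u) \otimes \mathcal{C}(v') \otimes \mathcal{B}(w)
\]
carrying $u \otimes v \otimes w$ to $u \otimes v' \otimes w$ and preserving weights. Since balanced lattice words are precisely the highest-weight elements of weight zero (cf.~\eqref{eq:lattice_inequalities}), this immediately gives that $uvw$ is a balanced lattice word if and only if $uv'w$ is one, establishing condition~(i).

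The core of the argument is condition~(ii). By \Cref{lem:crystal-phases}, for each balanced $uvw$ there exist raising and lowering paths $\vec e, \vec f$ in $\mathcal{C}(v)$ and an increasing map $h$ with $\varrho_\bullet(u,v,w)^\top = h \cdot \rho_\bullet(\vec e, v, \vec f)$. The crucial claim to establish is that the \emph{same} $\vec e, \vec f$, and $h$ work for $uv'w$, i.e., $\varrho_\bullet(u,v',w)^\top = h \cdot \rho_\bullet(\vec e, v', \vec f)$. This will follow from the observation that the signature rule governing the action of $e_i,f_i$ on a crystal tensor product depends only on the statistics $\varepsilon_i,\varphi_i$ of each factor, and these are preserved by the isomorphism $\mathcal{C}(v) \cong \mathcal{C}(v')$. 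Consequently, when executing the crystal raising algorithm, the sequence of raising/lowering operators---and the identification of which tensor factor each operator acts upon---is intrinsic to the crystal structure and so transfers verbatim from $uvw$ to $uv'w$, yielding the desired invariance of $\vec e$, $\vec f$, and $h$. Combining the two factorizations with the hypothesis \eqref{eq:strong-equivalence} then gives
\[
\varrho_\bullet(u,v,w)^\top = h \cdot \rho_\bullet(\vec e, v', \vec f) \cdot \pi_\bullet = \varrho_\bullet(u,v',w)^\top \cdot \pi_\bullet.
\]
Since $\breve{\pi}_\bullet$ acts as the identity outside the $v/v'$-block, this local identity on the $v$-columns of the promotion matrices, combined with the trivial compatibility on $u$- and $w$-indexed rows and columns, assembles by formal manipulation of the appliance action into the full identity $\prom_\bullet(uvw) = \prom_\bullet(uv'w) \cdot \breve{\pi}_\bullet$ of~\eqref{eq:good_degen.1}.

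The main obstacle will be the crystal-theoretic bookkeeping in the key claim: one must carefully follow a full execution of the crystal raising algorithm on $uvw$, track precisely which operators hit the middle factor versus $u$ or $w$, and verify that the crystal isomorphism transports this record faithfully to $uv'w$ (in particular matching the increasing map $h$). A secondary subtlety is that the paths $\vec e, \vec f$ produced by \Cref{lem:crystal-phases} are not unique in general; however, \eqref{eq:strong-equivalence} is assumed to hold for \emph{all} such paths, so any ambiguity is absorbed by the hypothesis. Once the crystal transfer is in hand, the remaining reduction from the $v$-block identity to the full promotion-appliance identity is purely formal.
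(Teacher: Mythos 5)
Your proposal is correct and follows essentially the same route as the paper's proof: verify condition (i) via the tensor-product crystal isomorphism, then combine \Cref{lem:crystal-phases} with the hypothesis \eqref{eq:strong-equivalence}, using the key observation that the crystal isomorphism allows $\vec{e}$, $\vec{f}$, and $h$ to be taken identical for $uvw$ and $uv'w$ (your justification via the signature rule is a slightly more explicit version of the paper's one-line assertion). The only blemish is a minor misplacement of transposes in your final display relative to the statement of \eqref{eq:strong-equivalence}, which is bookkeeping rather than a gap.
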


\begin{proof}
  Suppose $uvw$ is a balanced lattice word. Since $v$ and $v'$ are assumed to be corresponding vertices in isomorphic crystal graphs, $uv'w$ is a balanced lattice word. Moreover, from the proof of \Cref{lem:crystal-phases} and \Cref{fig:crystal-phases}, the portions of $\PM(uvw)$ and $\PM(uv'w)$ corresponding to rows and columns from $u$ or $w$ are the same. Hence \eqref{eq:good_degen.1} holds here directly. By symmetry, it now suffices to consider only the rows corresponding to $v$.

   By \Cref{lem:crystal-phases} and \eqref{eq:strong-equivalence},
  \begin{align*}
     \varrho_\bullet(u, v', w) \cdot \pi_\bullet &= h \cdot \rho_\bullet(\vec{e}, v', \vec{f})^\top \cdot \pi_\bullet \\
      &= h \cdot \rho_\bullet(\vec{e}, v, \vec{f})^\top = \varrho_\bullet(u, v, w) .
  \end{align*}
  Here the fact that there is a crystal isomorphism sending $v$ to $v'$ implies that $\vec{e}$, $\vec{f}$, and $h$ from \Cref{lem:crystal-phases} can be taken to be the same. We now have \eqref{eq:good_degen.1}. 
\end{proof}

\begin{example}\label{ex:good_degen}
  For any proposed substitution $v \to v'$, \Cref{thm:good_degen} reduces the \textit{a priori} infinite list of conditions in \eqref{eq:good_degen.1} to a finite calculation. We summarize part of this calculation for the good degeneration $\overline{3}24 \to \overline{3}\overline{1}\overline{3}$ with the following realized plumbing:

  \begin{center}
    \includegraphics[scale=1.2]{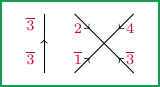}
  \end{center}
  
  \noindent First, we compute the portions of the crystals above and below $\overline{3}24$ and $\overline{3}\overline{1}\overline{3}$ and pick corresponding paths to the highest and lowest weight elements:

  \begin{center}
    \includegraphics[scale=0.9]{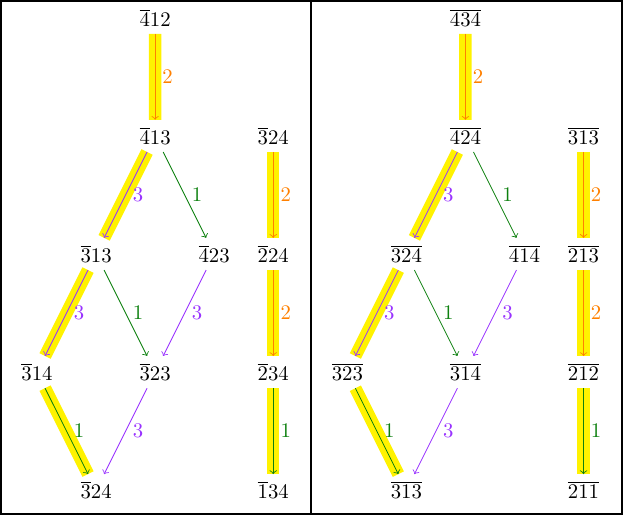}
  \end{center}

  \noindent Each path yields a possible column in $\PM(uvw)$. We then compute the crystal appliances and verify that applying $\pi_\bullet$ to $\rho_\bullet(\vec{e}, v', \vec{f})^\top$ yields $\rho_\bullet(\vec{e}, v, \vec{f})^\top$:

  \begin{center}
    \includegraphics[width=0.95\textwidth]{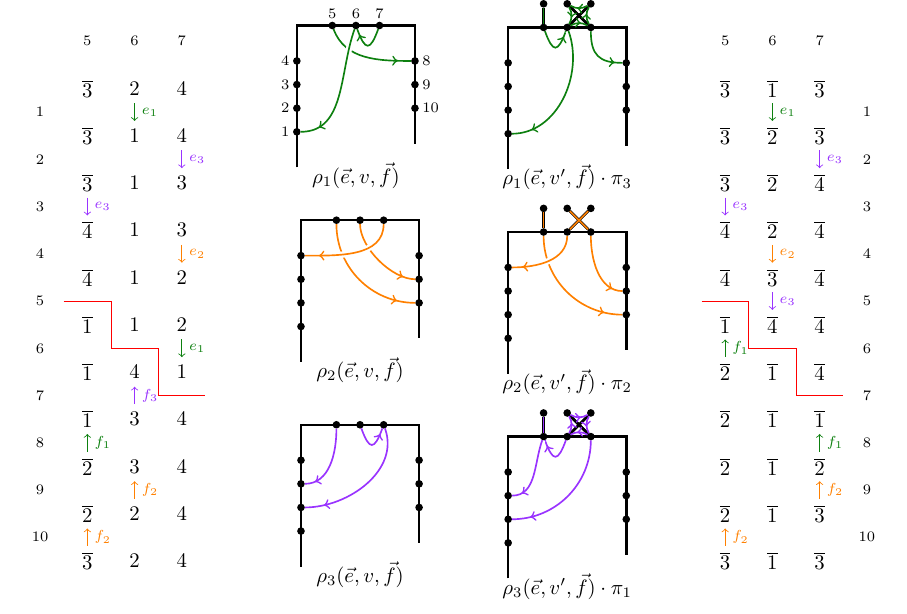}
  \end{center}

  In \Cref{ex:prom-appliances-1} and \Cref{ex:prom-appliances-2}, the promotion appliances also satisfied
    \[ \varrho_1(1\overline{4}\overline{4}, \overline{3}24, 34\overline{2}\overline{1}) = \varrho_1(1\overline{4}\overline{4}, \overline{3}\overline{1}\overline{3}, 34\overline{2}\overline{1}) \cdot \pi_1. \]
  To connect the promotion appliances and the crystal appliances as in \Cref{lem:crystal-phases}, we must use $h(1)=1, h(2)=2, h(3)=h(4)=3$ and $h(8)=h(9)=8, h(10)=10$, giving in particular
  \[ \varrho_1(1\overline{4}\overline{4}, \overline{3}24, 34\overline{2}\overline{1}) = h \cdot \rho_3(\vec{e}, \overline{3}24, \vec{f})
  \qquad\text{and}\qquad
  \varrho_1(1\overline{4}\overline{4}, \overline{3}\overline{1}\overline{3}, 34\overline{2}\overline{1}) = h \cdot \rho_3(\vec{e}, \overline{3}\overline{1}\overline{3}, \vec{f}). \]
\end{example}

\begin{example}
    For a non-example, consider the candidate unwitnessed rule $\overline{4}2 \to 2\overline{4}$ with the  plumbing $\mathsf{X}^{-+}_{+-}$ from \Cref{fig:plumbing-degens}. The witnessed version $\overline{4}24 \to 2\overline{4}4$ is one of the growth rules in \Cref{fig:growth-rules}. One may check as in \Cref{ex:good_degen} that $\overline{4}24 \to 2\overline{4}4$ is a good degeneration. However, $\overline{4}2 \to 2\overline{4}$ fails the check for one of the pairs of paths, even though $\overline{4}2 \sim_K 2\overline{4}$. Moreover, if one were to add the unwitnessed rule $\overline{4}2 \to 2\overline{4}$ to the growth algorithm, the result produces non-fully reduced hourglass plabic graphs as in \Cref{fig:bad-degeneration}.
\end{example}

\begin{figure}[htbp]
    \centering
    \raisebox{1.26cm}{\includegraphics[scale=0.85]{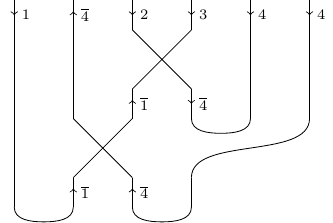}}
    \hspace{1.4cm}\includegraphics[scale=0.85]{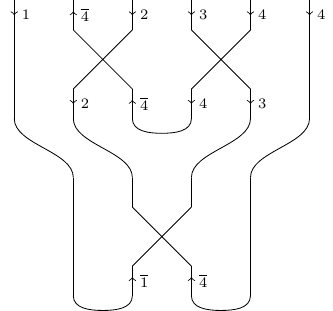}
    \caption{(\textsc{Left}) The result of applying the growth rules in \Cref{fig:growth-rules} to $1\overline{4}2344$. (\textsc{Right}) Possible output if the unwitnessed rule $\overline{4}2 \to 2\overline{4}$ were allowed. The result is non-fully reduced and fails the $\trip_\bullet = \prom_\bullet$ property.}
    \label{fig:bad-degeneration}
\end{figure}

\begin{proposition}\label{prop:short_good_degen}
  The $88$ short rules of length $2$ or $3$ in \Cref{fig:growth-rules} are good degenerations.
\end{proposition}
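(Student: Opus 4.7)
The plan is to invoke \Cref{thm:good_degen} on each of the 88 candidate rules, turning the problem into a finite case-check of crystal appliances.

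First, I would cut down the workload via symmetry. By \Cref{lem:degen-symmetries}, the involutions $\tau, \varpi, \varepsilon$ carry good degenerations to good degenerations and permute the 88 short growth rules. As noted in the text, the short rules form exactly 10 orbits under this action, so it suffices to verify one representative from each family. For each representative substitution $v \to v'$ equipped with its proposed plumbing $\pi_\bullet$ (read off from \Cref{fig:growth-rules}), I then carry out the following three sub-steps:

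\textbf{(a) Crystal isomorphism.} Using the bracketing rule (\Cref{def:bracketing-rule}), compute the connected component $\mathcal{C}(v)$ and check that there is a $U_q(\fsl_4)$-crystal isomorphism $\mathcal{C}(v) \cong \mathcal{C}(v')$ sending $v \mapsto v'$. Because $|v|, |v'| \leq 3$ and $r = 4$, each component is small and this step amounts to verifying that $v$ and $v'$ have the same sequence of $\varepsilon_i, \varphi_i$ values and that applying $e_i$'s and $f_i$'s produces matching words throughout the component. (When witnesses are present, they are precisely what forces the string data of $v$ and $v'$ to agree.)

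\textbf{(b) Enumerate paths and assemble crystal appliances.} For every increasing path $\vec{e}\colon v\to v^{\uparrow}$ and decreasing path $\vec{f}\colon v\to v^{\downarrow}$ in $\mathcal{C}(v)$, form the appliances $\rho_\bullet(\vec{e},v,\vec{f})^{\top}$ and $\rho_\bullet(\vec{e},v',\vec{f})^{\top}$ using \Cref{def:crystal-appliances}. Phases I and III can be written off directly from $\vec{e}$ and $\vec{f}$. Phases II and IV depend only on $v^{\uparrow}$ and $v^{\downarrow}$ (respectively) and so need be computed only once per family from $\PM(v^{\uparrow}x)$ for any balancing completion $x$. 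Then verify
\[
  \rho_\bullet(\vec{e},v,\vec{f})^{\top} \;=\; \rho_\bullet(\vec{e},v',\vec{f})^{\top}\cdot \pi_\bullet
\]
for every such pair of paths. This is exactly the hypothesis \eqref{eq:strong-equivalence} of \Cref{thm:good_degen}.

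\textbf{(c) Conclude.} With \eqref{eq:strong-equivalence} verified for every $(\vec{e},\vec{f})$, \Cref{thm:good_degen} immediately gives that $v\to v'$ with plumbing $\pi_\bullet$ is a good degeneration.

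\Cref{ex:good_degen} already performs this verification explicitly for the family containing $\overline{3}24\to\overline{3}\,\overline{1}\,\overline{3}$; the other nine families are handled in exactly the same way. The only real obstacle is bookkeeping: the number of raising/lowering path pairs grows with the crystal, and, in the witnessed rules, one must track carefully how the witness constrains the possible paths. Indeed, the role of the witness is to either exclude the pairs $(\vec{e},\vec{f})$ that would otherwise produce a mismatch, or to ensure that any such mismatch is absorbed precisely by~$\pi_\bullet$; without it, the substitution fails \eqref{eq:strong-equivalence} (e.g.\ the unwitnessed $\overline{4}2\to 2\overline{4}$, even though $\overline{4}2\sim_K 2\overline{4}$). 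Since each of the ten component crystals under consideration has only a modest number of vertices and paths, the full check is straightforward and can be carried out by hand (and confirmed by computer); we omit the exhaustive tables.
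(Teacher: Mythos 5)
Your proposal is correct and follows essentially the same route as the paper: the paper's proof likewise invokes \Cref{thm:good_degen} case by case exactly as in \Cref{ex:good_degen}, uses \Cref{lem:degen-symmetries} to reduce the $88$ rules to the $10$ symmetry families, and defers the routine finite verification of \eqref{eq:strong-equivalence} to a computer implementation. Your explicit breakdown into the crystal-isomorphism check and the path-by-path appliance comparison simply spells out what ``exactly as in \Cref{ex:good_degen}'' entails.
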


\begin{proof}
  In each case, the result follows from \Cref{thm:good_degen} exactly as in \Cref{ex:good_degen}. The calculations are lengthy, routine, and omitted. Using \Cref{lem:degen-symmetries} reduces the labor considerably. A computer implementation of the algorithm is available at \cite[\S3]{sl4-web-basis-code}.
\end{proof}

\subsection{Long good degenerations}\label{sec:long-good-degenerations}

We now prove that the two families of long growth rules in \Cref{fig:growth-rules} are also good degenerations. Our argument will involve the plumbings in \Cref{fig:plumbing-degens}.

\begin{figure}[htbp]
    \centering
    \includegraphics[width=0.9\textwidth]{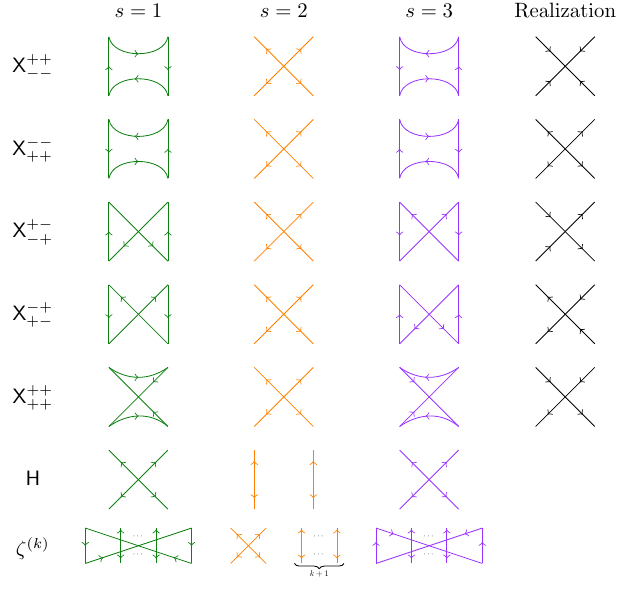}
    \caption{Some realized and virtual plumbings needed to establish the long growth rules of \Cref{fig:growth-rules}.}
    \label{fig:plumbing-degens}
\end{figure}

\begin{lemma}\label{lem:pair_degens}
  The following are good degenerations, which are their own inverses.
  \begin{enumerate}[(i)]
    \item For plumbing $\sfH$: $\overline{3}2 \leftrightarrow 2\overline{3}$, $\overline{2}2 \leftrightarrow 3\overline{3}$, $\overline{2}3 \leftrightarrow 3\overline{2}$, $\overline{3}\overline{1} \leftrightarrow 24$, $\overline{2}\overline{1} \leftrightarrow 34$, $\overline{3}\overline{2} \leftrightarrow 14$, $\overline{4}\overline{2} \leftrightarrow 13$, $\overline{4}\overline{3} \leftrightarrow 12$.
    \item For plumbing $\sfX^{+-}_{-+}$: $1\overline{2} \leftrightarrow \overline{2}1$, $2\overline{1} \leftrightarrow \overline{1}2$, $2\overline{2} \leftrightarrow \overline{1}1$.
    \item For plumbing $\sfX^{-+}_{+-}$: $\overline{3}4 \leftrightarrow 4\overline{3}$, $\overline{4}3 \leftrightarrow 3\overline{4}$, $\overline{3}3 \leftrightarrow 4\overline{4}$.
  \end{enumerate}
\end{lemma}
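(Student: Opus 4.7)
The plan is to apply \Cref{thm:good_degen} to each of the pairs in (i)--(iii). Since every word appearing in the statement has length $2$, the relevant crystal components $\mathcal{C}(v)$ sit inside $V^{\omega_i} \otimes V^{\omega_j}$ for small $i,j$ and are small enough to enumerate completely via the bracketing rule of \Cref{def:bracketing-rule}. For each pair $v \leftrightarrow v'$, I would first check that $v$ and $v'$ have equal $\fsl_4$-weight (recalling that $\mathbf{e}_1 + \mathbf{e}_2 + \mathbf{e}_3 + \mathbf{e}_4 = 0$ in the $\fsl_4$-weight lattice, so, e.g., $\mathbf{e}_2 + \mathbf{e}_4 = -\mathbf{e}_1 - \mathbf{e}_3$ identifies the weights of $24$ and $\overline{3}\overline{1}$) and that the $e_i$/$f_i$ arrows emanating from $v$ and $v'$ match, furnishing an explicit crystal isomorphism $\mathcal{C}(v) \cong \mathcal{C}(v')$ sending $v \mapsto v'$.

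Next, for each pair together with its prescribed plumbing $\pi_\bullet \in \{\sfH, \sfX^{+-}_{-+}, \sfX^{-+}_{+-}\}$ from \Cref{fig:plumbing-degens}, I would enumerate all raising paths $\vec{e}\colon v \to v^{\uparrow}$ and lowering paths $\vec{f}\colon v \to v^{\downarrow}$ (which coincide under the isomorphism with those for $v'$), compute the crystal appliances $\rho_\bullet(\vec{e},v,\vec{f})$ and $\rho_\bullet(\vec{e},v',\vec{f})$ via \Cref{def:crystal-appliances}, and verify
\[
\rho_\bullet(\vec{e}, v, \vec{f})^\top \;=\; \rho_\bullet(\vec{e}, v', \vec{f})^\top \cdot \pi_\bullet,
\]
exactly as in the sample calculation of \Cref{ex:good_degen}. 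By \Cref{thm:good_degen}, this produces the desired good degeneration. The symmetries of \Cref{lem:degen-symmetries} dramatically reduce the casework: the eight pairs in (i) fall into a few orbits under the involutions $\tau,\varpi,\varepsilon$ (using that $\sfH$ is carried to itself by each of these, as can be read off from \Cref{fig:plumbing-degens}), while the pairs in (ii) and (iii) are interchanged by $\varpi$ and related internally by $\tau,\varepsilon$. Only a handful of representative cases therefore need to be verified by hand; the remainder follow by transporting the appliance identities through the involutions. An automated implementation is available at \cite[\S3]{sl4-web-basis-code}.

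For the involution assertion, I would observe directly from \Cref{fig:plumbing-degens} that each of $\sfH$, $\sfX^{+-}_{-+}$, $\sfX^{-+}_{+-}$ satisfies $\pi_\bullet \circ \pi_\bullet = \id$ as a plumbing from a $2$-element set to itself, with two vertically stacked copies cancelling strand-by-strand. Once the forward direction $v \to v'$ is established with plumbing $\pi_\bullet$, the backwards direction $v' \to v$ with the same plumbing $\pi_\bullet$ follows by the symmetry of \eqref{eq:strong-equivalence} in \Cref{thm:good_degen}, and the composition of the two good degenerations is then the identity good degeneration, as claimed.

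The main obstacle is simply computational volume rather than conceptual difficulty: each pair involves a small crystal and only a handful of raising/lowering paths, but the bookkeeping is tedious. Particular care is needed with the conventions for $\pi_\bullet$, since the superscripts in $\sfX^{\pm\mp}_{\mp\pm}$ encode boundary strand orientations and dictate how the $\trip_1$-, $\trip_2$-, and $\trip_3$-strands thread through the plumbing (some realizations being virtual rather than drawn as fragments of hourglass plabic graphs); an error here would break the appliance identity. The symmetry reduction of \Cref{lem:degen-symmetries} and the crystal-algorithmic implementation of \Cref{thm:good_degen} are what make the verification tractable.
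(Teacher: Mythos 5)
Your proposal is correct and matches the paper's approach: the paper's proof of this lemma is simply ``a direct verification using \Cref{thm:good_degen}'' together with a pointer to the computer implementation, and your write-up is an accurate expansion of exactly that verification (crystal isomorphism, enumeration of raising/lowering paths, appliance identity, symmetry reduction via \Cref{lem:degen-symmetries}, and the involution claim following from $\pi_\bullet$ squaring to the identity on the relevant appliances). Your caveat about virtual realizations of the plumbings in the reverse direction is also consistent with the remark the paper makes immediately after the lemma.
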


\begin{proof}
This is a direct verification using \Cref{thm:good_degen}. A computer implementation is available at \cite[\S4]{sl4-web-basis-code}.
\end{proof}

\begin{remark}
  The plumbings in \Cref{lem:pair_degens}(ii) and (iii) are only realized in one direction. For instance, if the substitution $\overline{2}1 \to 1\overline{2}$ had a realizable plumbing, it would need to be $\sfX^{-+}_{+-}$ in order for the six-vertex edge directions to be consistent with the label signs; however this is not a good degeneration. Instead, \Cref{lem:pair_degens} uses a different (non-realizable) plumbing. In general, good degenerations are not invertible, since plumbings need not possess inverses.
\end{remark}

\begin{lemma}\label{lem:long_good_degens}
  The following are good degenerations, for all $k \in \mathbb{Z}_{\geq 0}$.
  \begin{enumerate}[(i)]
    \item For plumbing $\sfX^{--}_{++} \times \id^{k+1}$: $\overline{3}\overline{2}\overline{2}^k4 \to 41\overline{2}^k4$.
    \item For plumbing $\sfX^{++}_{++} \times \id^{k+1}$: $14\overline{2}^k4 \to 41\overline{2}^k4$.
    \item For plumbing $\sfX^{++}_{--} \times \id^{k+1}$: $23\overline{2}^k4 \to \overline{1}\overline{4}\overline{2}^k4$.
    \item For plumbing $\zeta^{(k)}$: $41\overline{2}^k4 \to \overline{3}\overline{2}\overline{2}^k4$.
  \end{enumerate}
  Moreover, the composite $\zeta^{(k)} \circ (\sfX^{--}_{++} \times \id^{k+1})$ acts as the identity on the appliance $\rho_\bullet(\vec{e}, \overline{3}\overline{2}\overline{2}^k4, \vec{f})^\top$.
\end{lemma}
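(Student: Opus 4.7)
The plan is to proceed by induction on $k$, reducing each assertion via \Cref{thm:good_degen} to a finite crystal-theoretic verification at each stage, with the base case handled by \Cref{prop:short_good_degen}.

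\textbf{Base case} ($k=0$). The four substitutions become the length-3 rules $\overline{3}\overline{2}4 \to 414$, $144 \to 414$, $234 \to \overline{1}\overline{4}4$, and $414 \to \overline{3}\overline{2}4$. Each appears among the short growth rules displayed in \Cref{fig:growth-rules}, and the plumbings $\sfX^{--}_{++} \times \id$, $\sfX^{++}_{++} \times \id$, $\sfX^{++}_{--} \times \id$, and $\zeta^{(0)}$ specialize to their short counterparts, so the base case follows from \Cref{prop:short_good_degen}.

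\textbf{Inductive step.} Fix $k \geq 1$. By \Cref{thm:good_degen}, to establish each of (i)--(iv) it suffices to check that
\[
\rho_\bullet(\vec{e}, v, \vec{f})^\top = \rho_\bullet(\vec{e}, v', \vec{f})^\top \cdot \pi_\bullet
\]
for every increasing path $\vec{e}$ and decreasing path $\vec{f}$ in $\mathcal{C}(v)$. The idea is to control how $\mathcal{C}(v \cdot \overline{2}^k 4)$ grows with $k$ by direct application of the bracketing rule of \Cref{def:bracketing-rule}: since $\overline{2}$ carries brackets only for $e_1, f_1, e_2, f_2$, while the terminal $4$ carries brackets only for $e_3, f_3$, the extra $\overline{2}$ interacts with the remainder of the word in very restricted ways. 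I would argue that, modulo the local behavior of $e_1, f_1, e_2, f_2$ on the $\overline{2}^k$-block, both $v$ and $v'$ support the same collection of extensions of $\vec{e}, \vec{f}$, and that extending a path in $\mathcal{C}(v)$ across the new $\overline{2}$ corresponds precisely to appending an identity strand to the plumbing. This propagates the appliance identity from $k-1$ to $k$ for (i), (ii), (iii). For (iv), one engineers $\zeta^{(k+1)}$ from $\zeta^{(k)}$ by the same analysis together with tracking how crystal operators cross the boundary between the initial $41$ and the witness block $\overline{2}^k 4$; this is the construction implicit in the finite-state-machine picture alluded to in the text.

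For the final claim, by (i) the plumbing $\sfX^{--}_{++} \times \id^{k+1}$ transports $\rho_\bullet(\vec{e}, \overline{3}\overline{2}\overline{2}^k 4, \vec{f})^\top$ to $\rho_\bullet(\vec{e}, 41\overline{2}^k 4, \vec{f})^\top$, and by (iv) the plumbing $\zeta^{(k)}$ transports the latter back to the former, for all choices of paths. Hence $\zeta^{(k)} \circ (\sfX^{--}_{++} \times \id^{k+1})$ acts as the identity on $\rho_\bullet(\vec{e}, \overline{3}\overline{2}\overline{2}^k 4, \vec{f})^\top$.

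\textbf{Main obstacle.} The hardest step is constructing and validating $\zeta^{(k)}$: unlike the other three plumbings it is not of the form $P \times \id^{k+1}$, and its transitions must be calibrated so that (a) the inductive passage from $k$ to $k+1$ closes for every admissible pair $(\vec{e}, \vec{f})$ and (b) the composite with $\sfX^{--}_{++} \times \id^{k+1}$ collapses to the identity on the specified appliance rather than merely to a permutation of it. Carrying this out requires precise bookkeeping of the bracketings across the interface between the leading two letters and the $\overline{2}^k 4$ tail, uniformly in $k$, which is the most technically delicate part of the argument.
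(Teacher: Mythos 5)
Your overall strategy---induction on $k$---is genuinely different from the paper's, which handles all $k\geq 0$ at once by exhibiting finite state machines (\Cref{fig:long-degens-DFA-3p2p2p4,fig:long-degens-DFA-412p4,fig:another_machine,fig:another_another_machine}) that describe \emph{every} word reachable from $\overline{3}\overline{2}\overline{2}^k4$, $41\overline{2}^k4$, etc.\ by raising or lowering operators, uniformly in $k$, and then reading off all possible crystal appliances from that data. However, as written your argument has two genuine gaps. First, the base case for (iv) is not covered by \Cref{prop:short_good_degen}: the substitution $414 \to \overline{3}\overline{2}4$ is not a growth rule (it runs opposite to the $\tlex$-increasing direction of \Cref{fig:growth-rules}), and its plumbing $\zeta^{(0)}$ is \emph{virtual}, i.e.\ not realized by any hourglass plabic graph fragment, whereas every growth rule carries a realized plumbing. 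So (iv) cannot be bootstrapped from the short rules at all; it requires the direct appliance computation.

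Second, and more seriously, the inductive step rests on the claim that the extra $\overline{2}$ ``interacts with the remainder of the word in very restricted ways'' and that extending any path $(\vec{e},\vec{f})$ across it amounts to appending an identity strand. This is exactly the nontrivial content of the lemma, and it is not true in the naive form you state it: under the bracketing rule of \Cref{def:bracketing-rule}, each $\overline{2}$ contributes a bracket to both the $e_1$- and $e_2$-matchings, so inserting one changes which letters elsewhere are matched or unmatched, and the raising/lowering operators genuinely act \emph{inside} the $\overline{2}^k$ block ($f_1$ sends $\overline{2}\mapsto\overline{1}$, $e_2$ sends $\overline{2}\mapsto\overline{3}$). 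Consequently the set of admissible paths $(\vec{e},\vec{f})$ for $k+1$ is not obtained from that for $k$ by a simple ``extension,'' and the crystals $\mathcal{C}(v)$ for consecutive $k$ are not related by any map that would make your induction close without a case analysis equivalent to the paper's state machines (whose states carry parameters $\alpha+\beta+\gamma+\delta=k$ precisely to record how the block has been transformed). Your proposal correctly identifies the construction and verification of $\zeta^{(k)}$ as the delicate point, but it defers exactly that point; until the uniform description of the reachable crystal elements (or an equivalent inductive lemma relating appliances for $k$ and $k+1$) is actually established, the proof is incomplete.
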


\begin{proof}
  Consider (i). In \Cref{fig:long-degens-DFA-3p2p2p4}, we depict finite state machines representing all elements reachable from $\overline{3}\overline{2}\overline{2}^k4$ by $e_i$'s or by $f_i$'s, together with information equivalent to $\prom_\bullet(v^\uparrow x)$. One may check the correctness of \Cref{fig:long-degens-DFA-3p2p2p4} directly, by using the bracketing rule in \Cref{def:bracketing-rule} for the action of crystal raising and lowering operators. A full version of a similar argument is given in the proof of \Cref{lem:long-degens-DFA-412p3p4} below. Likewise, \Cref{fig:long-degens-DFA-412p4} depicts equivalent information for the word $41\overline{2}^k4$.
  
  We may read off crystal appliances from this data, resulting in \Cref{fig:long-degens-3p2p2p4-412p4-appliances}. To do this, start at the bottom of the large state machine in \Cref{fig:long-degens-DFA-3p2p2p4} and apply $e_2$ some number $m$ times to the positions of the initial $\overline{2}^k$. This results in a thick arrow of width $m$ from these positions in $\rho_2(\vec{e}, \overline{3}\overline{2}\overline{2}^k4, \vec{f})$ to the first dot in the lower left. The rest of the appliance similarly records which operators act in which positions as we move up the finite state machine and through the rest of \Cref{fig:long-degens-DFA-3p2p2p4}. Thus \eqref{eq:strong-equivalence} holds in case (i). Now (ii) follows from (i) and \Cref{lem:pair_degens}(i) using $14 \to \overline{3}\overline{2}$ and the observation that $\mathsf{X}^{--}_{++} \circ \mathsf{H} = \mathsf{X}^{++}_{--}$. The claim about the composite may also be verified directly from \Cref{fig:plumbing-degens,fig:long-degens-3p2p2p4-412p4-appliances}. One may similarly verify (iv) using \Cref{fig:long-degens-DFA-412p4,fig:long-degens-DFA-3p2p2p4}.
  
  As for (iii), when $k=0$, this is a short growth rule and therefore covered by \Cref{prop:short_good_degen}. Now suppose $k > 0$. Consider the finite state machines in \Cref{fig:another_machine,fig:another_another_machine}, which may be verified directly from the bracketing rule. It remains to either draw analogues of \Cref{fig:long-degens-3p2p2p4-412p4-appliances}, or to verify the necessary conditions directly from \Cref{fig:another_machine,fig:another_another_machine}. In particular, the reader may check that for any $\vec{e}, \vec{f}$ paths, when operators are applied at the intermediate $k-1$ positions starting from the $4$-th letter, these operators act in the same place for both $23\overline{2}^k4$ and $\overline{1}\overline{4}\overline{2}^k4$. Thus the corresponding appliances are identical in these positions, in agreement with the $\id^{k-1}$ fragment of the plumbing. Tracking the first three positions and the final position provides the rest of the appliances.
\end{proof}

\begin{figure}[htbp]
    \centering
    \includegraphics[width=0.95\textwidth]{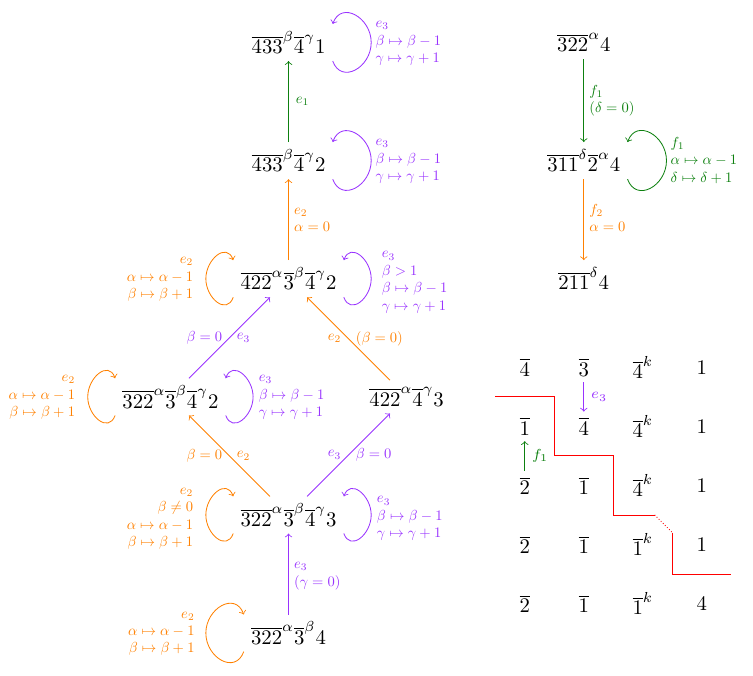}
    \caption{Finite state machines for $\overline{3}\overline{2}\overline{2}^k4$. Here $\alpha+\beta+\gamma+\delta=k$.}
    \label{fig:long-degens-DFA-3p2p2p4}
\end{figure}

\begin{figure}[htbp]
    \centering
    \includegraphics[width=0.95\textwidth]{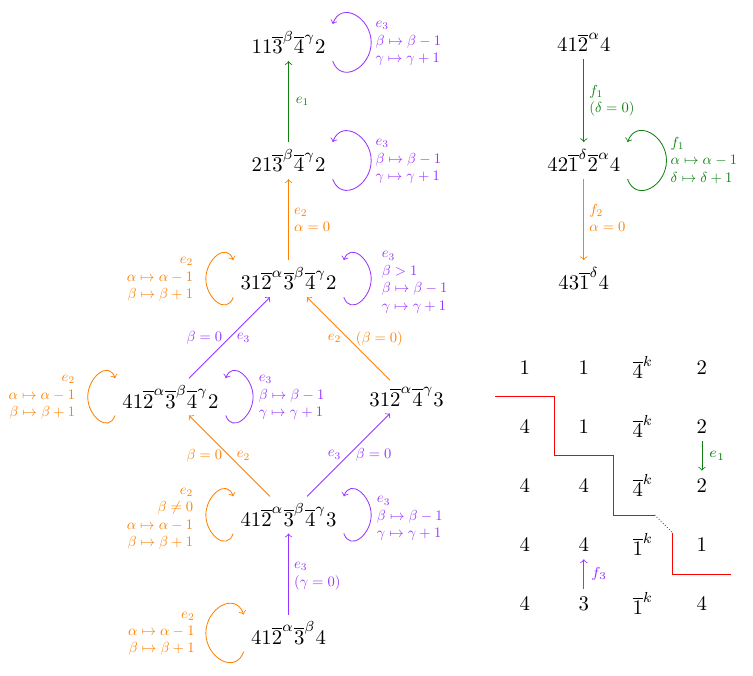}
    \caption{Finite state machines for $41\overline{2}^k4$.}
    \label{fig:long-degens-DFA-412p4}
\end{figure}

\begin{figure}[htbp]
    \centering
    \includegraphics[width=0.95\textwidth]{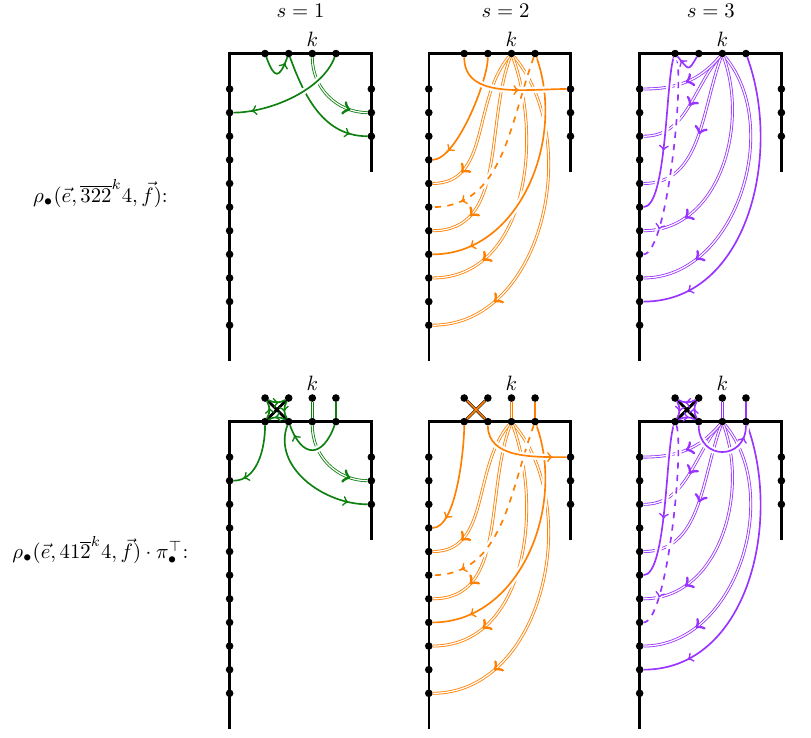}
    \caption{Crystal appliances for $\overline{3}\overline{2}\overline{2}^k4$ and $41\overline{2}^k4$. The plumbing in this case is $\pi_\bullet = \mathsf{X}^{--}_{++}  \times \id^{k+1}$.}
    \label{fig:long-degens-3p2p2p4-412p4-appliances}
\end{figure} 

\begin{figure}[htbp]
    \centering
    \includegraphics[width=0.95\linewidth]{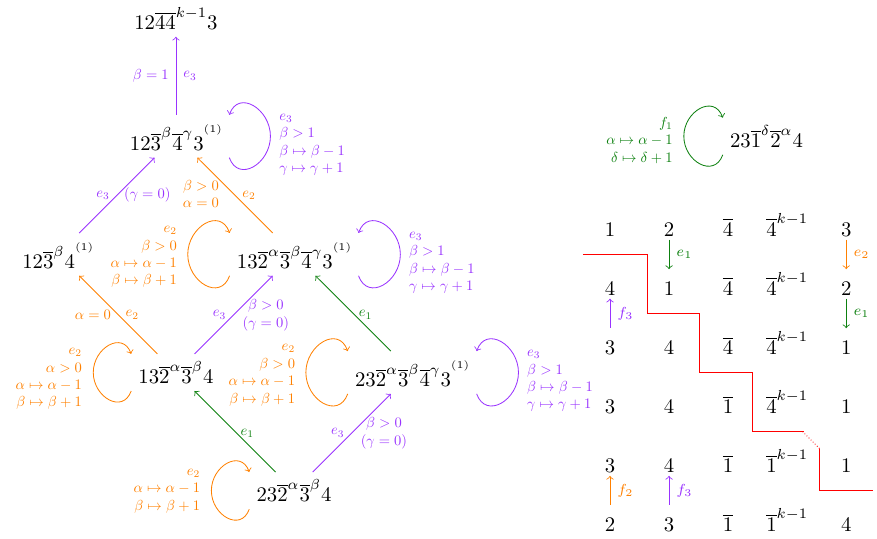}
    \caption{Finite state machines for $23\overline{2}^k 4$, where we assume $\alpha + \beta + \gamma + \delta = k > 0$. Here the superscript \raisebox{1ex}{$\scriptscriptstyle{(1)}$} on a state means that $\beta > 0$.}
    \label{fig:another_machine}
\end{figure}

\begin{figure}[htbp]
    \centering
    \includegraphics[width=0.95\linewidth]{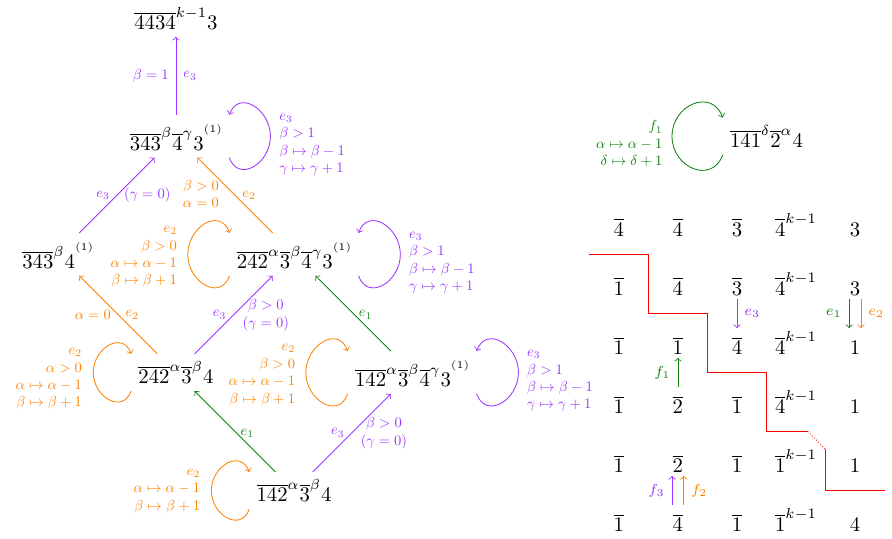}
    \caption{Finite state machines for $\overline{1} \overline{4} \overline{2}^k  4$, where we assume $\alpha + \beta + \gamma + \delta = k > 0$. Here the superscript \raisebox{1ex}{$\scriptscriptstyle{(1)}$} on a state means that $\beta>0$.}
    \label{fig:another_another_machine}
\end{figure}

\begin{theorem}\label{thm:R8}
  The growth rules in \Cref{fig:growth-rules} are all good degenerations.
\end{theorem}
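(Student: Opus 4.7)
The plan is to assemble \Cref{thm:R8} directly from the preceding propositions and lemmas, partitioning the growth rules of \Cref{fig:growth-rules} into two groups according to how they were boxed. The $88$ short rules of length $2$ or $3$ (the green boxes, organized into $10$ families under the $4$-fold symmetry generated by $\tau, \varpi, \varepsilon$) are handled immediately by \Cref{prop:short_good_degen}, whose verification in turn was a finite check via the crystal-theoretic criterion of \Cref{thm:good_degen}. So only the two families of long rules (the red boxes) remain.

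For each long family, the strategy is to first verify a canonical representative from the orbit under $\tau, \varpi, \varepsilon$, and then invoke \Cref{lem:degen-symmetries} to extend to the full family. The canonical representatives have already been done in \Cref{lem:long_good_degens}: specifically, parts (i), (ii), and (iv) together cover the family containing $14\overline{2}^k 4 \to 41\overline{2}^k 4$ (with its partner $41\overline{2}^k 4 \to \overline{3}\overline{2}\overline{2}^k 4$), and part (iii) covers the family containing $23\overline{2}^k 4 \to \overline{1}\overline{4}\overline{2}^k 4$, along with any other long rules appearing in these orbits. Applying the involutions $\iota \in \{\tau, \varpi, \varepsilon\}$ (and their composites) to each of these representatives—and to their realized plumbings $\sfX^{\pm\pm}_{\pm\pm} \times \id^{k+1}$ and $\zeta^{(k)}$—produces all remaining long rules shown in \Cref{fig:growth-rules}, each as a good degeneration by \Cref{lem:degen-symmetries}.

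The hard work is all in \Cref{lem:long_good_degens}, where the crystal structure of the long words is controlled uniformly in the witness parameter $k$ by the finite state machines in \Cref{fig:long-degens-DFA-3p2p2p4,fig:long-degens-DFA-412p4,fig:another_machine,fig:another_another_machine}; these then yield the appliance comparisons in \Cref{fig:long-degens-3p2p2p4-412p4-appliances} that verify hypothesis \eqref{eq:strong-equivalence} of \Cref{thm:good_degen}. The main obstacle that could potentially arise at the level of \Cref{thm:R8} itself is merely bookkeeping: one must confirm that every long rule actually drawn in \Cref{fig:growth-rules} lies in the $\langle \tau, \varpi, \varepsilon \rangle$-orbit of one of the four representatives treated in \Cref{lem:long_good_degens}. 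This amounts to matching the boundary labels and the plumbing type of each drawn rule against the orbit under the symmetries, which is a direct inspection.

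Thus the proof is short: quote \Cref{prop:short_good_degen} for the short rules; quote \Cref{lem:long_good_degens} for the canonical long representatives; close the two long-rule orbits using \Cref{lem:degen-symmetries}; and observe that these cases exhaust the diagrams of \Cref{fig:growth-rules}.
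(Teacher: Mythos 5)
Your handling of the $88$ short rules is exactly the paper's: quote \Cref{prop:short_good_degen}. The gap is in the long rules. You assert that the four cases of \Cref{lem:long_good_degens} are ``canonical representatives'' whose $\langle\tau,\varpi,\varepsilon\rangle$-orbits exhaust all long rules in \Cref{fig:growth-rules}. That is false. The long rules have the form $14wx \to 41wx$ (and $23wx \to \overline{1}\overline{4}wx$, plus their symmetry images) where the witness string $w$ is an \emph{arbitrary} word in $\langle\overline{2},\overline{3}\rangle$ and the final witness $x$ ranges over $\{2,3,4,\overline{1}\}$. \Cref{lem:long_good_degens} only treats the very special witness strings $\overline{2}^k$ with final witness $4$. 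Applying $\tau$, $\varpi$, $\varepsilon$ to those cases changes the entire rule (colors, order, and letter values simultaneously); it does not generate, say, $14\,\overline{3}\,\overline{2}\,\overline{3}\,2 \to 41\,\overline{3}\,\overline{2}\,\overline{3}\,2$ or any rule with a mixed $w$ or with $x\in\{2,3,\overline{1}\}$. So your case analysis does not cover most of the long rules, and the ``direct inspection'' you defer to at the end cannot succeed.

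What is actually needed --- and what the paper does --- is an induction on the length of $w$, with the length-$3$ short rules as base cases. One peels off the last letter of $w$ together with $x$ using the involutive good degenerations of \Cref{lem:pair_degens}(i) (e.g.\ $\overline{2}2\leftrightarrow 3\overline{3}$, $\overline{3}2\leftrightarrow 2\overline{3}$, $\overline{3}\overline{2}\leftrightarrow 14$) to rewrite the tail, applies the inductive hypothesis to the resulting shorter rule, and then rewrites back; the two auxiliary $\sfH$-plumbings cancel, so the net plumbing is the one claimed for the long rule. The only case this reduction cannot reach is $x=4$ with $w=\cdots\overline{3}\,\overline{2}\,\overline{2}^k$, which is where \Cref{lem:long_good_degens}(i), (ii), (iv) finally enter: one substitutes $\overline{3}\overline{2}\overline{2}^k4 \to 41\overline{2}^k4$ and back, using the final assertion of that lemma that the composite $\zeta^{(k)}\circ(\sfX^{--}_{++}\times\id^{k+1})$ acts as the identity on the relevant promotion appliances. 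Without this induction your argument proves the theorem only for the measure-zero subfamily of long rules with $w=\overline{2}^k$ and $x=4$ (and their symmetry images).
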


\begin{proof}
 The short rules were handled in \Cref{prop:short_good_degen}. It remains to consider the two families of long rules in \Cref{fig:growth-rules}. For these, we induct on length; the length-$3$ base cases are contained among the short rules. We focus on $14wx$ where $w \in  \{\overline{3}, \overline{2}\}^*$ and $x \in \{2, 3, 4, \overline{1}\}$; the argument for $23wx$ is identical. First suppose $x=2$.
  \begin{itemize}
    \item If $w=\cdots\overline{2}$, apply $14\cdots\overline{2}2 \to 14\cdots3\overline{3} \to 41\cdots3\overline{3} \to 41\cdots\overline{2}2$. The first and third steps use the plumbings in \Cref{lem:pair_degens}(i) and the second step uses induction. The first and third plumbings cancel each other.
    \item If $w=\cdots\overline{3}$, apply $14\cdots\overline{3}2 \to 14\cdots2\overline{3} \to 41\cdots2\overline{3} \to 41\cdots\overline{3}2$ likewise.
  \end{itemize}
  The $x=3$ and $x=\overline{1}$ cases are extremely similar and are omitted. 
  
  Now suppose $x=4$. If $w=\cdots\overline{3}$, the above argument again applies, so take $w=\cdots\overline{2}$. If $w$ contains no $\overline{3}$, the result is \Cref{lem:long_good_degens}(ii). Otherwise, $w = \cdots\overline{3}\overline{2}\overline{2}^k$ for some $k \in \mathbb{Z}_{\geq 0}$. Now apply the substitutions $14\cdots\overline{3}\overline{2}\overline{2}^k4 \to 14\cdots41\overline{2}^k4 \to 41\cdots41\overline{2}^k4 \to 41\cdots\overline{3}\overline{2}\overline{2}^k4$, where the first and third steps use \Cref{lem:long_good_degens} and the second step uses induction. While the plumbings from the two applications of \Cref{lem:long_good_degens} do not cancel, nonetheless their composite acts as the identity on the promotion appliances of $\overline{3}\overline{2}\overline{2}^k4$. Hence we may remove their composite from the structure entirely, completing the induction and proof.
\end{proof}

\subsection{Completeness of the growth rules}\label{sec:completeness}

We now show that any balanced oscillating lattice word $w$ with $r=4$ has at least one applicable growth rule. We will show in \Cref{cor:growth_no_cycles} that the growth algorithm terminates.

Recall the notation $\widetilde{i}$ from \Cref{def:grevlex}, which represents either $i$ or $\overline{r-i+1}$. Note that the involutions $\tau$ and $\varepsilon$ act on $\widetilde{w}$ by reversing and performing substitutions $\tilde{1} \leftrightarrow \tilde{4}$, $\tilde{2} \leftrightarrow \tilde{3}$. The composite $\varpi$ fixes $\widetilde{w}$. 

We write $\langle a_1,\ldots,a_k \rangle$ to denote the set of finite words in the $a_i$. Given a set $X$ of words and a letter $a$, we write $Xa$ (resp.\ $aX$) for the set of words $\{xa : x \in X\}$ (resp.\ $\{ax : x \in X\}$).

\begin{lemma}\label{lem:short_patterns}
  At least one of the short growth rules in \Cref{fig:growth-rules} applies whenever any of the following substrings appears in $\widetilde{w}$:
    \[ \tilde{1}\tilde{1}\tilde{4}, \tilde{1}\tilde{2}\tilde{3}, \tilde{1}\tilde{2}\tilde{4}, \tilde{1}\tilde{3}\tilde{2}, \tilde{1}\tilde{3}\tilde{3}, \tilde{1}\tilde{3}\tilde{4}, \tilde{1}\tilde{4}\tilde{4}, \tilde{2}\tilde{2}\tilde{4}, \tilde{2}\tilde{3}\tilde{4}, \tilde{3}\tilde{2}\tilde{4}.  \]
\end{lemma}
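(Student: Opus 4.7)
The plan is a finite case analysis. Each pattern $\tilde{i}\tilde{j}\tilde{k}$ in the list represents one of $2^3=8$ concrete lattice-word substrings, since each $\tilde{m}$ can be realized as either $m$ or $\overline{5-m}$. So there are $80$ substrings to consider in total, and the approach will be to exhibit, for each such substring, a specific short growth rule from \Cref{fig:growth-rules} whose domain is a prefix (or full match) of the substring, with any remaining letters serving as witnesses.

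First I would reduce the workload by symmetry. By \Cref{lem:degen-symmetries}, the collection of short growth rules is closed under the color-reversing involution $\varpi$, which swaps $m \leftrightarrow \overline{5-m}$. Because $\varpi$ fixes every pattern $\tilde{i}\tilde{j}\tilde{k}$ while pairing up the eight concrete substrings of each pattern into four orbits, it is enough to verify four substrings per pattern, i.e.\ $40$ cases. (Combining with $\tau$ or $\varepsilon$ can further consolidate certain patterns---for example $\tilde{1}\tilde{3}\tilde{2}$ and $\tilde{3}\tilde{2}\tilde{4}$ are exchanged by these symmetries---but this is inessential for the argument.)

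Next I would organize the casework pattern by pattern. For each pattern I list the four $\varpi$-orbit representatives and, for each, identify a short rule from the figure. In practice, the rule's domain is usually the first one or two letters of the substring, with the remaining letter(s) present as the required witness. For instance, for $\tilde{1}\tilde{4}\tilde{4}$ the substring $144$ matches $144 \to 414$ (the $k=0$ case of the short growth rule in the $14\overline{2}^k4 \to 41\overline{2}^k4$ family), and the three other $\varpi$-orbit representatives are handled similarly via sign-substituted versions. For $\tilde{1}\tilde{2}\tilde{3}$, the substring $123$ witnesses a $12 \to 21$-type rule; the substrings $12\overline{2}$, $1\overline{3}3$, and $\overline{4}23$ are each handled by some short rule whose domain is matched by the first two or three letters. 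The remaining eight patterns are treated by the same template, always reducing to a table lookup against \Cref{fig:growth-rules}.

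The main obstacle is purely bookkeeping: the task is to certify exhaustively that every one of the $40$ orbit representatives (equivalently, every one of the $80$ substrings) is in fact the domain of, or carries a usable witness for, some short rule in the figure. There is no conceptual difficulty---no case requires invoking the long rules and no case requires $\SL_r$-Knuth equivalences beyond those already encoded in the figure. The verification can be tabulated once and checked mechanically; a computer implementation (as referenced in \cite{sl4-web-basis-code}) provides an independent certification.
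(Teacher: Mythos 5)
Your proposal is correct and matches the paper's approach: the paper's proof of this lemma is simply a direct inspection of the short rules in the figure (certified by the computer implementation in \cite{sl4-web-basis-code}), which is exactly the exhaustive case check you describe, with your $\varpi$-symmetry reduction being a harmless organizational refinement. (Minor quibble: rule domains have length $2$ or $3$, never $1$, so "the first one or two letters" should read "the first two or three letters".)
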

\begin{proof}
    This follows from inspecting \Cref{fig:growth-rules}. A computer implementation is available in \cite[\S6]{sl4-web-basis-code}.
\end{proof}

\begin{lemma}\label{lem:long_patterns}
  At least one of the growth rules in \Cref{fig:growth-rules} applies whenever any of the following substrings appears in $\widetilde{w}$:
    \[ \tilde{1}\tilde{4}x\tilde{4}, \quad \tilde{1}x\tilde{1}\tilde{4}, \quad \tilde{2}\tilde{3}x\tilde{4}, \quad \tilde{1}x\tilde{2}\tilde{3}, \]
  where $x\in \langle \tilde{2},\tilde{3}\rangle$. 
\end{lemma}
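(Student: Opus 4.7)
The plan is a case analysis on how each $\tilde{i}$ in the substring specializes to an actual letter of $\pm[4]$, leveraging the $4$-fold symmetry of growth rules from \Cref{lem:degen-symmetries}. In $\tilde{\bullet}$-form, $\varpi$ acts trivially (since negating a letter preserves its $\tilde{\bullet}$-symbol), while $\tau = \varepsilon$ reverses words and swaps $\tilde{i}\leftrightarrow\widetilde{5-i}$. Applying $\tau$ to the pattern $\tilde{1}\tilde{4}x\tilde{4}$ produces $\tilde{1}y\tilde{1}\tilde{4}$ with $y\in\langle\tilde{2},\tilde{3}\rangle$ the image of $x^{\mathrm{rev}}$ under $\tilde{2}\leftrightarrow\tilde{3}$; similarly $\tau$ carries $\tilde{2}\tilde{3}x\tilde{4}$ to $\tilde{1}y\tilde{2}\tilde{3}$. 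Hence it suffices to treat patterns $1$ and $3$.

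For pattern $1$, $\tilde{1}\tilde{4}x\tilde{4}$, I first split on the specialization of $\tilde{1}\tilde{4}$, which is one of $1\overline{1}$, $\overline{4}4$, $14$, $\overline{4}\overline{1}$. The first two immediately yield the cap growth rules $1\overline{1}\to\varnothing$ and $\overline{4}4\to\varnothing$ from \Cref{fig:growth-rules}, and the fourth case is the $\varpi$-image of the third. So assume the actual word begins $14$. If every letter of $x$ specializes to $\overline{2}$ (i.e., $x = \overline{2}^k$ in actual letters) and $\tilde{4}\in\{4,\overline{1}\}$, the actual substring $14\overline{2}^k\tilde{4}$ matches the long growth rule family $14\overline{2}^k y\to 41\overline{2}^k y$ with $y\in\{4,\overline{1}\}\subseteq\{2,3,4,\overline{1}\}$ from \Cref{lem:long_good_degens}(ii) together with the extensions established in the proof of \Cref{thm:R8}. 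Otherwise, some letter of $x$ specializes to $2$, $3$, or $\overline{3}$, and in each such case a length-$2$ adjacent pair in the actual substring matches a short swap growth rule from \Cref{lem:pair_degens}: for example, an $\overline{3}$ adjacent to $4$ or $\overline{1}$ gives $\overline{3}4$, $4\overline{3}$, $\overline{3}\overline{1}$, or $\overline{1}\overline{3}$; an $\overline{3}$ or $\overline{2}$ adjacent to an unbarred $2$ or $3$ gives pairs like $\overline{3}2$, $\overline{2}3$, $\overline{2}2$, $\overline{3}3$, $2\overline{3}$, $3\overline{2}$, etc. All such pairs appear among the short rules by \Cref{lem:pair_degens}.

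Pattern $3$, $\tilde{2}\tilde{3}x\tilde{4}$, is handled analogously. Now $\tilde{2}\tilde{3}$ specializes to one of $23$, $2\overline{2}$, $\overline{3}3$, $\overline{3}\overline{2}$. The latter three are themselves short growth rules from \Cref{lem:pair_degens} (namely $2\overline{2}\to\overline{1}1$, $\overline{3}3\to 4\overline{4}$, and $\overline{3}\overline{2}\to 14$), leaving the case $\tilde{2}\tilde{3}=23$. This parallels the $14$ case exactly, now invoking the long growth rule family $23\overline{2}^k y\to \overline{1}\overline{4}\overline{2}^k y$ from \Cref{lem:long_good_degens}(iii) when $x=\overline{2}^k$ and $\tilde{4}\in\{4,\overline{1}\}$, and matching a short swap pair otherwise. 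The main obstacle is the subcase where $x$ is not of the form $\overline{2}^k$: one must confirm that every adjacency arising in the actual substring literally corresponds to a short rule in \Cref{fig:growth-rules} rather than merely an unlisted good degeneration (cf.\ \Cref{rem:complete-redundant}). This reduces to a finite verification, made tractable by grouping cases into $\tau$-$\varpi$-$\varepsilon$ orbits and noting that the only letters that arise in the transitions lie in $\{\pm 1, \pm 2, \pm 3, \pm 4\}$ subject to significant $\tilde{\bullet}$-type constraints.
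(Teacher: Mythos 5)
Your symmetry reduction (patterns $2$ and $4$ are the $\tau$-images of patterns $1$ and $3$) and your case split on the specialization of the prefix ($1\overline{1}$, $\overline{4}4$ give end caps; $\overline{4}\overline{1}$ is the $\varpi$-image of $14$; similarly for $\tilde{2}\tilde{3}$) match the paper. The gap is in your ``Otherwise'' branch. You treat the substitutions of \Cref{lem:pair_degens} as short growth rules, but that lemma only asserts they are \emph{good degenerations}; most of them are deliberately excluded from \Cref{fig:growth-rules}. The paper warns explicitly (see the discussion around \Cref{fig:bad-good-degens}) that the $\sfH$-plumbing swaps such as $\overline{3}2 \leftrightarrow 2\overline{3}$ would create infinite loops and non-reduced output if admitted as growth rules, and several of your cited pairs fail \Cref{lem:growth_lex}(i) outright: for $\overline{3}\overline{2} \to 14$ one has $\tilde{a}=\tilde{2} > \tilde{1}=\tilde{c}$, so it cannot be the $\sfX$ of any growth rule. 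A concrete failure of your argument is the substring $14\overline{3}\,\overline{1}$ (pattern $\tilde{1}\tilde{4}\tilde{2}\tilde{4}$ with $x=\overline{3}$): none of the adjacent pairs $14$, $4\overline{3}$, $\overline{3}\,\overline{1}$ is guaranteed to be a short rule ($14\to41$ unwitnessed is explicitly not a rule, and $\tilde{4}\tilde{2}\tilde{4}$ is absent from \Cref{lem:short_patterns}), yet your first branch does not fire since $x\neq\overline{2}^k$.

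The repair is exactly what the paper does: the long growth rule families are not restricted to $\overline{2}^k$ but have the form $14\,w\,y \to 41\,w\,y$ (and $23\,w\,y$, $\overline{3}\overline{2}\,w\,y$, plus symmetric images) for an \emph{arbitrary} word $w \in \langle\overline{2},\overline{3}\rangle$ and witness $y \in \{2,3,4,\overline{1}\}$ --- this generality is precisely what the induction in \Cref{thm:R8} establishes. Given $14x\tilde{4}$ with $x$ specializing into $\{2,3,\overline{2},\overline{3}\}$, take $y$ to be the leftmost unbarred letter of $x$ if one exists (the prefix of $x$ before it lies in $\langle\overline{2},\overline{3}\rangle$), and otherwise take $y$ to be the terminal $\tilde{4}\in\{4,\overline{1}\}$; in either case a single long rule applies and no short pair rules are needed inside $x$. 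The same choice of witness handles the $23$ and $\overline{3}\overline{2}$ prefixes in pattern $3$.
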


\begin{proof}
  The first two substrings and last two substrings are equivalent under $\tau$. For $\tilde{1}\tilde{4}x\tilde{4}$, apply $\varpi$ to consider $1\tilde{4}x\tilde{4}$. A growth rule applies to $1\overline{1}$, so we may assume we have $14x\tilde{4}$. If $x$ contains a $2$ or $3$, a long growth rule applies to the prefix ending at the leftmost $2$ or $3$ in $x$ with that $2$ or $3$ as the witness. Otherwise, a long growth rule still applies with the final $\tilde{4}$ as witness. The argument for $\tilde{2}\tilde{3}x\tilde{4}$ is essentially identical.
\end{proof}

\begin{lemma}\label{lem:some-growth-rule-applies}
  At least one of the growth rules in \Cref{fig:growth-rules} applies to any non-empty balanced lattice word $w$.
\end{lemma}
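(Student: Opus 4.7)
The plan is to use the boundary constraints of balanced lattice words together with the combinatorial patterns of Lemmas~\ref{lem:short_patterns} and~\ref{lem:long_patterns}, with one direct appeal to the lattice condition to handle a length-$2$ exception. We may assume $w$ is oscillating (else argue for $\osc(w)$). The lattice condition forces $w_1 = 1$, so $\widetilde{w}_1 = \widetilde{1}$; the rectangular/balanced condition forces $w_n \in \{4,\overline{1}\}$, so $\widetilde{w}_n = \widetilde{4}$. Let $p$ be the smallest index with $\widetilde{w}_p = \widetilde{4}$, so $p \geq 2$.

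First I would handle the case $p = 2$ directly using the lattice condition. After $w_1 = 1$ the partial shape is $(1,0,0,0)$, and $w_2 = 4$ would give $(1,0,0,1)$, violating $\lambda_3 \geq \lambda_4$. Hence $w_2 = \overline{1}$, and the length-$2$ growth rule $1\overline{1} \to \varnothing$ applies to the prefix. This is the one case genuinely outside the scope of Lemmas~\ref{lem:short_patterns} and~\ref{lem:long_patterns}, since those lemmas only list patterns of length $\geq 3$.

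For $p \geq 3$, I would examine the triple $T = \widetilde{w}_{p-2}\widetilde{w}_{p-1}\widetilde{w}_p$, whose first two letters lie in $\{\widetilde{1},\widetilde{2},\widetilde{3}\}$ by choice of $p$. A direct inspection of Lemma~\ref{lem:short_patterns} shows that every such triple ending in $\widetilde{4}$ is one of the listed short patterns except for the three exceptions $\widetilde{2}\widetilde{1}\widetilde{4}$, $\widetilde{3}\widetilde{1}\widetilde{4}$, $\widetilde{3}\widetilde{3}\widetilde{4}$. If $T$ is a listed short pattern, Lemma~\ref{lem:short_patterns} applies and we are done. Otherwise I would extend the triple leftward to exhibit a long pattern from Lemma~\ref{lem:long_patterns}.

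Specifically, in the residual case $T \in \{\widetilde{2}\widetilde{1}\widetilde{4}, \widetilde{3}\widetilde{1}\widetilde{4}\}$, I would take $j < p-1$ maximal with $\widetilde{w}_j = \widetilde{1}$ (which exists because $\widetilde{w}_1 = \widetilde{1}$ and satisfies $j \leq p-3$ because $\widetilde{w}_{p-2} \neq \widetilde{1}$). By maximality of $j$ and choice of $p$, positions $j+1,\ldots,p-2$ carry letters in $\{\widetilde{2},\widetilde{3}\}$, so $\widetilde{w}_j\cdots\widetilde{w}_p$ matches the long pattern $\widetilde{1}x\widetilde{1}\widetilde{4}$. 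In the residual case $T = \widetilde{3}\widetilde{3}\widetilde{4}$, I would let $k$ be the start of the maximal run of $\widetilde{3}$'s ending at $p-1$ (so $2 \leq k \leq p-2$, using $\widetilde{w}_1 = \widetilde{1}$). Then $\widetilde{w}_{k-1} \in \{\widetilde{1},\widetilde{2}\}$; if $\widetilde{w}_{k-1} = \widetilde{1}$ then $\widetilde{1}\widetilde{3}\widetilde{3}$ is a short pattern from Lemma~\ref{lem:short_patterns}, and if $\widetilde{w}_{k-1} = \widetilde{2}$ then $\widetilde{w}_{k-1}\cdots\widetilde{w}_p = \widetilde{2}\widetilde{3}^{p-k}\widetilde{4}$ matches the long pattern $\widetilde{2}\widetilde{3}x\widetilde{4}$. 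The only genuine subtlety is recognizing the $p = 2$ case needs the lattice condition rather than pattern-matching; every other case reduces routinely to Lemmas~\ref{lem:short_patterns} and~\ref{lem:long_patterns} via this backward-tracing strategy.
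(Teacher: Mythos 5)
Your argument is essentially a mirror image of the paper's: the paper anchors its case analysis at the rightmost $\tilde{1}$ and looks rightward, while you anchor at the leftmost $\tilde{4}$ and look leftward; both reduce to \Cref{lem:short_patterns} and \Cref{lem:long_patterns}. Your case analysis for $p\geq 3$ checks out: the nine triples $\tilde{a}\tilde{b}\tilde{4}$ with $\tilde a,\tilde b\in\{\tilde 1,\tilde 2,\tilde 3\}$ do reduce to the listed short patterns except for exactly $\tilde{2}\tilde{1}\tilde{4}$, $\tilde{3}\tilde{1}\tilde{4}$, $\tilde{3}\tilde{3}\tilde{4}$, and your leftward extensions to the long patterns $\tilde{1}x\tilde{1}\tilde{4}$ and $\tilde{2}\tilde{3}x\tilde{4}$ (or to the short pattern $\tilde 1\tilde 3\tilde 3$) are valid, using only $\widetilde{w}_1=\tilde 1$, the maximality of $j$ (resp.\ of the run of $\tilde 3$'s), and the minimality of $p$.

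The one genuine error is the claim that the lattice condition forces $w_1=1$. It forces only $\widetilde{w}_1=\tilde{1}$, i.e., $w_1\in\{1,\overline{4}\}$: the prefix shape $(0,0,0,-1)$ is a perfectly good generalized partition. This is harmless for $p\geq 3$, where you only use $\widetilde{w}_1=\tilde 1$, but it leaves a hole in your $p=2$ base case. For a word beginning $\overline{4}4\cdots$ (e.g., the balanced lattice word $\overline{4}4$ itself), your argument asserts $w_2=\overline{1}$ and invokes $1\overline{1}\to\varnothing$, whereas in fact $w_1w_2=\overline{4}4$ (the combination $\overline{4}\,\overline{1}$ is excluded since $(-1,0,0,-1)$ is not weakly decreasing) and one must instead invoke the symmetric length-$2$ rule $\overline{4}4\to\varnothing$, which is among the growth rules. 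With that one-line repair the proof is complete.
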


\begin{proof}
  Consider $\widetilde{w}$, which must begin with $\tilde{1}$ and end with $\tilde{4}$. Consider the rightmost $\tilde{1}$. It is followed by a letter other than $\tilde{1}$, so we consider the following three cases.
  \begin{itemize}
    \item $\cdots\tilde{1}\tilde{2}\cdots$: this must be $\cdots\tilde{1}\tilde{2}\tilde{2}^k\tilde{3}\cdots$ or $\cdots\tilde{1}\tilde{2}\tilde{2}^k\tilde{4}\cdots$. In the first case, apply \Cref{lem:long_patterns}. In the second case, we have $\cdots\tilde{1}\tilde{2}\tilde{4}\cdots$ or $\cdots\tilde{2}\tilde{2}\tilde{4}$, so apply \Cref{lem:short_patterns}.
    \item $\cdots\tilde{1}\tilde{3}\cdots$: this must be $\cdots\tilde{1}\tilde{3}\tilde{2}\cdots$, $\cdots\tilde{1}\tilde{3}\tilde{3}\cdots$, or $\tilde{1}\tilde{3}\tilde{4}\cdots$, so apply \Cref{lem:short_patterns}.
    \item $\cdots\tilde{1}\tilde{4}\cdots$: since $w$ is a balanced lattice word, it cannot end in $14$ or $\overline{4}\overline{1}$. If it ends in $1\overline{1}$ or $\overline{4}4$, a growth rule applies. Hence we may suppose $\tilde{w}$ does not end in $\tilde{1}\tilde{4}$, so we have $\cdots\tilde{1}\tilde{4}v\tilde{4}\cdots$ where $v\in \langle \tilde{2},\tilde{3}\rangle$, so apply \Cref{lem:long_patterns}. \qedhere
  \end{itemize}
\end{proof}

\subsection{Linearized diagrams, nice labelings, and unitriangularity}\label{sec:nicelabelings}

Recall the $\tlex$-order from \Cref{def:grevlex}. Our next goal is to show that the growth labeling $\Gamma(G)$ yields the unique grevlex-maximal monomial in $[G]_q$, or equivalently that the growth labeling is the unique proper labeling with $\tlex$-minimal boundary word.

We outline the approach now. First, we show that the growth rules in \Cref{fig:growth-rules} inductively preserve the unique $\tlex$-minimal labeling property. This requires using the labels around each $\mathsf{X}$  in its orientation coming from the growth labeling. The argument allows slightly more flexibility in which $\mathsf{X}$ labelings actually appear. As we have noted, not all fully reduced hourglass plabic graphs are obtained as the output of the growth rules. We show that one may perform a sort of \emph{surgery} to simulate benzene moves while still using allowed labelings around $\mathsf{X}$'s. Since square moves do not change $[G]_q$, this is sufficient to prove the unique $\tlex$-minimal labeling property for all fully reduced hourglass plabic graphs.

Precise details follow. We begin by considering diagrams similar to those appearing at the top of \Cref{fig:growth-example}.

\begin{definition}
  A \textit{linearized diagram} is a \symm six-vertex configuration obtained by beginning with a row of directed dangling strands and at each step either (i) combining an adjacent pair of strands in an $\sfX$, or (ii) combining an adjacent pair in a $\cup$, with all other strands extended directly down in either case, until there are no more dangling strands.

  A \textit{signed} proper labeling $\phi$ of a linearized diagram is a labeling of the edges by $\pm [4]$ whose absolute value is a proper labeling (see \Cref{def:ssv-proper-labeling}) and where the sign is positive for downward-pointing arrows and negative for upward-pointing arrows. The boundary $\partial(\phi)$ of $\phi$ is the word of oscillating type obtained by reading off the topmost labels from left to right.
  
  A \textit{nice labeling} of a linearized diagram is a signed proper labeling where the labels on the $\sfX$'s and $\cup$'s are those that appear in the $\sfX$'s and $\cup$'s of the growth rules in \Cref{fig:growth-rules}, plus those appearing in \Cref{fig:extra-nice}.
\end{definition}

\begin{figure}[tbh]
    \centering
    \includegraphics[scale=1.2]{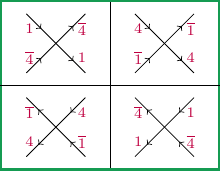}
    \caption{The additional directed $\sfX$ label configurations allowed in nice labelings.}
    \label{fig:extra-nice}
\end{figure}

\begin{remark}\label{rem:growth_linearized}
  The growth algorithm produces a linearized diagram $G$. The growth labeling $\Gamma(G)$ is a nice labeling without the configurations appearing in \Cref{fig:extra-nice} and with appropriate witnesses. In general, linearized diagrams with nice labelings need not have these features. The additional configurations in \Cref{fig:extra-nice} do not correspond to good degenerations, though we will find that they arise when applying Yang--Baxter moves to such $G$. Despite ignoring witnesses, linearized diagrams with nice labelings track more information than circular diagrams (such as in the lower-left of \Cref{fig:growth-example}), since they give a consistent set of orientations for each $\sfX$ and $\cup$. 
\end{remark}

The collection of possible nice labelings of $\sfX$'s possesses the following properties.

\begin{lemma}\label{lem:growth_lex}
  Suppose $ab \to cd$ is the portion of a good degeneration from the growth rules in \Cref{fig:growth-rules} corresponding to the $\sfX$, or similarly one of the additional labelings in \Cref{fig:extra-nice}. Then the following hold:
  \begin{enumerate}[(i)]
    \item Either $\tilde{a}=\tilde{b}=\tilde{c}=\tilde{d}$, or $\tilde{a} < \tilde{c}$ and $cd >_{\tlex} ab$.
    \item Fixing labels $ab$ at the top of the $\sfX$ and assigning labels $cd$ at the bottom is the $\tlex$-maximal way to create a signed proper labeling at these vertices.
    \item Given an $\sfX$ properly labeled by $pq$ at the top and $ts$ at the bottom, if $ts \geq_{\tlex} cd$, then $pq \geq_{\tlex} ab$.
    \item If $u, v$ are words of oscillating type such that $ucdv$ is a lattice word, then $uabv$ is a lattice word with the same weight.
  \end{enumerate}
\end{lemma}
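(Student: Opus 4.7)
The plan is to verify each of (i)--(iv) by direct case analysis on the finite list of $\sfX$ configurations permitted in nice labelings, namely those arising in the growth rules of \Cref{fig:growth-rules} together with the additional configurations in \Cref{fig:extra-nice}. The $4$-fold symmetry established by \Cref{lem:degen-symmetries} under $\tau$, $\varpi$, $\varepsilon$ reduces the bookkeeping by roughly a factor of four, and the two infinite families of long rules have a common $\sfX$ portion independent of the witness count $k$, so only finitely many $\sfX$ configurations must be examined overall.

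For (i), I would tabulate $(\widetilde{a}, \widetilde{b}, \widetilde{c}, \widetilde{d})$ for every allowed $\sfX$ and confirm that either all four values coincide (for instance in a configuration like $1\overline{4} \leftrightarrow \overline{4}1$ where every letter maps to $\widetilde{1}$) or else $\widetilde{a} < \widetilde{c}$; in the latter case $cd >_{\tlex} ab$ follows from \Cref{def:grevlex} since the two length-$2$ words differ already in position $1$. For (ii), the proper labeling constraint at the $\sfX$ vertex, viewed as a source, sink, or transmitting vertex of the symmetrized six-vertex model, forces $\{|c|, |d|\}$ to equal either $\{|a|, |b|\}$ or $[4] \setminus \{|a|, |b|\}$, leaving only two ordered options for $cd$ in each case; a finite enumeration then confirms that the nicely chosen $cd$ is $\tlex$-maximal among these.

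For (iii), I would fix the underlying vertex type and list all signed proper labelings $(pq, ts)$ in $\tlex$-order, checking directly that $ts \geq_{\tlex} cd$ forces $pq \geq_{\tlex} ab$. With only a handful of labelings per vertex type, this is a short check that essentially repackages the data from (i) and (ii) into a monotonicity statement.

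For (iv), weight equality $\mathbf{e}_a + \mathbf{e}_b = \mathbf{e}_c + \mathbf{e}_d$ holds exactly at a transmitting $\sfX$ and holds modulo $\mathbf{e}_1 + \mathbf{e}_2 + \mathbf{e}_3 + \mathbf{e}_4$ at a source or sink; the latter shift preserves the lattice-word condition since it corresponds to adjoining or removing a full column at each intermediate shape. Thus the only genuinely new lattice inequality in passing from $ucdv$ to $uabv$ concerns the single prefix $ua$ versus $uc$, which reduces to a per-configuration check using $\widetilde{a} \leq \widetilde{c}$ from (i) together with the explicit signs and magnitudes of $a$ and $c$. The main obstacle is the number of configurations rather than any conceptual difficulty: dozens of short rules and two long-rule families must each be verified, though every individual check is elementary and the overall verification can be automated in the style of \cite[\S4]{sl4-web-basis-code}.
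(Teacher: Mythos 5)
Your overall strategy---an exhaustive, automatable case check over the finitely many $\sfX$ configurations, using the $\tau,\varpi,\varepsilon$ symmetries to cut the work and noting that the long rules contribute only finitely many distinct $\sfX$'s---is exactly the paper's proof, which likewise says ``all properties may be checked case by case,'' works one example, and points to a computer verification. Parts (i) and (iii) as you describe them are fine, and your reduction of (iv) to the single new prefix $ua$ versus $uc$ (after observing that $\mathbf{e}_a+\mathbf{e}_b$ and $\mathbf{e}_c+\mathbf{e}_d$ agree modulo $\mathbf{e}_1+\mathbf{e}_2+\mathbf{e}_3+\mathbf{e}_4$) is a valid and slightly more explicit route than the paper's, which instead invokes a crystal isomorphism for the mixed-sign rules and a direct check for the rest.

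There is, however, a concrete error in your justification of (ii). You claim the proper-labeling constraint at the $\sfX$ forces $\{|c|,|d|\}$ to equal $\{|a|,|b|\}$ or $[4]\setminus\{|a|,|b|\}$, ``leaving only two ordered options for $cd$.'' This is correct for sources and sinks (where all four labels are distinct) and for transmitting vertices whose two top labels differ in absolute value, but it fails precisely for the transmitting rules with repeated top labels, such as $2\overline{2}\to\overline{1}1$ and $\overline{3}3\to4\overline{4}$ --- which is the very example the paper works out. Expanding such an $\sfX$ into two vertices joined by a $2$-hourglass, the constraint is that the left column $\{|a|,|c|\}$ and the right column $\{|b|,|d|\}$ are equal two-element sets (the complement of the hourglass label), so a top of $2\overline{2}$ forces a bottom of the form $\overline{k}k$ for \emph{any} $k\neq 2$: three options, none of which fit your dichotomy (the actual answer $\{1,1\}$ is neither $\{2,2\}$ nor the three-element set $\{1,3,4\}$). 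An enumeration driven by your stated constraint would therefore miss the candidates it needs to compare for exactly these rules. The fix is local---replace the row-wise dichotomy with the correct column-wise constraint and re-enumerate---after which the rest of your argument goes through as in the paper.
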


\begin{proof}
  All properties may be checked case by case. For example, consider $ab = 2\overline{2} \to \overline{1}1 = cd$. For (i), $\tilde{a} = \tilde{2} < \tilde{4} = \tilde{c}$. For (ii), expanding the $\sfX$ in this case yields two internal vertices connected horizontally by an hourglass. Labeling the top strands with $2$'s forces any proper labeling to label both bottom strands with the same number. The $\tlex$-maximal way of doing so uses $1$'s at the bottom, as in the growth rule. For (iii), we must consider $t=\overline{1}$, $s=1, 2, 3, 4$. Using $s=1$, proper labelings can have $pq=2\overline{2}, 3\overline{3}, 4\overline{4}$. Each of these satisfies $pq \geq_{\tlex} 2\overline{2}$. All other cases are similar.
  
  For (iv), the mixed-sign cases like $2\overline{2} \to \overline{1}1$ in fact involve a pair of vertices that correspond under a crystal isomorphism. For the remaining cases like $12 \to 21$, the direction in (iv) matters. A computer implementation of these calculations is available at \cite[\S8]{sl4-web-basis-code}.
\end{proof}

\begin{remark}
    The complete list of $ab \to cd$ satisfying \Cref{lem:growth_lex} is precisely those appearing in \Cref{fig:growth-rules} together with the following:
    \begin{alignat*}{3}
        &1\overline{1} \to \overline{2}2\qquad\qquad
        && 1\overline{4} \to \overline{4}1
        \quad 4\overline{1} \to \overline{1}4\qquad\qquad
        14 \to \overline{2}\overline{3}
        \quad \overline{4}\overline{1} \to 32.
        \\
        &\overline{4}4 \to 3\overline{3}\qquad\qquad
        && \overline{1}4 \to 4\overline{1}
        \quad \overline{4}1 \to 1\overline{4}
    \end{alignat*}
    See \cite[\S8]{sl4-web-basis-code}. The middle four have already been mentioned in \Cref{fig:extra-nice}. The left two start with $1\overline{1}$ or $\overline{4}4$, which for our purposes always become end caps. The right two are mysterious. By an exhaustive check, when \Cref{lem:growth_lex}(i)--(iii) hold, condition (iv) always holds.
\end{remark}

Equality in \Cref{lem:growth_lex}(i) occurs precisely for the additional labelings in \Cref{fig:extra-nice}, which do not occur in growth labelings. The following important fact is then immediate. 
\begin{corollary}\label{cor:growth_no_cycles}
  If $w \to w'$ is obtained by applying a growth rule, then either $w'$ is shorter than $w$, or $w' >_{\tlex} w$. In particular, the growth algorithm cannot enter an infinite loop.
\end{corollary}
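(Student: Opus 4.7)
The plan is to split on whether the applied growth rule from \Cref{fig:growth-rules} is an \emph{end-cap rule} (containing a $\cup$) or a \emph{crossing rule} (containing an $\mathsf{X}$, with witnesses possibly on either side of it). In the first case, a $\cup$ merges two adjacent dangling strands and removes them, so $|w'| = |w| - 2 < |w|$ and we are done. The real content of the corollary lies in the crossing case, and the main technical point---already supplied by \Cref{lem:growth_lex}(i)---is that the leftmost letter changed by the rule strictly increases under $\widetilde{\,\cdot\,}$. The only work in the proof is bookkeeping: confirming that every index strictly before the $\mathsf{X}$ contributes no difference between $w$ and $w'$, so that the strict increase at the $\mathsf{X}$ becomes visible in the lex comparison of $\widetilde{w}$ and $\widetilde{w'}$.

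For the crossing case, suppose the applied rule has substituted substring starting at position $j+1$ of $w$, and let $p^\ast$ denote the position (within that substring) of the top-left endpoint of the rule's $\mathsf{X}$. Every position outside the substituted substring is copied verbatim from $w$ to $w'$, and within the substring every position other than the two endpoints of the $\mathsf{X}$ is a witness (vertical strand), again carrying identical labels in $w$ and $w'$. Hence the smallest index at which $\widetilde{w}$ and $\widetilde{w'}$ can possibly differ is $j + p^\ast$, where $w$ reads the $\mathsf{X}$'s top-left label $a$ and $w'$ reads its bottom-left label $c$. Because the rule is a genuine growth rule of \Cref{fig:growth-rules}, its $\mathsf{X}$-labeling is \emph{not} one of the additional configurations in \Cref{fig:extra-nice}; as explicitly noted in the paragraph preceding the corollary, equality in \Cref{lem:growth_lex}(i) occurs precisely for those extras, which do not arise in growth labelings. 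The equality branch of \Cref{lem:growth_lex}(i) is therefore excluded, forcing $\widetilde{a} < \widetilde{c}$ strictly. This says $\widetilde{w} <_{\lex} \widetilde{w'}$ at their first differing index, i.e.\ $w' >_{\tlex} w$.

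For the ``In particular'' clause, combine the two cases: along any sequence of growth-rule applications the length $|w|$ is weakly decreasing, and at any fixed length the value $\widetilde{w}$ strictly increases in lex order. Since $|w|$ is bounded below by zero and there are only finitely many words of each length, only finitely many consecutive applications can preserve the length, and only finitely many length-reducing applications are possible. Hence any such sequence terminates, and no infinite loop is possible.
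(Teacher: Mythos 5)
Your proof is correct and follows the paper's intended route: the paper treats the corollary as immediate from \Cref{lem:growth_lex}(i) together with the observation that the equality case occurs only for the configurations of \Cref{fig:extra-nice}, which never appear in growth rules. Your write-up simply makes explicit the bookkeeping (end-cap rules shorten the word; crossing rules leave everything before the left leg of the $\mathsf{X}$ unchanged, so the strict inequality $\widetilde{a}<\widetilde{c}$ decides the lex comparison), and the termination argument by the well-founded order on (length, then $\tlex$) is exactly the induction the paper invokes.
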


\begin{corollary}
    Let $G$ be a linearized diagram with a nice labeling $\psi$. Then $\partial(\psi)$ is a balanced lattice word.
\end{corollary}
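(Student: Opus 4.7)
The plan is to induct on the total number of $\sfX$ and $\cup$ vertices in $G$. The base case is when $G$ has no vertices, in which case $\partial(\psi) = \varnothing$ is trivially a balanced lattice word.

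For the inductive step, let $y$ be the topmost vertex of $G$ and let $G^\circ$ be the linearized diagram obtained by deleting $y$: if $y$ is an $\sfX$, the two strands incident to $y$ from above are replaced in the top row of $G^\circ$ by the two strands that were below $y$ (retaining their labels), while if $y$ is a $\cup$, the two strands above $y$ are removed entirely. The labeling $\psi$ restricts to a nice labeling $\psi^\circ$ on $G^\circ$, and by induction $\partial(\psi^\circ)$ is a balanced lattice word.

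If $y$ is an $\sfX$ with top labels $ab$ and bottom labels $cd$, I would write $\partial(\psi) = u a b v'$ and $\partial(\psi^\circ) = u c d v'$ for some oscillating words $u, v'$. Since the labeling at $y$ matches either a growth rule $\sfX$ from \Cref{fig:growth-rules} or one of the configurations from \Cref{fig:extra-nice}, \Cref{lem:growth_lex}(iv) directly yields that $u a b v'$ is a balanced lattice word with the same weight. If instead $y$ is a $\cup$ with labels $ab$, then $\partial(\psi) = u a b v'$ and $\partial(\psi^\circ) = u v'$, and the task reduces to verifying that inserting $ab$ anywhere in the balanced lattice word $u v'$ again produces a balanced lattice word.

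The main obstacle lies in this $\cup$ case: one must identify the allowed endcap labelings from \Cref{fig:growth-rules}, which are precisely the length-two balanced lattice words $1 \overline{1}$ and $\overline{4} 4$ (the only two-letter words of zero weight whose single intermediate prefix also satisfies the chain of lattice inequalities). Once pinned down, the verification is immediate: inserting $1 \overline{1}$ only bumps the partial weight coordinate $w_1$ up by one before returning it, and inserting $\overline{4} 4$ only bumps $w_4$ down by one before returning it, so neither insertion can violate $w_1 \geq w_2 \geq w_3 \geq w_4$ at any prefix of the resulting word. Since $ab$ contributes zero to the total weight, $\partial(\psi)$ remains balanced, completing the induction; the $\sfX$ case is by contrast a clean application of the preceding lemma.
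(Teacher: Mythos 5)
Your proof is correct and takes essentially the same route as the paper, whose entire proof is ``This follows from \Cref{lem:growth_lex}(iv) and induction.'' You simply spell out the induction (peeling off the topmost vertex) and explicitly handle the end-cap case---correctly identifying the only allowed $\cup$ labelings as $1\overline{1}$ and $\overline{4}4$ and checking the lattice inequalities---which the paper leaves implicit.
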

\begin{proof}
    This follows from \Cref{lem:growth_lex}(iv) and induction.
\end{proof}

Linearized diagrams with nice labelings have the following key property. Note that it applies by \Cref{rem:growth_linearized} to the output $\mathcal{G}(T)$ of the growth algorithm using the growth labeling.

\begin{theorem}\label{thm:nice-unitriangularity}
  Let $G$ be a linearized diagram with a given nice labeling $\psi$ and suppose $\phi$ is a (signed) proper labeling of $G$. Then:
  \begin{enumerate}[(i)]
    \item $\partial(\phi) \geq_{\tlex} \partial(\psi)$, and
    \item if $\partial(\phi) = \partial(\psi)$, then $\phi = \psi$.
  \end{enumerate}
  In particular, linearized diagrams have at most one nice labeling.
\end{theorem}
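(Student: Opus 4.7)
The plan is to induct on the total number of $\sfX$'s and $\cup$'s in $G$, peeling off the topmost such vertex $v$ at top-boundary positions $(i,i+1)$ and letting $G'$ denote $G$ with $v$ removed. The restrictions $\psi', \phi'$ of $\psi, \phi$ to $G'$ are again a nice labeling and a signed proper labeling respectively, so the inductive hypothesis gives $\partial(\phi') \geq_{\tlex} \partial(\psi')$ with equality forcing $\phi' = \psi'$. A key preliminary observation is that the signs of the top-boundary edges are fixed by $G$, so the map $w \mapsto \widetilde{w}$ is injective on boundary words of signed proper labelings of $G$; consequently, $\tlex$-equality of such boundary words is equivalent to strict word-level equality. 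This fact will let me upgrade the $\tlex$-comparisons in the induction to the word-level conclusions required for (ii).

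When $v$ is a $\cup$ with nice labels drawn from the $\cup$-configurations appearing in \Cref{fig:growth-rules}, a direct check shows that these are $\tlex$-minimal among all signed proper labelings of $v$ compatible with its strand orientations. Since $\partial(\phi)$ and $\partial(\psi)$ arise from $\partial(\phi'), \partial(\psi')$ by inserting the $\cup$-labels at positions $i, i+1$, both claims follow immediately from the inductive hypothesis combined with this local minimality. When $v$ is an $\sfX$, write $\psi$'s labels at $v$ as $ab$ (top) / $cd$ (bottom) and $\phi$'s as $pq / ts$, and split into cases based on where the first $\tlex$-difference between $\partial(\phi')$ and $\partial(\psi')$ occurs in $G'$. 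A difference before position $i$ propagates verbatim to $\partial(\phi) >_{\tlex} \partial(\psi)$. A difference at position $i$ or $i+1$ (a bottom edge of $v$) gives $ts >_{\tlex} cd$; then \Cref{lem:growth_lex}(iii) gives $pq \geq_{\tlex} ab$, and if $pq = ab$, \Cref{lem:growth_lex}(ii) (the $\tlex$-maximality of $cd$ given top $ab$) would contradict $ts >_{\tlex} cd$, so in fact $pq >_{\tlex} ab$ strictly. A difference after $i+1$, or no difference at all, yields $ts = cd$ exactly by the sign-injectivity observation; then \Cref{lem:growth_lex}(iii) gives $pq \geq_{\tlex} ab$, and any residual suffix inequality from IH is inherited by the comparison of $\partial(\phi)$ and $\partial(\psi)$, finishing (i). For (ii), equality $\partial(\phi) = \partial(\psi)$ forces $pq = ab$; IH gives $ts \geq_{\tlex} cd$, while \Cref{lem:growth_lex}(ii) forces $ts \leq_{\tlex} cd$, so $ts = cd$ exactly; thus $\partial(\phi') = \partial(\psi')$, and the inductive (ii) gives $\phi' = \psi'$ and hence $\phi = \psi$ globally.

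The main obstacle I anticipate is the careful bookkeeping of how the ``first $\tlex$-difference position'' translates between the comparisons in $G$ and in $G'$, particularly at the interface positions $i, i+1$ where the $\sfX$ replaces top labels by bottom labels. The crucial technical input is that \Cref{lem:growth_lex}(ii)--(iii) together establish a monotone correspondence at each $\sfX$ between top labels and $\tlex$-maximal compatible bottom labels, which propagates upward through the diagram. These statements were verified case by case against the finite list of configurations in \Cref{fig:growth-rules} and \Cref{fig:extra-nice}, so the inductive step reduces to a routine propagation of these local facts.
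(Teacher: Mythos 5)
Your proof is correct and follows essentially the same route as the paper's: induct by peeling the topmost step, apply the inductive hypothesis to $G'$, and use \Cref{lem:growth_lex}(ii)--(iii) at the $\sfX$ to propagate the $\tlex$-comparison and force equality of labels in case (ii). The only differences are cosmetic — you make explicit the sign-injectivity of $w \mapsto \widetilde{w}$ and the $\tlex$-minimality of the end-cap labels, which the paper treats as obvious, and you organize the case split around the first difference of $\partial(\phi')$ versus $\partial(\psi')$ rather than invoking \Cref{lem:growth_lex}(i) to compare against $\partial(\psi)$ directly.
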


\begin{proof}
  We induct on the number of steps in the linearized diagram. The base case is trivial. If an end cap is the topmost step in $G$, the result is obvious by induction. If instead an $\sfX$ is the topmost step in $G$, let $G'$ be the linearized diagram obtained from $G$ by removing this topmost step and let $\psi', \phi'$ be the restrictions of $\psi$ and $\phi$ to $G'$. Since $\psi'$ is a nice labeling, induction gives $\partial(\phi') \geq_{\tlex} \partial(\psi')$. By \Cref{lem:growth_lex}(a), $\partial(\psi') \geq_{\tlex} \partial(\psi)$, hence $\partial(\phi') \geq_{\tlex} \partial(\psi)$. Note that $\partial(\psi), \partial(\psi')$ and $\partial(\phi), \partial(\phi')$ differ only at the positions of the $\sfX$. Suppose that the $\sfX$ in $\psi$ is labeled by $ab$ at the top and $cd$ at the bottom. Further suppose that the $\sfX$ in $\phi$ is labeled by $pq$ at the top and $ts$ at the bottom, so that the corresponding portion of $\partial(\phi')$ is $ts$ and $\partial(\phi)$ is $pq$. If $\partial(\phi')$ does not agree with $\partial(\psi)$ up until the $\sfX$, then $\partial(\phi') \geq_{\tlex} \partial(\psi)$ implies $\partial(\phi) >_{\tlex} \partial(\psi)$ and we are done. So, suppose $\partial(\phi')$ agrees with $\partial(\psi)$ at least until the $\sfX$. Since $\partial(\phi') \geq_{\tlex} \partial(\psi')$, we have $ts \geq_{\tlex} cd$. By \Cref{lem:growth_lex}(iii), we have $pq \geq_{\tlex} ab$. If $pq \neq ab$, then $\partial(\phi) >_{\tlex} \partial(\psi)$, and (i) holds. If $pq = ab$, then by \Cref{lem:growth_lex}(ii), $cd \geq_{\tlex} ts$, so $cd = ts$. Now $\partial(\phi')$ and $\partial(\psi')$ agree through at least the $\sfX$, so $\partial(\phi') \geq_{\tlex} \partial(\psi')$ implies $\partial(\phi) \geq_{\tlex} \partial(\psi)$, giving (i).

  Now suppose $\partial(\phi) = \partial(\psi)$. In the notation above, this gives $ab=pq$ and so, as before, $cd=ts$. Thus $\partial(\phi') = \partial(\psi')$, so $\phi' = \psi'$ by induction, and hence $\phi = \psi$, giving (ii).
\end{proof}

Our next goal is to show that one may apply Yang--Baxter moves to linearized diagrams in such a way that nice labelings are preserved.

\begin{lemma}\label{lem:linearized-benzene}
  The possible oriented triangles in linearized diagrams are those configurations depicted in \Cref{fig:linearized-benzene}.
\end{lemma}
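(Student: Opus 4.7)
My plan is to prove the lemma by exploiting the strict temporal and horizontal ordering built into a linearized diagram. First, I will observe that any triangle must consist of three $\sfX$ vertices, since a $\cup$ cap terminates two strands into a single leaf of the graph and therefore cannot lie on any cycle. Labeling the three vertices $v_1, v_2, v_3$ in the order they are introduced during the construction, I claim the triangle-edge structure is forced: since $v_2$ and $v_3$ are created strictly after $v_1$, any triangle edge incident to $v_1$ must leave from a bottom strand of $v_1$, so both bottom strands of $v_1$ are triangle edges (one going to $v_2$, one to $v_3$; they cannot coincide or we would have a double edge rather than a triangle). Dually, both top strands of $v_3$ are triangle edges, one coming from $v_1$ and one from $v_2$. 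Consequently $v_2$ contributes exactly one top strand (from $v_1$) and one bottom strand (to $v_3$) to the triangle.

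Next I will pin down the left/right placement. At the moment $v_3$ is created it combines two horizontally adjacent dangling strands: one of them is a bottom strand of $v_2$ and the other is the as-yet-unused bottom strand of $v_1$. Tracking horizontal positions from the creation of $v_1$ onward, one of $v_1$'s bottom strands was consumed by $v_2$, so $v_2$'s two bottom strands now occupy that strand's former position. For the remaining $v_1$-strand and a $v_2$-strand to be adjacent when $v_3$ is formed, every dangling strand lying horizontally between them must first have been cleared by intermediate $\sfX$'s or $\cup$'s, which forces a unique nested configuration up to the global left-right mirror symmetry. In that configuration the triangle edges are (say) $v_1$-BL to $v_2$-TL, $v_1$-BR to $v_3$-TR, and $v_2$-BR to $v_3$-TL, with the third triangle edge passing underneath inside the region bounded by the two bottom strands of $v_1$.

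Finally, to enumerate the oriented triangles, I will overlay the symmetrized six-vertex constraint (\Cref{def:symm6v}) at each of $v_1, v_2, v_3$: each vertex is a source, a sink, or a transmitting vertex, and orientations on the triangle edges must cycle consistently in one of the two cyclic senses. Since the location (top/bottom, left/right) of each triangle edge at each $v_i$ has already been fixed in the previous step, imposing a cyclic orientation on the three triangle edges and then extending in all admissible ways to the remaining two edges at each vertex reduces to a finite enumeration. I expect the main obstacle to be simply executing this enumeration carefully, checking that every configuration listed in \Cref{fig:linearized-benzene} is realized while no extraneous configuration is missed; the answer will be invariant under the overall left-right mirror in agreement with the symmetry $\varepsilon$ inherited from \Cref{lem:degen-symmetries}.
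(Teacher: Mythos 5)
The paper gives no written argument for this lemma beyond ``a straightforward, though tedious, case-by-case check,'' so the comparison here is really between your proposal and what the classification in \Cref{fig:linearized-benzene} (and its use in \Cref{lem:benzene} and \Cref{tab:benzene-puzzles}) actually requires. Your plan has a genuine gap at its very first step: the claim that a $\cup$ ``terminates two strands into a single leaf of the graph and therefore cannot lie on any cycle'' is false. In a linearized diagram an end cap is not a vertex of the underlying \symm six-vertex configuration at all; it merely splices two dangling strand-ends into one continuous edge. In particular, a $\cup$ joining a bottom strand of one $\sfX$ to a bottom strand of another $\sfX$ \emph{is} an edge between those two vertices and can perfectly well be a side of a $3$-cycle. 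A quick count of half-edges confirms that most of the configurations in \Cref{fig:linearized-benzene} are of this kind: three $\sfX$'s have six top and six bottom half-edges, a direct triangle edge consumes one top and one bottom, and a $\cup$-edge consumes two bottoms; so the boundary conditions in \Cref{tab:benzene-puzzles} force exactly one $\cup$-edge in types (a), (b), (g), two in type (f), and all three in type (d) (boundary $\to \varnothing$). Only type (c) is the all-direct-edge triangle your structural analysis describes. The proof of \Cref{lem:benzene} relies on these end caps explicitly (``the $\cup$ must be labeled $1$,'' ``the orientation of the end cap at the lower right'').

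Consequently, the structural claims that follow --- both bottom strands of $v_1$ are triangle edges, both top strands of $v_3$ are triangle edges, $v_2$ contributes one top and one bottom --- hold only in the cup-free case and your enumeration would recover a single family out of the seven, badly undercounting the possible oriented triangles. To repair the argument you need to classify triangles according to how many of the three edges are realized as $\cup$-arcs (zero through three), and for each count work out the admissible temporal orders of the three $\sfX$'s and the nesting pattern of the caps consistent with the adjacency requirement for applying a $\cup$; only then does the orientation analysis via \Cref{def:symm6v} become a finite check. The temporal-ordering and ``clear the strands in between'' reasoning you use is sound and is the right tool for each such case, but it must be run separately for every distribution of direct versus $\cup$ edges, which is exactly the tedious case analysis the paper alludes to.
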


\begin{proof}
  The argument involves a straightforward, though tedious, case-by-case check. The details are omitted.
\end{proof}

\begin{figure}[tbh]
  \centering
  \includegraphics[width=\linewidth]{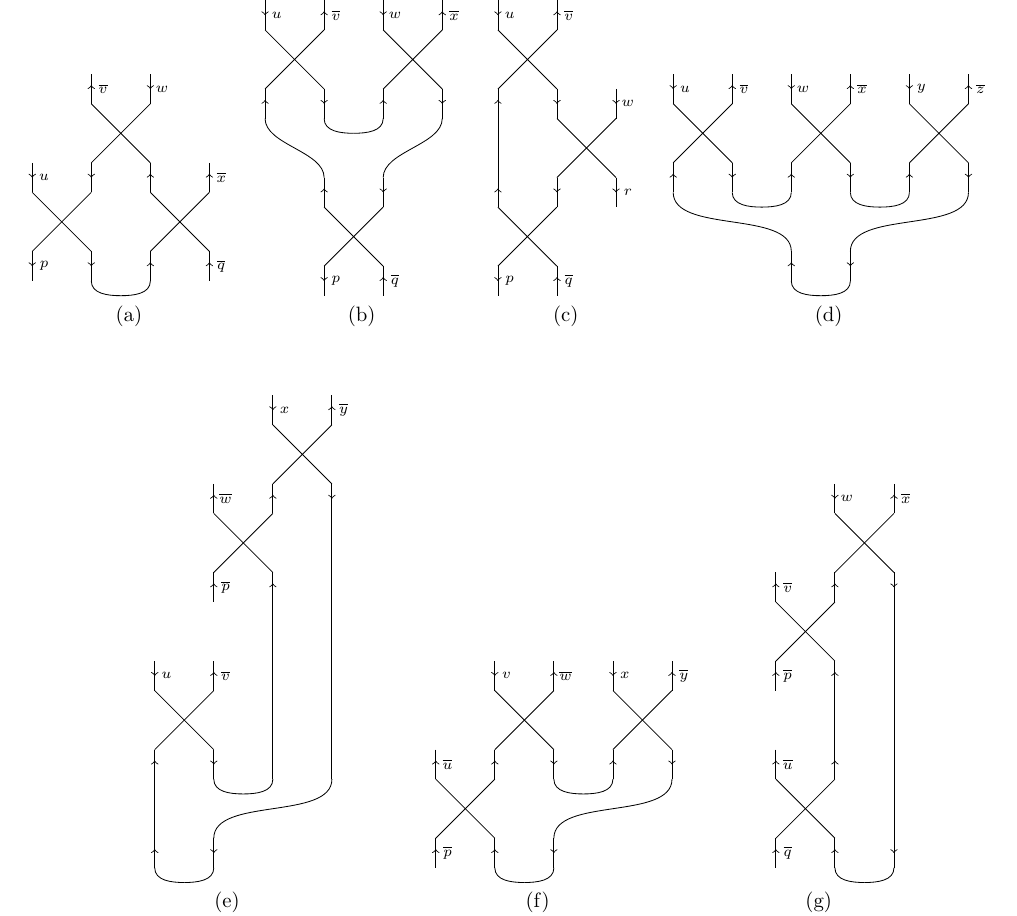}
  \caption{Configurations of oriented triangles in linearized diagrams. Those configurations obtained by reflecting through the vertical axis, reversing arrows, or both are also allowed, as are configurations obtained by shifting $\sfX$'s while preserving the linearized structure and $\cup$'s.}
  \label{fig:linearized-benzene}
\end{figure}

\begin{table}[tbh]
\centering
\begin{tabular}{c|c|c}
  Case(s)
    & Orientation(s)
    & $u, v, \ldots$ to $p, q, \ldots$ boundary conditions \\
    \toprule
  (a), (b)
    & CCW, CW
    & $\begin{aligned}
      1\overline{3}3\overline{1} \to 4\overline{4}
        && 1\overline{4}2\overline{1} \to 2\overline{4}
        && 1\overline{4}3\overline{1} \to 3\overline{4}
        && 1\overline{3}4\overline{1} \to 4\overline{3}
        && 1\overline{2}4\overline{1} \to 4\overline{2} \\
      \overline{4}2\overline{2}4 \to \overline{1}1
        && \overline{4}1\overline{3}4 \to \overline{3}1
        && \overline{4}1\overline{2}4 \to \overline{2}1
        && \overline{4}2\overline{1}4 \to \overline{1}2
        && \overline{4}3\overline{1}4 \to \overline{1}3
      \end{aligned}$ \\
    \midrule
  \multirow{2}{*}[-0.2em]{(b)}
    & CCW
    & $\begin{aligned}
      \overline{3}3\overline{1}4 \to \overline{1}4
        \qquad \overline{4}1\overline{3}3 \to \overline{4}1
      \end{aligned}$ \\
    \cmidrule{2-3}
  {}
    & CW
    & $\begin{aligned}
      2\overline{2}4\overline{1} \to 4\overline{1}
        \qquad 1\overline{4}2\overline{2} \to 1\overline{4}
      \end{aligned}$ \\
    \midrule
  (c)
    & CCW, CW
    & $\begin{aligned}
      \overline{4}3\overline{1} \to \overline{1}3\overline{4}
        && \overline{3}4\overline{1} \to \overline{1}4\overline{3}
        && \overline{3}3\overline{1} \to \overline{1}4\overline{4}
        && \overline{4}2\overline{1} \to \overline{1}2\overline{4} \\
      1\overline{3}4 \to 4\overline{3}1
        && 1\overline{4}3 \to 3\overline{4}1
        && 1\overline{3}3 \to 4\overline{4}1
        && 1\overline{2}4 \to 4\overline{2}1 \\
      \overline{4}2\overline{2} \to \overline{1}1\overline{4}
        && \overline{2}4\overline{1} \to \overline{1}4\overline{2}
        && \overline{4}1\overline{3} \to \overline{3}1\overline{4}
        && \overline{4}1\overline{2} \to \overline{2}1\overline{4} \\
      2\overline{2}4 \to 4\overline{1}1
        && 1\overline{4}2 \to 2\overline{4}1
        && 3\overline{1}4 \to 4\overline{1}3
        && 2\overline{1}4 \to 4\overline{1}2
      \end{aligned}$ \\
    \midrule
  \multirow{2}{*}[-0.2em]{(d)}
    & CCW
    & $\begin{aligned}
      \overline{4}1\overline{3}3\overline{1}4 \to \varnothing
      \end{aligned}$ \\
    \cmidrule{2-3}
  {}
    & CW
    & $\begin{aligned}
      1\overline{4}2\overline{2}4\overline{1} \to \varnothing
      \end{aligned}$ \\
    \midrule
  (e)
    & 
    & (None) \\
    \midrule
  \multirow{3}{*}[-2.1em]{(f)}
    & CCW, CW
    & $\begin{aligned}
      \overline{4}1\overline{2}4\overline{1} \to \overline{2}
        && \overline{4}1\overline{3}4\overline{1} \to \overline{3} \\
      1\overline{4}3\overline{1}4 \to 3
        && 1\overline{4}2\overline{1}4 \to 2
      \end{aligned}$ \\
    \cmidrule{2-3}
  {}
    & CCW
    & $\begin{aligned}
      1\overline{4}2\overline{2}4 \to 1 \\
      \overline{4}1\overline{3}3\overline{1} \to \overline{4}
      \end{aligned}$ \\
    \cmidrule{2-3}
  {}
    & CW
    & $\begin{aligned}
      \overline{4}2\overline{2}4\overline{1} \to \overline{1} \\
      1\overline{3}3\overline{1}4 \to 4
      \end{aligned}$ \\
    \midrule
  \multirow{2}{*}[-1.0em]{(g)}
    & CCW
    & $\begin{aligned}
      1\overline{4}24 \to 21
        && 1\overline{4}34 \to 31 \\
      \overline{4}1\overline{3}\overline{1} \to \overline{3}\overline{4}
        && \overline{4}1\overline{2}\overline{1} \to \overline{2}\overline{4}
      \end{aligned}$ \\
    \cmidrule{2-3}
  {}
    & CW
    & $\begin{aligned}
      \overline{4}\overline{2}4\overline{1} \to \overline{1}\overline{2}
        && \overline{4}\overline{3}4\overline{1} \to \overline{1}\overline{3} \\
      13\overline{1}4 \to 43
        && 12\overline{1}4 \to 42
      \end{aligned}$ \\
    \midrule
\end{tabular}
\caption{Possible boundary conditions of nice labelings of oriented triangles in \Cref{fig:linearized-benzene}.}
\label{tab:benzene-puzzles}
\end{table}

\begin{lemma}\label{lem:benzene}
  Suppose one of the oriented triangles in \Cref{fig:linearized-benzene} appears in a nice labeling of a linearized diagram. The complete list of possible boundary conditions is in \Cref{tab:benzene-puzzles}.
\end{lemma}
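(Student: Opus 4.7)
The approach is a finite, exhaustive case analysis of oriented triangles in linearized diagrams, one case per configuration (a)--(g) in Figure~\ref{fig:linearized-benzene}. For each such triangle, we consider its two possible global orientations (clockwise/counterclockwise), and for each oriented triangle we enumerate all ways to attach a nice labeling consistent with the allowed local labelings of the $\sfX$'s and $\cup$'s appearing in the growth rules of Figure~\ref{fig:growth-rules} together with the extra $\sfX$-labelings of Figure~\ref{fig:extra-nice}. Once a labeling of the internal edges is fixed, the corresponding boundary word $u,v,\ldots$ is read off the outermost edges, compared with the boundary word obtained by collapsing the triangle to an $\sfX$, $\cup$, $\cap$ or single strand (the image under a Yang--Baxter move), and recorded in Table~\ref{tab:benzene-puzzles}.

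The first step is bookkeeping: tabulate, once and for all, the list of allowed $(a,b)\to(c,d)$ labelings of an oriented $\sfX$ (from Figure~\ref{fig:growth-rules} and Figure~\ref{fig:extra-nice}) together with the signs determined by the arrows, and the analogous list for $\cup$'s. For each type (a)--(g), each internal edge of the triangle is either a simple edge shared by two $\sfX$'s, or the bottom (top) of one $\sfX$ meeting an end cap $\cup$; matching labels across shared edges gives a finite constraint-satisfaction problem. Solving it for each type, one obtains the entries of Table~\ref{tab:benzene-puzzles}; the boundary word on the ``after'' side of each entry is then obtained by applying the Yang--Baxter move indicated in Figure~\ref{fig:linearized-benzene} and reading off the resulting labels, with consistency guaranteed by Lemma~\ref{lem:growth_lex}(iv) (which ensures that the resulting word is still a balanced lattice word of the same weight).

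Two general observations cut down the case analysis substantially. First, the symmetries noted in Figure~\ref{fig:linearized-benzene} (reflection through a vertical axis, arrow reversal, and shifts of $\sfX$'s along the linear order) are compatible with the involutions $\tau$, $\varepsilon$, $\varpi$ of Lemma~\ref{lem:degen-symmetries}, so each orbit under these symmetries need only be checked once, and the rest of Table~\ref{tab:benzene-puzzles} follows. Second, for many candidate internal labelings the constraint that each $\sfX$ be one of the allowed configurations is incompatible with the orientation data of the triangle: for instance, in type (e) every attempt to orient the three $\sfX$'s compatibly with a nice labeling forces a disallowed arrow pattern, which is why the corresponding row of Table~\ref{tab:benzene-puzzles} is empty.

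The main obstacle is purely enumerative: there are on the order of a hundred allowed $\sfX$-labelings and seven triangle shapes each with two orientations, so we are effectively verifying a table of roughly $10^2$--$10^3$ small constraint problems. The argument is most efficient when organized by first fixing the orientation pattern at the three corners of the triangle (which determines, via the $\sfX$ tables, very short lists of candidates for each corner), then propagating along the two or three internal edges. In writing the proof we record one representative from each symmetry orbit in detail and indicate how the remaining entries of Table~\ref{tab:benzene-puzzles} follow by applying $\tau$, $\varepsilon$, $\varpi$, and arrow reversal; a companion computer verification (parallel to the implementations referenced in the earlier lemmas) can be used to confirm the full table.
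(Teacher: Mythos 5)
Your proposal matches the paper's proof in essence: both are finite, exhaustive case analyses that fix an oriented triangle type from Figure~\ref{fig:linearized-benzene}, propagate the forced labels of the end caps and the allowed $\sfX$-labelings (from Figure~\ref{fig:growth-rules} plus Figure~\ref{fig:extra-nice}) across the shared internal edges, and use the involutions $\tau$, $\varepsilon$, $\varpi$ to reduce the number of cases. One small correction: the arrow in each entry of Table~\ref{tab:benzene-puzzles} separates the labels on the top dangling strands of the fragment from those on its bottom dangling strands --- it is not a comparison with the image under a Yang--Baxter move (that comparison is the content of the subsequent Lemma~\ref{lem:nice-benzenes}), so neither the collapse step nor the appeal to Lemma~\ref{lem:growth_lex}(iv) is needed here.
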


\begin{proof}
  Consider configuration (a) in \Cref{fig:linearized-benzene}, with boundary conditions $u\overline{v}w\overline{x} \to p\overline{q}$. To be a fragment of a nice labeling, the $\cup$ must be labeled $1$. Examining the allowed $\sfX$ nice labelings, we find $u=1=x$, the uppermost vertex is $\overline{v}w \to p\overline{q}$, and at least one of $p$ or $q$ is $4$. If $p=q=4$, we must have $v=w=3$, so $1\overline{3}3\overline{1} \to 4\overline{4}$ is a valid nice labeling boundary condition for this counterclockwise-oriented type (a) triangle. All other cases are similar and are omitted. The fundamental involutions provide symmetries which simplify the argument. Specifically, $\varepsilon, \varpi$ interchange clockwise and counterclockwise orientations by reflecting or reversing arrows, respectively, and their composite $\tau$ fixes clockwise and counterclockwise orientations.

  Now consider configuration (g) in \Cref{fig:linearized-benzene} as another example. Here the boundary conditions give $\overline{u} \overline{v} w \overline{x} \to \overline{p} \overline{q}$. From the orientation of the end cap at the lower right, we see that it must be labeled by $\overline{4}$ and $4$. Hence, the configuration at the upper right can only be nicely labeled with $w = 4$ and $\overline{x} = \overline{1}$, using one of the additional nice labelings of \Cref{fig:extra-nice}. Similarly, in the lower left corner, we must have $\overline{u} = \overline{4}$, while $\overline{q} \in \{ \overline{1}, \overline{2}, \overline{3}\}$. This value $\overline{q}$ must propagate up to the intersection with $\overline{p}$ and we see that $\overline{p} = \overline{1}$, so that $\overline{q} = \overline{1}$ is not in fact possible. Moreover $\overline{v} = \overline{q}$. Thus, the valid nice labeling boundary conditions are exactly $\overline{4}\overline{2}4\overline{1} \to \overline{1}\overline{2}$
        and $\overline{4}\overline{3}4\overline{1} \to \overline{1}\overline{3}$.  Again, the other orientations of this type are similar after applying symmetries.
The treatment of the remaining configurations of \Cref{fig:linearized-benzene} is analogous and omitted.
\end{proof}

\begin{remark}
  The need for the additional nice labelings from \Cref{fig:extra-nice} may be seen in \Cref{fig:new-nice-labelings}. On the left is a counterclockwise-oriented triangle with boundary conditions $1\overline{4}2\overline{1} \to 2\overline{4}$, which may arise in the output of the growth algorithm. On the right is a clockwise-oriented triangle with the same boundary conditions, which cannot arise as the output of the growth algorithm since it involves the highlighted additional nice labeling $1\overline{4} \to \overline{4}1$. Moreover, this is the only clockwise-oriented triangle with these boundary conditions. Hence in order to apply a Yang--Baxter move while staying within the set of linearized diagrams with nice labelings, we must allow the additional nice labeling. While the involutions $\varepsilon$ and $\varpi$ reverse triangle orientations, they also change boundary conditions.
\end{remark}

\begin{figure}[tbh]
  \centering
  \includegraphics{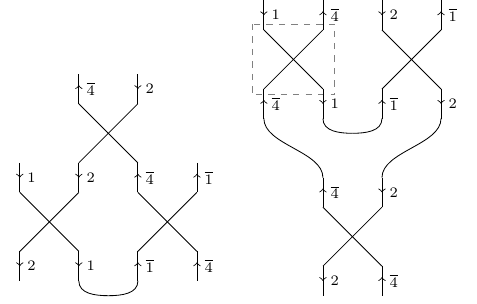}
  \caption{Linearized diagrams of oriented triangles with nice labelings and boundary conditions $1\overline{4}2\overline{1} \to 2\overline{4}$. The highlighted $\mathsf{X}$ involves the additional nice labeling from \Cref{fig:extra-nice}.}
  \label{fig:new-nice-labelings}
\end{figure}

\begin{lemma}\label{lem:nice-benzenes}
  Suppose $G$ and $G'$ are \symm six-vertex configurations related by a Yang--Baxter move. If $G$ can be written as a linearized diagram with a nice labeling, then the same is true of $G'$. Moreover, the nice labelings of $G$ and $G'$ are the same except inside the triangle at which the move was applied.
\end{lemma}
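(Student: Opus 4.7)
The plan is to reduce the claim to a finite case analysis built on Lemmas \ref{lem:linearized-benzene} and \ref{lem:benzene}. Because a Yang--Baxter move is supported inside a single oriented triangle $\Delta$, every edge of $G$ outside $\Delta$ coincides with an edge of $G'$, and the three external edges of $\Delta$ (with their orientations and labels inherited from $G$) match those of the opposite-orientation triangle $\Delta'$ in $G'$. Hence the second assertion of the lemma---that the labelings agree outside $\Delta$---is automatic, and it remains to show that $G'$ can still be written as a linearized diagram and that $\Delta'$ admits a nice labeling compatible with the inherited external labels.

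For the linearization step, I would appeal to Lemma \ref{lem:linearized-benzene} together with the caption of Figure \ref{fig:linearized-benzene}, which together record that each of the seven triangle shapes (a)--(g) arises in both CCW and CW orientations as a fragment of a linearized diagram (via reflection through the vertical axis, arrow reversal, and local shifting of $\sfX$'s). Since the linearization of $G$ outside $\Delta$ passes unchanged to the outside of $\Delta'$, the last few linearization steps of $G$ may be rearranged locally to produce $\Delta'$ in the reversed orientation; thus $G'$ is linearizable.

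For the nice-labeling step, I would traverse Table \ref{tab:benzene-puzzles} row by row. The rows that list both CCW and CW orientations (much of (a), (b), (c), (f), and (g)) are immediate, since the identical boundary conditions occur with a nice labeling in either orientation. The one-sided rows (the CCW-only or CW-only sub-entries in (b), (d), (f), and (g)) require explicit construction of a nice labeling of $\Delta'$ with the inherited external boundary. Here I anticipate that the additional $\sfX$ configurations of Figure \ref{fig:extra-nice} will be needed; they were introduced precisely to close the class of nice labelings under Yang--Baxter, as the remark surrounding Figure \ref{fig:new-nice-labelings} already illustrates. Uniqueness of each constructed labeling then follows from Theorem \ref{thm:nice-unitriangularity}(ii).

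The main obstacle is the sheer volume of cases for the one-sided rows, where each internal edge of $\Delta'$ must be assigned a label so that every internal $\sfX$ and $\cup$ receives a configuration from Figure \ref{fig:growth-rules} or Figure \ref{fig:extra-nice}. This case analysis is tedious but finite, and is made tractable by the involutions $\tau$, $\varpi$, $\varepsilon$, which interchange CCW and CW configurations and cut the number of independent cases roughly in half. Once the check is complete, combining it with the locality of the move yields the lemma.
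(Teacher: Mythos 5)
Your overall strategy --- reduce to the classification of oriented triangles in \Cref{lem:linearized-benzene}, then walk through the boundary conditions catalogued in \Cref{tab:benzene-puzzles}, dispatching the entries that occur in both orientations and constructing replacements for the one-sided entries --- is the same as the paper's, and the locality observation and the use of the $\tau,\varpi,\varepsilon$ symmetries are fine. But your plan for the one-sided entries has a genuine gap.

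For a one-sided entry such as the clockwise type (d) configuration with boundary conditions $1\overline{4}2\overline{2}4\overline{1} \to \varnothing$, you propose to ``explicitly construct a nice labeling of $\Delta'$ with the inherited external boundary,'' anticipating that the extra $\sfX$ labelings of \Cref{fig:extra-nice} will make this possible. This cannot work as stated: \Cref{tab:benzene-puzzles} already accounts for \emph{all} nice labelings, including those of \Cref{fig:extra-nice}, and its very content is that these boundary conditions admit no nice labeling of any counterclockwise configuration from \Cref{fig:linearized-benzene}. So no assignment of labels to the opposite-orientation triangle, drawn as one of the standard fragments (a)--(g), can succeed. The missing idea is that the \emph{linearized structure itself} must be changed locally, not just the labels: the reversed triangle is realized as a different fragment, obtained by appending end caps to a configuration of type (c) (or, for the type (g) exceptions, a companion configuration), as in \Cref{fig:linearized-tilting}. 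These augmented fragments present the same external boundary with the opposite orientation and do admit nice labelings. The additional labelings of \Cref{fig:extra-nice} are what rescue the \emph{symmetric} entries (as \Cref{fig:new-nice-labelings} illustrates, where a CCW type (a) is traded for a CW type (b)); they are not the fix for the one-sided entries. Without the end-cap surgery your case analysis would terminate in cases that provably have no solution.
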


\begin{proof}
  By \Cref{lem:linearized-benzene}, every oriented triangle in a linearized diagram looks locally like one of the configurations in \Cref{fig:linearized-benzene}. It suffices to show that every such local configuration can be replaced by one with the same boundary conditions and opposite orientation. In \Cref{fig:new-nice-labelings}, we have a sample benzene triangle of type (a) with boundary conditions $1\overline{4}2\overline{1} \to 2\overline{4}$ and counterclockwise orientation, and a benzene triangle of type (b) with the same boundary conditions but clockwise orientation. A benzene move may be applied here by performing this surgery.
  
  Most of the possible boundary conditions listed in \Cref{tab:benzene-puzzles} are symmetric and do not require structural changes outside of the configurations (a)--(g). A sample exception is the clockwise-oriented configuration $1\overline{4}2\overline{2}4\overline{1} \to \varnothing$ of type (d). This may be replaced by the counterclockwise-oriented configuration on the left in \Cref{fig:linearized-tilting}, which arises from appending end caps to a configuration of type (c). Exceptions of type (a) or (b) (resp.\ (f)) may be handled similarly using this type (c) configuration but using only the rightmost end cap (resp.\ the left and right end caps). Exceptions of type (g) may be handled with the configuration on the right of \Cref{fig:linearized-tilting}.
\end{proof}

\begin{figure}[tbh]
  \centering
  \includegraphics{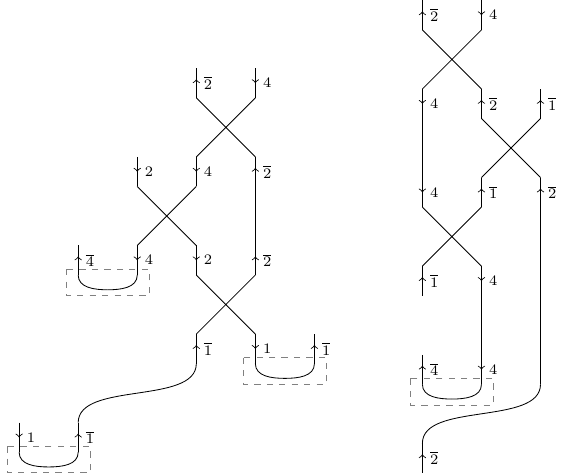}
  \caption{Appending end caps to oriented triangles in linearized diagrams.}
  \label{fig:linearized-tilting}
\end{figure}

\begin{remark}
    The surgery procedure in the proof of \Cref{lem:nice-benzenes} simulates benzene moves for linearized diagrams with nice labelings. One may wonder if the same can be done for square moves. The answer is no: only one of the two elements of the move-equivalence class of the square associated to $11223344$ has a nice labeling, namely the one obtained by applying the growth rules. Recall the good degeneration $2233 \to 2323$ from \Cref{rem:complete-redundant}. It labels the $\mathsf{X}$ with $23 \to 32$, which does not satisfy \Cref{lem:growth_lex}(iii). If we were to add this rule to \Cref{fig:growth-rules}, we would not be able to use our inductive argument to prove the unique $\tlex$-minimal boundary labeling property.
\end{remark}

\subsection{Growth rules and descents}\label{sec:descents}

Here, we define descents of fluctuating tableaux and show how they appear in the growth algorithm.

\begin{definition}\label{def:Des}
  Let $w$ be a lattice word on $\mathcal{A}_r$. Say $\osc(w) = w_1 \cdots w_n$. The \emph{descent set} of $w$ is
    \[ \Des(w) = \{i \in [n-1] : 0 < w_i < w_{i+1} \text{ or } w_i < w_{i+1} < 0\}. \]
\end{definition}

\begin{lemma}\label{lem:descents-are-connected}
  Let $G$ be an hourglass plabic graph obtained by repeatedly applying growth rules starting from a balanced oscillating lattice word $w = w_1 \cdots w_n$. If $i \in \Des(w)$, then the boundary vertices $b_i$ and $b_{i+1}$ are connected to a common vertex in $G$.
\end{lemma}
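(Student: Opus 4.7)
The plan is to induct on the number of growth rule applications, maintaining the invariant that for each descent $i \in \Des(w)$, the boundary vertices $b_i$ and $b_{i+1}$ are connected to a common interior vertex in the partially constructed graph. A critical observation underlying the argument is that the end caps $1\overline{1} \to \varnothing$ and $\overline{4}4 \to \varnothing$ do not introduce new interior vertices; rather, they identify the endpoints of two strands. Consequently, a boundary vertex's ``ultimate'' interior neighbor is the first $\sfX$-type vertex encountered along its strand, possibly after traversing a chain of end-cap identifications to reach a vertex originally generated by a different $\sfX$.

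For the inductive step, I would consider the first growth rule applied to the current dangling word, at some window $[j, j+p-1]$, and split into three cases for a descent $(i,i+1) \in \Des(w)$. If both positions lie entirely outside the window, the rule does not affect the relevant strands and the induction hypothesis applies directly to the remaining algorithm. If both positions lie entirely inside the window, inspection of \Cref{fig:growth-rules} confirms that every descent within a rule's LHS corresponds to an $\sfX$ or end cap placed precisely between those two positions, creating the required shared interior vertex. If the descent straddles the window boundary (say $i$ is outside and $i{+}1$ is the leftmost position of the window), then the rule's output strand at this boundary has a label that is end-cap-compatible with $w_i$ (i.e.\ forms a $1\overline{1}$ or $\overline{4}4$ pair), so any subsequent rule consuming $b_i$'s dangling strand is forced to be such an end cap, which identifies $b_i$'s strand with the new strand emanating from the $\sfX$ that consumed $b_{i+1}$, giving $b_i$ the same interior neighbor.

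The main obstacle is verifying the compatibility property required in the third case: for each growth rule in \Cref{fig:growth-rules}, whenever an $\sfX$ consumes a strand at the boundary of the window, the emerging output label at that boundary must end-cap-pair with any external letter that could form a same-sign descent with the consumed top label. This is a finite combinatorial check, substantially simplified by the $\{\tau,\varpi,\varepsilon\}$ symmetry of the rule families; the two long rule families $\overline{3}\overline{2}\overline{2}^k 4 \to 41\overline{2}^k 4$ and $14\overline{2}^k 4 \to 41\overline{2}^k 4$ can be handled by induction on $k$, reducing the verification to their short base cases already covered by \Cref{prop:short_good_degen}.
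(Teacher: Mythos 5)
Your overall mechanism---track the descent pair through successive rule applications, and use the fact that a dangling $1\overline{1}$ or $\overline{4}4$ pair must eventually be joined by an end cap---is the same one the paper's proof uses. However, two of your three cases rest on claims that are false as stated, and the missing ingredient in both is the role of the \emph{witnesses} attached to the short rules. In your ``both inside the window'' case, it is not true that every descent internal to a rule's top word sits directly above an $\mathsf{X}$ or end cap: the rule $\overline{3}\,\overline{2}\,\overline{1} \to 41\overline{1}$ has descents at positions $1$--$2$ and $2$--$3$ of its window but only one $\mathsf{X}$ (between positions $1$ and $2$); the descent $\overline{2}\,\overline{1}$ is resolved only later, because the dangling pair at positions $2$--$3$ becomes $1\overline{1}$ and is eventually capped. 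So even this case already needs the forward-tracking argument, not mere inspection of \Cref{fig:growth-rules}.

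More seriously, the ``compatibility property'' in your straddling case is false. If the descent is $12$ and the rule $24 \to 42$ fires with the $2$ as the left end of its window, the dangling pair becomes $14$, which is not end-cap compatible; if the descent is $23$ and $12 \to 21$ or $\overline{4}2 \to 2\overline{4}$ fires on the $2$, the pair becomes $13$ or $\overline{4}3$; dually for $34 \to 43$ against the descent $23$. In each of these scenarios one boundary vertex would be attached to an $\mathsf{X}$ that the other can never reach, and the lemma would fail. What saves the statement is not the output labels but the fact that each of these rules carries a witness requirement incompatible with the descent context ($24 \to 42$ needs a left witness in $\{2,3,\overline{4}\}$, so it cannot fire after a $1$; $12\to 21$ and $\overline{4}2 \to 2\overline{4}$ do not admit $3$ as a right witness; etc.). Ruling out exactly these rules is the substance of the paper's case analysis, and it is the step your proposal omits---your proposed finite check, as framed, would simply report failures here. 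You also use without verification that a $1\overline{1}$ pair is inert (no rule other than the end cap can consume either of its strands); this needs its own inspection of the rule list. Finally, the ``induction on $k$'' for the long rules is unnecessary and doesn't address any of the above: for this lemma the only relevant features of a long rule are the position of its $\mathsf{X}$ (between its first two letters) and its witness set.
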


\begin{proof}
  Consider a descent $w_i w_{i+1} = 14$. Growth rules with $14 \to 41$ have the required property. None of the growth rules can change only the $4$ or only the $1$, so $14 \to 41$ must eventually be applied.
  
  Next consider a descent $w_i w_{i+1}=13$. Growth rules with $13 \to 31$ or $13 \to \overline{2}\overline{4}$ have the required property. Other growth rules may only change the $3$, namely $3\overline{1} \to \overline{1}3$, $34 \to \overline{1}\overline{2}$, $34 \to 43$. The first two will add an $\mathsf{X}$ and result in $13\cdots \to 1\overline{1}\cdots$, and no further growth rules can change the $1 \overline{1}$ until an end cap is eventually applied; hence the boundary vertices will both be connected to the center of the $\mathsf{X}$ in the symmetrized six-vertex configuration and this remains true in the hourglass plabic graph. The third would require $134 \to 143$, but that rule requires witnesses $3$ or $\overline{4}$ rather than $1$, so it does not apply.

  For a descent $w_i w_{i+1}=12$, we similarly find the growth rules which may change the $2$ are $2\overline{1} \to \overline{1}2$, $2\overline{2} \to \overline{1}1$, $23 \to \overline{1}\overline{4}$, $24 \to \overline{1}\overline{3}$, and $24 \to 42$. All except the last result in a $1\overline{1}$, allow us to conclude as in the previous case. For the last case, we only have a growth rule $24 \to 42$ with a left witness from $\{2,3,\overline{4}\}$, so in particular this cannot be applied here since we have a $1$ preceding the $2$.

  For a descent $23$, the growth rules which can change only the $2$ involve $\overline{4}2 \to 2\overline{4}$, $12 \to 21$, and $12 \to \overline{3}\overline{4}$, but none of them allow $3$ as a witness on the right. Similarly the growth rules which can change only the $3$ involve $3\overline{1} \to \overline{1}3$, $34 \to 43$, and $34 \to \overline{1}\overline{2}$, none of which allow $2$ as a witness on the left.

  Applying the involutions covers the remaining cases.
\end{proof}

\subsection{Growth rules and fully reduced graphs}\label{sec:growth-reduced}

We now show that the output $\mathcal{G}(T)$ of the growth algorithm is fully reduced. By \Cref{cor:fully-reduced-iff-monotonic}, we may equivalently show that $\mathcal{G}(T)$ is monotonic (see \Cref{def:hourglass-monotonicity}). Our argument is inductive and verifies,  by further analyzing crystal appliances, that growth rules preserve monotonicity. We begin with the significantly simpler $\trip_2$ conditions.

\begin{definition}
  Let $\{a, c\}, \{b, d\}$ be four distinct numbers in $[n]$. By relabeling if necessary, we may suppose $a < c$, $b < d$, and $a < b$. We say they form a \emph{crossing} if $a < b < c < d$. 
\end{definition}

\begin{lemma}\label{lem:growth-rules-trip2}
Consider a growth rule $v \to v'$ from \Cref{fig:growth-rules} applied to an oscillating word $w$ to obtain a balanced lattice word $w'$. Suppose that $G'$ is an hourglass plabic graph of the same boundary type as $w'$ and with $\trip_\bullet(G') = \prom_\bullet(w')$. Suppose further that $G$ is obtained by gluing the diagram for the growth rule onto $G'$. If the $\trip_2$-strands in $G'$ have no self-intersections or double crossings, then the same is true of $G$.
\end{lemma}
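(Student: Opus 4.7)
The plan is to pass to the symmetrized six-vertex realization $\varphi(G)$ (\Cref{thm:6-vertex-hourglass-correspondence}), in which $\trip_2$-strands are precisely the straight-line paths through transmitting vertices (the $\mathsf{X}$'s) that terminate at source/sink caps on the boundary; they never turn. Viewed this way, $G$ decomposes as a small fragment $R$ coming from the growth rule glued on top of $\varphi(G')$. Within $R$, which is a linearized diagram made only of $\mathsf{X}$'s and end caps, the restriction of $\trip_2$ is a disjoint union of simple straight segments: each pair of such segments meets in at most one point (at a common $\mathsf{X}$), and no segment meets itself.

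I would then decompose each $\trip_2$-strand $\alpha$ of $G$ according to its intersections with $R$ and with $G'$. Either (i) $\alpha$ lies entirely inside $R$ and is matched by an end cap, or (ii) $\alpha$ enters $R$, runs straight down through some $\mathsf{X}$'s, exits into $G'$ at a $v'$-boundary position, traverses a $\trip_2$-strand of $G'$, and re-enters $R$ to exit at the top. Case (i) is immediate. For (ii), a self-intersection of $\alpha$ in $G$ would have to use some edge of $R$ twice or some edge of $G'$ twice. Since the segments of $\alpha$ inside $R$ are straight and enter at distinct boundary vertices of $R$, they cannot share an edge; so a self-intersection forces the middle $G'$-segment of $\alpha$ to self-intersect as a $\trip_2$-strand of $G'$, contradicting the hypothesis.

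The real obstacle is ruling out new double crossings. Suppose two $\trip_2$-strands $\alpha,\beta$ of $G$ double-cross at points $P_1,P_2$. Because each pair of straight paths inside $R$ meets at most once, at most one of $P_1,P_2$ lies in $R$. If both lie in $G'$, then the two $G'$-segments of $\alpha$ and $\beta$ are $\trip_2$-strands of $G'$ that double-cross, contradicting the hypothesis. The remaining case is that exactly one of the crossings, say $P_1$, is the unique $\mathsf{X}$ in $R$ through which both $\alpha$ and $\beta$ pass, and $P_2$ lies in $G'$.

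To exclude this last case, I would proceed by direct inspection of the growth rules, organized by the $4$-fold symmetry of \Cref{lem:degen-symmetries}. For each rule one traces which pairs of top strands cross at an $\mathsf{X}$ inside $R$ and at which $v'$-positions of $G'$ the two strands then enter $G'$, and verifies that the $\trip_2(G')$-matches of these entry positions do not cross inside $G'$. Equivalently, the growth rules in \Cref{fig:growth-rules} have been chosen so that every $\mathsf{X}$ inside $R$ realizes a crossing of the matching $\prom_2(w)=\trip_2(G)$ that is \emph{absent} from the matching $\prom_2(w')=\trip_2(G')$: this is already encoded in the plumbings computed in \Cref{prop:short_good_degen} and \Cref{lem:long_good_degens}, which show that $\trip_2$ of $G$ and $\trip_2$ of $G'$ differ only by the local swaps implemented by the $\mathsf{X}$'s of $R$. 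Hence the pair $(\alpha,\beta)$ is not crossed in $G'$, ruling out $P_2$. Long growth rules are handled identically using the finite-state-machine descriptions of \Cref{fig:long-degens-DFA-3p2p2p4,fig:long-degens-DFA-412p4}. The proof is therefore concentrated in this bookkeeping step; conceptually it is straightforward, but checking each of the ten short families together with the two long families is the main labor.
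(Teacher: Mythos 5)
Your treatment of double crossings is essentially the paper's argument: reduce to the case where one crossing is the unique $\mathsf{X}$ of the attached fragment and the other lies in $G'$, and then verify rule-by-rule (over all crystal appliances of $v'$, using the finite-state machines for the long families) that the two strands entering $G'$ at the bottom of the $\mathsf{X}$ are matched by $\trip_2(G')$ to non-interleaving endpoints. However, your self-intersection argument has a genuine gap. You claim that a self-intersection of a strand $\alpha$ of $G$ must either reuse an edge of $R$ (impossible, since the two $R$-segments are straight and enter at distinct boundary vertices) or force the $G'$-segment of $\alpha$ to self-intersect. This misses the actual danger: the descending segment of $\alpha$ exits the $\mathsf{X}$ at the bottom-left position $a'$, traverses a $\trip_2$-strand of $G'$, and returns at the bottom-right position $b'$; the ascending segment then passes through the \emph{center vertex} of the same $\mathsf{X}$ a second time, using the other diagonal. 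No edge is shared and nothing self-intersects inside $G'$, yet $\alpha$ revisits a vertex, which is exactly what ``self-intersection'' means here (cf.\ \Cref{def:six-vertex-monotonicity}). This happens precisely when $\trip_2(G')(a') = b'$, i.e.\ $\rho_2(a)=b$ in appliance terms, and ruling it out requires the same kind of finite check per growth rule that you set up for double crossings — it is the first condition the paper verifies.

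A secondary issue: you assert that the needed non-crossing condition ``is already encoded in the plumbings'' of \Cref{prop:short_good_degen} and \Cref{lem:long_good_degens}. It is not. Those results establish the good-degeneration identity $\prom_\bullet(w) = \prom_\bullet(w')\cdot\breve{\pi}_\bullet$, which gives $\trip_\bullet(G)=\prom_\bullet(w)$, but says nothing about whether $\{a',\trip_2(G')(a')\}$ and $\{b',\trip_2(G')(b')\}$ interleave, nor about whether $\trip_2(G')(a')=b'$. Both are additional constraints on the crystal appliances $\rho_\bullet(\vec{e},v',\vec{f})$ of $v'$ that must be verified separately for every rule and every appliance (for the long rules the paper gets both for free from $\rho_s(a)<a<b<\rho_s(b)$). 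Your proof would be correct once you add the $\rho_2(a)\neq b$ check to the self-intersection case and recast the ``bookkeeping step'' as a verification over crystal appliances rather than an appeal to the plumbing computations.
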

\begin{proof}
  First assume $v\to v'$ is a short rule. Consider self-intersections of $\trip_2$-strands in $G$. Since $G'$ has no $\trip_2$ self-intersections, the only way to introduce a self-intersection by attaching an $\sfX$ is for a $\trip_2$-strand to leave and re-enter the $\sfX$ through the bottom two edges. That is, $\trip_2(G')(a') = b'$ where $a' < b'$ are the consecutive positions corresponding to the $\sfX$ in $w'$. Since $\trip_\bullet(G') = \prom_\bullet(w')$, we have $\prom_2(w')(a') = b'$. Let $\rho_\bullet=\rho_\bullet(\vec{e}(v',w'), v', \vec{f}(v',w'))$; then $\rho_2(a) = b$, where $a-|\vec{e}|$ (respectively, $b-|\vec{e}|$) is the index of the left (respectively, right) part of the $\sfX$ in $v'$. Graphically, $\rho_2$ would need an arc between the bottom vertices of the $\sfX$. We see in \Cref{ex:good_degen} that this does not occur, and one may check directly that the same is true for all short rules, so $G$ does not contain $\trip_2$-strand self-intersections in these cases. A computer implementation of these verifications and the ones below is available at \cite[\S7]{sl4-web-basis-code}.

  Now, consider double crossings of $\trip_2$-strands in $G$. Since $G'$ has no $\trip_2$-strand double crossings, they arise in $G$ only from the two $\trip_2$-strands leaving the bottom two edges of the $\sfX$ and crossing in $G'$. This configuration occurs if and only if $\{a', \trip_2(G')(a')\}, \{b', \trip_2(G')(b')\}$ form a crossing. As before, such a crossing would be apparent as a crossing in $\rho_2$; we see this does not occur in \Cref{ex:good_degen}, and we may check it does not occur for any short rule for any crystal appliance for $v'$. Hence, $G$ does not contain $\trip_2$-strand double crossings in these cases. The $\trip_2$ conditions may be summarized as requiring that $\rho_2(a) \neq b$ and that $\{a, \rho_2(a)\}$, $\{b, \rho_2(b)\}$ do not form a crossing.

  Finally, consider the long rules. By using the $\tau, \varpi, \epsilon$ involutions and \Cref{lem:degen-symmetries}, it suffices to consider the rules of the form $14zx \to 41zx$ and $\overline{3}\overline{2}zx \to 41zx$ where $z \in \langle \overline{2}, \overline{3} \rangle$ and $x \in \{4, 3, 2, \overline{1}\}$.

  Let $\rho_\bullet$ be any crystal appliance of $41zx$. Recall that $\rho_s(i)$ records when the operator $e_s$ or $f_s$ is applied at the position corresponding to $i$ during the construction of the crystal appliance. For instance, $\rho_s(i) < i$ if this occurs during phases I or III and $\rho_s(i) > i$ if this occurs during phases II or IV. One can read off the action of $e_s$ during phase I from finite state machines like \Cref{fig:long-degens-DFA-3p2p2p4}. We note that $\rho_s(a) < a$ for $s=1, 2, 3$, since $v' = 41\cdots$ begins with $4$, which must be turned into a $1$ during phase I. Similarly, $\rho_s(b) > b$. Hence $\{a, \rho_s(a)\}, \{b, \rho_{s'}(b)\}$ do not cross for any $s, s' \in \{1, 2, 3\}$, in particular for $s=s'=2$.
\end{proof}

We give a similar but more technical argument for the $\trip_1$, $\trip_2$ monotonicity conditions. In order to rule out violations of the $\trip_1, \trip_2$ conditions in \Cref{def:hourglass-monotonicity}, we introduce the following notion for appliances which intuitively corresponds to crossings of trip strands. We then verify that the growth rules never introduce new violations of the $\trip_1, \trip_2$ conditions by examining all possible crystal appliances for growth rules. The most interesting case is exemplified by $\overline{3}\overline{2}\overline{2}\overline{3}4 \to 41\overline{2}\overline{3}4$, where configurations as in \Cref{fig:monotonicity-preserved} (right) must be ruled out.

\begin{definition}\label{def:intersecting}
  Let $a \neq b$. We say $(a, x)$ \textit{intersects} $(b, y)$ if
  \begin{enumerate}[(i)]
      \item $x = b$; or
      \item $y = a$; or
      \item $x = y$; or
      \item $a, b, x, y$ are  all distinct and $\{a, x\}$, $\{b, y\}$ form a crossing.
  \end{enumerate}
  Further, for $g_\bullet$ an appliance, or a $\prom_\bullet$, or a $\trip_\bullet$, we say $g_i(a)$ \textit{intersects} $g_j(b)$ if $(a, g_i(a))$ intersects $(b, g_j(b))$.
\end{definition}

\begin{lemma}\label{lem:touching-rho}
  Let $uvw$ be a balanced oscillating lattice word. Let $a \neq b$ be indices from $1,\ldots,|v|$. If $\prom_i(uvw)(|u|+a)$ intersects $\prom_j(uvw)(|u|+b)$, then there is a crystal appliance $\rho_\bullet = \rho_\bullet(\vec{e}, v, \vec{f})$ such that $\rho_{r-i}(|\vec{e}|+a)$ intersects $\rho_{r-j}(|\vec{e}|+b)$.
\end{lemma}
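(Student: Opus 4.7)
The plan is to leverage \Cref{lem:crystal-phases} to produce paths $\vec{e}, \vec{f}$ and a weakly-increasing function $h$ satisfying
\[
  \varrho_\bullet(u, v, w)^\top \;=\; h \cdot \rho_\bullet(\vec{e}, v, \vec{f}).
\]
Unraveling the definitions of appliance transpose and the left action of $h$, this specializes to the pointwise identity
\[
  \prom_i(uvw)(|u|+k) \;=\; \check{h}\!\left(\rho_{r-i}(\vec{e}, v, \vec{f})(|\vec{e}|+k)\right)
\]
for all $1 \le i \le r-1$ and $k \in [|v|]$. Writing $a' = |u|+a$, $\tilde{a} = |\vec{e}|+a$, $x = \prom_i(uvw)(a')$, and $\tilde{x} = \rho_{r-i}(\tilde{a})$, and analogously for $b', \tilde{b}, y, \tilde{y}$, we get $x = \check{h}(\tilde{x})$ and $y = \check{h}(\tilde{y})$. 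The key features of $\check{h}$ are: it preserves the three-block decomposition $A \sqcup B \sqcup C$; it is the identity on $B$ under the natural identification $|\vec{e}|+k \leftrightarrow |u|+k$; it is weakly increasing on each of $A$ and $C$; and $\rho_s|_B$ must itself be injective, since $\check{h} \circ \rho_s|_B = \prom_{r-s}|_B$ is the restriction of a permutation.

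The proof then reduces to a case analysis on which clause (i)--(iv) of \Cref{def:intersecting} holds for $(a', x)$ and $(b', y)$. Clauses (i) and (ii) place the shared element in $B_{\mathrm{prom}}$, and identity-on-$B$ immediately transfers the coincidence to the crystal. For clause (iv), I enumerate the four sorted configurations of $(a', b', x, y)$ that produce a crossing (namely $a' < b' < x < y$, $a' < y < x < b'$, $y < a' < b' < x$, and $x < y < a' < b'$) and, in each, invoke block preservation together with the weak monotonicity of $\check{h}$ to transfer the crossing to the crystal indices $\tilde{a}, \tilde{b}, \tilde{x}, \tilde{y}$. The only possible degeneration is a collision $\tilde{x} = \tilde{y}$ in $A_{\mathrm{cryst}}$ or $C_{\mathrm{cryst}}$, which gives clause (iii) in the crystal and hence still yields an intersection.

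The hardest case is (iii): $x = y$ but the crystal preimages $\tilde{x}, \tilde{y}$ may be distinct, which happens precisely when they both lie in $A_{\mathrm{cryst}}$ or both in $C_{\mathrm{cryst}}$ and $\check{h}$ collapses two different crystal steps into the same promotion step. Here a crossing of $\{\tilde{a}, \tilde{x}\}$ with $\{\tilde{b}, \tilde{y}\}$ occurs precisely when $\tilde{x}, \tilde{y}$ appear in the same relative order as $\tilde{a}, \tilde{b}$, and the pairs nest otherwise. Since $i \ne j$ (forced by injectivity of each $\prom_s$) and $a \ne b$, the two crystal moves correspond to distinct operators acting at distinct positions of $v$ during a single promotion step. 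The plan is to exploit the freedom in the choice of $(\vec{e}, \vec{f})$: within a collapsed promotion step, the relevant pair of operators can be applied in either order while still reaching the phase-I target $v^\uparrow$ (or phase-III target $v^\downarrow$), producing two legitimate crystal appliances that differ exactly by swapping $\tilde{x}, \tilde{y}$; one of these realizes the favorable order. The main technical hurdle is verifying that this swap is always compatible with \Cref{lem:crystal-phases}. When the two Kashiwara indices are separated by at least~$2$ in the Dynkin diagram the operators strictly commute and the swap is immediate, whereas the adjacent case requires a more delicate argument using the crystal commutation relations together with the constraints imposed on raising operators by the bracketing rule within the $v$-subword.
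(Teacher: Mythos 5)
Your reduction to the four clauses of \Cref{def:intersecting} and your treatment of clauses (i), (ii), and (iv) track the paper's argument: the identity of $\check{h}$ on $B$ disposes of (i)--(ii), and block-preservation plus weak monotonicity of $h$ on each of $A$ and $C$ transfer genuine crossings (in fact no degeneration $\tilde{x}=\tilde{y}$ can occur there, since $\check{h}(\tilde{x})\neq\check{h}(\tilde{y})$). The gap is in the case you correctly single out as hardest: $\prom_i(uvw)(a')=\prom_j(uvw)(b')=k$ with $k$ an index of $u$ or $w$. Your plan---reorder the two operators inside the fiber $h^{-1}(k)$ so that they occur in the favorable order---is not carried out, and it faces obstacles you only gesture at. The positions at which raising and lowering operators act are forced by the bracketing rule (\Cref{def:bracketing-rule}), not chosen; operators with adjacent Kashiwara indices do not commute; and even when both orders give valid paths to $v^\uparrow$ (or $v^\downarrow$), swapping them can change the positions at which they, and any intervening operators in the same fiber, act, which destroys the identifications $\rho_{r-i}(\tilde{a})=\tilde{x}$ and $\rho_{r-j}(\tilde{b})=\tilde{y}$ on which your crossing argument rests. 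You also never invoke $i<j$, which the paper extracts from \Cref{prop:prom-cyclically-monotonic}; without it there is no reason the ``favorable'' order should be the achievable one.

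The paper resolves this case by an entirely different device: it perturbs the ambient word rather than the crystal path. Replacing the $k$-th letter of $uvw$ by its standardized complement (e.g.\ $\overline{3}\mapsto 124\cdots r$) is an $\SL_r$-Knuth equivalence producing a new balanced lattice word $y$ still containing $v$ as the same subword; an analysis of the $(r-1)\times(r-1)$ submatrix of $\PM^i(y)$ on the new rows and columns shows $\prom_i(y)(a')=k+i-1$ and $\prom_j(y)(b')=k+j-1$, so $i<j$ yields an honest crossing $a'<b'<k+i-1<k+j-1$, which then transfers to a crystal appliance of $v$ via \Cref{lem:crystal-phases} exactly as in your clause-(iv) case. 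You would need either to supply this idea or to fully substantiate the reordering claim; as written, the proof is incomplete at precisely the step the lemma turns on.
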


\begin{proof}
  Assume without loss of generality that $a<b$. Let $a' = |u|+a$ and $b' = |u|+b$. If $a'$, $\prom_i(uvw)(a')$, $b'$, and $\prom_j(uvw)(b')$ are all distinct, by assumption $\{a', \prom_i(uvw)(a')\}$, $\{b', \prom_j(uvw)(b')\}$ form a crossing. Hence the crystal appliance $\rho_\bullet=\rho_\bullet (\vec{e}(v,uvw),v,\vec{f}(v,uvw))$ from \Cref{lem:crystal-phases} has a crossing by the monotonicity of associated function $h$, and hence an intersection.

  Now suppose $a', \prom_i(uvw)(a'), b'$, and $\prom_j(uvw)(b')$ are not all distinct. Since $\prom_\bullet(uvw)$ is fixed-point-free, we must have $\prom_i(uvw)(a') = b'$, or $\prom_j(uvw)(b') = a'$, or $\prom_i(uvw)(a') = \prom_j(uvw)(b') = k$ for some $k$. In the first two cases, $\rho_\bullet$ again has a crossing, since $a'$ and $b'$ are indices corresponding to $v$ and the relevant part of the crystal appliance is defined in phase II or IV. The same applies in the third case if $k$ is an index corresponding to $v$ in $uvw$. Hence we may suppose $k$ is an index corresponding to $u$ or $w$. Suppose $k > |u|+|v|$ and $(uvw)_k < 0$, the other cases being similar. If $i=j$ then we would have $a'=b'$, but we have assumed $a < b$. By \Cref{prop:prom-cyclically-monotonic} applied to $\prom_\bullet(uvw)(k)$, we see $i < j$.
  
  Now let $y$ be obtained by replacing the $k$-th letter of $uvw$ with its standardized complement $\kappa$, e.g.~$\overline{3}$ would be replaced with $124\cdots r$, which is an $\SL_r$-Knuth equivalence. Since $\prom_i(uvw)(a') = k$, we have $\prom_i(y)(a') \in [k, k+r-1)$. That is, among columns $[k, k+r-1)$ of the promotion matrix $\PM_i(y)$, there is a $1$ in row $a'$. Note that $a' < k$, so this $1$ is not among rows $[k, k+r-1)$. We claim that $\prom_i(y)(a') = k+i-1$, i.e.~the $1$ in row $a'$ is in column $k+i-1$. To see this, consider the $(r-1) \times (r-1)$ submatrix $D_i$ of $\PM_i(y)$ with rows and columns $[k, k+r-1)$. We find directly that $\kappa^\uparrow = 123\cdots(r-1)$. Using \Cref{lem:crystal-phases}, it is not difficult to see that $D_i$ is the first $r-1$ rows and columns of $\PM_i(\kappa^\uparrow r)$. Moreover, $\PM_i(123\cdots r)$ is the $r \times r$ matrix with $1$'s along the $i$-th super-diagonal and $(r-i)$-th sub-diagonal and $0$'s elsewhere. Hence $D_i$ is obtained by deleting the final row and column from $\PM_i(123\cdots r)$, so $D_i$ is missing a $1$ precisely in the $i$-th column. This missing $1$ corresponds precisely to the $1$ in row $a'$ of $\PM_i(y)$, so it must be in column $k+i-1$ as claimed.

  Repeating this argument for $j$, we now have $\prom_i(y)(a') = k+i-1$ and $\prom_j(y)(b') = k+j-1$. Since $i < j$, we find $a' < b' < k \leq \prom_i(y)(a') < \prom_j(y)(b')$, so $\{a', \prom_i(y)(a')\}$ and $\{b', \prom_j(y)(b')\}$ form a crossing. Letting $\rho'_\bullet = \rho_\bullet(\vec{e}(v,y), v, \vec{f}(v,y))$, by monotonicity of the associated function $h'$, we have that $\rho'_{r-i}(|\vec{e}|+a)$ intersects $\rho'_{r-j}(|\vec{e}|+b)$.
\end{proof}

\begin{lemma}\label{lem:star-witness}
Let $v = 41zx$ where $z\in \langle \overline{2},\overline{3} \rangle$ and $x\in\{4,3,2,\overline{1}\}$. In a crystal appliance $\rho_\bullet(\vec{e},v,\vec{f})$, we have $\rho_1(\vec{e},v,\vec{f})(a+2)=a+b$, where $a = |\vec{e}|$ and $b = |v|$.
\end{lemma}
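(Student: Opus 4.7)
My plan is to analyze the crystal structure of $v = 41zx$ by combining elementary observations about crystal raising operators with the finite state machine machinery developed in \Cref{lem:long_good_degens}. The key structural fact is that position $2$ of $v$ is permanently occupied by the letter $1$: since each crystal raising operator $e_s$ (for $s \in \{1, 2, 3\}$) modifies only letters of the form $s+1$ or $\overline{s}$, it cannot act on a $1$. Consequently, no step of any raising path $v \to v^\uparrow$ acts at position $2$, so $\rho_1(a+2)$ cannot lie in phase I of the crystal appliance of \Cref{def:crystal-appliances}.

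Next I would employ the finite state machines from \Cref{fig:long-degens-DFA-412p4} for $v = 41\overline{2}^k 4$, together with the analogous machines one constructs for the other three choices of witness $x \in \{3, 2, \overline{1}\}$, to describe the highest weight element $v^\uparrow$ and the structure of the raising and lowering paths explicitly. These machines record the possible letters at each position as we move to $v^\uparrow$ and $v^\downarrow$, showing that the $1$ at position $2$ stays put while the $4$ at position $1$ is eventually reduced and the letters of $z$ evolve in a controlled manner governed by finitely many state transitions.

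Finally, I would invoke the phase II/IV formula of \Cref{def:crystal-appliances} together with the bridge identity of \Cref{lem:crystal-phases}: $\rho_1(a+2) = a+i$ for the unique $i \in [b]$ (not in phases I or III) such that the promotion permutation of $v^\uparrow x'$ relates positions $i$ and $2$ appropriately, for any balancing suffix $x'$. Applying the first balance point characterization of promotion (\Cref{prop:promotion-from-balance-points}) and iterating through the promotion orbit of $v^\uparrow x'$, I would verify that this $i$ is forced to equal $b$; geometrically, the relevant first balance point of $\promotion^{b-1}(v^\uparrow x')$ lies at the position whose cyclic rotation back to $v^\uparrow x'$ is position $2$.

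The main obstacle is the final step: carrying out the promotion orbit calculation uniformly across all $z \in \langle \overline{2}, \overline{3}\rangle$ and $x \in \{4, 3, 2, \overline{1}\}$. This will require a careful analysis of how the descent structure of $v^\uparrow$ interacts with the $\theta_{j_i}$ operators of \Cref{prop:promotion-from-balance-points}, and will naturally proceed by induction on $|z|$, mirroring the inductive structure used in the proof of \Cref{thm:R8}. The four cases for $x$ can largely be unified using the symmetries of \Cref{lem:degen-symmetries}, reducing the casework considerably.
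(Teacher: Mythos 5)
Your proposal sets up the right framework (the phase structure of the crystal appliance and the reduction of cases (3) to the promotion permutation of $v^{\uparrow}x'$), but the decisive step is missing, and you acknowledge as much by calling the promotion-orbit calculation ``the main obstacle.'' What makes that calculation tractable in the paper is a structural fact your plan never extracts from the finite state machines: by \Cref{lem:long-degens-DFA-412p3p4}, the highest weight element has the very specific form $v^{\uparrow} = 11w'2$ with $w' \in \langle \overline{3},\overline{4}\rangle$ a lattice word. With this in hand one does not need $\promotion^{b-1}(v^{\uparrow}y)$ at all: since the relevant entry pairs position $2$ with position $b$, and $\rho_1$ and $\rho_3$ are mutually inverse on the $B$-block, it suffices to compute the \emph{second} promotion step. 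One checks directly that $\promotion(11w'2y) = 1w'2y'$ and $\promotion^2(11w'2y) = w'1y''$, so the trailing $2$ is raised to $1$ precisely at the second application of promotion, which produces the required entry in phase II. Your proposed route through the first-balance-point characterization of $\promotion^{b-1}(v^{\uparrow}x')$, with an induction on $|z|$, is not carried out and is substantially harder than necessary; as written, the proof is incomplete at exactly the point where the content lies.

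Two further issues. First, your claim that the four cases of $x$ ``can largely be unified using the symmetries of \Cref{lem:degen-symmetries}'' is not correct: $\tau$, $\varpi$, and $\varepsilon$ send the family $41zx$ to entirely different families (e.g.\ $\varpi(41z4) = \overline{1}\,\overline{4}\,\varpi(z)\,\overline{1}$), so they do not permute the values of $x$. The case $x = \overline{1}$ must be handled separately, e.g.\ via the crystal isomorphism $41z\overline{1} \cong 41z234$ and passing to the prefix $41z2$, while $x \in \{4,3,2\}$ are covered uniformly by \Cref{lem:long-degens-DFA-412p3p4}. Second, ruling out phase I is not enough on its own: since $v_2 = v^{\uparrow}_2 = 1$, the transition determining $\rho_1(a+2)$ could a priori occur in phase III (a lowering operator $f_1$ acting on position $2$), which would place the value in the $C$-block rather than $B$. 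This possibility is excluded only by actually exhibiting the entry in phase II, i.e.\ by completing the promotion computation you deferred.
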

\begin{proof}
If $x = \overline{1}$, we have a crystal isomorphism which sends $41z\overline{1}$ to $41z234$; when applying $e_3$ to this word, the final $4$ is matched with the preceding $3$, and when applying $e_2$, this $3$ is matched with the preceding $2$. Thus, by considering the prefix $41z2$, we reduce to the case $x=2$ . 

Now suppose $x \in \{4, 3, 2\}$. This guarantees that \Cref{lem:long-degens-DFA-412p3p4} applies to $v$; hence the highest weight element in $\mathcal{C}(v)$ is of the form $v^\uparrow = 11w' 2$ where $w' \in \langle \overline{3},\overline{4} \rangle$ is a lattice word. As in \Cref{def:crystal-appliances}(3), let $y$ be such that $v^\uparrow y$ is a balanced lattice word. It is easy to see that $\promotion(v^\uparrow y) = 1w' 2y'$ and $\promotion^2(v^\uparrow y) = w' 1y''$ for some $y'$ and $y''$. In particular this gives $\prom_2(v^\uparrow y)(b)=2$ since the $2$, located in position $b$, changed to $1$ in the second application of promotion, which proves the claim.
\end{proof}

\begin{figure}[htbp]
    \centering
    \noindent
    \begin{minipage}{0.55\textwidth}
    \includegraphics[width=\textwidth]{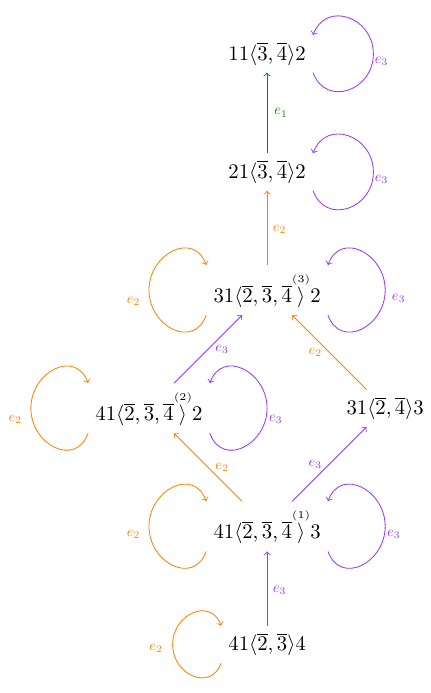}
    \end{minipage}
    \begin{minipage}{0.4\textwidth}
    \begin{itemize}
      \item[\textsuperscript{(1)}] No $\overline{4} \cdots \overline{3}$ subsequence.      
      \item[\textsuperscript{(2)}] No $\overline{4} \cdots \overline{3} \cdots \overline{2}$ subsequence.
      \item[\textsuperscript{(3)}] No $\overline{3} \cdots \overline{2}$ subsequence.
    \end{itemize}
    \end{minipage}
    \caption{The finite state machine, from \Cref{lem:long-degens-DFA-412p3p4}, for describing the part of $\mathcal{C}(w)$ reachable from $w \in 41\langle\overline{2}, \overline{3}\rangle4$ 
    by applying raising operators.}
    \label{fig:long-degens-DFA-412p3p4}
\end{figure}

\begin{lemma}\label{lem:growth-rules-monotonic-mostly}
  Under the hypotheses of \Cref{lem:growth-rules-trip2}, if $G'$ is monotonic, then $G$ is monotonic.
\end{lemma}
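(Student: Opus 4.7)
The plan is to assume $G$ fails the $\trip_1$, $\trip_2$ monotonicity condition and derive a contradiction, exploiting the fact that $G'$ is monotonic so the failure must involve the new $\mathsf{X}$ (or $\cup$) introduced by the growth rule. End-cap $\cup$'s only terminate strands, so they create no new intersections; we focus on an $\mathsf{X}$ at positions $a'<b'$ of $v'$ corresponding to positions $a<b$ at the top of the rule. By the inductive hypothesis $\trip_\bullet(G')=\prom_\bullet(w')$ and \Cref{thm:R8} (plus \Cref{lem:crystal-phases}), we also have $\trip_\bullet(G)=\prom_\bullet(w)$. A violation of monotonicity in $G$ translates, via the $\mathsf{X}$, into an intersection (in the sense of \Cref{def:intersecting}) of $\prom_i(w)(\alpha)$ and $\prom_j(w)(\beta)$ for indices $\alpha,\beta\in\{a,b\}$ coming from the $\mathsf{X}$ (together with appropriate $i,j\in\{1,2\}$).

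Next, invoke \Cref{lem:touching-rho}: any such intersection in $\prom_\bullet(w)$ restricted to $v$-indices lifts to an intersection $\rho_{r-i}(|\vec e|+\alpha)\cap\rho_{r-j}(|\vec e|+\beta)$ in some crystal appliance $\rho_\bullet=\rho_\bullet(\vec e,v,\vec f)$. Thus it suffices to rule out such intersections in every crystal appliance of $v$ for the relevant pairs $(\alpha,\beta,i,j)$ and their $\varepsilon,\varpi,\tau$-symmetric variants (\Cref{lem:degen-symmetries}). For each short growth rule this is a finite verification by enumerating the raising and lowering paths in $\mathcal{C}(v)$ and reading off $\rho_\bullet$, exactly as in \Cref{ex:good_degen}; these checks parallel and slightly extend those used in \Cref{lem:growth-rules-trip2} and can be carried out (and in practice were carried out by computer; see the companion code repository).

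For the long rules, after applying the symmetries it suffices to treat $v=41zx$ and $v=\overline{3}\overline{2}zx$ with $z\in\langle\overline{2},\overline{3}\rangle$ and $x\in\{4,3,2,\overline{1}\}$. The key observation is that in every element of $\mathcal{C}(v)$ reachable from $v$ by raising operators—described explicitly by the finite state machines of \Cref{fig:long-degens-DFA-3p2p2p4} and \Cref{fig:long-degens-DFA-412p3p4}—the initial letters $41$ (resp.\ $\overline{3}\overline{2}$) are immediately acted on by raising operators in phase I, turning them into $11$ (resp.\ $\overline{4}\overline{4}$). Consequently $\rho_s(|\vec e|+1),\rho_s(|\vec e|+2)$ all lie strictly below $|\vec e|+1$ for every $s$, so these cannot participate in a crossing-type intersection with $\rho_{s'}$ at any position $>|\vec e|+2$. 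The potentially dangerous remaining intersection involves $\rho_1(|\vec e|+2)$, which is pinned by \Cref{lem:star-witness} to the distinguished witness position $|\vec e|+b$; that single equality places the would-be re-intersection onto the witness edge, where monotonicity of $G'$ already precludes any further crossing.

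The main obstacle is precisely the long-rule analysis: the witnesses $z$ can be arbitrarily long and the crystal action on them is nontrivial, so one cannot simply enumerate cases. This is exactly what the finite-state-machine description of $\mathcal{C}(v)$ and the pinning provided by \Cref{lem:star-witness} are designed to handle; together they reduce the infinite family to the finite short-rule-style check described above. I expect a few residual configurations (for instance when $x=\overline{1}$ forces a crystal isomorphism $41z\overline{1}\mapsto 41z234$) to require the extra manipulation already invoked in the proof of \Cref{lem:star-witness}, but these reduce to the $x=2$ case.
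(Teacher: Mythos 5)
Your high-level strategy matches the paper's: reduce a monotonicity violation in $G$ to an intersection of promotion-permutation values, lift it to crystal appliances via \Cref{lem:touching-rho}, verify the short rules by finite (computer) check, and handle the long rules via the finite state machines and \Cref{lem:star-witness}. However, the long-rule analysis — which is the crux — has genuine gaps. First, your reduction only considers intersections at the two positions $\alpha,\beta\in\{a,b\}$ of the $\mathsf{X}$. For the plumbings $\mathsf{X}^{--}_{++}\times\id^{k+1}$ and $\mathsf{X}^{++}_{++}\times\id^{k+1}$, the new $\trip_1$-strand of $G$ enters and exits through the \emph{bottom} of the $\mathsf{X}$, so it is a concatenation of the $\trip_3(G')(a')$- and $\trip_1(G')(b')$-strands; a $\trip_2$-strand of $G'$ ending at a witness position $t\in[b,c]$ can cross each of these once without violating monotonicity of $G'$, yet produce a violation in $G$ (the configuration in \Cref{fig:monotonicity-preserved}, right). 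Ruling this out requires showing $\rho_1(a)$ and $\rho_2(t)$ do not intersect for \emph{every} $t$ in that range; your assertion that ``monotonicity of $G'$ already precludes any further crossing'' is exactly what fails here, and this is why the paper needs the finite-state-machine argument pinning down where $e_2$ acts on the first position (only upon reaching $21\langle\overline{3},\overline{4}\rangle 2$, with no later $e_2$) to conclude $\rho_2(t)<\rho_2(a)$ or $\rho_2(t)>a$.

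Second, your stated key observation is false as written: for $v'=41zx$ only the initial $4$ is raised in phase I, so $\rho_s(a)<a$ but $\rho_s(b)>b$ for all $s$ (the $1$ in the second position cannot be raised); both values do not ``lie strictly below $|\vec e|+1$.'' Relatedly, you analyze appliances of the source words $v$ (including $\overline{3}\overline{2}zx$), whereas the violation lives in $G'$, whose boundary contains the target $v'=41zx$; only the crystal appliances of $41zx$ need to be controlled. These are not cosmetic issues: the paper's proof isolates precise sufficient conditions (its conditions (A) and (B)) on the appliances of $v'$, and the half of condition (B) quantified over witness positions is exactly the part your argument does not establish.
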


\begin{proof}
  By \Cref{lem:growth-rules-trip2}, we know that the $\trip_2, \trip_2$ condition from \Cref{def:hourglass-monotonicity} holds. It remains to show that the $\trip_1, \trip_2$ condition also holds. As before, a violation of this condition in $G$ must involve one of the $\trip_1$-strands in the additional plumbing through the attached $\mathsf{X}$. We now provide sufficient conditions on the level of crystal appliances of $v'$ to rule out such violations.

  \begin{claim}
  Suppose for all crystal appliances $\rho_\bullet = \rho_\bullet(\vec{e}, v', \vec{f})$ the following conditions hold.  Here $a=|\vec{e}|+1, b=|\vec{e}|+2$ and $\pi_\bullet$ is the associated plumbing of the growth rule $v \to v'$.
  \begin{enumerate}[(A)]
    \item If $\pi_\bullet = \mathsf{X}^{+-}_{-+} \times \id^k$ for $k \geq 0$, none of the following pairs intersect: $\rho_1(b), \rho_2(a)$; $\rho_3(a), \rho_2(b)$; $\rho_3(b), \rho_2(a)$; $\rho_1(a), \rho_2(b)$.
    \item If $\pi_\bullet = \mathsf{X}^{--}_{++} \times \id^{k+1}$ or $\pi_\bullet = \mathsf{X}^{++}_{++} \times \id^{k+1}$  for $k \geq 0$, none of the following pairs intersect: $\rho_1(b), \rho_2(a)$; $\rho_3(a), \rho_2(b)$; $\rho_3(b), \rho_2(a)$. Furthermore, $\rho_1(c) = b$ for some $c > b$ in $v'$, and for all $t \in [b, c]$, $\rho_1(a)$ and $\rho_2(t)$ do not intersect.
  \end{enumerate}
Then the $\trip_1, \trip_2$ condition also holds for $G$, so $G$ is monotonic.
\end{claim}

\begin{proof}[Proof of Claim]
  Suppose there were a violation of the $\trip_1$, $\trip_2$ condition in $G$. If the $\trip_1$-strand does not involve the $\mathsf{X}$, then we would have a $\trip_1$, $\trip_2$ condition violation in $G'$. Consider $\pi_\bullet = \mathsf{X}^{+-}_{-+} \times \id^k$. There are four $\trip_1$-strands through the $\mathsf{X}$. Let $a'$ and $b'$ be the indices in $w'$ corresponding to the left and right part of the $\mathsf{X}$, respectively. Consider the strand $\trip_1(G)(b') = \trip_1(G')(a')$ as in \Cref{fig:monotonicity-preserved} (left). If this strand has a violation with some $\trip_2$-strand, it must be $\trip_2(G)(a') = \trip_2(G')(b')$ since otherwise it was a violation in $G'$. Since $\trip_1(G)(b')$ and $\trip_2(G)(a')$ cross in the $\mathsf{X}$ and $G'$ is monotonic, the violation must come from $\trip_1(G')(a')$ and $\trip_2(G')(b')$ intersecting in $G'$. Hence $\prom_1(w')(a')$ intersects $\prom_2(w')(b')$, so by \Cref{lem:touching-rho} there is a crystal appliance $\rho_\bullet = \rho_\bullet(\vec{e}, v', \vec{f})$ such that $\rho_3(a)$ intersects $\rho_2(b)$, contradicting (A). The remaining conditions for (A) are similar.

  The argument for the first three conditions in (B) is also similar. The remaining conditions arise from examining the $\trip_1$-strand that enters and exits the bottom of $\mathsf{X}$ at $a'$ and $b'$, respectively. A violation with a $\trip_2$-strand either is a violation in $G'$ or arises from crossing both $\trip_3(G')(a')$ and $\trip_1(G')(b')$. Since $\rho_1(c) = b$ by assumption, $\trip_1(G')(b') = c'$ where $c'-b' = c-b$. If a $\trip_2$-strand crosses $\trip_1(G')(b')$, it must have an endpoint $t' \in [b', c']$ by monotonicity in $G'$. If it additionally crosses $\trip_3(G')(a')$ as in \Cref{fig:monotonicity-preserved} (right), then as before we have an intersection of $\rho_1(a)$ and $\rho_2(t)$ for some crystal appliance $\rho_\bullet=\rho_\bullet(\vec{e},v',\vec{f})$ of $v'$, where $t'-a' = t-a$.
\end{proof}

    \begin{figure}[tbh]
        \centering
        \includegraphics[width=0.8\textwidth]{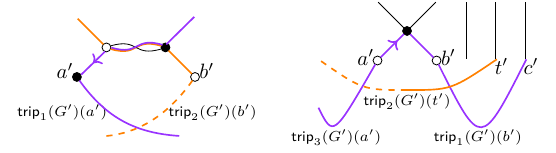}
        \caption{Forbidden configurations in the proof of the claim in \Cref{lem:growth-rules-monotonic-mostly} illustrating conditions (A) [left] and (B) [right].}
        \label{fig:monotonicity-preserved}
    \end{figure}

  We now return to the proof of the lemma. By using the $\tau, \varpi, \epsilon$ involutions and \Cref{lem:degen-symmetries}, it suffices to check the conditions in the claim for growth rules such that $\pi_\bullet$ is of the form in (A) or (B). Note that under the conditions of the claim, $G'$ is monotonic. The conditions (A) or (B) may be checked directly for all the possible crystal appliances of the short rules in \Cref{fig:growth-rules}. A computer implementation of these verifications is available at \cite[\S7]{sl4-web-basis-code}.

  It again remains to consider the long rules (which never satisfy the hypotheses of (A)).  As in the proof of \Cref{lem:growth-rules-trip2}, it suffices to consider those which apply $\mathsf{X}^{++}_{++}$ or $\mathsf{X}^{--}_{++}$ with target $v'$ of the form $41wx$, where $w \in \langle \overline{2}, \overline{3} \rangle$ and $x \in \{4, 3, 2, \overline{1}\}$. Let $\rho_\bullet$ be any crystal appliance $\rho_\bullet(\vec{e}, v',\vec{f})$. By the claim, all that remains is to verify condition (B) for $\rho_\bullet$. If $x = \overline{1}$, we have a crystal isomorphism sending $41w\overline{1}$ to $41w234$, and we may apply the $x=2$ case on the prefix, which appropriately preserves $\rho_\bullet$ for the purposes of condition (B). Thus we take $x \in \{4, 3, 2\}$. As in the proof of \Cref{lem:growth-rules-trip2}, we have $\rho_s(a) < a$ and $\rho_s(b) > b$ for $s \in \{1, 2, 3\}$. Hence none of $\{\rho_3(a), \rho_2(b)\}$, $\{\rho_1(b), \rho_2(a)\}$, and $\{\rho_3(b), \rho_2(a)\}$ intersect.
  
  From \Cref{lem:star-witness}, $\rho_1(c) = b$, where $c$ corresponds to the position of $x$. For the conditions on $t \in [b, c]$ in (B), note that $\rho_2(a) < \rho_1(a)$ since $e_1$  must be applied to the first position of $41wx$ after $e_2$ is. Hence $\rho_2(a) < \rho_1(a) < a < t$. We now further claim that either $\rho_2(t) < \rho_2(a)$ or $\rho_2(t) > a$. Indeed, by \Cref{lem:long-degens-DFA-412p3p4} below, we may read off when $e_2$ is applied to the first position when using crystal raising operators starting from $41wx$ from the finite state machine in \Cref{fig:long-degens-DFA-412p3p4}. By inspection, this occurs only when we obtain a word in $21\langle \overline{3},\overline{4} \rangle 2$ and no further applications of $e_2$ occur afterwards. Hence for $t$ at positions where $e_2$ is applied during phase I, we have $\rho_2(t) < \rho_2(a)$. For $t$ at positions where $e_2$ is not applied during phase I, $e_2$ must be applied during a later phase, so $\rho_2(t) > a$.

  Finally, if $\rho_2(t) < \rho_2(a)$, then $\rho_2(t) < \rho_1(a) < a < t$, and $\rho_1(a)$ does not intersect $\rho_2(t)$. Hence we may suppose $\rho_2(t) > a$. We have $\rho_2(t) \neq t$, so either $\rho_2(t) > t$ or $\rho_2(t) < t$. In the first case, $\rho_1(a) < a < t < \rho_2(t)$, while in the second case, $\rho_1(a) < a < \rho_2(t) < t$, and again $\rho_1(a)$ does not intersect $\rho_2(t)$.
\end{proof}

All that remains is to prove the correctness of the finite state machine in \Cref{fig:long-degens-DFA-412p3p4}. 

\begin{lemma}\label{lem:long-degens-DFA-412p3p4}
Let $v$ be an element of one of the sets of words labeling the vertices of \Cref{fig:long-degens-DFA-412p3p4}. Then $v^\uparrow \in 11\langle \overline{3}, \overline{4} \rangle2$ and any upward path in $\mathcal{C}(v)$ from $v$ to $v^{\uparrow}$ corresponds to a walk in the figure.
\end{lemma}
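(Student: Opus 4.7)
The proof plan is to verify \Cref{fig:long-degens-DFA-412p3p4} by a direct state-by-state application of the bracketing rule from \Cref{def:bracketing-rule}. I would show, for each state $S$ of the diagram and each word $v \in S$: (a) the raising operators $e_i$ with $e_i(v) \neq 0$ are exactly those labeling the outgoing edges of $S$, and (b) if $S \xrightarrow{e_i} S'$ is such an edge, then $e_i(v) \in S'$. Given (a) and (b), any upward path from $v$ in $\mathcal{C}(v)$ corresponds bijectively to a walk in the figure, and terminates at a state from which no $e_i$ is applicable. These terminal states are exactly the sets of the form $11\langle\overline{3},\overline{4}\rangle 2$, which simultaneously establishes $v^\uparrow \in 11\langle\overline{3},\overline{4}\rangle 2$.

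The base case to handle first is the terminal states themselves. For a word $v^\uparrow = 1\,1\,u\,2$ with $u \in \langle\overline{3},\overline{4}\rangle$, the $e_1$-bracketing places $[$'s only under $1$'s (no $\overline{2}$'s occur), and a single $]$ under the trailing $2$; the latter matches the closest unmatched $[$ among the two $1$'s, leaving one unmatched $[$ but \emph{no} unmatched $]$. The $e_2$-bracketing places $[$'s under every $\overline{3}$ in $u$ and the initial $1$'s contribute nothing, while the trailing $2$ contributes $[$; here the $\overline{3}$'s must be matched by $]$'s coming from $\overline{2}$'s, but none are present, so again there is no unmatched $]$. The $e_3$ case is analogous. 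Hence no raising operator acts, confirming $v^\uparrow$ is highest weight. I would also verify at the initial state $41\langle\overline{2},\overline{3}\rangle 4$ that for $v = 4\,1\,w\,4$ the $e_i$-bracketings produce only $[$'s (no $2$'s, $3$'s, or $\overline{1}$'s occur), except for the $]$'s contributed by $\overline{2}$'s under the $e_2$-bracketing and by the trailing $4$ under the $e_3$-bracketing; the resulting unmatched $]$'s precisely identify the outgoing edges shown.

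For the inductive step I would enumerate each transition $S \xrightarrow{e_i} S'$ and verify that $e_i(v)$ satisfies the subword-avoidance constraint defining $S'$. The crucial observation is that the superscript conditions on the states (``No $\overline{4}\cdots\overline{3}$ subsequence'', etc.) are precisely the invariants that control which $[$/$]$ symbols remain unmatched in each $e_i$-bracketing. For example, absence of a $\overline{3}\cdots\overline{2}$ subsequence implies that every $]$ from a $\overline{2}$ under the $e_2$-bracketing is unmatched, so $e_2$ always acts on the rightmost $\overline{2}$, converting it to $\overline{3}$; one checks that this operation preserves the appropriate subword condition on the output. Applying the symmetries $\tau$, $\varpi$, $\varepsilon$ from \Cref{lem:degen-symmetries} would collapse many cases.

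The main obstacle is the bookkeeping, not any single deep argument: the avoidance conditions and the global matching behavior of the bracketing rule interact subtly across the $w \in \langle\overline{2},\overline{3}\rangle$ block, and one must confirm both that the listed transitions preserve the target state's invariants \emph{and} that no unlisted $e_i$ can act. I expect the most delicate transitions to be those that change which $\overline{2}$ or $\overline{3}$ is the ``active'' unmatched letter, since these can modify the avoidance condition in non-obvious ways. A finite computer enumeration, parallel to those cited elsewhere in the paper at \cite[\S7]{sl4-web-basis-code}, would corroborate the hand verification.
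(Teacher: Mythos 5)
Your plan is the same as the paper's proof: a state-by-state verification of the automaton via the bracketing rule, with the subword-avoidance superscripts serving as the invariants preserved by the relevant raising operators (the paper isolates this as a preliminary claim about the sets $B$, $C$, $E$ and then checks each transition).

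However, your base-case analysis of the terminal state contains a concrete error. For $v = 11u2$ with $u \in \langle\overline{3},\overline{4}\rangle$, the $e_3$-bracketing is \emph{not} analogous to the $e_2$ case: each $\overline{3}$ in $u$ contributes a $]$, and if some $\overline{3}$ is not matched by a $\overline{4}$ to its left (e.g.\ $v = 11\overline{3}2$), then $e_3$ applies, sending $11\overline{3}2 \to 11\overline{4}2$. So it is false that no raising operator acts on every element of $11\langle\overline{3},\overline{4}\rangle2$; the correct statement, and the one the figure encodes via a self-loop, is that $e_3$ maps this set into itself while $e_1$ and $e_2$ never apply, so the set is \emph{absorbing} and therefore contains $v^\uparrow$. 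Your argument ``paths terminate at a state from which no $e_i$ is applicable; these terminal states are exactly $11\langle\overline{3},\overline{4}\rangle2$'' conflates the absorbing state of the automaton with the set of highest-weight words and needs this repair. A similar imprecision appears in your description of the initial state: under the $e_3$-bracketing of $41w4$, the leading $4$ and any $\overline{3}$'s in $w$ also contribute $]$'s, and the conclusion that $e_3$ acts on the trailing $4$ requires the ``rightmost unmatched $]$'' clause rather than the claim that only the trailing $4$ contributes a $]$. These are local slips rather than a failure of the method, and the remainder of your plan (checking that each listed transition preserves the target invariant and that no unlisted operator applies) is exactly what the paper carries out; note also that the $\tau,\varpi,\varepsilon$ symmetries will not help here, since this particular family of words is not closed under them.
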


\begin{proof}
  We verify the correctness of the diagram in \Cref{fig:long-degens-DFA-412p3p4} using the bracketing rule (see \Cref{def:bracketing-rule}) step by step. Define sets
  \begin{alignat*}{5}\
    A &= \langle\overline{2}, \overline{3}\rangle &&\qquad
    B = \langle\overline{2}, \overline{3}, \overline{4}\overset{\scriptscriptstyle{(1)}}{\rangle} &&\qquad
    C = \langle\overline{2}, \overline{3}, \overline{4}\overset{\scriptscriptstyle{(2)}}{\rangle} \\
    D &= \langle\overline{2}, \overline{4}\rangle &&\qquad
    E = \langle\overline{2}, \overline{3}, \overline{4}\overset{\scriptscriptstyle{(3)}}{\rangle} &&\qquad
    F = \langle\overline{3}, \overline{4}\rangle.
  \end{alignat*}
As indicated in \Cref{fig:long-degens-DFA-412p3p4}, the superscripts $^{\scriptscriptstyle{(1)}}$, $^{\scriptscriptstyle{(2)}}$, and $^{\scriptscriptstyle{(3)}}$ have the following meanings: $B$ consists of all words in $\langle\overline{2}, \overline{3}, \overline{4}\rangle$ where there is no subsequence $\overline{4}\cdots\overline{3}$; $C$ consists of all words in $\langle\overline{2}, \overline{3}, \overline{4}\rangle$ where there is no subsequence $\overline{4}\cdots\overline{3}\cdots\overline{2}$; and $E$ consists of all words in $\langle\overline{2}, \overline{3}, \overline{4}\rangle$ where there is no subsequence $\overline{3}\cdots\overline{2}$.
  
\begin{claim} 
If a crystal raising operator applies to a word in $C$ or $E$, then the result is again in $C$ or $E$, respectively. Furthermore, if a crystal raising operator applies to a word in $B3$, then the result is again in $B3$ or in $B2 \subset C2$.
\end{claim}

\begin{proof}[Proof of Claim]
No $e_1$ operators apply to $B$, $C$ or $E$ since there are no $\overline{1}$'s or $2$'s.

Now consider applying $e_2$ to $w \in C$. To apply $e_2$ here requires matching $\overline{3}, \overline{2}$ pairs and changing the rightmost unmatched $\overline{2}$ into a $\overline{3}$. Suppose to the contrary $e_2(w)$ has a subsequence $\overline{4}\cdots\overline{3}\cdots\overline{2}$ at indices $i < j < k$. The $e_2$ then must have been applied at $w_j = \overline{2}$, so $w_j$ is unmatched and $w_k=\overline{2}$ is matched. However, this implies there is some $t$ with $j < t < k$ such that $w_t=\overline{3}$, contradicting $w \in C$. 

Consider applying $e_3$ to $w \in C$ giving a forbidden subsequence $\overline{4}\cdots\overline{3}\cdots\overline{2}$ at indices $i < j < k$. The $e_3$ then must have been applied at $w_i=\overline{3}$. As $w_i$ is the rightmost unmatched $\overline{3}$ in $w$, there is some $t$ with $i<t<j$ such that $w_t=\overline{4}$, contradicting $w \in C$.

Next consider applying $e_2$ to $w \in E$ giving a forbidden subsequence  $\overline{3}\cdots\overline{2}$ at indices $i < j$. The $e_2$ then must have been applied at $w_i=\overline{2}$. As $w_i$ is the rightmost unmatched $\overline{2}$ in $w$, there is some $t$ with $i<t<j$ such that $w_t=\overline{3}$, contradicting $w \in E$.

Applying $e_3$ to $w \in E$ never gives a forbidden subsequence $\overline{3}\cdots\overline{2}$ since it must change a $\overline{3}$ to $\overline{4}$.

Finally, consider $w \in B3$. Applying $e_2$ either changes the $3$ to $2$, giving an element of $B2$, or it applies inside $B$. In the latter case $e_2$ changes some $w_j=\overline{2}$ to $\overline{3}$. Assume this results in a subsequence $\overline{4}\cdots \overline{3}\cdots 3$ at indices $i < j < k$. As $w_j$ is the rightmost unmatched $\overline{2}$ or $3$ in $w$, there is some $t$ with $j<t<k$ such that $w_t=\overline{3}$, contradicting $w\in B3$.
Assume applying $e_3$ to $w \in B3$ gives a forbidden subsequence $\overline{4}\cdots\overline{3}$ at indices $i < j$. The $e_3$ then must have been applied at $w_i=\overline{3}$. As $w_i$ is the rightmost unmatched $\overline{3}$ in $w$, there is some $t$ with $i<t<j$ such that $w_t=\overline{4}$, contradicting $w\in B3$.
\end{proof}

We next verify the possible transitions between states in \Cref{fig:long-degens-DFA-412p3p4} are as claimed.
\begin{itemize}
    \item Let $w \in 41A4$.
    \begin{itemize}
        \item[$e_1$:] does not apply.
        \item[$e_2$:] only applies to $A$ and preserves it.  
        \item[$e_3$:] always acts on the rightmost $4$, as it is unmatched, which gives a transition to a word in $41A3 \subset 41B3$.
    \end{itemize}
    \item Let $w \in 41B3$.
    \begin{itemize}
        \item[$e_1$:] does not apply.
        \item[$e_2$:] either is applied at the $3$, resulting in $41B2 \subset 41C2$, or it is applied inside $B$. In the latter case, the associativity of the tensor product of crystals guarantees that the action of $e_2$ on $w$ is compatible with the action on the subword from $B$. Thus we obtain a word in $41B3$ by the claim. 
        \item[$e_3$:] either is applied inside $B$, giving again a word in $41B3$, or it is applied at the $4$. The latter case occurs if and only if every $\overline{3}$ is matched with a $\overline{4}$ to its left. As $B$ avoids the subsequence $\overline{4}\cdots\overline{3}$, we must have that $w$ contains no $\overline{3}$, so $e_3(w) \in 31D3$.
    \end{itemize}
    \item Let $w \in 41C2$.
    \begin{itemize}
        \item[$e_1$:] does not apply, as the ending $2$ is always matched.
        \item[$e_2$:] is applied inside $C$, which is preserved by $e_2$ by the claim.
        \item[$e_3$:] either is applied inside $C$, which is preserved by $e_3$, or it is applied at the $4$. The latter case occurs if and only if every $\overline{3}$ is matched with a $\overline{4}$ on its left. As $C$ avoids the subsequence $\overline{4}\cdots\overline{3}\cdots\overline{2}$, we must have $w \in 41E2$ in this case, so $e_3(w) \in 31E2$.
    \end{itemize}
    \item Let $w \in 31D3$.
    \begin{itemize}
        \item[$e_1$:] does not apply.
        \item[$e_2$:] acts on the ending $3$ as $w$ does not contain a $2$ or a $\overline{3}$ that could match it, giving a word in $31D2\subset 31E2$. 
        \item[$e_3$:] does not apply.
    \end{itemize}
    \item Let $w\in 31E2$.
    \begin{itemize}
        \item[$e_1$:] does not apply, as the ending $2$ is always matched.
        \item[$e_2$:] either acts on $E$, which is preserved by the claim, or on the $3$. In the latter case, all $\overline{2}$'s must be matched to the left by $\overline{3}$'s. As $E$ contains no $\overline{3}\cdots\overline{2}$ subsequence, this can only be the case when $w$ contains no $\overline{2}$, thus application of $e_2$ gives a word in $21F2$.
        \item[$e_3$:] can only act on $E$, which is preserved by the claim.
    \end{itemize}
    \item Let $w\in 21F2$.
    \begin{itemize}
        \item[$e_1$:] acts on the initial $2$ as the ending $2$ is matched by the $1$ in the second position, giving a word in $11F2$.
        \item[$e_2$:] does not apply.
        \item[$e_3$:] can only act on $F$, which is preserved by $e_3$.
    \end{itemize}
    \item Let $w\in 11F2$.
    \begin{itemize}
        \item[$e_1$:] does not apply as the ending $2$ is matched by the $1$ in the second position.
        \item[$e_2$:] does not apply.
        \item[$e_3$:] can only act on $F$, which is preserved by $e_3$. \qedhere
    \end{itemize}
\end{itemize}
\end{proof}

\subsection{Proof of the growth algorithm theorem}\label{sec:growth-theorem-proof}

We now piece together the preceding results to establish the growth algorithm theorem.

\begin{proof}[Proof of {\Cref{thm:growth_algorithm}}]
  We treat the oscillating case, the extension to the general fluctuating case being immediate. \Cref{alg:growth} terminates in an hourglass plabic graph by \Cref{lem:some-growth-rule-applies} and \Cref{cor:growth_no_cycles}, using induction on lattice words of oscillating type under length and then $\tlex$-order. The same induction using \Cref{thm:good_degen} gives condition (ii), $\trip_\bullet(G) = \prom_\bullet(T)$ for $G = \mathcal{G}(T)$ with $w = L(T)$ of oscillating type. Condition (i) now follows from \Cref{lem:growth-rules-monotonic-mostly} and \Cref{cor:fully-reduced-iff-monotonic}. Conditions (iii) and (iv) are \Cref{thm:nice-unitriangularity}. The forward implication of (v) is \Cref{lem:descents-are-connected}. Conversely, suppose $b_i$ and $b_{i+1}$ of $G$ are connected to the same vertex, where $i \in [n-1]$. Their boundary labels must have the same sign, and they must increase by the $\tlex$-minimality condition (iv), which completes the argument for (v). 
\end{proof}

\section{The hourglass web basis}
\label{sec:bijection-and-basis}

We are now ready to prove our main theorems. \Cref{thm:main-bijection} establishes a bijection between move-equivalence classes of fully reduced hourglass plabic graphs and rectangular fluctuating tableaux. Here, the key facts are that a tableau is determined by the antiexcedances of its promotion permutations, while the separation labeling of the boundary edges of an hourglass plabic graph is determined by the antiexcedances of its trip permutations. Since hourglass plabic graphs are move-equivalent exactly when they share the same tuple of trip permutations, this means that the separation labeling of the boundary edges is a complete invariant of the move-equivalence class. In this way, a tableau and class of hourglass plabic graphs correspond when they share the same antiexcedance data.

\Cref{thm:unitriangular} shows, via the growth algorithm properties of \Cref{thm:growth_algorithm}, that the tensor invariant of a fully reduced hourglass plabic graph has a leading term given by the boundary word of its separation labeling. Moreover, the coefficient on this term is a unit in the ring $\mathbb{Z}[q, q^{-1}]$.

\Cref{thm:web-basis} obtains a rotation-invariant web basis for the invariant space $\Inv_{U_q(\fsl_4)} (\bigwedge_q^{\underline{c}} V_q)$ as the invariants of fully reduced hourglass plabic graphs. Here, we choose a rotation-invariant representative from each move-equivalence class; we call these representatives ``top'' elements because they are maximal under a natural lattice structure on the move-equivalence class.

\subsection{The main bijection}
\label{sec:main-bijection}
\Cref{thm:sep-label-promotion-equivariant} shows that the separation labeling determines a map $\mathcal{T}$ from contracted fully reduced hourglass plabic graphs to rectangular fluctuating tableaux of the same type. Since move-equivalent graphs have the same trip permutations (\Cref{prop:trip-of-underlying-plabic}), and thus the same separation labels on the boundary edges (\Cref{prop:sep-boundary-label-from-trips}), we can in fact consider this function as a map
\[\mathcal{T}: \crg(\underline{c})/{\sim} \to \rft(\underline{c}).\]
\Cref{thm:growth_algorithm} shows that growth rules define a map $\mathcal{G}: \rft(\underline{c})\to \crg(\underline{c})$, which induces a map
\[\ \mathcal{G}:\rft(\underline{c}) \to \crg(\underline{c})/{\sim}.\] \Cref{thm:main-bijection} below shows that $\mathcal{T}$ and $\mathcal{G}$ are mutually inverse bijections; this is our main combinatorial result, enabling the construction of our web basis.

\begin{theorem}
\label{thm:main-bijection}
The maps $\mathcal{T}:\crg(\underline{c})/{\sim} \to \rft(\underline{c})$ and $\mathcal{G}:\rft(\underline{c}) \to \crg(\underline{c})/{\sim}$ are mutually inverse bijections. Furthermore, this bijection satisfies $\trip_{\bullet}(G)=\prom_{\bullet}(\mathcal{T}(G))$. Consequently, it intertwines promotion and evacuation of tableaux with rotation and reflection of hourglass plabic graphs.
\end{theorem}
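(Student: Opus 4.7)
The plan is to establish the theorem in three successive steps: first, that $\mathcal{T} \circ \mathcal{G}$ is the identity; second, the core intertwining identity $\trip_\bullet(G) = \prom_\bullet(\mathcal{T}(G))$ for any fully reduced $G$; and third, mutual inverseness together with the equivariance claim. Throughout, \Cref{thm:hourglass-trips-determine-move-equivalence} (characterizing move-equivalence by $\trip_\bullet$) will be the main engine for converting statements about trip permutations into statements about move-equivalence classes of graphs.

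For the first step, I would start with $T \in \rft(\underline{c})$ and set $G = \mathcal{G}(T)$. By \Cref{thm:growth_algorithm}(ii), $\trip_\bullet(G) = \prom_\bullet(T)$, while \Cref{prop:sep-boundary-label-from-trips} writes $\bw(G)$ purely in terms of antiexcedances of $\trip_\bullet(G)$. A parallel description recovers $L(T)$ from $\prom_\bullet(T)$ via the same antiexcedance recipe, as may be extracted by combining \Cref{def:prom-perms} with \Cref{prop:prom-cyclically-monotonic}, whose cyclic monotonicity precisely encodes the row-membership of the $i$-th letter. Matching these two formulas gives $\bw(G) = L(T)$, hence $\mathcal{T}(G) = T$.

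For the second step, the key input is essentially already established inside the proof of \Cref{thm:sep-label-promotion-equivariant}: there it is shown that for any fully reduced $G$ with $T = \mathcal{T}(G)$, the value $\trip_a(G)(1)$ is exactly the first $a$-balance point of $\bw(G) = L(T)$, weakly after the first $(a-1)$-balance point. On the tableau side, \Cref{prop:first_balance} together with \Cref{def:prom-perms} identify this same balance point as $\prom_a(T)(1)$, so $\trip_\bullet(G)(1) = \prom_\bullet(\mathcal{T}(G))(1)$. To propagate this to an arbitrary index $i$, I would apply the identity at index $1$ to the rotated graph $\rot^{-(i-1)}(G)$; using that $\mathcal{T}$ intertwines rotation with promotion (a consequence of step one together with \Cref{thm:sep-label-promotion-equivariant}) and \Cref{thm:prom_perms}(iii) on the tableau side, the equality transports back to yield $\trip_a(G)(i) = \prom_a(\mathcal{T}(G))(i)$ at every index $i$.

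With these pieces in place, the bijection follows quickly: if $\mathcal{T}(G_1) = \mathcal{T}(G_2)$, then by step two $\trip_\bullet(G_1) = \trip_\bullet(G_2)$, so $G_1 \sim G_2$ by \Cref{thm:hourglass-trips-determine-move-equivalence}, giving injectivity of $\mathcal{T}$; combined with $\mathcal{T} \circ \mathcal{G} = \id$, we get mutually inverse bijections. The intertwining of promotion with rotation and of evacuation with reflection is then immediate from \Cref{thm:sep-label-promotion-equivariant}. The main obstacle I anticipate is the bookkeeping in step one, where one must be fully explicit about how $|L(T)_i|$ is determined by the antiexcedance statistics of $\prom_\bullet(T)$ at $i$, matching the formula in \Cref{prop:sep-boundary-label-from-trips}; while this essentially comes from \Cref{def:prom-perms} and the cyclic monotonicity in \Cref{prop:prom-cyclically-monotonic}, it is the place where the graph and tableau antiexcedance structures must be coupled, and it requires separate care for the two sign cases of $o_i$.
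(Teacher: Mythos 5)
Your proposal is correct and follows essentially the same route as the paper's proof: $\mathcal{T}\circ\mathcal{G}=\id$ via \Cref{thm:growth_algorithm} together with the antiexcedance descriptions of boundary words and lattice words, the identity $\trip_\bullet(G)=\prom_\bullet(\mathcal{T}(G))$ for arbitrary $G$ via \Cref{thm:sep-label-promotion-equivariant} applied to all rotations, and injectivity of $\mathcal{T}$ via \Cref{thm:hourglass-trips-determine-move-equivalence}. The only real divergence is in how two sub-facts are sourced: where you dig into the proof of \Cref{thm:sep-label-promotion-equivariant} for the balance-point identification of $\trip_a(G)(1)$ and propose to re-derive the antiexcedance description of $L(T)$ from \Cref{def:prom-perms} and \Cref{prop:prom-cyclically-monotonic}, the paper instead cites the companion paper's Theorem~6.12 (a tableau is determined by the antiexcedance sets of its promotion permutations) as a black box and combines it with the observation that a permutation is determined by the antiexcedance sets of all its cyclic conjugates. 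The step you flag as your ``main obstacle'' is thus precisely the content of that cited result, so your argument is sound but would need that derivation carried out (or the citation supplied) to be complete.
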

\begin{proof}
Let $T \in \rft(\underline{c})$ and $G=\mathcal{G}(T)$. By the properties of the growth algorithm established in \Cref{thm:growth_algorithm}, we have $G \in \crg(\underline{c})$ is a contracted fully reduced hourglass plabic graph satisfying $\trip_{\bullet}(G)=\prom_{\bullet}(T)$. By properties of separation labeling given in \Cref{thm:sep-label-promotion-equivariant}, $\mathcal{T}(G) \in \rft(\underline{c})$ is a fluctuating tableau. \Cref{prop:sep-boundary-label-from-trips} explains how the separation labeling of boundary edges is determined by antiexcedances of trip permutations, while \cite[Thm.~6.12]{fluctuating-paper} explains how fluctuating tableaux are determined by antiexcedances of their promotion permutations. Combining these facts, we have $\mathcal{T}(G)=T$, so the composition $\mathcal{T} \circ \mathcal{G}$ is the identity on the set $\rft(\underline{c})$ of fluctuating tableaux. 

Now let $G \in \crg(\underline{c})$ and let $T=\mathcal{T}(G)$. By \Cref{thm:sep-label-promotion-equivariant}, $T \in \rft(\underline{c})$ is a fluctuating tableau. By \Cref{thm:sep-label-promotion-equivariant}, we therefore have that $\mathcal{T}(\rot^i(G))=\mathcal{P}^i(T)$ for all $i$. Let $\sigma=(1 2 \ldots n) \in \mathfrak{S}_n$. It is easy to see that if $\aexc(\sigma^i \pi \sigma^{-i}) = \aexc(\sigma^i \pi' \sigma^{-i})$ for all $i$, then $\pi=\pi'$. Thus by \cite[Thms.~6.7 \& 6.12]{fluctuating-paper} and \Cref{prop:sep-boundary-label-from-trips}, we have $\prom_{\bullet}(T)=\trip_{\bullet}(G)$. Let $G'=\mathcal{G}(T)$; by \Cref{thm:growth_algorithm}, we have $\prom_{\bullet}(T)=\trip_{\bullet}(G')$. Thus by \Cref{thm:hourglass-trips-determine-move-equivalence}, showing that trip permutations determine move-equivalence classes, we have $G' \sim G$, so that $\mathcal{G} \circ \mathcal{T}$ is the identity on $\crg(\underline{c})/{\sim}$.

We have now shown that $\mathcal{T}$ and $\mathcal{G}$ are mutually inverse bijections satisfying $\trip_{\bullet}=\prom_{\bullet}$. The rest of the theorem then follows from \Cref{thm:sep-label-promotion-equivariant}.
\end{proof}

\begin{remark}
    \Cref{thm:growth_algorithm}(iv) and (v) imply that under the bijection from \Cref{thm:main-bijection}, descents of tableau correspond to boundary vertices $b_i$ and $b_{i+1}$ sharing an internal vertex. 
\end{remark}

\subsection{Unitriangularity from separation words}

We now make precise the conversion from fully reduced hourglass plabic graphs to webs. Note that the following tagging convention only affects $[W]_q$ by a sign.

\begin{definition}
\label{def:hourglass-tagging-convention}
Let $W \in \crg(\underline{c})$. We view $W$ as a web of type $\underline{c}$ (see \Cref{def:U-q-web}) by considering each $m$-hourglass as an edge of multiplicity $m$, and by tagging the vertices $v$ of $W$ as follows: 
\begin{itemize}
    \item If $v$ is incident to a 2-hourglass and two simple edges, place the tag between the simple edges.
    \item If $v$ is incident to four simple edges, then the two $\trip_2$-strands passing through $v$ divide the disk into four sectors (they do not double-cross since $W$ is fully reduced). Place the tag in the sector containing the base face of $W$.
    \item If $v$ is incident to a boundary 3-hourglass and a simple edge, place the tag on the side of the $\trip_2$-strand through the simple edge which contains the base face.
    \item If $v$ is incident to two boundary 2-hourglasses, place the tag on the side of the base face.
\end{itemize}
\noindent Note that there is no choice when $v$ is incident to a boundary $4$-hourglass.
\end{definition}

The following unitriangularity result is analogous to \cite[Thm.~2]{Khovanov-Kuperberg} (see also \cite[Thm.~3.2]{LACIM}). 

\begin{theorem}\label{thm:unitriangular}
  Let $G \in \crg(\underline{c})$. Then the separation word $w = \bw(G)$ is the unique $\tlex$-minimal boundary word among all proper labelings of $G$. Thus
  \begin{equation}\label{eq:unitriangular}
    [G]_q = \pm q^a\,x_w + \sum_{v >_{\tlex} w} d_w^v(q)\,x_v
  \end{equation}
  for some $a \in \mathbb{Z}$ and $d_w^v(q) \in \mathbb{Z}[q, q^{-1}]$, where $v$ ranges over words of type $\underline{c}$.
\end{theorem}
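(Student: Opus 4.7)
The plan is to reduce the claim to the output of the growth algorithm and then transfer the property along the move-equivalence class. Given $G \in \crg(\underline{c})$, I would set $T = \mathcal{T}(G)$ and $G^\circ = \mathcal{G}(T)$, so that $G \sim G^\circ$ by \Cref{thm:main-bijection}. Since trip permutations are invariant under moves (\Cref{prop:trip-of-underlying-plabic}) and separation labels on boundary edges are determined by antiexcedances of trip permutations (\Cref{prop:sep-boundary-label-from-trips}), we get $\bw(G) = \bw(G^\circ) = w$. By \Cref{thm:growth_algorithm}(iii)--(iv), the growth labeling $\Gamma(G^\circ)$ is the unique proper labeling of $G^\circ$ with boundary word $w$, and every other proper labeling of $G^\circ$ has boundary strictly $\tlex$-greater than $w$.

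To propagate this property from $G^\circ$ to $G$, I would invoke \Cref{cor:benzene-then-square}: $G$ and $G^\circ$ are connected by a sequence of benzene moves followed by a sequence of square moves. For the benzene steps, the linearized diagram produced by the growth algorithm carries its growth labeling as a nice labeling (\Cref{rem:growth_linearized}); by \Cref{lem:nice-benzenes}, each Yang--Baxter move (equivalently, benzene move via \Cref{thm:6-vertex-hourglass-correspondence}) again yields a linearized diagram with a nice labeling, and \Cref{thm:nice-unitriangularity} then supplies the desired unique $\tlex$-minimal proper labeling property throughout the benzene orbit. For the square steps, one argues by a direct case analysis of the configurations in \Cref{fig:main-hourglass-moves}: each square move induces a boundary-word-preserving bijection between proper labelings of the two local configurations with fixed external boundary labels, so both the uniqueness of the proper labeling with boundary $w$ and the $\tlex$-minimality of $w$ propagate across square moves.

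With the first claim established, the expansion \eqref{eq:unitriangular} follows from \Cref{thm:q.Laplace}: the unique proper labeling $\phi_0$ of $G$ with boundary $w$ contributes the single monomial $\sgn_G(w) \cdot q^{\wgt_G(\phi_0)}\, x_w = \pm q^a\, x_w$ with $\mathbb{Z}[q, q^{-1}]$-unit coefficient, while all other monomials $x_v$ in the expansion arise from proper labelings with $v >_{\tlex} w$. I expect the main obstacle to be the square move step of the transfer: the linearized diagram machinery handles benzene moves cleanly, but square moves typically destroy the linearized structure and so require a separate combinatorial argument. This amounts to enumerating, for each square move configuration in \Cref{fig:main-hourglass-moves} together with its color reversal, the proper labelings of the two local configurations with prescribed external labels and matching them up; the proper labeling constraints at each interior vertex should make the requisite bijection tractable, though the case analysis will be lengthy.
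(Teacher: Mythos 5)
Your proposal is correct and follows the paper's proof almost exactly for two of its three steps: the base case is \Cref{thm:growth_algorithm}(iii)--(iv) applied to $G^\circ = \mathcal{G}(\mathcal{T}(G))$, and the benzene step is handled by \Cref{rem:growth_linearized}, \Cref{lem:nice-benzenes}, and \Cref{thm:nice-unitriangularity}, with \Cref{cor:benzene-then-square} ordering the moves. Where you diverge is the square-move step, which you correctly flag as the main remaining obstacle and propose to settle by a case-by-case boundary-preserving bijection between proper labelings of the two local configurations. The paper avoids this entirely: a square move does not change the invariant at all, $[G']_q = [G'']_q$ (this is \cite[Eq.~2.10]{Cautis-Kamnitzer-Morrison}, already invoked in the proof of \Cref{thm:web-basis}), and since square moves preserve $\trip_\bullet$ and hence $\bw$, the expansion \eqref{eq:unitriangular} transfers verbatim. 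The uniqueness statement also transfers, because by \Cref{thm:q.Laplace} the coefficient of $x_v$ is $\sgn\cdot\sum_{\phi:\partial(\phi)=v} q^{\wgt(\phi)}$, a sum of powers of $q$ with positive integer coefficients; equality of the polynomials therefore forces the sets of boundary words of proper labelings to coincide and forces the $\tlex$-minimal one to be achieved by a unique labeling (a coefficient $\pm q^a$ cannot arise from two or more labelings). Your direct bijection would also work --- indeed the polynomial identity guarantees that, for each fixed assignment of external labels, the weight-graded counts of internal completions agree on both sides, so the local matching you describe must exist --- but it amounts to re-deriving a special case of the CKM square relation combinatorially, at the cost of a lengthy case check that the paper's one-line appeal to $[G']_q=[G'']_q$ renders unnecessary.
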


\begin{proof}
  If $G = \mathcal{G}(T)$ arises from the growth algorithm, the result follows from \eqref{eq:q.Laplace} and \Cref{thm:growth_algorithm}(iii)--(iv). (The tagging only results in a global sign change.) If $G'$ is benzene-equivalent to such $G$, the result holds for $G'$ by \Cref{thm:nice-unitriangularity} and \Cref{lem:nice-benzenes}. Finally, if $G''$ is square-move-equivalent to such $G'$, then $[G']_q = [G'']_q$. By \Cref{cor:benzene-then-square}, we may apply benzene moves before square moves; hence, by \Cref{thm:main-bijection}, this covers all $G \in \crg(\underline{c})$.
\end{proof}

\begin{remark}
Note that for $G$ and $G'$ differing by a benzene move, the invariants $[G]_q$ and $[G']_q$ are not necessarily equal (see the relation in \Cref{fig:benzene-relation}), however, since $\bw(G)=\bw(G')$, they have the same $\tlex$-leading term by \Cref{thm:unitriangular}.
\end{remark}

\subsection{Distinguished representatives and the web basis}

The bijection from \Cref{thm:main-bijection} implies that $|\crg(\underline{c})/{\sim}| = |\rft(\underline{c})|$. By \Cref{prop:fluc-tab-give-dimension}, this quantity is also equal to $\dim_{\mathbb{C}} \Inv(\bigwedge\nolimits^{\underline{c}} V)=\dim_{\mathbb{C}(q)}\Inv_{U_q(\fsl_4)}(\bigwedge_q\nolimits^{\underline{c}} V_q)$. In this section, we obtain our ${U_q(\fsl_4)}$-web basis $\mathscr{B}_q^{\underline{c}}$ by producing a distinguished web invariant from each move-equivalence class of $\crg(\underline{c})$.

\begin{definition}
A \emph{benzene face} in an hourglass plabic graph is a face that admits a benzene move. We say a benzene face is \emph{clockwise} (resp.\ \emph{counterclockwise}) if in each hourglass edge the white vertex precedes the black vertex in clockwise (resp.\ counterclockwise) order; equivalently, if the corresponding triangle in the \symm six-vertex configuration is oriented clockwise (resp.\ counterclockwise). Let $\mathcal{B}$ be a benzene-move-equivalence class of contracted fully reduced hourglass plabic graphs. For $x, y \in \mathcal{B}$,  write $x \precdot y$ and say $y$ \emph{covers} $x$ when $y$ is obtained by applying a benzene move to a clockwise benzene face in $x$, resulting in a counterclockwise benzene face in $y$. Let $\preceq$ be the reflexive, transitive closure of this relation on~$\mathcal{B}$.
\end{definition}

\begin{proposition}
\label{prop:benzene-max-exists}
  The relation $(\mathcal{B}, \preceq)$ is a partially ordered set with a unique maximum, which may be reached by starting at an arbitrary element and applying covering relations arbitrarily until no more apply.
\end{proposition}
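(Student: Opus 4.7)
The plan is to transport the problem via $\varphi$ (from \Cref{thm:6-vertex-hourglass-correspondence}) to well-oriented symmetrized six-vertex configurations, where benzene moves become Yang--Baxter moves and clockwise (respectively, counterclockwise) benzene faces correspond to clockwise (respectively, counterclockwise) oriented $3$-cycles. A cover $x \precdot y$ then corresponds to applying one Yang--Baxter move at a clockwise triangle, producing a counterclockwise one. Working at this level makes the local structure of the moves transparent.

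First I would construct a strict monovariant $h \colon \mathcal{B} \to \mathbb{Z}_{\geq 0}$ with $h(y) > h(x)$ whenever $x \precdot y$. Following \Cref{remark:benzene}, each element of $\mathcal{B}$ determines a perfect matching---given by the hourglass edges---on the subgraph of simple-degree-$3$ vertices, and a benzene move is precisely a hexagon flip on this dimer cover. Such hexagon flips on a planar bipartite region admit a standard integer-valued height function that changes by $\pm 1$ at the flipped hexagon, increasing precisely when the flip adds a box to the associated plane partition; one checks directly that this sign convention matches clockwise-to-counterclockwise. Summing this height function over faces yields the desired $h$. Since $\mathcal{B}$ is finite, $h$ is bounded, and from this antisymmetry of $\preceq$ (hence that it is a partial order) and termination of any sequence of covers follow at once.

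Next I would establish the local confluence (diamond) property: if $x \precdot y_1$ by flipping a clockwise triangle $T_1$ and $x \precdot y_2$ by a distinct clockwise triangle $T_2$, then there exists $z \in \mathcal{B}$ with $y_1 \preceq z$ and $y_2 \preceq z$. Since $T_1 \neq T_2$, they are either edge-disjoint (in which case the two Yang--Baxter moves commute and applying both gives the common $z$), or they share exactly one edge (they cannot share more by full reducedness, and four pairwise-crossing $\trip_2$-strands are forbidden by \Cref{lem:p1-p2-implies-p3}). In the shared-edge case, I would carry out a short case analysis of the possible orientations, using that both triangles are clockwise and that each edge of a well-oriented configuration is directed: the analysis shows that after flipping $T_1$, the triangle $T_2$ persists in $y_1$ as a clockwise triangle (with one vertex relocated by the move), so we may apply a second cover, and by symmetry the two paths meet at a common $z$.

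Combining the strict monovariant $h$ (which gives termination and antisymmetry) with local confluence, Newman's lemma upgrades this to global confluence of the rewriting system on $\mathcal{B}$. Therefore $(\mathcal{B}, \preceq)$ has a unique maximum, and any greedy sequence of covers starting at an arbitrary element terminates at that maximum. The main obstacle is the shared-edge case of local confluence: verifying by hand that all possible local configurations of two clockwise, edge-adjacent triangles in a well-oriented symmetrized six-vertex diagram admit a common successor; the key leverage is \Cref{prop:trip2_6V} and its consequences constraining how $\trip_2$-strands interact, which keep the enumeration finite and tractable.
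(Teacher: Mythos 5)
Your overall strategy — a strict monovariant for termination plus local confluence plus Newman's lemma — matches the paper's. Your height-function monovariant is a plausible alternative to the paper's termination argument; the paper instead observes directly that between any two benzene moves at a fixed face $F$, a move must be applied at every face adjacent to $F$, and since boundary faces never admit moves this forces each face to be flipped only finitely often.

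The genuine gap is in your treatment of the diamond property. You flag the shared-edge case of two clockwise triangles $T_1, T_2$ as the ``main obstacle'' and sketch a case analysis purporting to show that $T_2$ survives as a clockwise triangle after flipping $T_1$. In fact this case is vacuous: two distinct clockwise benzene faces (equivalently, clockwise-oriented triangles in the six-vertex picture) can never share an edge. The shared edge $e$ carries a single fixed direction; traversing the boundaries of $T_1$ and $T_2$ both clockwise would cross $e$ in opposite directions, since $T_1$ and $T_2$ lie on opposite sides of $e$, so at most one of the two can be clockwise. This is precisely the paper's observation that the two faces ``by the orientation condition, must be non-adjacent,'' and it makes the diamond property immediate: the two covering moves are at edge-disjoint faces and commute. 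As written, your sketch of the shared-edge analysis is also internally shaky — a Yang--Baxter move at $T_1$ relocates a crossing that is a vertex of the shared edge, so it is not clear $T_2$ would remain an oriented $3$-cycle at all, let alone a clockwise one — but once you notice the orientation obstruction the whole case disappears, which is the step you are missing.
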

\begin{proof}
Consider a sequence of covers in $(\mathcal{B},\preceq)$, corresponding to benzene moves on faces $F_1,F_2,\ldots$. Between any two benzene moves at the same face $F$, a benzene move must be applied to all faces $F'$ sharing an edge with $F$. Since no moves can be applied to boundary faces, this implies that $F$ appears finitely many times in the sequence.  This guarantees antisymmetry, so that $\preceq$ is a partial order. 

  Now, $\mathcal{B}$ possesses the \emph{diamond property}: if $x \precdot y_0$ and $x \precdot y_1$, then there is some $z \in \mathcal{B}$ with $y_0 \precdot z$ and $y_1 \precdot z$. Indeed, $y_0$ and $y_1$ are obtained from $x$ by benzene moves on two distinct faces which, by the orientation condition, must be non-adjacent, so the two benzene moves commute. Newman's Lemma \cite[Thm.~1]{Newman} asserts that this diamond property implies the existence of a unique maximal element.
\end{proof}

\begin{remark}
Standard techniques (see e.g. \cite[\S 3.1]{Kenyon}) can be used to prove that $\mathcal{B}$ is in fact a distributive lattice, but we will not use this fact.
\end{remark}

\begin{definition}
\label{def:top-fully-reduced}
A contracted fully reduced hourglass plabic graph $G$ that is maximal in its benzene-move-equivalence class (in the sense of \Cref{prop:benzene-max-exists}) is called \emph{top fully reduced}. Equivalently, $G \in \crg(\underline{c})$ is top fully reduced if and only if it has no clockwise benzene faces.
\end{definition}

\begin{theorem}
\label{thm:web-basis}
The collection 
\[
\mathscr{B}_q^{\underline{c}} \coloneqq \{[W]_q : \text{ $W$ a top fully reduced hourglass plabic graph of type $\underline{c}$}\}
\]
is a rotation-invariant web basis for $\Inv_{U_q(\fsl_4)}(\bigwedge_q\nolimits^{\underline{c}} V_q)$.
\end{theorem}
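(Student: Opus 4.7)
The plan is to assemble the results of the paper in a short chain. By \Cref{prop:benzene-max-exists}, every benzene-move-equivalence class has a unique maximum, so each move-equivalence class in $\crg(\underline{c})/{\sim}$ contains at least one top fully reduced representative. For each class $[G]$ the separation word $w_{[G]} := \bw(W)$ with $W \in [G]$ is well-defined, since by \Cref{prop:trip-of-underlying-plabic} and \Cref{prop:sep-boundary-label-from-trips} it depends only on the tuple of trip permutations, which are move invariants; and by \Cref{thm:main-bijection} the assignment $[G] \mapsto w_{[G]}$ is injective (it records the lattice word of $\mathcal{T}([G]) \in \rft(\underline{c})$).

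Next I would establish linear independence via unitriangularity: by \Cref{thm:unitriangular}, for every top $W$ of class $[G]$ the invariant $[W]_q$ has $\tlex$-leading monomial $\pm q^{a} x_{w_{[G]}}$ with unit coefficient in $\mathbb{Z}[q,q^{-1}]$. Fixing one top representative $W_{[G]}$ per move-equivalence class, the invariants $\{[W_{[G]}]_q\}$ have pairwise distinct leading monomials and are therefore linearly independent. The cardinality of this subfamily is $|\crg(\underline{c})/{\sim}| = |\rft(\underline{c})|$ by \Cref{thm:main-bijection}, which equals $\dim_{\mathbb{C}(q)} \Inv_{U_q(\fsl_4)}(\bigwedge_q\nolimits^{\underline{c}} V_q)$ by \Cref{prop:fluc-tab-give-dimension}, so this subfamily is already a basis.

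To finish I would show that $\mathscr{B}_q^{\underline{c}}$ coincides with this basis, namely that any two top representatives $W, W'$ of the same move-equivalence class satisfy $[W]_q = [W']_q$. Since both invariants expand in the basis $\{[W_{[G]}]_q\}$ with the same $\tlex$-leading monomial and unit leading coefficient, $[W']_q$ and $[W]_q$ differ by at most a unit in $\mathbb{Z}[q,q^{-1}]$. Using \Cref{cor:benzene-then-square}, I would connect $W$ to $W'$ by a sequence of benzene moves followed by a sequence of square moves; since square moves preserve the invariant exactly (as recorded in the proof of \Cref{thm:unitriangular}) and the benzene portion moves within a single benzene-equivalence class whose top is unique by \Cref{prop:benzene-max-exists}, the unit is forced to be $1$. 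Combined with the preceding step this gives $|\mathscr{B}_q^{\underline{c}}| = \dim \Inv$ and hence that $\mathscr{B}_q^{\underline{c}}$ is a basis.

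Rotation-invariance then follows by symmetry. Rotation of the disk sends $\crg(\underline{c})$ bijectively to $\crg((c_2,\ldots,c_n,c_1))$, preserves full reducedness because the forbidden $4$-cycle condition is rotation-symmetric, and preserves the top condition because clockwise/counterclockwise orientations of benzene faces are intrinsic to the face and independent of the position of the base face. On invariants, rotation realizes the cyclic shift $\shuffle$ up to sign, so $\shuffle(\mathscr{B}_q^{\underline{c}}) = \mathscr{B}_q^{(c_2,\ldots,c_n,c_1)}$. I expect the main obstacle to be the uniqueness-of-invariant step: ruling out a nontrivial unit scalar between two top representatives requires careful bookkeeping through the benzene/square sequence and correct handling of the benzene skein relation, whereas linear independence, the count, and rotation-invariance are quick consequences of earlier theorems.
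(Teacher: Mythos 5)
Your proposal follows the paper's proof of \Cref{thm:web-basis} essentially step for step: existence of top representatives via \Cref{prop:benzene-max-exists}, linear independence from the distinct $\tlex$-leading terms supplied by \Cref{thm:unitriangular}, the dimension count from \Cref{thm:main-bijection} and \Cref{prop:fluc-tab-give-dimension}, equality of invariants of two top representatives from \Cref{cor:benzene-then-square} together with the invariance of $[\cdot]_q$ under square moves, and rotation-invariance from the fact that the top condition is intrinsic to the graph.

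One auxiliary claim in your third paragraph is false and should be deleted: knowing that $[W]_q$ and $[W']_q$ have the same $\tlex$-leading monomial with unit coefficients does \emph{not} imply they differ by a unit of $\mathbb{Z}[q,q^{-1}]$ --- two elements with the same leading term can differ in lower-order terms (compare $x_w$ with $x_w + x_v$). The paper does not argue ``up to a unit'' at all; it concludes exact equality $[W]_q=[W']_q$ directly from the decomposition of \Cref{cor:benzene-then-square} and the fact that square moves preserve the invariant on the nose, with the two endpoints both being maximal in their benzene classes forcing the benzene portion of the connecting sequence to contribute nothing (benzene moves do change the invariant, per \Cref{fig:benzene-relation}, so this is where the ``both top'' hypothesis is genuinely used, exactly as you flag). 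Dropping the unit detour and phrasing the final step as ``the benzene portion is empty'' rather than ``the unit is forced to be $1$'' brings your argument in line with the paper's.
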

\begin{proof}
Let $\mathcal{C}$ be a move-equivalence class of graphs from $\crg(\underline{c})$. By \Cref{cor:benzene-then-square}, square moves and benzene moves on fully reduced graphs commute: any $G,G' \in \mathcal{C}$ may be connected by first applying a sequence of benzene moves and then a sequence of square moves. By \cite[Eq.~2.10]{Cautis-Kamnitzer-Morrison}, square moves do not affect the web invariant; thus any top fully reduced hourglass plabic graphs $W,W' \in \mathcal{C}$ satisfy $[W]_q=[W']_q$. We write $[\mathcal{C}]_q$ for this common value.

By \Cref{thm:q.Laplace} and \Cref{thm:unitriangular}, the invariants $[\mathcal{C}]_q$ for $\mathcal{C} \in \crg(\underline{c})/{\sim}$ are linearly independent, since they have distinct leading terms. Since 
\[
|\crg(\underline{c})/{\sim}| =|\rft(\underline{c})|=\dim_{\mathbb{C}(q)} \Inv_{U_q(\fsl_4)}(\bigwedge_q\nolimits^{\underline{c}} V_q)
\]
by \Cref{thm:main-bijection} and \Cref{prop:fluc-tab-give-dimension},  the invariants $[\mathcal{C}]_q$ form a web basis.

Rotation invariance follows by observing that the rotation of a top fully reduced graph is again top fully reduced.
\end{proof}

\begin{corollary}
  For any complete collection of representatives $\mathscr{R}$ for $\crg(\underline{c})/{\sim}$, the corresponding invariants $[G]_q$ for $G \in \mathscr{R}$ form a $\mathbb{C}(q)$-basis for $\Inv_{U_q(\fsl_4)}(\bigwedge\nolimits_q^{\underline{c}} V_q)$.
\end{corollary}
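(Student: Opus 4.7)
The plan is to deduce this directly from the unitriangularity of \Cref{thm:unitriangular} together with a dimension count, without appealing to any special choice of representative. The key observation is that the separation word $\bw(G)$ is actually an invariant of the move-equivalence class of $G$, so distinct classes yield distinct $\tlex$-leading monomials in \eqref{eq:unitriangular}.

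More precisely, I would first argue that $\bw$ descends to a well-defined function on $\crg(\underline{c})/{\sim}$. By \Cref{prop:trip-of-underlying-plabic}(a), move-equivalent hourglass plabic graphs have identical trip permutations, and by \Cref{prop:sep-boundary-label-from-trips} the separation label on each boundary edge is determined purely by antiexcedance data of $\trip_1(G), \trip_2(G), \trip_3(G)$. Hence $\bw(G)$ depends only on the class $[G] \in \crg(\underline{c})/{\sim}$. Moreover, by \Cref{thm:main-bijection}, the bijection $\mathcal{T}$ sends a class to the fluctuating tableau whose lattice word is $\bw(G)$, so $G \mapsto \bw(G)$ takes pairwise distinct values on distinct move-equivalence classes.

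Given an arbitrary collection of representatives $\mathscr{R}$, I would then invoke \Cref{thm:unitriangular} to write, for each $G \in \mathscr{R}$,
\[
  [G]_q = \pm q^{a_G}\, x_{\bw(G)} + \sum_{v >_{\tlex} \bw(G)} d_{\bw(G)}^v(q)\, x_v,
\]
where the leading coefficient $\pm q^{a_G}$ is a unit in $\mathbb{Z}[q,q^{-1}]$. Since the leading monomials $x_{\bw(G)}$ are pairwise distinct as $G$ ranges over $\mathscr{R}$, a standard unitriangularity argument (ordering $\mathscr{R}$ so that $\bw$-values increase in $\tlex$-order) shows that $\{[G]_q : G \in \mathscr{R}\}$ is linearly independent over $\mathbb{C}(q)$. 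Finally, combining \Cref{thm:main-bijection} with \Cref{prop:fluc-tab-give-dimension} gives
\[
  |\mathscr{R}| \;=\; |\crg(\underline{c})/{\sim}| \;=\; |\rft(\underline{c})| \;=\; \dim_{\mathbb{C}(q)} \Inv_{U_q(\fsl_4)}\!\Bigl(\bigwedge\nolimits_q^{\underline{c}} V_q\Bigr),
\]
so linear independence upgrades to a basis.

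There is no real obstacle here: once \Cref{thm:unitriangular}, \Cref{thm:main-bijection}, and \Cref{prop:fluc-tab-give-dimension} are in hand, the corollary is essentially immediate. The only point that requires any care is confirming that $\bw$ is constant on move-equivalence classes and injective across them, which is why I would dispatch that verification first.
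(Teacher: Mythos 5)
Your proposal is correct and is essentially the paper's own argument: linear independence from the unitriangularity of \Cref{thm:unitriangular} (with distinct $\tlex$-leading monomials across move-equivalence classes, since $\bw$ is determined by $\trip_\bullet$ and hence constant on classes), plus the dimension count from \Cref{thm:main-bijection} and \eqref{eq:fluc-tab-give-dimension}. The extra verification you flag—that $\bw$ is a class invariant and injective across classes—is a worthwhile elaboration of what the paper leaves implicit, but it is the same route.
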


\begin{proof}
  They are linearly independent by \Cref{thm:unitriangular}, and there are the right number by \eqref{eq:fluc-tab-give-dimension} and \Cref{thm:main-bijection}.
\end{proof}

We thank Greg Kuperberg for suggesting \Cref{thm:unitriangular.canonical} towards making connections with the dual canonical basis.

\begin{corollary}\label{thm:unitriangular.canonical}
  Let $E = ([x_v] [G_w]_q)$ be a matrix whose rows are indexed by words $v$ of type $\underline{c}$ listed in decreasing $\tlex$ order and whose columns are indexed by $w \in \rft(\underline{c})$ listed in decreasing $\tlex$ order, where $G_w \in \crg(\underline{w})$ with $\bw(G) = w$. Then $E$ is in row echelon form, and each column has a pivot of the form $\pm q^a$ for $a \in \mathbb{Z}$.
\end{corollary}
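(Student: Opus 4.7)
The plan is to deduce the statement directly from \Cref{thm:unitriangular}, which already contains all the substantive content; only a matrix-level translation is required.

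First, for each column index $w \in \rft(\underline{c})$, \Cref{thm:unitriangular} provides
\[
[G_w]_q = \pm q^{a_w}\, x_w + \sum_{v >_{\tlex} w} d^v_w(q)\, x_v
\]
for some $a_w \in \mathbb{Z}$ and $d^v_w(q) \in \mathbb{Z}[q, q^{-1}]$. Reading off coefficients, this yields two matrix-level facts: the entry $E_{w, w}$ equals $\pm q^{a_w}$, and $E_{v, w} = 0$ for every word $v$ of type $\underline{c}$ with $v <_{\tlex} w$. Since every tableau $w \in \rft(\underline{c})$ is itself a lattice word of type $\underline{c}$, the label $w$ appears among the row indices, so the entry $E_{w, w}$ is a genuine matrix position carrying a unit in $\mathbb{Z}[q, q^{-1}]$.

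Next, I would read off the pivot structure. In column $w$, the entry $E_{w, w}$ serves as the pivot: every entry directly below it (in the decreasing-$\tlex$ ordering of rows) vanishes by the zero condition above. As $w$ strictly decreases in $\tlex$ from left to right across columns, the pivot row $v = w$ also strictly decreases, so distinct columns give distinct pivot rows arranged in a staircase that moves down-and-right. Moreover, in the pivot row $v = w$, every entry $E_{w, w'}$ at a column $w' >_{\tlex} w$ (i.e., a column strictly to the left of column $w$) satisfies $v = w <_{\tlex} w'$ and therefore vanishes by the same unitriangularity; hence the leading nonzero entry of the pivot row $w$ sits exactly at column $w$.

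These observations together are precisely the definition of row echelon form, with pivots at $\{(w, w) : w \in \rft(\underline{c})\}$ and pivot values $\pm q^{a_w}$ of the advertised form. The only potential obstacle is the careful bookkeeping of the decreasing-$\tlex$ conventions simultaneously on rows (indexed by all words of type $\underline{c}$) and columns (indexed by the subset $\rft(\underline{c})$ of tableaux), and the verification that the many non-lattice-word rows do not disrupt the echelon structure; this is immediate from the unitriangular zero pattern, which forces any nonzero entry in a non-lattice-word row $v$ to occur at a column $w \leq_{\tlex} v$, hence weakly to the right of the corresponding staircase position.
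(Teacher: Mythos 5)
Your proposal is correct and takes essentially the same approach as the paper: the paper's entire proof is a one-line appeal to \Cref{thm:unitriangular}, and you have simply written out the matrix-level bookkeeping (unit diagonal entries $E_{w,w}=\pm q^{a_w}$, vanishing of $E_{v,w}$ for $v<_{\tlex}w$, and the resulting staircase of pivots) that the paper leaves to inspection. The only caveat is one inherited from the corollary's own phrasing rather than from your argument: ``row echelon form'' must be read loosely as this staircase-of-unit-column-pivots structure, since a non-lattice-word row lying between two pivot rows can have its leading entry in the same column as the pivot row below it (already visible for $r=2$, $\underline{c}=(1,1)$), so the strict textbook condition on leading entries of consecutive rows need not hold.
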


\begin{proof}
  This follows by inspecting the matrix $E$ and using \Cref{thm:unitriangular}.
\end{proof}

In \Cref{sec:reduction-rules} we give reduction relations which allow one to express the invariant of a (possibly non-planar) tensor diagram in our web basis. This is of particular interest in the computation of quantum link invariants. Moreover, these relations allow us to prove a weaker version of \Cref{thm:web-basis} in a way that avoids relying on the growth rules of \Cref{sec:forwards-map}; see \Cref{sec:bypassing_growth_rules} for details.

\section{Uncrossing and reduction relations}
\label{sec:reduction-rules}

\subsection{Reduction relations}
\label{sec:reduction_relations}
Using the braiding of the representation category for $U_q(\fsl_r)$ (see \cite[\S 6]{Cautis-Kamnitzer-Morrison}), one may assign invariants $[X]_q$ to \emph{tensor diagrams} $X$, variants of webs in which edges may cross over one another. When $X$ is the projection of a (colored) knot, link, or tangle, $[X]_q$ yields quantum knot invariants (e.g., \cite{Jones, Kauffman,Turaev,Khovanov:Jones,Khovanov:sl3,Morrison.Nieh}) of great interest. The reader may take the relations in \Cref{fig:uncrossing-relation} (adapted to our conventions from \cite[Cor.~6.2.3]{Cautis-Kamnitzer-Morrison}) as defining $[X]_q$ in the case $r=4$. 

In \Cref{alg:reduction-algorithm} below, we describe how the invariant $[X]_q$ of an arbitrary $U_q(\fsl_4)$-tensor diagram may be expanded in the web basis $\mathscr{B}_q^{\underline{c}}$ of \Cref{thm:web-basis}. The relations appearing in this section are drawn from \cite{Cautis-Kamnitzer-Morrison} and adapted to our conventions. As noted by Kuperberg \cite[\S 8.3]{Kuperberg}, such reduction algorithms to a web basis allow for the efficient computation of quantum link and tangle invariants. In the case $\underline{c}=(1,\ldots,1)$ and $q=1$, where the representation of $\mathfrak{S}_n$ on $\Inv(\bigwedge\nolimits^{\underline{c}} V)$ via permutation of tensor factors is a rectangular Specht module, the uncrossing relations also allow for the computation of the actions of the simple reflections in the basis $\mathscr{B}^{\underline{c}}$.

\begin{figure}[tbh]
    \centering
    \includegraphics[scale=1.2]{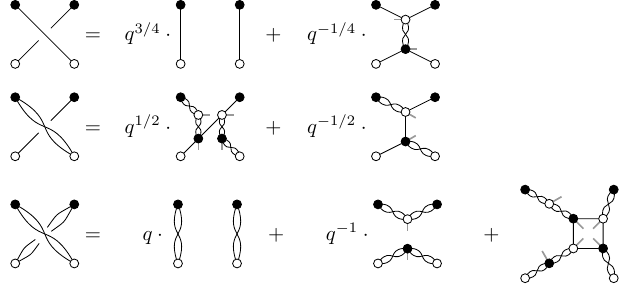}
    \caption{The uncrossing relations for invariants of $U_q(\fsl_4)$-tensor diagrams.}
    \label{fig:uncrossing-relation}
\end{figure}

Let $[k]_q \coloneqq (q^k-q^{-k})/(q-q^{-1})$.

\begin{algorithm}[\textsc{Reduction algorithm}]
\label{alg:reduction-algorithm}
Let $X$ be any $U_q(\fsl_4)$-tensor diagram.
\begin{enumerate}
    \item[(1)] Apply uncrossing relations (\Cref{fig:uncrossing-relation}) to $X$ to remove any crossings.
    \item[(2)] For each web appearing in this expansion, apply relations from \Cref{fig:bad-4-cycle-relation} to decompose all forbidden $4$-cycles.
    \item[(3)] For each web appearing in this expansion, remove any loops using the relations from \Cref{fig:loop-relation}.
    \item[(4)] For each web $W$ appearing in this expansion, if $W$ is not fully reduced, apply the benzene relation of \Cref{fig:benzene-relation} to create a forbidden $4$-cycle and return to Step (2). Otherwise continue.
    \item[(5)] Apply benzene relations to express a fully reduced web invariant as $$[W]_q=[W']_q+\left(\sum_{\alpha} \pm [W_{\alpha}]_q \right),$$ where $W'$ is top fully reduced and the $W_{\alpha}$ have fewer faces. Return to Step (2) to decompose each $W_{\alpha}$.
\end{enumerate}
\end{algorithm}

\begin{figure}[tbh]
    \centering
    \includegraphics{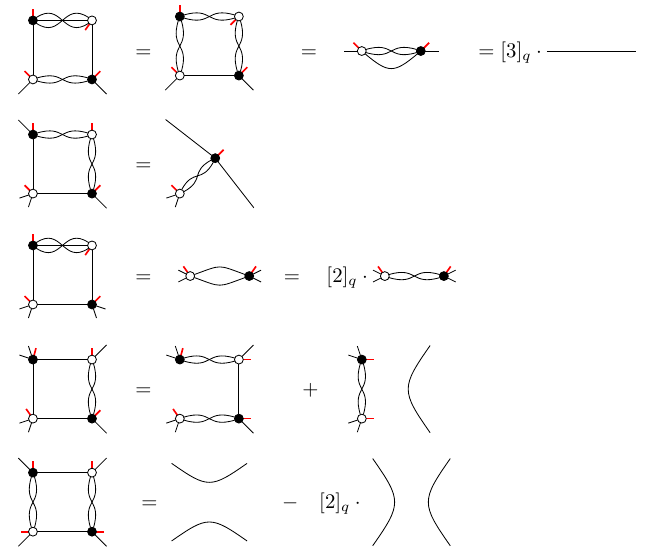}
    \caption{Relations decomposing the forbidden $4$-cycles of $U_q(\fsl_4)$-webs. Here, $[k]_q \coloneqq (q^k-q^{-k})/(q-q^{-1})$. Tags are drawn in red ($\textcolor{red}{\blacksquare}$) for clarity.}
    \label{fig:bad-4-cycle-relation}
\end{figure}

\begin{figure}[th]
    \centering
    \includegraphics{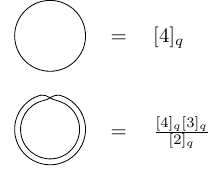}
    \caption{The loop deletion relations for $U_q(\fsl_4)$-webs.}
    \label{fig:loop-relation}
\end{figure}

\begin{figure}[tbh]
    \centering
    \includegraphics{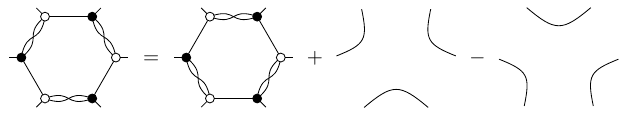}
    \caption{The benzene relation on invariants of $U_q(\fsl_4)$-webs.}
    \label{fig:benzene-relation}
\end{figure}

\begin{theorem}
\label{thm:reduction-algorithm-expands-into-basis}
For $X$ a tensor diagram of type $\underline{c}$, \Cref{alg:reduction-algorithm} expresses $[X]_q$ in the basis $\mathscr{B}_q^{\underline{c}}$.
\end{theorem}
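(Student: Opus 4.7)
The proof proceeds by establishing three claims in turn: (i) every relation invoked in \Cref{alg:reduction-algorithm} is a valid identity in $\Inv_{U_q(\fsl_4)}(\bigwedge\nolimits_q^{\underline{c}} V_q)$; (ii) the algorithm terminates; (iii) the final expression is supported on the basis $\mathscr{B}_q^{\underline{c}}$. Part (i) is purely referential: the uncrossing relations of \Cref{fig:uncrossing-relation}, the forbidden $4$-cycle relations of \Cref{fig:bad-4-cycle-relation}, the loop relations of \Cref{fig:loop-relation}, and the benzene relation of \Cref{fig:benzene-relation} are all direct translations, via the dictionary of \Cref{fig:CKM-FLL-translations} and the tag-sign calculus of \Cref{lem:tag-sign}, of identities already established in \cite{Cautis-Kamnitzer-Morrison}. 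Part (iii) is then essentially immediate once (ii) is in hand: the algorithm only halts on a term $[W']_q$ when no step applies to $W'$, which forces $W'$ to be top fully reduced, and by \Cref{thm:web-basis} these invariants span $\mathscr{B}_q^{\underline{c}}$.

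The crux is therefore termination. Stage one handles Step~(1): each uncrossing relation in \Cref{fig:uncrossing-relation} expands a neighborhood of a single crossing as a $\mathbb{Z}[q,q^{-1}]$-linear combination of diagrams with strictly fewer crossings, so induction on the number of crossings produces a $\mathbb{Z}[q,q^{-1}]$-linear combination of honest (planar) $U_q(\fsl_4)$-webs. From here on, no step of the algorithm re-introduces a crossing.

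Stage two will show that for any web $W$, Steps~(2)--(5) reduce $[W]_q$ to a $\mathbb{Z}[q,q^{-1}]$-linear combination of top fully reduced web invariants in finitely many steps. The plan is strong induction on the number $\#F(W)$ of internal faces. For the inductive step, I split into cases. If $W$ has an explicit forbidden $4$-cycle, Step~(2) applies; inspection of \Cref{fig:bad-4-cycle-relation} shows that each resulting summand, after the loop-cleanup of Step~(3), has strictly smaller $\#F$. If $W$ has no forbidden $4$-cycle but is not fully reduced, then by \Cref{def:fully-reduced} there is some $W''\sim W$ containing a forbidden $4$-cycle; using \Cref{cor:benzene-then-square} and the fact that square moves preserve $[\,\cdot\,]_q$ (as in the proof of \Cref{thm:web-basis}), we may apply finitely many benzene relations in Step~(4) to expose such a $4$-cycle, each benzene relation leaving $\#F$ of the ``main'' term unchanged while producing strictly smaller correction terms, before Step~(2) then strictly drops $\#F$ of the main term as well. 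Finally, if $W$ is fully reduced but not top fully reduced, Step~(5) applies, and by \Cref{prop:benzene-max-exists} finitely many benzene relations connect $W$ to the unique top representative $W'$ in its benzene class, each relation again preserving the face count of the main term and producing smaller correction terms, so the induction closes.

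The main obstacle to be overcome is a careful bookkeeping for the benzene relation of \Cref{fig:benzene-relation} in Steps~(4) and~(5): one must verify that each non-benzene summand on the right-hand side genuinely drops at least one internal face, so that these correction terms are indeed amenable to the inductive hypothesis, and simultaneously that the sequence of benzene applications required to either expose a forbidden $4$-cycle (Step~(4)) or to climb to the top element under $\precdot$ (Step~(5)) is finite. The latter finiteness is handled by the diamond property proved in \Cref{prop:benzene-max-exists}; the former is a local check on \Cref{fig:benzene-relation} itself, and together they complete the termination argument and hence the proof.
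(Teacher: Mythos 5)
Your proposal is correct and takes essentially the same approach as the paper's own (terse) proof: uncrossing first, then a monotonicity/termination argument based on face count for Steps~(2)--(4), then \Cref{prop:benzene-max-exists} to reach the top representative in Step~(5). You have, helpfully, made explicit the induction and the bookkeeping of correction terms in the benzene relation, which the paper elides.

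One small slip worth flagging: your invocation of \Cref{cor:benzene-then-square} in the case where $W$ has no explicit forbidden $4$-cycle but is not fully reduced is not quite licensed as stated, since that corollary is proved only for contracted \emph{fully reduced} graphs $G, G' \in \crg(\underline{c})$, whereas the $W$ in this branch is precisely \emph{not} fully reduced. The citation is not actually needed here. It suffices to observe directly from \Cref{def:fully-reduced} that some $W'' \sim W$ has a forbidden $4$-cycle; the move sequence witnessing $W \sim W''$ then supplies the finitely many benzene moves (interleaved with square moves, which preserve the invariant) that Step~(4) applies, and once the forbidden $4$-cycle appears, Step~(2) drops the face count as you note. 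This is also implicitly what the paper's own one-line observation (``if a web is modified only by a benzene move in Step~(4), then it will also be modified by Step~(2) on the subsequent iteration'') is doing. With that substitution, your argument is complete.
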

\begin{proof}
If $X$ is a tensor diagram, then the uncrossing relations unambiguously express $[X]_q$ as a combination of web invariants. By \Cref{def:fully-reduced}, each web that is not fully reduced is modified by one of the Steps (2--4). If a web is modified only by a benzene move in Step (4), then it will also be modified by Step (2) on the subsequent iteration. Since the loop deletion and $4$-cycle relations (\Cref{fig:bad-4-cycle-relation,fig:loop-relation}) all decrease the number of faces, the algorithm terminates in a fully reduced web. Finally, by \Cref{prop:benzene-max-exists}, there is a sequence of benzene moves converting any fully reduced web into a basis web (and in fact this sequence may be chosen arbitrarily).
\end{proof}

\subsection{Another proof of the web basis}
\label{sec:bypassing_growth_rules}
\Cref{thm:reduction-algorithm-expands-into-basis} allows us to prove weaker versions of some of the main results of \Cref{sec:bijection-and-basis} while avoiding the growth rules of \Cref{sec:forwards-map}. 

\begin{theorem}[cf.\ \Cref{thm:main-bijection} and \Cref{thm:web-basis}]
\label{thm:basis-without-growth-rules}
    The map $\mathcal{T}:\crg(\underline{c})/{\sim} \to \rft(\underline{c})$ is a bijection satisfying $\trip_{\bullet}(G)=\prom_{\bullet}(\mathcal{T}(G))$; consequently it intertwines promotion and evacuation of tableaux with rotation and reflection of hourglass plabic graphs. Furthermore, 
    \[   \mathscr{B}_q^{\underline{c}} \coloneqq \{[W]_q : \text{ $W$ a top fully reduced hourglass plabic graph of type $\underline{c}$}\}
\]
is a rotation-invariant web basis for $\Inv_{U_q(\fsl_4)}(\bigwedge_q\nolimits^{\underline{c}} V_q)$. 
\end{theorem}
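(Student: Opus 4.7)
The plan is to establish \Cref{thm:basis-without-growth-rules} by combining the reduction algorithm of \Cref{sec:reduction-rules} with the combinatorics developed in \Cref{sec:hourglass-and-six-vertex,sec:backwards-map}, without ever invoking the growth rules of \Cref{sec:forwards-map}. The skeleton of the argument is to show injectivity of $\mathcal{T}$ and the identity $\trip_\bullet = \prom_\bullet \circ \mathcal{T}$ by antiexcedance reconstruction, and then to deduce surjectivity and the basis property from a single dimension count driven by the reduction algorithm.

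First I would verify $\trip_\bullet(G) = \prom_\bullet(\mathcal{T}(G))$ exactly as in the proof of \Cref{thm:main-bijection}: setting $T = \mathcal{T}(G)$, \Cref{thm:sep-label-promotion-equivariant} gives $\mathcal{T}(\rot^i(G)) = \promotion^i(T)$ for every $i$. By \Cref{prop:sep-boundary-label-from-trips} the separation word $\bw(\rot^i(G))$ records $\aexc(\sigma^i \trip_a(G) \sigma^{-i})$ for each $a$, where $\sigma$ is the long cycle, and by \cite[Thms.~6.7 \& 6.12]{fluctuating-paper} the lattice word of $\promotion^i(T)$ records $\aexc(\sigma^i \prom_a(T) \sigma^{-i})$. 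Since these two words coincide for every $i$, and since a permutation in $\mathfrak{S}_n$ is determined by the sets $\aexc(\sigma^i \pi \sigma^{-i})$ as $i$ varies, we conclude $\trip_a(G) = \prom_a(T)$ for every $a$. Combined with \Cref{thm:hourglass-trips-determine-move-equivalence}, this immediately yields injectivity of $\mathcal{T}$, since $\mathcal{T}(G) = \mathcal{T}(G')$ forces $\trip_\bullet(G) = \trip_\bullet(G')$ and hence $G \sim G'$.

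Next I would argue that $\mathscr{B}_q^{\underline{c}}$ is well-defined and spans. The invariant $[W]_q$ attached to a top representative of a class $\mathcal{C} \in \crg(\underline{c})/{\sim}$ does not depend on the choice of top representative, because any two top representatives are related by a sequence of square moves (by \Cref{prop:benzene-max-exists} and \Cref{cor:benzene-then-square}), and square moves preserve the tensor invariant by \cite[Eq.~2.10]{Cautis-Kamnitzer-Morrison}. To see that $\mathscr{B}_q^{\underline{c}}$ spans $\Inv_{U_q(\fsl_4)}(\bigwedge\nolimits_q^{\underline{c}} V_q)$, note that this invariant space is spanned by the invariants of arbitrary $U_q(\fsl_4)$-tensor diagrams of type $\underline{c}$, and by \Cref{thm:reduction-algorithm-expands-into-basis} each such invariant expands as a $\mathbb{C}(q)$-linear combination of elements of $\mathscr{B}_q^{\underline{c}}$. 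We therefore obtain the chain of inequalities
\begin{equation*}
  \dim_{\mathbb{C}(q)} \Inv_{U_q(\fsl_4)}\bigl(\bigwedge\nolimits_q^{\underline{c}} V_q\bigr)
    \;\leq\; |\mathscr{B}_q^{\underline{c}}|
    \;\leq\; |\crg(\underline{c})/{\sim}|
    \;\leq\; |\rft(\underline{c})|
    \;=\; \dim_{\mathbb{C}(q)} \Inv_{U_q(\fsl_4)}\bigl(\bigwedge\nolimits_q^{\underline{c}} V_q\bigr),
\end{equation*}
where the first inequality is spanning, the third is injectivity of $\mathcal{T}$, and the final equality is \Cref{prop:fluc-tab-give-dimension}. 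All inequalities must be equalities, so $\mathscr{B}_q^{\underline{c}}$ is a basis, $\mathcal{T}$ is a bijection, and the intertwining of evacuation with reflection is supplied by \Cref{thm:sep-label-promotion-equivariant}. Rotation invariance of $\mathscr{B}_q^{\underline{c}}$ is then immediate, since $\rot$ preserves the property of being top fully reduced (the cyclic orientation of each benzene face is unchanged).

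The hard part of this route is not the dimension-counting skeleton but the prior verification of \Cref{thm:reduction-algorithm-expands-into-basis}, namely the check that the uncrossing, forbidden $4$-cycle, loop, and benzene relations of \Cref{fig:uncrossing-relation,fig:bad-4-cycle-relation,fig:loop-relation,fig:benzene-relation} are genuine identities of $U_q(\fsl_4)$-tensor invariants and that iteratively applying them terminates at a top fully reduced web. Once those quantum-group identities are in place, the argument above produces a basis, but it sacrifices the constructive information delivered by the growth rules: it yields no explicit representative web for a given tableau, no unitriangular monomial expansion as in \Cref{thm:unitriangular}, and in particular no control over the unit leading coefficient.
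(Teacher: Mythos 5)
Your proposal is correct and follows essentially the same route as the paper's own proof of this theorem: both establish $\trip_\bullet=\prom_\bullet$ and the injectivity of $\mathcal{T}$ via the antiexcedance argument imported from the second paragraph of the proof of \Cref{thm:main-bijection}, then deduce the basis property by combining the Fundamental Theorem of Invariant Theory and \Cref{thm:reduction-algorithm-expands-into-basis} for spanning with the dimension count from \Cref{prop:fluc-tab-give-dimension}. Your explicit chain of inequalities and your closing remarks on what is sacrificed (explicit inverse, unitriangularity, $\mathbb{Z}[q,q^{-1}]$-unit leading coefficient) are exactly the points the paper itself makes in \Cref{remark:what_we_lose}.
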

\begin{proof}
    Let $G \in \crg(\underline{c})$ and let $T=\mathcal{T}(G)$. As argued in the second paragraph of the proof of \Cref{thm:main-bijection}, the results of \Cref{sec:backwards-map} imply that $T \in \rft(\underline{c})$ and that $\trip_{\bullet}(G)=\prom_{\bullet}(T)$. As before $T \in \rft(\underline{c})$ is determined by $\prom_{\bullet}(T)$ by \cite[Thm.~6.12]{fluctuating-paper}. Therefore we have $\mathcal{T}(G)=\mathcal{T}(G')$ if and only if $\trip_{\bullet}(G)=\trip_{\bullet}(G')$; by \Cref{thm:hourglass-trips-determine-move-equivalence}, this happens exactly when $G \sim G'$. Thus the induced map $\mathcal{T}:\crg(\underline{c})/{\sim} \to \rft(\underline{c})$ is well-defined and injective.

    By \Cref{prop:fluc-tab-give-dimension} we have $|\rft(\underline{c})|=\dim_{\mathbb{C}(q)} \Inv_{U_q(\fsl_4)}(\bigwedge_q\nolimits^{\underline{c}} V_q)$. By (a quantum version of) the Fundamental Theorem of Invariant Theory (see \cite{Cautis-Kamnitzer-Morrison} and \cite{Weyl}), the set of invariants $[X]_q$ of tensor diagrams spans $\Inv_{U_q(\fsl_4)}(\bigwedge_q\nolimits^{\underline{c}} V_q)$. Hence, by \Cref{thm:reduction-algorithm-expands-into-basis}, $\mathscr{B}_q^{\underline{c}}$ also spans over $\mathbb{C}(q)$. But \[
    |\mathscr{B}_q^{\underline{c}}|=|\crg(\underline{c})/{\sim}| \leq |\rft(\underline{c})|=\dim_{\mathbb{C}(q)} \Inv_{U_q(\fsl_4)}(\bigwedge_q\nolimits^{\underline{c}} V_q)\]
    by injectivity. Thus $\mathscr{B}_q^{\underline{c}}$ is a $\mathbb{C}(q)$-basis.
\end{proof}

\begin{remark}\label{remark:what_we_lose}
Although \Cref{thm:basis-without-growth-rules} allows for a much shorter proof of the web basis, the growth rules were essential for the following results from \Cref{sec:bijection-and-basis}: 
    \begin{enumerate}
        \item the explicit inverse to the bijection $\mathcal{T}$ and  explicit process for constructing $\crg(\underline{c})$,
        \item the proof of \Cref{thm:unitriangular} and consequently the fact that the separation word gives the leading term of $[W]_q$ and that this term has coefficient which is a unit in $\mathbb{Z}[q,q^{-1}]$, and
        \item the integrality of the basis $\mathscr{B}_q^{\underline{c}}$.
    \end{enumerate}
\end{remark}

\section{Combinatorial applications of the web basis}
\label{sec:cc-asm-pp}
In this section, we collect results related to our web basis $\mathscr{B}_q^{\underline{c}}$ that are of particular interest in combinatorics.
We first re-prove a cyclic sieving result. We then summarize two particular extreme move-equivalence classes relating to alternating sign matrices and plane partitions, respectively. 

\subsection{Cyclic sieving and basis webs}
\Cref{thm:web-basis} can be used to easily recover the $4$-row case of a celebrated cyclic sieving result of Rhoades \cite{Rhoades}, originally proved using Kazhdan--Lusztig theory. The $3$-row case was re-proven in \cite{Petersen-Pylyavskyy-Rhoades}, using Kuperberg's $U_q(\fsl_3)$-web basis. For the definition of the \emph{$q$-hook length polynomial} below, see \cite[Cor.~7.21.6]{Stanley:EC2} or \cite{Rhoades}. 

\begin{corollary}
\label{cor:cyclic-sieving}
Let $n = 4k$, $\lambda=(k,k,k,k)$, and $\zeta=e^{2 \pi i/ n}$. The number of standard tableaux of shape $\lambda$ fixed by $\mathcal{P}^{d}$ is $f^{\lambda}(\zeta^d)$, where $f^{\lambda}(q)$ is the $q$-hook length polynomial.
\end{corollary}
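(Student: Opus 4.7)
The plan is to adapt the standard web-theoretic approach to cyclic sieving, using the rotation-invariant basis $\mathscr{B}^{\underline{c}}$ of Theorem 7.2 together with the intertwining of promotion and rotation from Theorem 7.1. I specialize $q=1$ and take $\underline{c} = (1,1,\ldots,1)$ with $n$ copies; then $\rft(\underline{c}) = \mathrm{SYT}(\lambda)$ and $\dim \Inv_{\SL_4}(V^{\otimes n}) = f^{\lambda}(1)$. By Schur--Weyl duality, $\Inv_{\SL_4}(V^{\otimes n})$ carries an $\mathfrak{S}_n$-action by permutation of tensor factors, and as an $\mathfrak{S}_n$-module is isomorphic to the irreducible Specht module $S^{\lambda}$.

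First, I would compare the cyclic shift isomorphism $\shuffle$ (geometric rotation of the boundary circle) with the action of the long cycle $c = (1\,2\,\cdots\,n) \in \mathfrak{S}_n$ on the invariant space. These two operators differ only by a global sign arising from moving one $V$-tensor factor past the other $n-1$ via the braiding; concretely, $c = \epsilon \cdot \shuffle$ for an explicit root of unity $\epsilon = \pm 1$ determined by $n$. By Theorem 7.2, $\shuffle$ permutes $\mathscr{B}^{\underline{c}}$ as a set, so in this basis $c$ acts as a signed permutation matrix. Taking traces and using Theorem 7.1 (which identifies $\shuffle$ on $\mathscr{B}^{\underline{c}}$ with $\promotion$ on $\mathrm{SYT}(\lambda)$) gives
\[
  \mathrm{tr}(c^d \mid S^{\lambda}) \;=\; \epsilon^d \cdot \#\{W \in \mathscr{B}^{\underline{c}} : \shuffle^d(W) = W\} \;=\; \epsilon^d \cdot \#\{T \in \mathrm{SYT}(\lambda) : \promotion^d(T) = T\}.
\]

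Second, I would invoke Springer's theorem on regular elements for the symmetric group (equivalently, a direct Murnaghan--Nakayama computation for rectangular shapes), which evaluates the character value $\chi^{\lambda}(c^d)$ at the long cycle raised to the $d$-th power as $\epsilon^d \cdot f^{\lambda}(\zeta^d)$, with the same sign convention tracked in the previous step. Combining this with the trace identity above yields
\[
  \#\{T \in \mathrm{SYT}(\lambda) : \promotion^d(T) = T\} \;=\; f^{\lambda}(\zeta^d),
\]
which is exactly the claim.

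The main obstacle will be the careful sign bookkeeping comparing $\shuffle$ to the long-cycle permutation of tensor factors in our exterior-algebra conventions, and ensuring the sign matches the one appearing in Springer's evaluation of $\chi^{\lambda}(c^d)$; fortunately, since both signs arise from the same geometric source (moving one factor past $n-1$ others), they cancel. The remaining ingredients---rotation invariance of the basis, the $\promotion \leftrightarrow \shuffle$ intertwining, Schur--Weyl duality, and Springer's theorem---are all available, so the proof is essentially an assembly of these pieces in the manner of \cite{Petersen-Pylyavskyy-Rhoades} for the $\SL_3$ case.
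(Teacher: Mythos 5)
Your high-level architecture matches the paper's proof exactly: identify $\Inv_{\SL_4}(V^{\otimes n})$ with the Specht module $\mathsf{S}^{\lambda}$ via Schur--Weyl, observe that the web basis is permuted up to sign by the long cycle $\sigma$, take traces, and invoke Springer's theorem on regular elements to evaluate $\chi^{\lambda}(\sigma^d) = (-1)^d f^{\lambda}(\zeta^d)$. The paper does precisely this. But the sign bookkeeping, which you flag as an ``obstacle'' and then dismiss, is the substantive content of the proof, and your proposed resolution of it does not hold up.

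The gap is as follows. You claim that $\shuffle$ (geometric rotation of the disk) and the permutation action of $c=\sigma$ differ by a \emph{global} sign $\epsilon$ coming from ``moving one $V$-tensor factor past the other $n-1$ via the braiding,'' and that this cancels against the $(-1)^d$ in Springer's formula since ``both signs arise from the same geometric source.'' Neither half of this is correct. At $q=1$ the braiding is the trivial transposition of tensor factors, so it contributes no sign; the cyclic shift $\shuffle$ on the invariant space \emph{is} the action of $\sigma$. The sign in question comes from a completely different place: the tag conventions on $U_q(\fsl_4)$-webs (\Cref{lem:tag-sign}, where sliding a tag past an edge of multiplicity $c$ costs $(-1)^{c(4-c)}$) and the conventions of \Cref{def:hourglass-tagging-convention}. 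The resulting relation is $\shuffle([W]) = (-1)^{k(W)}[\rot(W)]$ where $k(W)$ is the number of simple degree-$4$ vertices on the $\trip_2(1)$-strand of $W$; this is \emph{a priori} a $W$-dependent sign, not a global one. The heart of the paper's proof is the parity argument that $k(W)$ is always odd: in the symmetrized six-vertex configuration $\varphi(W)$, the $\trip_2(1)$-strand has two incoming boundary edges, so the number of direction reversals along it (which occur exactly at sources and sinks, i.e.~simple degree-$4$ vertices) is odd. Without establishing this, you cannot conclude that each $\mathcal{P}^d$-fixed tableau contributes a sign of exactly $(-1)^d$, and the trace identity does not close. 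Your Springer step is fine and your assembly is fine, but the one nontrivial lemma is missing.
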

\begin{proof}
Let $\sigma=(1 \: 2 \; \cdots \: n) \in \mathfrak{S}_{n}$. By Springer's theory of regular elements \cite[Prop. 4.5]{Springer}, we have $\chi^{\lambda}(\sigma^d)=(-1)^df^{\lambda}(\zeta^d)$, where $\chi^{\lambda}$ is the character of the Specht module $\mathsf{S}^{\lambda} \cong \Inv(V^{\otimes n})$. On the other hand, since the basis $\mathscr{B}^{(1^{n})}$ is permuted up to sign by the action of $\sigma$ and is in $\mathcal{P}$-equivariant bijection with standard tableaux of shape $\lambda$, this character value is a signed count of the number of $\mathcal{P}^d$-fixed tableaux.

Moreover, each fixed point contributes a sign of precisely $(-1)^d$ as follows. Suppose $W \in \crg((1^{n}))$. Using our tagging conventions, $[\rot(W)] = (-1)^k \cdot \shuffle([W])$ where $k$ is the number of vertices of simple degree $4$ on the $\trip_2(1)$-strand of $W$, and $\shuffle$ is the cyclic shift isomorphism from \Cref{sec:intro_bases}. For a \symm six-vertex configuration $\varphi(W)$, this strand has two incoming boundary edges. There must be an odd number of direction changes of the arrows along the strand, which occur precisely at sources or sinks, so $k$ is odd. The result follows.
\end{proof}

\subsection{Alternating sign matrices and square moves}
\label{sec:asm}
The set $\ASM_n$ of \emph{alternating sign matrices} consists of $n \times n$ matrices with entries from $\{-1, 0, 1\}$ where the nonzero entries of each row and column begin and end with $1$ and alternate between $1$ and $-1$. Alternating sign matrices have long been of interest in enumerative combinatorics (see, e.g., \cite{Mills.Robbins.Rumsey, Zeilberger, Kuperberg_ASM_pf, ProppManyFaces,Kuperberg:roof}). Recently, they have obtained further significance through connections \cite{Weigandt} to \emph{back stable Schubert calculus} \cite{Lam.Lee.Shimozono}.

Permutation matrices are instances of alternating sign matrices. Indeed, $\ASM_n$ is the MacNeille completion of the strong Bruhat order on $\mathfrak{S}_n$ \cite{Lascoux.Schutzenberger}, i.e.~$\ASM_n$ is the smallest lattice containing Bruhat order. More strongly, $\ASM_n$ is a distributive lattice with covering relations given by adding {\small $\begin{pmatrix}-1 & 1 \\ 1 & -1\end{pmatrix}$} where possible. The sub-poset of join-irreducible elements can be realized as a certain tetrahedral poset embedded in three-dimensional space (see \cite[Fig.~5]{RazStrogRow} and \cite{ELKP92,Striker.tetrahedralposet}). Hence $\ASM_n$ can be viewed as the order ideals of this tetrahedral poset. 

\begin{proposition}\label{prop:ASM}
  Let $T$ be the ``superstandard'' $4$-row standard tableau with lattice word $1^n 2^n 3^n 4^n$. The move-equivalence class of $\mathcal{G}(T)$ is in bijection with $\ASM_n$, with square moves corresponding to covering relations. No benzene moves apply to elements of this class.
\end{proposition}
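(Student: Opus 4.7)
The plan is to pass to symmetrized six-vertex configurations via $\varphi$ and invoke the classical bijection between domain-wall configurations on an $n \times n$ grid and alternating sign matrices.

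First, I would observe that any symmetrized six-vertex configuration on the $n \times n$ square grid with all boundary edges oriented inward is well-oriented: its $\trip_2$-strands form $n$ horizontal and $n$ vertical straight lines, so (P1) and (P2) of \Cref{prop:trip2_6V} are automatic (no double crossings, no self-intersections, and no three pairwise crossing strands). By \Cref{remark:symm6vconnections}, such configurations biject with $\ASM_n$, and under $\varphi^{-1}$ they correspond to fully reduced graphs in $\crg((1^{4n}))$ via \Cref{thm:6-vertex-hourglass-correspondence}.

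Next, I would show that this family of graphs forms a single move-equivalence class naturally isomorphic to the distributive lattice $\ASM_n$, with square moves corresponding to cover relations. Under $\varphi$, square moves intertwine with ASM moves, and by \Cref{remark:ASM_Yang_Baxter} these ASM moves are exactly the cover relations of $\ASM_n$ (adding or removing a $\begin{pmatrix}-1 & 1 \\ 1 & -1\end{pmatrix}$ block). Connectivity of $\ASM_n$ under these relations places all such graphs in one move-equivalence class. Conversely, any graph $H$ in that class shares the grid's $\trip_2$-matching by \Cref{thm:hourglass-trips-determine-move-equivalence}; since no Yang--Baxter move applies to the grid (no three $\trip_2$-strands pairwise cross), \Cref{prop:pm-diagrams-connected-under-Yang--Baxter} forces the matching diagram of $\varphi(H)$ to be the grid itself, and hence $\varphi(H)$ is an ASM configuration.

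To identify this class with $[\mathcal{G}(T)]$, by \Cref{thm:main-bijection} it suffices to exhibit one $G_0$ in the class with $\mathcal{T}(G_0) = T$, i.e., with separation word $1^n 2^n 3^n 4^n$. Taking $G_0 = \varphi^{-1}$ of, say, the identity permutation matrix, placing the base face $F_0$ in the upper-left exterior corner, and labeling boundary vertices $b_1, \dots, b_{4n}$ clockwise from the top side, I would use \Cref{prop:sep-boundary-label-from-trips} together with the grid's straight $\trip_2$-strands and \Cref{lem:trip1-2-3-layout} to verify that the number of trip strands (among $\trip_1, \trip_2, \trip_3$) returning to a lower-indexed boundary vertex is constant on each side, equal to $0, 1, 2, 3$ on the top, right, bottom, and left sides respectively. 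This yields separation labels $1, 2, 3, 4$ on those sides, giving $\bw(G_0) = 1^n 2^n 3^n 4^n$.

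Finally, no benzene move can apply: benzene moves correspond under $\varphi$ to Yang--Baxter moves (\Cref{thm:6-vertex-hourglass-correspondence}), which require three pairwise crossing $\trip_2$-strands bounding an oriented triangle (\Cref{fig:main-6v-moves}), which is impossible on the grid. The main obstacle is the separation-word computation in the third paragraph; it is conceptually straightforward but requires careful bookkeeping of how the $\trip_1$- and $\trip_3$-strands navigate the grid at transmitting vertices and sinks. Everything else follows formally from $\varphi$ and the already established theory of fully reduced hourglass plabic graphs.
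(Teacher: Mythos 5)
Your proposal is correct and takes essentially the same route as the paper: pass to symmetrized six-vertex configurations on the $n\times n$ square grid with all boundary edges inward, invoke the classical bijection with $\ASM_n$, observe that the straight $\trip_2$-strands make (P1) and (P2) trivially hold and rule out Yang--Baxter (hence benzene) moves, and check that the separation word is $1^n 2^n 3^n 4^n$. You offer somewhat more detail than the paper on why this family is a single move-equivalence class (and in particular the nice converse argument via \Cref{prop:pm-diagrams-connected-under-Yang--Baxter} that any graph in the class must have the grid as its matching diagram), but you leave the separation-word verification at the same level of detail as the paper does.
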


\begin{proof}
Consider the \symm six-vertex configurations obtained by placing $4n$ incoming strands around a square grid and assigning orientations to the middle in all possible ways. Such configurations are in bijection with configurations of the classical six-vertex model (with \emph{domain wall} boundary conditions) by reversing all horizontal arrows, and these in turn are in bijection with $\ASM_n$ by \cite{Kuperberg_ASM_pf}. The composite bijection from the \symm six-vertex configurations to $\ASM_n$ may be realized by replacing sinks with $1$, sources with $-1$, and transmitting vertices with $0$. See \Cref{fig:ASM-3x3} for examples. 

These \symm six-vertex configurations are clearly well oriented by \Cref{prop:trip2_6V} and admit no Yang--Baxter moves, since they contain no big triangles. It is easy to check that the corresponding fully reduced hourglass plabic graphs (see \Cref{thm:6-vertex-hourglass-correspondence}) have separation word $1^n 2^n 3^n 4^n$.
\end{proof}

\begin{figure}[hbtp]
    \centering
    \includegraphics[width=\linewidth]{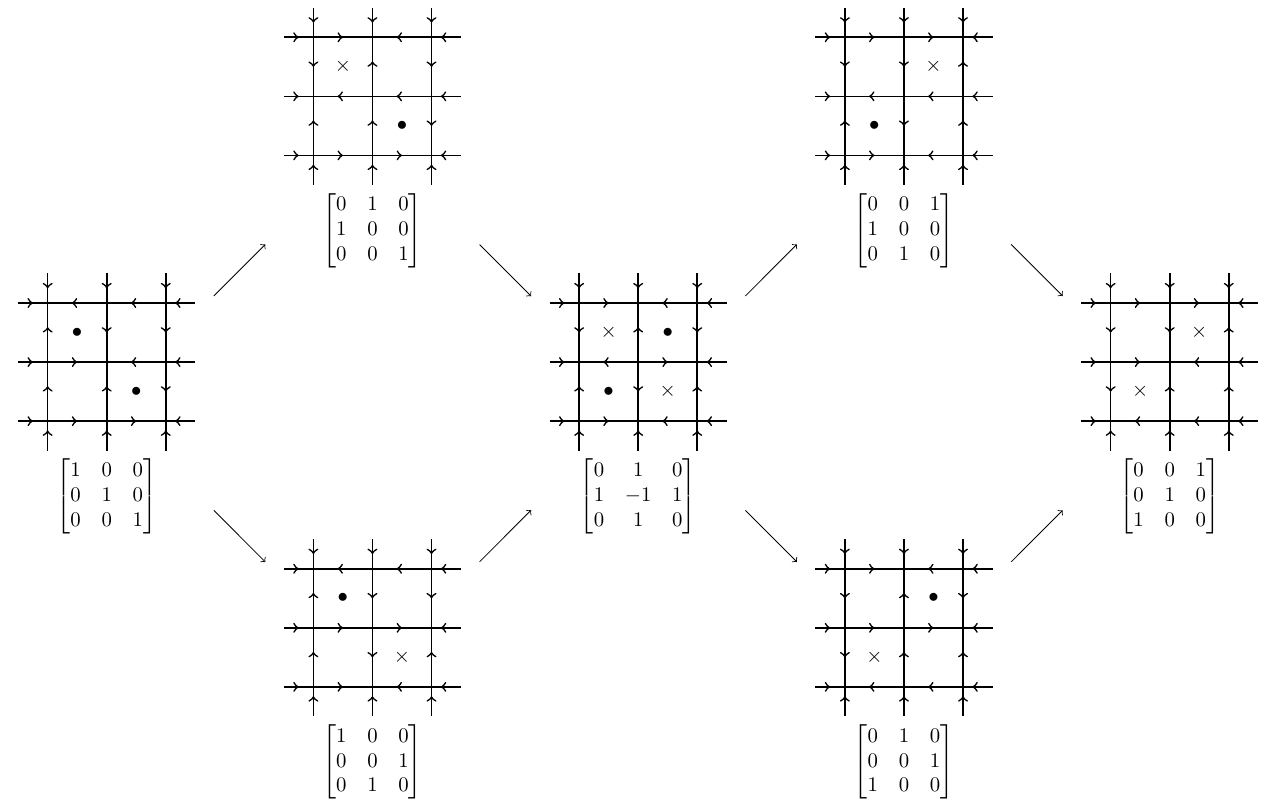}
    \caption{The $7$ elements of $\ASM_3$ together with the corresponding \symm six-vertex configurations in the move-equivalence class of $\varphi(\mathcal{G}(T))$. Here $T$ is the standard tableau with lattice word $L = 111222333444$. ASM moves going up (resp. down) in the poset are marked with $\bullet$ (resp. $\times$).}
    \label{fig:ASM-3x3}
\end{figure}

See \Cref{remark:ASM_Yang_Baxter} for further connections between square moves and alternating sign matrix dynamics.

\subsection{Plane partitions and benzene moves}
\label{sec:plane-partitions}
The set $\PP(a \times b \times c)$ of \emph{plane partitions} in the $a \times b \times c$ box consists of the $a \times b$ matrices of nonnegative integers that weakly decrease along rows and columns and have maximum entry at most $c$. Like alternating sign matrices, plane partitions have long been prominent in enumerative combinatorics (see, e.g, \cite{Andrews:MacMahon,Andrews:Macdonald,Stembridge:Pfaffian}) and representation theory (see, e.g., \cite{Proctor:pp,Garver.Patrias.Thomas}). They are now one of the central objects of study in the burgeoning area of dynamical algebraic combinatorics (e.g., \cite{Striker.Williams,Striker:DAC,Hopkins:CSP,Patrias.Pechenik:CFDF}).

The plane partitions $\PP(a \times b \times c)$ may be equivalently viewed as the configurations of stacked unit cubes in the prism $[0, a] \times [0, b] \times [0, c]$ where gravity acts parallel to $(-1, -1, -1)$. Such plane partitions form a distributive lattice and may be seen as order ideals of the product of chains poset $[a]\times[b]\times[c]$. They have further alternative interpretations as rhombus tilings or perfect matchings on particular subsets of the hexagonal lattice.

\begin{proposition}
\label{prop:pp}
  Let $a, b, c \in \mathbb{Z}_{\geq 1}$. If $a \geq c$, let 
  $
  L = 1^a \, \overline{4}\vphantom{4}^b \, 2^c \, \overline{1}\vphantom{1}^{a-c} \, \overline{2}\vphantom{2}^c \, 4^b \, \overline{1}\vphantom{1}^c,
  $
  and if $a \leq c$, let $L = 1^a \, \overline{4}\vphantom{4}^b \, 2^a \, 1^{c-a} \, \overline{2}\vphantom{2}^a \, 4^b \, \overline{1}\vphantom{1}^c$. Let $T$ be the tableau with lattice word $L$. The move-equivalence class of $\mathcal{G}(T)$  is in bijection with $\PP(a \times b \times c)$, with benzene moves corresponding to covering relations. No square moves apply to elements of this class.
\end{proposition}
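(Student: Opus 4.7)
The proof proceeds in three main steps: (1) identify $\mathcal{G}(T)$ as a hexagonal honeycomb hourglass plabic graph; (2) show that the move-equivalence class reduces to benzene-equivalence and corresponds to dimer covers on this honeycomb; and (3) invoke the classical bijection between honeycomb dimer covers and plane partitions.

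First, I would construct an explicit honeycomb hourglass plabic graph $H_{a,b,c}$ filling a hexagonal region with side lengths $a, b, c, a, b, c$: every internal vertex has simple degree three (two simple edges together with one $2$-hourglass), every internal face is a benzene hexagon, and the $2$-hourglass edges are placed in a chosen reference orientation corresponding to the empty plane partition. I would then verify that $H_{a,b,c} \sim \mathcal{G}(T)$ by computing the separation word $\bw(H_{a,b,c}) = L$ via \Cref{def:separation-labeling,prop:sep-boundary-label-from-trips} and invoking \Cref{thm:main-bijection}. The six sides of the hexagon correspond to maximal same-sign runs of $L$: in the $a \geq c$ case the adjacent negative runs $\overline{1}^{a-c}$ and $\overline{2}^c$ together form one side of length $a$, and in the $a \leq c$ case the positive runs $2^a$ and $1^{c-a}$ together form one side of length $c$, with the other five sides matching the remaining runs directly.

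Second, since every internal face of $H_{a,b,c}$ is a hexagonal benzene face, no square move is applicable to any graph in its move-equivalence class. By \Cref{cor:benzene-then-square}, move-equivalence on this class therefore coincides with benzene-equivalence. As observed in \Cref{remark:benzene}, benzene moves act as hexagon flips on the dimer cover of the underlying trivalent graph whose matching edges are the $2$-hourglass edges. By the classical correspondence on the hexagonal honeycomb (see, e.g., \cite{WhatIsDimer}), such dimer covers are in bijection with $\PP(a \times b \times c)$, and hexagon flips correspond precisely to adding or removing a single unit cube, which are exactly the covering relations in the distributive lattice of plane partitions.

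The main obstacle lies in the explicit construction and verification in the first step. A direct approach is to describe $H_{a,b,c}$ geometrically and then compute its separation labeling from the layout of its trip strands; by monotonicity (\Cref{cor:fully-reduced-iff-monotonic}) and \Cref{prop:sep-boundary-label-from-trips}, each boundary edge has an easily tracked separation label that must match the corresponding letter of $L$. The asymmetric cases $a \neq c$ are the most delicate, since the seven consecutive-equal-letter runs in $L$ must be grouped correctly onto the six hexagon sides, as dictated by the geometry of the honeycomb and by \Cref{thm:growth_algorithm}(v). Once this identification is established, the bijection with plane partitions and the correspondence of benzene moves with covering relations follow from standard hexagonal dimer-model arguments.
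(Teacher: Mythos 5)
Your proposal matches the paper's proof quite closely: both construct the hexagonal honeycomb hourglass plabic graph filling the $a,b,c$-region, observe that its internal face structure admits no square moves so the move class is a pure benzene class, identify benzene moves with hexagon flips of the associated dimer cover, invoke the standard honeycomb-dimer/plane-partition bijection, and verify the separation word equals $L$ by tracking which $\trip_1$-, $\trip_2$-, and $\trip_3$-strands separate each boundary edge from the base face. The one place your write-up is slightly less efficient is the appeal to \Cref{cor:benzene-then-square}: since benzene moves do not change the underlying plabic graph $\widehat{G}$ (\Cref{remark:benzene}), the face structure is preserved under benzene moves, so the absence of square faces in $H_{a,b,c}$ already forces move-equivalence to coincide with benzene-equivalence without invoking that corollary. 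Your observation about how the seven same-sign runs of $L$ group onto six hexagon sides in the $a\neq c$ cases is a genuine point that the paper handles implicitly in its trip-strand case analysis; the paper writes out the $a\le c$ computation and notes $a>c$ is similar. The only detail you defer — actually carrying out the separation-label computation by tracking trip strands along each side of the hexagon — is exactly what the paper supplies, so there is no gap, just work you flagged but did not write out.
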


\begin{proof}
  In the hexagonal lattice, begin with a ``large hexagon''  with $a, b, c, a, b, c$ hexagons, respectively, on each side in cyclic order; see \Cref{fig:PP-tiling}. The perfect matchings on this large hexagon can be converted to rhombus tilings by considering the dual graph and omitting edges of the dual graph that cross matched edges of the large hexagon. In this way, benzene moves correspond to adding or removing a box in the plane partition; see \Cref{fig:PP-2x2x2}. Alternatively, perfect matchings on the large hexagon may be converted to hourglass plabic graphs by replacing matched edges with hourglass edges. In this move-equivalence class, there are no square faces, so no square moves apply, and hence the graphs are fully reduced.
  
  We show that the separation labeling is $L = 1^a \, \overline{4}\vphantom{4}^b \, 2^a \, 1^{c-a} \, \overline{2}\vphantom{2}^a \, 4^b \, \overline{1}\vphantom{1}^c$ in the case that $a \leq c$ as follows (the case $a>c$ is similar). The $\trip_1$-strands are determined by turning alternately left at white and right at black vertices on the underlying hexagonal grid plabic graph. Thus, the only $\trip_1$-strands that separate a boundary face from the base face occur at the vertical, length $b$, sides of the hexagon. Similarly, the $\trip_3$-strands are determined by turning alternately right at white and left at black on the underlying plabic graph. Thus, the only $\trip_3$-strands that separate a boundary face from the base face occur at the vertical, length $b$, sides of the hexagon, as well as along the southeast, length $c$, side of the hexagon for the first $a$ boundary vertices and along the southwest, length $a$, side of the hexagon.

  The $\trip_2$-strand from a boundary vertex travels along only two of the three types of cube faces on the plane partition, tracing along all blocks at that ``level". As illustrated in \Cref{fig:PP-tiling}, the $\trip_2$-strands from boundary vertices on the east, length $b$, side of the hexagon travel along the dark and light gray rhombi until they reach the west, length $b$, side. Similarly, the $\trip_2$-strands travel from length $a$ side to the other length $a$ side along light gray and white rhombi, and $\trip_2$-strands travel from length $c$ side to length $c$ side along dark gray and white rhombi. Thus, one may observe that $\trip_2$-strands are only separating when they begin on a length $b$ side.
\end{proof}

\begin{figure}[htbp]
    \centering
    \includegraphics{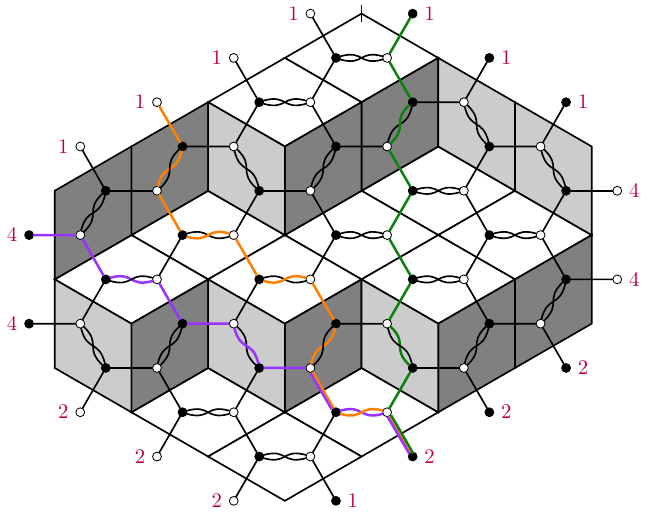}
    \caption{The large hexagon associated to $\PP(3 \times 2 \times 4)$ as in \Cref{prop:pp}. A sample hourglass plabic graph in the move-equivalence class of $\mathcal{G}(T)$ has been drawn, together with the dual plane partition and the trip strands from a boundary vertex. Here $T$ is the oscillating tableau with lattice word $L = 111\overline{4}\overline{4}2221\overline{2}\overline{2}\overline{2}44\overline{1}\overline{1}\overline{1}\overline{1}$, which corresponds to the separation labels of each boundary edge, as shown.}
    \label{fig:PP-tiling}
\end{figure}

\begin{figure}[htbp]
    \centering
    \includegraphics[width=0.9\textwidth]{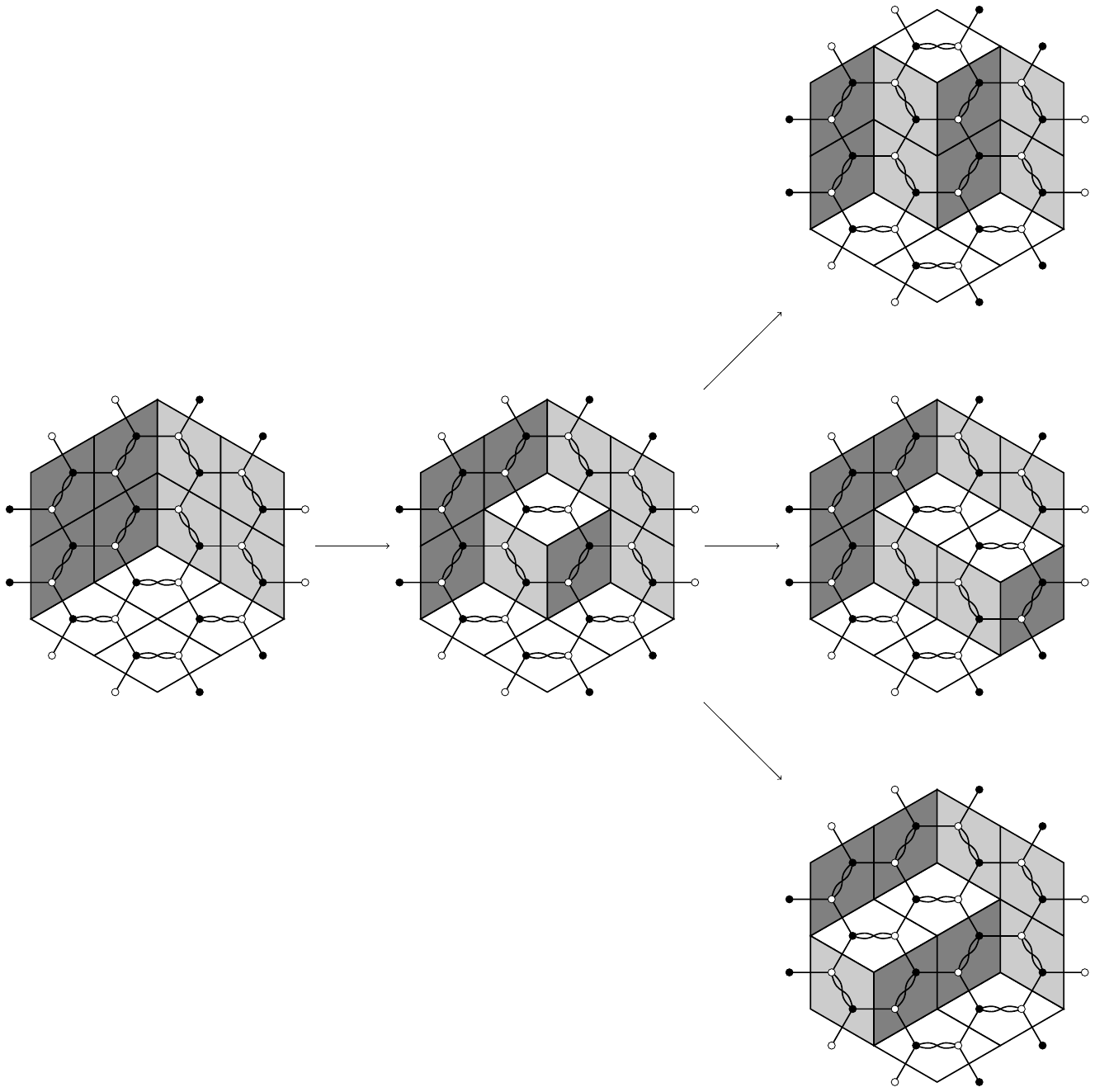}
    \caption{The $5$ elements in the first three ranks of $\PP(2 \times 2 \times 2)$ drawn as rhombus tilings, together with the corresponding hourglass plabic graphs in the move-equivalence class of $\mathcal{G}(T)$. Here $T$ is the oscillating tableau with lattice word $L = 11\overline{4}\overline{4}22\overline{2}\overline{2}44\overline{1}\overline{1}$.}
    \label{fig:PP-2x2x2}
\end{figure}

\section{Hourglass plabic graphs recover known web bases}
\label{sec:two_column}

In this section, we discuss how all known rotation-invariant web bases fit within our framework of hourglass plabic graphs, forbidden $4$-cycles, and the $\trip_{\bullet}=\prom_{\bullet}$ property, and give an extension of these bases to the ``semistandard" setting.

\subsection{A uniform characterization for small $r$}
We may extend \Cref{def:hourglass-plabic-graph} by defining an \emph{$U_q(\fsl_r)$-hourglass plabic graph} exactly as before, but with all internal vertices of degree $r$. We define the trip permutations $\trip_{\bullet}(G)=(\trip_1(G),\ldots, \trip_{r-1}(G))$ of such a graph $G$ in the natural way, extending \Cref{def:trip_perms}.

\begin{example}\label{ex:FLL-HPG-example}
    Recall the $U_q(\fsl_5)$-web from \Cref{ex:CKM-FLL-example}, drawn at left. The corresponding $U_q(\fsl_5)$-hourglass plabic graph is drawn at right. 
    \begin{center}
        \includegraphics[scale=1.2]{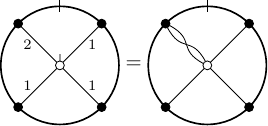}
    \end{center}
    Since tags only change the overall sign, we have suppressed the tag in the $U_q(\fsl_5)$-hourglass plabic graph.
\end{example}

In general, $U_q(\fsl_r)$-hourglass plabic graphs admit contraction moves as in \Cref{fig:contraction-hourglass-moves}, which preserve $\trip_{\bullet}$. As before, we say a $U_q(\fsl_r)$-hourglass plabic graph $G$ is \emph{contracted} if all possible contraction moves have been applied. For $r=2$ or $3$, the contraction moves are the only moves on hourglass plabic graphs. 

\begin{definition}
\label{def:general-forbidden-4-cycles}
A $4$-cycle of an $U_q(\fsl_r)$-hourglass plabic graph $G$ is called \emph{forbidden} if the sum of its edge multiplicities exceeds $r$. For $r=2,3,$ or $4$, we write $G \sim G'$ for the move-equivalence relation. We say such a graph $G$ with no isolated components is \emph{fully reduced} if no $G' \sim G$ contains a forbidden $4$-cycle. \emph{Top} fully reduced graphs are distinguished contracted representatives in each move-equivalence class of fully reduced graphs (as in \Cref{def:top-fully-reduced}); when $r=2$ or $3$, every contracted fully reduced graph is top.
\end{definition}

\Cref{thm:unified-web-basis} below shows how our hourglass plabic graph perspective with forbidden $4$-cycles and $\trip_{\bullet}=\prom_{\bullet}$, unifies the Temperley--Lieb $U_q(\fsl_2)$-basis, the Kuperberg's $U_q(\fsl_3)$-basis, and our $U_q(\fsl_4)$-basis from \Cref{thm:web-basis}.

\begin{theorem}
\label{thm:unified-web-basis}
Let $r=2,3,$ or $4$. Then $\crg(\underline{c};r)/{\sim}$ is in bijection with $\rft(\underline{c};r)$ via $\trip_{\bullet}=\prom_{\bullet}$ and this bijection intertwines rotation and promotion. Furthermore,
\[
\mathscr{B}_q^{\underline{c},r} \coloneqq \{[W]_q : \text{ $W$ a top fully reduced $U_q(\fsl_r)$-hourglass plabic graph of type $\underline{c}$}\},
\]
is a rotation-invariant web basis of $\Inv_{U_q(\fsl_r)}(\bigwedge_q\nolimits^{\underline{c}} V_q)$. 
\end{theorem}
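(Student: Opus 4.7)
The case $r=4$ is precisely the content of \Cref{thm:main-bijection} and \Cref{thm:web-basis}, so the task reduces to handling $r=2$ and $r=3$ in a way that parallels the $r=4$ argument while reusing the existing literature. The plan is first to show that $\crg(\underline{c};r)/{\sim}$, equipped with our forbidden $4$-cycle condition, coincides with the already-known rotation-invariant web bases in these ranks; second, to verify that the separation labeling $\mathcal{T}$ of \Cref{sec:backwards-map} specializes to the bijections with tableaux that have previously been considered; and third, to conclude the basis and equivariance statements.

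For $r=2$, contracted $U_q(\fsl_2)$-hourglass plabic graphs have all internal vertices of degree $2$, so after contraction each connected component either joins two boundary vertices by a single simple path (equivalently, by a single simple or $2$-hourglass edge) or forms an internal loop; since any $4$-cycle has edges summing to at least $4>2$, the forbidden $4$-cycle condition eliminates all internal cycles and forces the resulting matching on boundary vertices to be non-crossing. Thus $\crg(\underline{c};2)/{\sim}$ is exactly the Temperley--Lieb set of non-crossing matchings compatible with the coloring $\underline{c}$. For $r=3$, in any contracted $U_q(\fsl_3)$-hourglass plabic graph, an hourglass between two internal vertices would leave each endpoint with a single remaining incident edge, and contracting it yields a trivalent bipartite plabic graph; since all $4$-cycles have edge-multiplicity sum $\geq 4>3$, the forbidden $4$-cycle condition is Kuperberg's non-elliptic condition (no internal faces with fewer than $6$ sides), and so $\crg(\underline{c};3)/{\sim}$ coincides with Kuperberg's non-elliptic basis webs.

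The plan for the bijection $\mathcal{T}$ is to observe that the proofs in \Cref{sec:backwards-map} of \Cref{thm:sep-label-gives-lattice-word} and \Cref{thm:sep-label-promotion-equivariant} used only the monotonicity consequences of full reducedness (\Cref{cor:fully-reduced-iff-monotonic}), the fact that the underlying plabic graph is reduced (\Cref{prop:underlying-plabic-is-reduced}), and \Cref{thm:hourglass-trips-determine-move-equivalence}. All three of these inputs admit natural $r\in\{2,3\}$ analogues: for $r=2$ they are essentially tautological, and for $r=3$ they reduce to Postnikov's classification of reduced bipartite plabic graphs by trip permutation together with the Hopkins--Rubey interpretation of the non-elliptic condition. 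Feeding these into the same proof template yields a well-defined $\mathcal{T}:\crg(\underline{c};r)/{\sim} \to \rft(\underline{c};r)$ satisfying $\trip_{\bullet}(G) = \prom_{\bullet}(\mathcal{T}(G))$; for $r=3$ and standard/oscillating $\underline{c}$, one then checks that $\mathcal{T}$ agrees with the Khovanov--Kuperberg bijection by comparing its equivariance with that established in \cite{Petersen-Pylyavskyy-Rhoades,Patrias,Hopkins-Rubey}, so in particular it intertwines rotation and promotion. Injectivity of $\mathcal{T}$ comes from the analogue of \Cref{thm:hourglass-trips-determine-move-equivalence}; surjectivity follows because the known Temperley--Lieb and Kuperberg bases are already known to be enumerated by $|\rft(\underline{c};r)|$.

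For the basis statement, we invoke the same formal pattern as the proof of \Cref{thm:basis-without-growth-rules}: the invariants $[W]_q$ of contracted fully reduced webs are linearly independent by the classical Temperley--Lieb ($r=2$) and Kuperberg ($r=3$) results, the cardinality of $\crg(\underline{c};r)/{\sim}$ matches $\dim_{\mathbb{C}(q)}\Inv_{U_q(\fsl_r)}(\bigwedge_q^{\underline{c}} V_q)$ by \Cref{prop:fluc-tab-give-dimension} combined with the bijection $\mathcal{T}$, and every move-equivalence class has a unique top representative (trivially so in ranks $2$ and $3$, where no benzene moves occur). Rotation invariance is then automatic from the rotation-invariance of the top condition. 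The main obstacle, and the step meriting the most care, is the rank-$3$ verification that our $\mathcal{T}$ computed via separation labels genuinely coincides with the Khovanov--Kuperberg tableau bijection; the cleanest route is to argue by induction on the number of internal vertices using the same contour-cutting argument as in the proof of \Cref{thm:sep-label-gives-lattice-word}, or equivalently to show that both maps satisfy $\trip_\bullet = \prom_\bullet$ and appeal to \cite[Thm.~6.12]{fluctuating-paper}, which determines a rectangular fluctuating tableau from its promotion permutations.
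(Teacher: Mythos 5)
Your proposal is correct in outline and would produce a valid proof, but it takes a more elaborate route than the paper. The paper's proof is very short and citation-heavy: it identifies the $r=2$ case with the Temperley--Lieb basis (noting ``easy verification'' of $\trip_{\bullet}=\prom_{\bullet}$), identifies the $r=3$ contracted fully reduced graphs with Kuperberg's non-elliptic webs (explicitly handling bigons by un-contracting one side into a forbidden $4$-cycle of multiplicities $1,1,1,2$), directly cites Hopkins--Rubey for the $\trip_{\bullet}=\prom_{\bullet}$ characterization when $r=3$, and cites \Cref{thm:web-basis} for $r=4$. Your plan instead proposes to re-derive the bijection $\mathcal{T}$ for $r=2,3$ by extending the separation-labeling machinery of \Cref{sec:backwards-map}, falling back to the Hopkins--Rubey citation only as an alternative. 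The paper's route is cleaner; your route, if carried out, would yield a more self-contained and uniform argument, which has expository value but is genuinely more work.

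Two cautionary points. First, you claim that the proofs of \Cref{thm:sep-label-gives-lattice-word} and \Cref{thm:sep-label-promotion-equivariant} ``used only'' monotonicity, reducedness of $\widehat{G}$, and \Cref{thm:hourglass-trips-determine-move-equivalence}, so that feeding in $r\in\{2,3\}$ analogues would suffice. This underestimates what is involved: those proofs lean substantially on $r=4$-specific machinery (the symmetrized six-vertex correspondence, \Cref{lem:how-trip1-trip2-diverge}, \Cref{lem:trip1-2-3-layout}, the contour arguments) and would require a non-trivial parallel development for $r=3$, where no analogous six-vertex picture is available. Second, some descriptive details are off: for $r=2$, contracted graphs may have an arc $b_i$--$v$--$b_j$ with one internal vertex, not only a single edge; for $r=3$, forbidding $4$-cycles does not directly forbid bigons, and the needed observation (which the paper makes) is that a bigon un-contracts to a forbidden $4$-cycle in a move-equivalent graph. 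Neither issue is fatal, but the first one especially means the ``same proof template'' claim is optimistic compared to the paper's choice to simply invoke Hopkins--Rubey.
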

\begin{proof}
When $r=2$, all hourglass plabic graphs consist of a non-crossing matching of the boundary vertices, with some internal vertices along each arc. Such a graph is contracted if there are $0$ or $1$ internal vertices on each arc; all such graphs are in the basis and this is exactly the Temperley--Lieb basis. The $\trip_{\bullet}=\prom_{\bullet}$ condition is an easy verification, and this property implies that rotation corresponds to promotion. 

When $r=3$, the fully reduced graphs are those whose contraction contains no $2$-cycles or $4$-cycles, since a $2$-cycle may be uncontracted into a $4$-cycle with edge multiplicities $1,1,1,2$. Thus the fully reduced graphs are exactly Kuperberg's \emph{non-elliptic} web basis. Hopkins--Rubey \cite{Hopkins-Rubey,Hopkins:talk} showed that the standard bijection between these webs and $3$-row standard Young tableaux (for example, via the growth rules of \cite{Khovanov-Kuperberg,Petersen-Pylyavskyy-Rhoades}) is characterized by $\trip_{\bullet}=\prom_{\bullet}$. Finally, the $r=4$ case is \Cref{thm:web-basis}.
\end{proof}

\subsection{The 2-column case}
In \cite{Fraser-2-column}, Fraser found a rotation-invariant $\SL_r$-web basis for $\Inv(V^{(1^{2r})})$ with $r$ arbitrary, the ``2-column" case. He describes a bijection $\varphi_r$ from rectangular standard tableaux of shape $r \times 2$ to a basis $\mathcal{W}_r$ of $\SL_r$-webs, which are only characterized as the outputs of the map $\varphi_r$ (see \Cref{fig:fraser_web} for an example). For $G = \varphi_r(T) \in \mathcal{W}_r$, all internal faces are $4$-cycles. The map $\varphi_r$ in fact requires certain choices to be made, but all possible outputs are connected by the square relation of \cite[Eq.~6.9]{Fraser-Lam-Le}, so that the invariant $[G]$ is well-defined.

\ytableausetup{boxsize=0.6cm}
\begin{figure}[htb]
    \centering
    \includegraphics[scale=0.64]{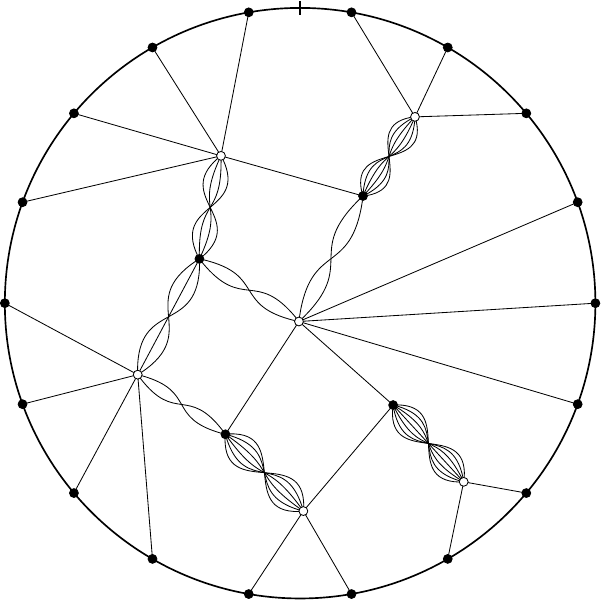}
    \qquad \raisebox{5cm}{\ytableaushort{14,27,39,5{11},6{12},8{15},{10}{16},{13}{17},{14}{18}}}
    \caption{The $\SL_9$-web from \cite[Fig.~1]{Fraser-2-column} converted to an $\SL_9$-hourglass plabic graph and the corresponding tableau. }
    \label{fig:fraser_web}
\end{figure}

In follow-up work \cite{two-column}, we have extended the perspective of this paper to show that the bijection $\varphi_r$ satisfies $\trip_{\bullet}(\varphi_r(T))=\prom_{\bullet}(T)$ and that after lifting to hourglass plabic graphs, the general square relation preserves trip permutations. Furthermore, the web basis elements $[W]_q$ for $ W \in \mathcal{W}_r$ are characterized as the web invariants of suitably defined fully reduced $U_q(\fsl_r)$-hourglass plabic graphs; full reducedness again includes the prohibition of the forbidden $4$-cycles of \Cref{def:general-forbidden-4-cycles} in move-equivalent graphs.

\subsection{The semistandard web basis} 
\label{sec:semistandard}
We now show how the growth rules and the reduction algorithm allow for the extension of the top fully reduced bases to bases for general graded pieces of the coordinate ring $\mathbb{C}[\widetilde{\mathrm{Gr}}_r(\mathbb{C}^n)]$ of the (affine cone over) the Pl\"{u}cker embedding of the Grassmannian, for $r \leq 4$. See \cite[\S6]{Fraser-2-column} for similar results in the 2-column setting.

For the remainder of this section, we consider tensor diagrams at $q=1$ all of whose boundary edges have multiplicity one, but such that a boundary vertex $b_i$ may be incident to any number $\mu_i \geq 0$ of such edges. Any such tensor diagram $X$ determines an element $[X] \in \Inv_{\SL_r}(\Sym^{\underline{\mu}}(V))$, where $\Sym^{\underline{\mu}}(V)\coloneqq \Sym^{\mu_1}(V) \otimes \cdots \otimes \Sym^{\mu_n}(V)$.  This element is the restriction of the invariant $[\check{X}] \in \Inv_{\SL_r}(\bigotimes_i V^{\otimes \mu_i})$ of the diagram $\check{X}$ obtained from $X$ by splitting $b_i$ into $\mu_i$ separate boundary vertices, each incident to a single edge. In terms of the polynomial expressions from \Cref{sec:q.Laplace}, $[X]$ can be computed from $[\check{X}]$ by setting equal, for each $i$, the corresponding $\mu_i$ vectors of boundary variables. Call such a tensor diagram $X$ a \emph{web of semistandard type} if it is planar; as before, we move freely between the language of webs and of hourglass plabic graphs. We now extend the definition of top full reducedness to such graphs.

\begin{definition}
    We say an hourglass plabic graph $G$ of semistandard type is \emph{top fully reduced} if the hourglass plabic graph $\check{G}$ is top fully reduced and if no boundary vertex of $G$ is adjacent to a single internal vertex by more than one edge.
\end{definition}

\begin{theorem}
\label{thm:semistandard}
    Let $r=2,3,$ or $4$. Then the collection $\mathscr{B}^r(\underline{\mu})$ of invariants of top fully reduced $r$-hourglass plabic graphs of semistandard type with boundary multiplicities $\underline{\mu}$ is a rotation-invariant web basis of $\mathbb{C}[\widetilde{\mathrm{Gr}}_r(\mathbb{C}^n)]_{\underline{\mu}}$. 
\end{theorem}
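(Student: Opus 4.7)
My plan is to reduce to the standard case $\underline{c}=(1^N)$ with $N=\sum_i\mu_i$, already handled by \Cref{thm:unified-web-basis}. A tensor diagram $X$ of semistandard type yields a standard-type diagram $\check X$ by splitting each boundary vertex $b_i$ into $\mu_i$ boundary vertices of multiplicity one; then $[X]$ is obtained from $[\check X]$ by identifying, for each block $i$, the $\mu_i$ boundary variables. Dually, $\Inv_{\SL_r}(\Sym^{\underline{\mu}}V)$ is the $W_{\underline{\mu}}$-invariant subspace of $\Inv_{\SL_r}(V^{\otimes N})$, where $W_{\underline{\mu}}=\mathfrak{S}_{\mu_1}\times\cdots\times\mathfrak{S}_{\mu_n}$ permutes boundary positions within each block, and by Borel--Weil its dimension equals the number of rectangular semistandard Young tableaux of shape $r\times(N/r)$ and content $\underline{\mu}$.

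For spanning, I would apply \Cref{alg:reduction-algorithm} to $\check X$ to write $[\check X]$ as a $\mathbb{C}$-linear combination of invariants $[\check W]$ of top fully reduced standard webs, and then identify variables. Each $[\check W]$, after identification, either becomes the invariant $[W]$ of a top fully reduced semistandard web $W$ (precisely when the $\mu_i$ split boundary vertices within each block connect to $\mu_i$ distinct interior neighbors) or becomes zero. For the vanishing: if two within-block split boundary vertices of $\check W$ share an interior neighbor $v$, then swapping those two edges at $v$ pairs proper labelings of $\check W$ into configurations with identical boundary monomials after variable identification but opposite coinversion signs in \eqref{eq:q.Laplace}, producing cancellation. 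Hence $\mathscr{B}^r(\underline{\mu})$ spans $\Inv_{\SL_r}(\Sym^{\underline{\mu}}V)$.

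For linear independence, I would combine \Cref{thm:unitriangular} with the bijection $\mathcal{T}$ of \Cref{thm:main-bijection}. Each top fully reduced semistandard web $W$ of type $\underline{\mu}$ corresponds via $\mathcal{T}(\check W)$ to a standard tableau of shape $r\times(N/r)$ whose lattice word has no within-block descent, by \Cref{thm:growth_algorithm}(v) combined with a planarity argument lifting the consecutive condition to all within-block pairs; such standard tableaux are precisely the standardizations of rectangular semistandard tableaux of content $\underline{\mu}$. By \Cref{thm:unitriangular}, $[\check W]$ has $\tlex$-leading monomial determined with unit coefficient by $\bw(\check W)$, which descends after identification to a distinct semistandard monomial across $W\in\mathscr{B}^r(\underline{\mu})$. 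This gives linear independence and, together with the dimension count, the bijection $\mathscr{B}^r(\underline{\mu})\leftrightarrow\mathrm{SSYT}(r\times(N/r),\underline{\mu})$; thus $\mathscr{B}^r(\underline{\mu})$ is a basis. Rotation invariance is inherited from the standard case via rotation of the standardization $\check W$. The main obstacles will be the antisymmetric vanishing step in the spanning argument, requiring careful sign analysis via \eqref{eq:q.Laplace}, and the planarity-based reduction of the no-duplicate-neighbor condition to the consecutive case of \Cref{thm:growth_algorithm}(v).
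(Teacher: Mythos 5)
Your overall strategy tracks the paper's closely: reduce to the standard case via $\check{X}$, use the reduction relations for spanning, identify the vanishing case when a boundary vertex sends two edges to one interior vertex, and relate semistandard webs to $\mathrm{SSYT}(r \times (N/r), \underline{\mu})$ via $\mathcal{T}(\check{W})$ and the descent condition of \Cref{thm:growth_algorithm}(v). The paper's spanning argument is slightly cleaner on the vanishing step: rather than pairing proper labelings with opposite coinversion signs, it simply observes that $[\check{W}]$ is \emph{alternating} in the two arguments corresponding to split boundary vertices sharing an interior neighbor (this follows from the product map $V \otimes V \to \bigwedge^r V$ at that vertex), so identification kills the invariant. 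Your sign-pairing argument is doing the same thing at the monomial level and should work, though it would require some care about how the two edges sit in the cyclic order around the shared vertex.

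The genuine gap is in your linear independence argument via $\tlex$-leading terms. \Cref{thm:unitriangular} gives that $x_{\bw(\check{W})}$ has coefficient $\pm 1$ in $[\check{W}]$ and all other terms are strictly $\tlex$-larger. But after identifying variables within each block, the monomial $x_{\bw(\check{W})}$ collapses to the same identified monomial as $x_v$ for \emph{any} within-block permutation $v$ of $\bw(\check{W})$, and all such $v$ with distinct entries satisfy $v >_{\tlex} \bw(\check{W})$ and may appear in $[\check{W}]$ with nonzero coefficients. You assert the leading monomial ``descends after identification,'' but this requires ruling out cancellation in the sum $\sum_{v : m(v) = m(\bw(\check{W}))} [\text{coeff of } x_v]$, which you do not do. The paper sidesteps this entirely: it never attempts a leading-term argument in the semistandard setting. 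Instead it combines spanning with an explicit \emph{injection} $\psi$ from move-equivalence classes of top fully reduced semistandard graphs into $\mathrm{SSYT}(r \times N, \underline{\mu})$, then invokes the dimension count. Since you already have both the spanning argument and the dimension count (via Borel--Weil), you can drop the leading-term step entirely and instead carefully establish the injectivity of $G \mapsto \psi(G)$, which is what the paper does. Finally, the ``planarity argument lifting the consecutive condition to all within-block pairs'' is unnecessary: the paper only ever uses the consecutive case of \Cref{thm:growth_algorithm}(v), together with the observation that in a standard tableau, $a+1$ not being in a lower row than $a$ forces $a+1$ strictly to the right of $a$, so the relabeled cells automatically lie in distinct columns.
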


\begin{remark}
    Fomin and Pylyavskyy \cite{Fomin-Pylyavskyy-advances} conjecture close connections between the $\SL_3$-web basis and the cluster algebra structure on $\mathbb{C}[\widetilde{\mathrm{Gr}}_3(\mathbb{C}^n)]$, having assumed that the $\SL_3$-web basis extends to the semistandard setting as we describe above. In light of \Cref{thm:semistandard}, it would be very interesting to study whether these conjectures might also hold for the $\SL_4$-web basis and the cluster algebra structure on $\mathbb{C}[\widetilde{\mathrm{Gr}}_4(\mathbb{C}^n)]$. The rotation-invariance of our basis is particularly important in this context, as rotation induces an automorphism of the cluster structure.
\end{remark}

\begin{proof}[Proof of \Cref{thm:semistandard}]
    Suppose that $\underline{\mu}$ is a composition of $rN$ into $n$ parts. The First Fundamental Theorem of Invariant Theory (see \cite{Weyl}) implies that the graded piece $\mathbb{C}[\widetilde{\mathrm{Gr}}_r(\mathbb{C}^n)]_{\underline{\mu}} \cong \Inv_{\SL_r} \Sym^{\underline{\mu}}(V)$ is spanned by products $\Delta_{I_1}\cdots \Delta_{I_N}$ of Pl\"{u}cker coordinates such that $i$ lies in exactly $\mu_i$ of the sets $I_j$ for $i=1,\ldots,n$. Such a product equals $[X]$ where $X$ is the tensor diagram with boundary multiplicities $\underline{\mu}$ obtained by overlaying $N$ $r$-valent star graphs using boundary vertices $I_1,\ldots,I_N$. Using the relations from \Cref{alg:reduction-algorithm} when $r=4$ or using the relations from \cite{Kuperberg} when $r=2$ or $3$, which apply equally well in the semistandard setting, we can write $[X]$ as a linear combination of invariants $[W]$ of webs $W$ of semistandard type $\underline{\mu}$ such that $\check{W}$ is top fully reduced. If such a $W$ has a boundary vertex incident to a single internal vertex by more than one edge, then $[W]=0$ since $[\check{W}]$ is alternating in the corresponding two arguments, so we may in fact write $[X]$ as a linear combination of elements of $\mathscr{B}^r(\underline{\mu})$. This shows that $\mathscr{B}^r(\underline{\mu})$ spans $\mathbb{C}[\widetilde{\mathrm{Gr}}_r(\mathbb{C}^n)]_{\underline{\mu}}$.
    
    It is well-known (see e.g. \cite[Ch.~1]{Seshadri}) that $\dim \mathbb{C}[\widetilde{\mathrm{Gr}}_r(\mathbb{C}^n)]_{\underline{\mu}}$ is equal to the cardinality of the set $\ssyt(r \times N, \underline{\mu})$ of semistandard tableaux of shape $r \times N$ and content $\underline{\mu}$. Thus, to prove that $\mathscr{B}^r(\underline{\mu})$ is a basis, it suffices to give an injection from the top fully reduced graphs of semistandard type, modulo moves, to $\ssyt(r \times N, \underline{\mu})$. We define such a map $\psi$ as follows: given a top fully reduced graph $G$ of semistandard type, form the standard tableau $\mathcal{T}(\check{G})$ and make it of content $\underline{\mu}$ by relabeling the boxes labeled $1+\sum_{j=1}^{i-1} \mu_j, \ldots, \mu_i + \sum_{j=1}^{i-1} \mu_j$ by $i$; call this tableau $\psi(G)$. 
    
    We first argue that $\psi(G)$ is indeed semistandard. If boxes labeled $a$ and $a+1$ are both relabeled by $i$, then the boundary vertices $\check{b}_a$ and $\check{b}_{a+1}$ of $\check{G}$ both come from splitting the same boundary vertex $b_i$ of $G$. Since $G$ is top fully reduced of semistandard type, this means that $\check{b}_a$ and $\check{b}_{a+1}$ do not share a neighbor in $\check{G}$. By \Cref{thm:growth_algorithm}(v), this is equivalent to $a \not \in \Des(\mathcal{T}(\check{G}))$, so $a+1$ is not in a lower row than $a$ in $\mathcal{T}(\check{G})$. Applying this for each $a$ in $1+\sum_{j=1}^{i-1} \mu_j, \ldots, \mu_i + \sum_{j=1}^{i-1} \mu_j$, we see that the boxes with these labels in $\psi(G)$ occur left to right in a skew strip having no two boxes in the same column. Thus $\psi(G)$ is semistandard. We now check that $\psi$ is injective. Clearly $G_1 \sim G_2$ if and only if $\check{G_1} \sim \check{G_2}$, so, since $\mathcal{T}$ is injective on $\crg/{\sim}$ by \Cref{thm:unified-web-basis}, it only remains to check that the relabeling step is injective. But this also follows from the observation about the skew strip.

    Finally, it is clear that rotation of graphs sends the basis $\mathscr{B}^r(\underline{\mu})$ to the basis $\mathscr{B}^r((\mu_2,\ldots,\mu_{n},\mu_1))$ for the rotated weight.
\end{proof}

\section*{Acknowledgements}
This project began during the 2021 BIRS Dynamical Algebraic Combinatorics program hosted at UBC Okanagan, and we are very grateful for the excellent research environment provided there. At that conference, Sam Hopkins and Martin Rubey introduced us to their perspective from \cite{Hopkins-Rubey} of webs-as-plabic graphs which was foundational in this work. We completed the main result at NDSU (partially supported by NSF DMS-2000592), for whose hospitality we are very thankful. Finally, we are grateful for the resources provided at ICERM, where much of this paper was written.

We are indebted to the anonymous referee for their insightful and thorough comments which significantly improved the paper. We also thank the following people for their helpful comments: Ashleigh Adams, Ben Elias, Sergey Fomin, Chris Fraser, Pavel Galashin, Joel Kamnitzer, Rick Kenyon, Mikhail Khovanov, Allen Knutson, Greg Kuperberg, Thomas Lam, Ian Le, Gregg Musiker, Rebecca Patrias, Alex Postnikov, Kevin Purbhoo, Pavlo Pylyavskyy, Brendon Rhoades, Anne Schilling, Travis Scrimshaw, David Speyer, Hugh Thomas, Julianna Tymoczko, Bruce Westbury, Lauren Williams, Haihan Wu, and Paul Zinn-Justin.

\bibliographystyle{amsalphavar}
\bibliography{v6}
\end{document}